% !TeX spellcheck = en_US
\makeatletter
\def\Mapstofill@{\arrowfill@{\Mapstochar\Relbar}\Relbar\Rightarrow}
\makeatother

\documentclass[oneside,11pt]{amsart}
\usepackage[marginpar=2.5cm]{geometry}
\usepackage{soul}
\usepackage[utf8]{inputenc}
\usepackage{etoolbox}
\usepackage{thm-restate}
\usepackage{comment}
\usepackage{todonotes}
\usepackage[colorlinks=false]{hyperref}
\usepackage{centernot}
\usepackage{mathtools}
\usepackage{amsmath,amssymb,amsthm, mathrsfs, mathtools, bbold, mathbbol}
\usepackage{amscd,mathtools}
\usepackage{stmaryrd}
\usepackage{microtype}

 \newcommand{\gothic}{\mathfrak}
\usepackage{amsmath}
\usepackage{amssymb}
\usepackage{amsfonts}
\usepackage{soul}
\usepackage{amsthm}
\usepackage{graphicx}
\usepackage{tikz,tikz-cd}
\usepackage{color}
\usepackage{mathtools}
\usepackage{enumitem}
\usepackage{hyperref}

\newtheorem{thm}{Theorem}[section]
\newtheorem{theorem}{Theorem}[section]
\newtheorem{prop}[thm]{Proposition}
\newtheorem{proposition}[thm]{Proposition}
\newtheorem{lem}[thm]{Lemma}
\newtheorem{lemma}[thm]{Lemma}
\newtheorem{cor}[thm]{Corollary}

\theoremstyle{definition}
\newtheorem{defn}[thm]{Definition}
\newtheorem{rem}[thm]{Remark}

\newtheorem{exmp}[thm]{Example}

  \newtheorem{introthm}{Theorem}
  \newtheorem{introcor}[introthm]{Corollary}

\newcommand{\abs}[1]{\lvert{#1}\rvert}

\newcommand{\norm}[1]{\left\lVert{#1}\right\rVert}
\renewcommand{\bar}[1]{\overline{#1}}

\newcommand{\set}[2]{\{\,{#1} \mid{#2} \,\}}
\renewcommand{\emptyset}{\varnothing}

\renewcommand{\setminus}{-}

\newcommand{\field}[1]{\mathbb{#1}}
\newcommand{\Z}{\field{Z}}
\newcommand{\R}{\field{R}}

\newcommand{\calA}{\mathcal{A}}
  \newcommand{\calB}{\mathcal{B}}

  \newcommand{\calP}{\mathcal{P}}

  \newcommand{\calU}{\mathcal{U}}
    \newcommand{\calV}{\mathcal{V}}

\newcommand{\cG}{\mathcal{G}}

\newcommand{\cP}{\mathcal{P}}

\newcommand{\qq}{\mathfrak{q}}
\newcommand{\go}{\mathfrak{o}}
\newcommand{\bfa}{\textbf{a}}
\newcommand{\bfb}{\textbf{b}}

\newcommand{\Ga}{\mathfrak{a}}
\newcommand{\Gb}{\mathfrak{b}}
\newcommand{\Gc}{\mathfrak{c}}
\newcommand{\ua}{\underline{a}}

\DeclareMathOperator{\Sh}{Sh}

\DeclareMathOperator{\CAT}{CAT}

\DeclareMathOperator{\Cay}{Cay}

\DeclarePairedDelimiterX{\Norm}[1]{\lVert}{\rVert}{#1}

\DeclareMathOperator{\Sat}{Sat} %

\DeclareMathOperator{\deep}{deep}
\DeclareMathOperator{\trans}{trans}

\usepackage{ifthen}

\newsavebox{\commentbox}

%\newenvironment{comment}%
%{\ifthenelse{\equal{\showcomments}{yes}}%
%{\footnotemark{}
%\begin{lrbox}{\commentbox}
%\begin{minipage}[t]{1.25in}\raggedright\sffamily\upshape\tiny
%\footnotemark[\arabic{footnote}]}%
%{\begin{lrbox}{\commentbox}}}%
%{\ifthenelse{\equal{\showcomments}{yes}}%
%{\end{lrbox}\end{minipage}\marginpar{\usebox{\commentbox}}}%
%{\end{lrbox}}}

\newcounter{acomments}

\newcounter{hcomments}

\usepackage{import}
\usepackage{xifthen}
\usepackage{pdfpages}
\usepackage{transparent}

\pdfsuppresswarningpagegroup=1

%\author{Alex Margolis}
 %\thanks{grants}
% \address{Department of Mathematics,  Ohio State University, Columbus, Ohio, USA}
% \email{margolis.93@osu.edu}
 
  \author{Hoang Thanh Nguyen}
 %\thanks{grants}
 \address{Department of Mathematics, FPT University, DaNang, VietNam}
 \email{nthoang.math@gmail.com}
 
  \author{Yulan Qing}
 %\thanks{grants}
 \address{Department of Mathematics,  University of Tennessee at Knoxville, Knoxville, Tennessee, USA}
 \email{yqing@utk.edu}

\begin{document}

%\title[\tiny  Quasi-redirecting boundary of admissible groups] {\footnotesize Quasi-redirecting boundary of admissible groups}

\title[Quasi-redirecting boundaries of non-positively curved groups]{Quasi-redirecting boundaries of non-positively curved groups }

\subjclass[2010]{%
%57M50, % Geometric structures on low dimensional manifolds
20F65,  %Geometric group theory
20F67} % Hyperbolic groups and nonpositively curved groups

\keywords{quasi-redirecting, relatively hyperbolic groups, Croke-Kleiner admissible groups, 3-manifold groups}

\begin{abstract}
The quasi-redirecting (QR) boundary is a close generalization of the Gromov boundary to all finitely generated groups. In this paper, we establish that the QR boundary exists as a topological space for several well-studied classes of groups. These include fundamental groups of irreducible non-geometric 3-manifolds, groups that are hyperbolic relative to subgroups with well-defined QR boundaries, right-angled Artin groups whose defining graphs are trees, and right-angled Coxeter groups whose defining flag complexes are planar. This result significantly broadens the known existence of QR boundaries.  

Additionally, we give a complete characterization of the QR boundaries of Croke–Kleiner admissible groups that act geometrically on $\mathrm{CAT(0)}$ spaces. We show that these boundaries are non-Hausdorff and can be understood as one-point compactifications of the Morse-like directions.

Finally, we prove that if $ G $ is hyperbolic relative to subgroups with well-defined QR boundaries, then the QR boundary of $ G $ maps surjectively onto its Bowditch boundary. 
 
\end{abstract}  

\maketitle
%\begin{center}
%	\setcounter{secnumdepth}{1}
	\setcounter{tocdepth}{1}
\tableofcontents
%\end{center}
%\newpage

\section{Introduction}

Boundaries play a fundamental role in geometric group theory, encoding asymptotic information about groups and their associated spaces. For hyperbolic groups in the sense of Gromov \cite{Gro87}, the Gromov boundary provides a canonical quasi-isometry invariant, with deep connections to dynamics, rigidity, and probability. However, not all non-positively curved groups admit such well-behaved boundaries.  A striking example is provided by Croke–Kleiner groups \cite{CK00}, which act geometrically on CAT(0) spaces yet fail to have well-defined visual boundaries. This highlights the need for alternative boundary constructions that capture meaningful asymptotic properties. Extending this perspective to non-hyperbolic groups has been a central challenge in the field, motivating various boundary constructions that capture ``hyperbolic-like'' directions, including the Morse boundary \cite{CS15}, \cite{Cor17} and the sublinearly Morse boundary \cite{QRT22}, \cite{QRT24}.

Despite these advances, existing boundaries often fail to encode the full range of asymptotic behaviors found in non-hyperbolic settings. The Morse boundary, for instance, is typically non-compact and negligible from the viewpoint of random walks \cite{CDG22}, while sublinearly Morse boundaries, though larger, still omit substantial asymptotic data. To address this, Qing–Rafi \cite{QR24} introduced the quasi-redirecting (QR) boundary, a new quasi-isometry invariant boundary that is often compact and contains sublinearly Morse boundaries as topological subspaces. Unlike previous constructions, the QR boundary captures a richer spectrum of hyperbolic-like behaviors, making it a promising new tool in geometric group theory.
The QR boundary is also shown to serve as a topological model for suitable random walks. When the QR boundary contains at least 3  points, sublinearly Morse boundaries are dense subsets of the QR boundary~\cite{GQV24}. It is also established in~\cite{GQV24} that when $X$ is a rank-one $\CAT(0)$ space, the QR boundary, when it exists, is a visibility space. Moreover, it is shown that when $G$ acts geometrically on a finite-dimensional $\CAT(0)$ cube complex, the QR boundary of $G$, when it exists and contains at least three points,  is not mono-directional and $G$ contains a Morse element. These properties provide evidence that the QR boundary  closely resembles the Gromov boundary of hyperbolic groups.

 The QR boundary is defined as follows:

\begin{defn}
    Let $\alpha, \beta \colon [0, \infty) \to X$ be two quasi-geodesic rays in a metric space $X$. % chktex 9
     We say $\alpha$ can be \textit{quasi-redirected} to $\beta$ (and write $\alpha \preceq \beta$) if there exists a pair of constants $(q, Q)$ such that for every $r >0$, there exists a $(q, Q)$--quasi-geodesic ray $\gamma$ that is identical to $\alpha$ inside the ball $B(\alpha(0), r)$ and eventually $\gamma$ becomes identical to $\beta$. We say $\alpha \sim \beta$ if $\alpha \preceq \beta$ and $\beta \preceq \alpha$. The resulting set of equivalence classes forms a poset, denoted by $P(X)$. This poset $P(X)$, when equipped with a ``cone-like topology'', is called the \textit{quasi-redirecting boundary} (QR boundary) of $X$ and denoted by $\partial X$.
\end{defn}

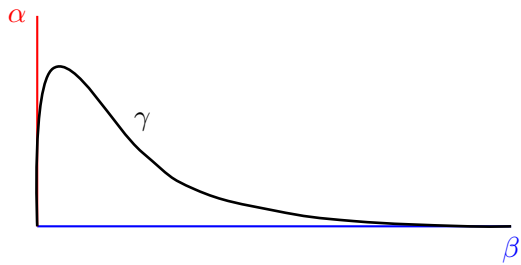
\begin{figure}[h!]
\centering
\begin{tikzpicture}[scale=0.7] 
\draw[blue, thick] (0,0) -- (9,0) node[below] {$\beta$};
\draw[red, thick] (0,0) -- (0,4) node[left] {$\alpha$};
\node at (2,2) {$\gamma$};

 \pgfsetlinewidth{1pt}
 \pgfsetplottension{.75}
 \pgfplothandlercurveto
 \pgfplotstreamstart
 \pgfplotstreampoint{\pgfpoint{0 cm}{0cm}}  
 \pgfplotstreampoint{\pgfpoint{.3 cm}{3 cm}}   
 \pgfplotstreampoint{\pgfpoint{2 cm}{1.4 cm}}
 \pgfplotstreampoint{\pgfpoint{3 cm}{.7 cm}}
 \pgfplotstreampoint{\pgfpoint{4.5 cm}{.3 cm}}
 \pgfplotstreampoint{\pgfpoint{6 cm}{.1 cm}} 
 \pgfplotstreampoint{\pgfpoint{8 cm}{.005 cm}} 
 \pgfplotstreampoint{\pgfpoint{9 cm}{0 cm}} 
 \pgfplotstreamend
 \pgfusepath{stroke} 
 \end{tikzpicture}
\caption{The ray $\alpha$ can be quasi-redirected to $\beta$ at radius $r$.}
\end{figure}

In order to define a ``cone-like topology'' on $P(X)$~\cite{QR24}, $X$ needs to satisfy three assumptions which we call the \emph{QR-assumptions}; see Section~\ref{subsec:QRboundary}. Despite its potential, a major open problem in the theory of QR boundaries is to determine for which groups it is well-defined. It is unknown which groups satisfy all three QR-Assumptions, and consequently, on which groups the QR boundary is defined. In~\cite[Question~D]{QR24}, it is asked whether  all finitely generated groups satisfy the  QR-Assumptions. On the one hand, there are no known examples of  finitely generated groups that do not satisfy the QR-Assumptions. On the other, there are relatively few classes  of  non-hyperbolic  groups that have been verified to satisfy the QR-Assumptions. In this paper, we answer~\cite[Question~D]{QR24} in the affirmative for several classes of groups, significantly extending the known scope of this construction. These class of groups include:
\begin{enumerate}
    \item Croke-Kleiner admissible groups that act geometrically on $\CAT(0)$ spaces.
\item Relatively hyperbolic groups whose peripherals satisfy the QR-assumptions.
\item Fundamental groups of non-geometric 3-manifolds.
\item  Right-angled Coxeter groups whose flag complexes are planar.
\end{enumerate}

Therefore, we provide evidence that the theory of QR-boundaries applies in a variety of concrete contexts. Moreover, our study of 3-manifold groups suggests intriguing connections between the algebraic structure of a group and the height of its QR poset, raising new questions about the relationship between divergence functions and boundary structure. Finally, our results for relatively hyperbolic groups establish a clear link between QR boundaries and Bowditch boundaries, suggesting further potential connections to random walks and the Poisson boundary.

\subsection{3-manifold groups}
Let $M$ be an irreducible non-geometric 3-manifold. The torus decomposition of $M$ yields a nonempty minimal union $\mathcal{T} \subset M$ of disjoint essential tori, unique up to isotopy, such that each component $M_v$ of $M \backslash \mathcal{T}$, called a \emph{piece}, is either  Seifert fibered or hyperbolic.
If all pieces of $M$ are Seifert fibered spaces, then $M$ is called a \emph{graph manifold}. Otherwise, it is called a \emph{mixed manifold}.

We obtain the following result.

%As an application of Theorem~\ref{thm:introQRofadmissibleCAT0} and Theorem~\ref{thm:introRHGs} we obtain the following result. For the detailed discussion, we refer the reader to Section~\ref{sec:3-manifolds}.

\begin{introthm}\label{thm:3-manifold}
    Let $M$ be an irreducible non-geometric 3-manifold. Then $G = \pi_1(M)$ satisfies the QR-Assumptions and hence $\partial G$ is well-defined. Furthermore, 
    \begin{enumerate}
        \item if $M$ is a mixed 3-manifold then $\partial G$ surjects onto the Bowditch boundary of $G$.

        \item If $M$ is a graph manifold then the poset $P(G)$ has the largest element and the other minimal elements are not comparable (see Figure~\ref{poset}). The action of $G$ on $\partial G$ is not minimal.
    \end{enumerate} 
\end{introthm}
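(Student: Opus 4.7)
The plan is to deduce the whole theorem from two earlier results of the paper: the characterization of QR boundaries for Croke-Kleiner admissible groups, and the theorem asserting that relative hyperbolicity with QR-well-behaved peripherals implies the QR-assumptions together with a surjection onto the Bowditch boundary.

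For the existence of $\partial G$, the argument splits along the JSJ decomposition. If $M$ is a graph manifold, then $G = \pi_1(M)$ acts geometrically on a $\CAT(0)$ space built from the universal covers of the Seifert pieces glued along their boundary flats, and this realizes $G$ as a Croke-Kleiner admissible group; existence of $\partial G$ then follows from the earlier CK-admissible result in the paper. If $M$ is mixed, then $G$ is hyperbolic relative to the collection $\mathcal{P}$ consisting of the fundamental groups of the maximal graph-manifold components together with the cusp subgroups of the hyperbolic pieces. The graph-manifold components satisfy the QR-assumptions by the previous case, and the cusp subgroups, being virtually $\Z^2$, trivially satisfy them. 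Applying the relatively hyperbolic result yields the QR-assumptions for $G$, and hence $\partial G$ is well-defined.

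Item (1) is then immediate: the peripheral structure above satisfies the hypotheses of the general surjection theorem for relatively hyperbolic groups, producing the surjection from $\partial G$ onto the Bowditch boundary of $(G, \mathcal{P})$. For item (2), I invoke the complete characterization of QR boundaries of CK-admissible groups: $\partial G$ is the cone-like one-point compactification of the set of Morse-like directions. The added point is the unique largest element of $P(G)$, realized by any quasi-geodesic ray that stabilizes inside a Seifert fiber (a $\Z^2$-flat), into which any quasi-geodesic ray in $G$ can be quasi-redirected. The remaining elements of $P(G)$ are the Morse-like directions, and two distinct Morse-like directions are incomparable since no Morse quasi-geodesic ray can be quasi-redirected to another without being equivalent to it. The action of $G$ on $\partial G$ is therefore non-minimal, because the singleton consisting of the largest element is a proper closed $G$-invariant subset.

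The main obstacle will be to ensure that the graph-manifold case cleanly fits into the CK-admissible framework developed earlier in the paper, in particular that the natural $\CAT(0)$ model has all the structural features required by that characterization, and that the mixed case admits the expected peripheral structure with no stray pieces to account for; once these verifications are in place, the remaining assertions amount to combining the CK-admissible and relatively hyperbolic results already established.
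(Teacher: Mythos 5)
Your overall strategy matches the paper's: graph manifolds via the Croke--Kleiner admissible theorem, mixed manifolds via the relatively hyperbolic theorem with peripherals that are $\Z^2$'s and graph-manifold subgroups, item~(1) from the surjection onto the Bowditch boundary, item~(2) from the poset description of CK-admissible QR boundaries. But there is one real gap in the graph-manifold step, plus an omitted hypothesis check.

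The assertion that $G = \pi_1(M)$ ``acts geometrically on a $\CAT(0)$ space built from the universal covers of the Seifert pieces glued along their boundary flats'' is not true for all graph manifolds: Leeb and Buyalo--Kobel'skii showed that there exist closed graph manifolds admitting no nonpositively curved metric, and this failure is not in general repaired by passing to finite covers. So you cannot in general realize $G$ directly as a group acting geometrically on $\CAT(0)$. The paper circumvents this by noting that Theorem~\ref{thm:introQRofadmissibleCAT0} only requires each \emph{vertex} group to be $\CAT(0)$ with omnipotent quotient; the global $\CAT(0)$ issue is handled inside that theorem's proof, where the group is replaced by a quasi-isometric flip-admissible one (Kapovich--Leeb, extended in~\cite{Ngu25}) and the QR boundary is computed there, invoking QI-invariance. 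Concretely, in Section~\ref{sec:3-manifolds} the paper first passes to a finite cover so that every Seifert piece is trivially fibered, $S_v \times S^1$, so each vertex group is $\pi_1(S_v)\times\Z$ (manifestly a $\CAT(0)$ group), and the quotient $\pi_1(S_v)$ is free, hence omnipotent. This verification of the omnipotence hypothesis of Theorem~\ref{thm:introQRofadmissibleCAT0} is missing from your proposal, and without the finite-cover reduction you have not checked the hypotheses at all. Once you replace your direct-$\CAT(0)$-action claim by this reduction, the rest of your argument goes through as written; in particular your treatment of the mixed case and of items~(1) and~(2) is essentially identical to the paper's.
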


\begin{figure}[htb]
\centering 
%\def\svgwidth{\columnwidth}
%\input{star.pdf_tex}
 %\resizebox{0.8\textwidth}{!}{}
 \def\svgwidth{0.7\textwidth}
 %% Creator: Inkscape inkscape 0.92.5, www.inkscape.org
%% PDF/EPS/PS + LaTeX output extension by Johan Engelen, 2010
%% Accompanies image file '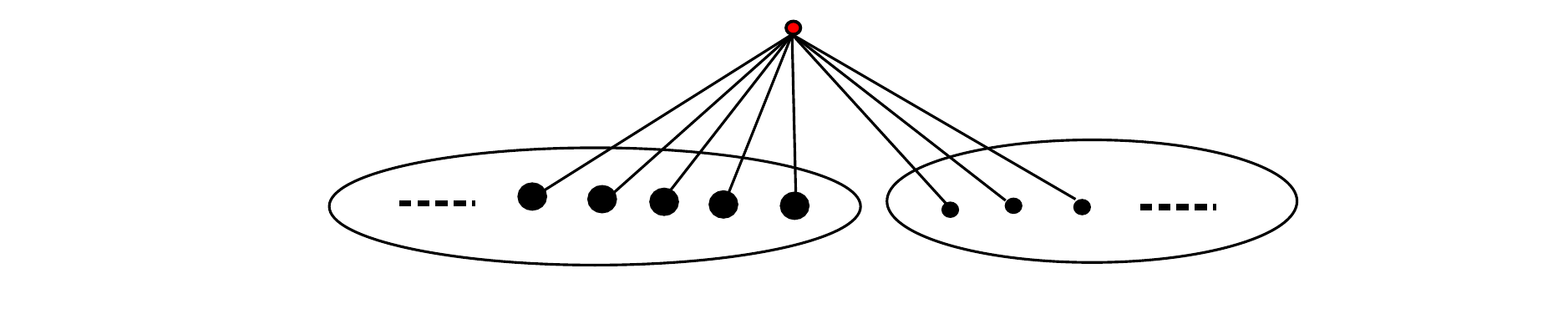' (pdf, eps, ps)
%%
%% To include the image in your LaTeX document, write
%%   \input{<filename>.pdf_tex}
%%  instead of
%%   \includegraphics{<filename>.pdf}
%% To scale the image, write
%%   \def\svgwidth{<desired width>}
%%   \input{<filename>.pdf_tex}
%%  instead of
%%   \includegraphics[width=<desired width>]{<filename>.pdf}
%%
%% Images with a different path to the parent latex file can
%% be accessed with the `import' package (which may need to be
%% installed) using
%%   \usepackage{import}
%% in the preamble, and then including the image with
%%   \import{<path to file>}{<filename>.pdf_tex}
%% Alternatively, one can specify
%%   \graphicspath{{<path to file>/}}
%% 
%% For more information, please see info/svg-inkscape on CTAN:
%%   http://tug.ctan.org/tex-archive/info/svg-inkscape
%%
\begingroup%
  \makeatletter%
  \providecommand\color[2][]{%
    \errmessage{(Inkscape) Color is used for the text in Inkscape, but the package 'color.sty' is not loaded}%
    \renewcommand\color[2][]{}%
  }%
  \providecommand\transparent[1]{%
    \errmessage{(Inkscape) Transparency is used (non-zero) for the text in Inkscape, but the package 'transparent.sty' is not loaded}%
    \renewcommand\transparent[1]{}%
  }%
  \providecommand\rotatebox[2]{#2}%
  \newcommand*\fsize{\dimexpr\f@size pt\relax}%
  \newcommand*\lineheight[1]{\fontsize{\fsize}{#1\fsize}\selectfont}%
  \ifx\svgwidth\undefined%
    \setlength{\unitlength}{900.54805424bp}%
    \ifx\svgscale\undefined%
      \relax%
    \else%
      \setlength{\unitlength}{\unitlength * \real{\svgscale}}%
    \fi%
  \else%
    \setlength{\unitlength}{\svgwidth}%
  \fi%
  \global\let\svgwidth\undefined%
  \global\let\svgscale\undefined%
  \makeatother%
  \begin{picture}(1,0.20358709)%
    \lineheight{1}%
    \setlength\tabcolsep{0pt}%
    \put(0,0){\includegraphics[width=\unitlength,page=1]{poset.pdf}}%
    \put(0.3055771,0.15566084){\color[rgb]{0,0,0}\makebox(0,0)[lt]{\lineheight{1.25}\smash{\begin{tabular}[t]{l}$P(G)$\end{tabular}}}}%
    \put(-0.00049334,0.00507145){\color[rgb]{0,0,0}\makebox(0,0)[lt]{\lineheight{1.25}\smash{\begin{tabular}[t]{l}$\text{sublinear Morse elements}$\end{tabular}}}}%
    \put(0.5576868,0.01675029){\color[rgb]{0,0,0}\makebox(0,0)[lt]{\lineheight{1.25}\smash{\begin{tabular}[t]{l}$\text{not sublinear Morse elements}$\end{tabular}}}}%
    \put(0.52614887,0.18678818){\color[rgb]{0,0,0}\makebox(0,0)[lt]{\lineheight{1.25}\smash{\begin{tabular}[t]{l}$[\zeta^{*}]$\end{tabular}}}}%
  \end{picture}%
\endgroup%

\caption{The picture provides a complete description of the poset $ P(G) $. The largest element, $ [\zeta^*] $, is positioned at the top, while the minimal elements are at the bottom. Both the set of sublinearly Morse elements and the set of non-sublinearly Morse elements have uncountable cardinality. }\label{poset}
\end{figure}

One interesting point here is that Theorem \ref{thm:3-manifold} shows that the height of $ P(G) $ is 2. It is widely known from the results of Kapovich-Leeb \cite{KL98} and Gersten \cite{Ger94} that the fundamental group of graph manifolds has quadratic divergence. So far, for groups with linear divergence, their $ P(G) $ has height 1. It would be interesting to understand the relationship between a group's divergence being a polynomial of degree $ d $ and the height of $ P(G) $.

To prove Theorem~\ref{thm:3-manifold}, we establish the existence of the quasi-redirecting boundary for \textit{Croke-Kleiner admissible groups} and \textit{relatively hyperbolic groups}. These results encompass but extend far beyond the fundamental groups of graph manifolds and mixed manifolds.

\subsection{Croke-Kleiner admissible groups}
When $M$ is an irreducible non-geometric 3-manifold, there is an induced graph of groups decomposition $\mathcal{G}$ of $\pi_1(M)$ with underlying graph $\Gamma$ as follows. For each piece $M_v$, there is a vertex $v$ of $\Gamma$  with vertex group $\pi_1(M_{v})$. For each torus $T_e\in \mathcal T$ contained in the closure of pieces $M_v$ and $M_{w}$, there is an edge $e$ of $\Gamma$ between vertices $v$ and $w$. The associated edge group is $\pi_1(T_e)\cong \Z^2$ and the edge monomorphisms are the maps induced by inclusion. 
%In~\cite{CK02}, Croke and Kleiner study a particular class of graph of groups with edge groups $\mathbb Z^2$ that they call \emph{admissible groups}  and generalize fundamental groups of $3$-dimensional graph manifolds and torus complexes (see~\cite{CK00}).  In this paper, an admissible group $G$ is called a  $\CAT(0)$ \emph{admissible group} if it acts geometrically on a Hadamard space $X$. Such an action $G \curvearrowright X$ is called CKA action, and the space $X$ is called  a $\CAT(0)$ \emph{admissible space}. 

%Admissible groups are modeled on  the JSJ structure of graph manifolds where  fundamental groups of  Seifert fibered pieces are replaced by a central extension  of a  hyperbolic group $H$:
%\begin{align}\label{centralExtEQ}
%1\to Z(G)=\mathbb Z\to G \to H \to 1    
%\end{align}
 Croke--Kleiner \cite{CK02} defined the class of \textit{admissible groups}, which have a graph of groups decomposition generalizing that of graph manifolds \cite{CK02}. Roughly speaking, a Croke-Kleiner  admissible group $G$ is a graph of groups $(\Gamma, \{G_{\hat v} \}, \{G_{\hat e}\}, \{\tau_{\hat e} \})$ with a nontrivial underlying graph $\Gamma$ where each edge group is $\Z^2$
and each vertex group $G_{\hat v}$ of $G$ has infinite cyclic center $Z_{\hat v}$ with quotient $G_{\hat v}/ Z_{\hat v}$ a non-elementary hyperbolic group. Additionally, the various edge groups need to be pairwise non-commensurable inside each vertex group. For the precise definition of Croke-Kleiner admissible groups, we refer the reader to Definition~\ref{defn:admissible}.

%Croke-Kleiner admissible groups are among the simplest non-hyperbolic  groups constructed algebraically from a finite number of hyperbolic  groups. The study of Croke-Kleiner admissible groups from different aspects recently attract attention and ongoingresearch continues to explore these groups.

Croke–Kleiner admissible groups are among the simplest non-hyperbolic groups built algebraically from a finite collection of hyperbolic groups. Their study from various perspectives has recently gained attention, and ongoing research continues to explore their properties (see \cite{CarolynNguenRamussen2024}, \cite{HRSS24}, \cite{Tao25}, \cite{SZ24}, \cite{HNY23}, \cite{NY23}, \cite{MN24}, \cite{NQ24}, among others).

In \cite{Wis00}, Wise introduces the concept of an \textit{omnipotent group} which has been widely used in subgroup separability. 

\begin{defn}
\label{defn:omnipotence}
A set of group elements $h_1, \cdots, h_r$ in a group $H$ is called {\it{independent}} if whenever $h_i$ and $h_j$ have conjugate powers then $i =j$.
A group $H$ is {\it{omnipotent}} if whenever $\{h_1,\cdots, h_r\}$ ($r\ge 1$) is an independent set of group elements, then there is a positive integer $p\ge 1$ such that for every choice of positive integers $\{n_1,\cdots, n_r\}$, there is a finite quotient $\varphi \colon H\to \hat H$  such that $\varphi(\hat h_i)$ has order $n_ip$ in $\hat H$  for each $i$. 
\end{defn}

 It is worth mentioning that free groups \cite{Wis00}, surface groups \cite{Baj07}, Fuchsian groups \cite{Wil10}  and virtually special hyperbolic groups \cite{Wis00} all belong to the omnipotent group category. However, it is a longstanding open question whether every hyperbolic group is residually finite. Wise suggested that if every hyperbolic group is residually finite, then any hyperbolic group would be considered an omnipotent group \cite[Remark~3.4]{Wis00}). 

%We give a complete description of the QR-boundary of admissible groups that act geometrically on $\CAT(0)$ spaces in the following theorem.

\begin{introthm}\label{thm:introQRofadmissibleCAT0}
%Let $G$ be an admissible group that acts properly discontinuously, cocompactly and by isometries on a complete proper $\CAT(0)$space. 
Let $G$ be a Croke-Kleiner  admissible group such that each vertex group $G_v$ of $G$ is a CAT(0) group and its quotient $Q_{\hat{v}} = G_{\hat{v}} / Z_{\hat{v}}$ is omnipotent.
Then:
\begin{enumerate}
    \item $G$  satisfies all three QR-Assumptions, ensuring $\partial G$ is defined and is a  quasi-isometry invariant.

    \item The poset $P(G)$ has the largest element while the minimal elements are pairwise incomparable (the picture is simarlar as  Figure~\ref{poset}). The action of $G$ on $\partial G$ is not minimal.

\end{enumerate}
\end{introthm}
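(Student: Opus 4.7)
The plan rests on analyzing the tree-of-spaces decomposition of $X$ coming from the graph-of-groups structure on $G$. Fixing a geometric action $G \curvearrowright X$ with Bass-Serre tree $T$, the space $X$ decomposes $G$-equivariantly as a union of vertex spaces $Y_v$, each quasi-isometric to $\widetilde{Q}_{\hat v} \times \R$, glued along flat strips that cover the edge spaces. Any quasi-geodesic ray $\alpha$ in $X$ projects to a combinatorial path in $T$ that is either eventually constant, in which case $\alpha$ is confined to a single piece, or escapes to an end of $T$; this dichotomy will organize the case analysis.

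The first step is to construct the maximum element. Choose a basepoint in some $Y_{v_0}$ and let $\zeta^{*}$ point along the central $\R$-factor of $Y_{v_0}$. I would show $\alpha \preceq \zeta^{*}$ for every ray $\alpha$ as follows: given $r>0$, truncate $\alpha$ at radius $r$ and produce a continuation that is a $(q,Q)$-quasi-geodesic with $(q,Q)$ independent of $r$, by routing through edge flats along the tree-path back to $v_0$ and merging into $\zeta^{*}$. The crucial ingredient is the omnipotence of each hyperbolic quotient $Q_{\hat v}$: in each traversed piece it allows choosing loxodromic elements whose powers have carefully matched orders in compatible finite quotients, and these elements produce bending quasi-geodesics whose distortion constants stay bounded uniformly over all pieces, so that the pieced-together continuation is a uniform quasi-geodesic.

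Step two identifies the minimal elements as the sublinearly Morse directions, namely rays whose tree-projection goes straight to infinity along a non-backtracking geodesic in $T$ and whose image in each traversed vertex space is Morse in the hyperbolic quotient $\widetilde{Q}_{\hat v}$. If $\alpha$ and $\beta$ represent distinct sublinearly Morse classes, any attempt to quasi-redirect $\alpha$ to $\beta$ must eventually traverse tree-edges on the symmetric difference of their tree-paths, and the Morse property in $\widetilde{Q}_{\hat v}$ obstructs this with a uniform lower bound on additive error. Hence $\alpha \not\preceq \beta$ and the minimal elements form an antichain. Combined with step one, this realizes the poset of Figure~\ref{poset}: a unique maximum $[\zeta^{*}]$, an uncountable antichain of minima, and nothing in between. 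The non-minimality of the $G$-action on $\partial G$ is then immediate, since $\{[\zeta^{*}]\}$ is a $G$-invariant closed singleton.

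For part (1), the cone topology setup of \cite{QR24} requires verifying three axioms. The first two reduce to standard tree-of-spaces quasi-geodesic arguments once a canonical representative is chosen in each class. I expect the main obstacle to lie in the third axiom, which asks for a controlled family of candidate rays whose restriction to each metric ball is finite up to uniformly bounded error. I plan to establish this by an inductive enumeration of candidates one tree-level at a time, using local finiteness of $T$ together with omnipotence to keep the quasi-geodesic constants uniform across levels. Quasi-isometry invariance of $\partial G$ then follows from the general machinery of \cite{QR24}.
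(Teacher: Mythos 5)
Your outline has the right broad shape (tree of spaces, a flat-ray maximum, incomparable minima), but it diverges from the paper in ways that are not mere stylistic choices: there are genuine gaps.

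The most serious one concerns how omnipotence is used. You propose to invoke omnipotence \emph{inside} the geometric construction, choosing ``loxodromic elements whose powers have carefully matched orders in compatible finite quotients'' to build bent quasi-geodesics directly in $X$. That is not what happens, and it is unclear how it could: omnipotence is a residual-finiteness-type property that has no direct bearing on metric distortion constants in the universal cover. What the paper actually does is use omnipotence \emph{once}, up front, via a theorem of Nguyen (\cite{Ngu25}, generalizing Kapovich--Leeb) to replace $G$ by a quasi-isometric \emph{flip} admissible group $G'$. Because $\partial G$ is a QI-invariant, this reduces the entire problem to flip admissible spaces, where the embedded templates are \emph{right-angled}. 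All the concrete metric work — backward spiral paths (Proposition~\ref{Prop:zeta-is-top}) and forward spiral paths of Types I and II (Propositions~\ref{prop:classifyspecialQR}, \ref{prop:differentitinerariesnotthesame}, \ref{prop:important}) — depends on the right-angle geometry of those templates; Proposition~\ref{prop:constructQg} and Corollary~\ref{cor:backwardlogisquasigeodesic} give uniform constants precisely because segments meet at right angles. Your plan has no substitute for this reduction, and without it the ``bending quasi-geodesic'' step is unsupported.

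Two further points are wrong or off-target. First, you identify the minimal elements with the sublinearly Morse directions; but as Figure~\ref{poset} makes explicit, the minimal stratum is strictly larger — it is the set of Type~II rays with sub-exponential excursion, and this set splits into sublinearly Morse and non-sublinearly Morse classes, both uncountable. The discriminating invariant is excursion growth (Definition of sub-exponential excursion and Lemmas~\ref{lem:upperboundofDelta}, \ref{Thm:Classification}), not Morseness of the projection to $\widetilde{Q}_{\hat v}$; rays whose excursion is super-exponential are equivalent to $\zeta^{*}$ even though their tree-projection goes straight to infinity. Second, what you describe as the ``third axiom'' (a finite family of candidate rays on each ball up to bounded error) is not QR-Assumption~2 of \cite{QR24}; the actual axiom asks for a uniform redirecting function $f_{\bfa}$ for each class $\bfa$. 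In the paper this is verified by producing explicit constants for each of the three cases (Type~I, Type~II super-exponential, Type~II sub-exponential), not by an inductive enumeration over tree-levels. You should re-read Section~\ref{subsec:QRboundary} and rebuild Step~(1) around the actual statement.
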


{The proof follows from a careful analysis of quasi-geodesic behavior in the group, particularly through the construction of backward spiral paths and forward spiral paths in Section~\ref{sec:template}.

The following corollary is an immediate consequence of Theorem~\ref{thm:introQRofadmissibleCAT0}.

\begin{introcor}
    Let $\Gamma$ be  a finite tree, then the associated right-angled Artin
group $A_\Gamma$ satisfies all three QR-Assumptions and hence $\partial A_\Gamma$ is well-defined.
\end{introcor}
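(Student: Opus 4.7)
The plan is to realize $A_\Gamma$ as a Croke-Kleiner admissible group satisfying the hypotheses of Theorem~\ref{thm:introQRofadmissibleCAT0} and invoke that theorem. The degenerate cases where $\Gamma$ has at most two vertices reduce to the known QR boundaries of $\Z$ or $\Z^2$ from~\cite{QR24}, while the star $K_{1,n}$ with $n \geq 2$ produces $A_\Gamma \cong \Z \times F_n$, which I would handle by a direct analysis modeled on the vertex-group arguments of Section~\ref{sec:template}. I therefore focus on the principal case: a finite tree $\Gamma$ of diameter at least three.

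First I would construct the graph-of-groups decomposition. Let $\Gamma^\circ \subseteq \Gamma$ be the subtree spanned by the non-leaf vertices of $\Gamma$; under the diameter hypothesis $\Gamma^\circ$ is itself a nontrivial tree. For each $v \in V(\Gamma^\circ)$, let $\mathrm{star}_\Gamma(v)$ be the subgraph on $v$ and its $\Gamma$-neighbors. Since $v$ commutes with each neighbor while the neighbors pairwise do not commute, the induced RAAG is $G_v := A_{\mathrm{star}_\Gamma(v)} \cong \Z \times F_{\deg_\Gamma(v)}$, with the central $\Z$ generated by $v$. Take underlying graph $\Gamma^\circ$, vertex groups $G_v$, edge group $\langle u, v \rangle \cong \Z^2$ at each $\{u,v\} \in E(\Gamma^\circ)$, and the obvious inclusions as edge maps. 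A routine Bass-Serre argument identifies $\pi_1$ of this graph of groups with $A_\Gamma$.

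Next I would verify the hypotheses of Theorem~\ref{thm:introQRofadmissibleCAT0}. Each $G_v$ has infinite cyclic center $\langle v \rangle$ with quotient $F_{\deg_\Gamma(v)}$, which is non-elementary hyperbolic because $\deg_\Gamma(v) \geq 2$ for $v \in V(\Gamma^\circ)$. Each $G_v$ acts geometrically on the CAT(0) product $\R \times T_{\deg_\Gamma(v)}$. The quotient $F_{\deg_\Gamma(v)}$ is a free group, hence omnipotent by Wise~\cite{Wis00}. Finally, distinct edges at $v$ in $\Gamma^\circ$ project to distinct free generators of $F_{\deg_\Gamma(v)}$, so the edge groups are pairwise non-commensurable inside $G_v$. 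Theorem~\ref{thm:introQRofadmissibleCAT0} then yields that $A_\Gamma$ satisfies all three QR-Assumptions.

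The only delicate step, and the one I would check most carefully, is the setup of the graph-of-groups decomposition: I need every generator of $A_\Gamma$ to lie in some $G_v$, every commuting relation to be contained in a single star or an edge group, and each leaf of $\Gamma$ to be assigned unambiguously to the unique $G_v$ with $v$ its neighbor. Once this is in place, the remaining verifications reduce to standard facts about free groups and products of CAT(0) spaces.
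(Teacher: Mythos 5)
Your proof is correct and reaches the same conclusion, but travels a genuinely different road.  The paper disposes of the main case with a single citation: when $\Gamma$ has a vertex of degree $\ge 2$, it invokes the well-known fact (due to Droms) that $A_\Gamma$ is the fundamental group of a nonpositively curved graph manifold, which automatically carries the Croke--Kleiner admissible structure.  You instead build the admissible graph-of-groups decomposition by hand: underlying graph the subtree $\Gamma^\circ$ of non-leaf vertices, vertex groups $A_{\mathrm{star}_\Gamma(v)} \cong \Z \times F_{\deg_\Gamma(v)}$, edge groups $\Z^2$ coming from the edges of $\Gamma^\circ$.  Your construction is sound --- a spot-check on a path $P_4$ or a quick Bass--Serre argument confirms the amalgam recovers $A_\Gamma$, and your verification of the non-commensurability condition (3) of Definition~\ref{defn:admissible} inside $\Z\times F_n$ is right, since cyclic subgroups of $F_n$ generated by distinct free generators cannot become commensurable after conjugation.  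What your explicit route buys is precision at the edge of the statement: you correctly notice that stars $K_{1,n}$ with $n\ge 2$ produce $A_\Gamma\cong \Z\times F_n$, which is \emph{not} a Croke--Kleiner admissible group (the underlying graph must have at least one edge, and $\Gamma^\circ$ degenerates to a single vertex), whereas the paper's phrasing ``at least one vertex of degree $\ge 2$'' sweeps that case in when it should not, since $\Z\times F_n$ is a Seifert fibered (geometric) manifold group, not a graph-manifold group as the paper defines the term.  Two small improvements you should make: first, you did not explicitly check condition (4) of Definition~\ref{defn:admissible}; it is immediate --- the preimages of the centers $\langle u\rangle$ and $\langle v\rangle$ already generate all of $G_e=\langle u,v\rangle$ --- but it should be said.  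Second, your treatment of the star case via ``a direct analysis modeled on the vertex-group arguments'' is more work than needed: $\Z\times F_n$ is a metric product, so Proposition~\ref{prop:product} handles it in one line exactly as the paper handles the $\Z^2$ case.
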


\begin{proof}
    If $\Gamma$ is a simplicial graph, let $A_\Gamma$ denote the associated right-angled Artin group. We study the QR-boundary of $A_\Gamma$ when $\Gamma$ is a finite tree.  If $\Gamma$ consists of a single edge, then $A_\Gamma$ is isomorphic to $\Z^2$, and the QR-boundary of $A_\Gamma$ consists of a single point. If $\Gamma$ contains at least one vertex of degree $\ge 2$ then it is a well-known fact that the associated right-angled Artin group $A_{\Gamma}$ is the fundamental group of a nonpositively curved graph manifold $M$. In particular, $A_\Gamma$ is a Croke-Kleiner admissible group such that each vertex group $G_v$ of $A_\Gamma$ is a direct product of $\Z$ with a free group. As free groups are omnipotent, the conclusion follows from Theorem~\ref{thm:introQRofadmissibleCAT0}. 
\end{proof}

\subsection{Relatively hyperbolic groups}

Relatively hyperbolic groups form a broad and important class of groups that generalize hyperbolic groups. A finitely generated group $G$ is hyperbolic relative to a finite collection of subgroups $\mathbb{P}$  if its coned-off Cayley graph is a hyperbolic and fine graph \cite{Bow12}. This notion captures groups that exhibit hyperbolic behavior outside of specific peripheral subgroups. The Bowditch boundary $\partial_{B} G$  is defined as the boundary of this coned-off graph, providing a compactification that encodes the group's geometric finiteness properties. Establishing the existence of the QR boundary in this setting is crucial for understanding how it interacts with the Bowditch boundary and other geometric structures.

\begin{introthm}\label{thm:introRHGs}
Let $G$ be a group that is hyperbolic relative to  $\mathbb P$. If the quasi-redirecting boundaries exist for each group $P \in \mathbb P$, then the quasi-redirecting boundary of $G$ exists and $\partial G$ surjects onto the Bowditch boundary $\partial_{B} G$ of $(G, \mathbb P)$.

\end{introthm}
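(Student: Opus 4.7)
The plan is to exploit the standard dichotomy for quasi-geodesic rays in a relatively hyperbolic group $(G,\mathbb{P})$: a quasi-geodesic ray $\alpha$ in $\Cay(G,S)$ either spends all but a bounded initial segment in a finite neighborhood of a unique peripheral coset $gP$ (and converges to the parabolic point fixed by $gPg^{-1}$ in $\partial_B G$), or its image in the coned-off Cayley graph is an infinite quasi-geodesic ray (and converges to a conical limit point of $\partial_B G$). This dichotomy, together with the bounded coset penetration (BCP) property of Farb, provides coarsely canonical representatives of each class and is the key tool powering every step below.

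Using this dichotomy, I would define a candidate map $\Phi \colon P(G) \to \partial_B G$ sending the QR class of $\alpha$ to its Bowditch limit. The first step is to check that $\Phi$ is well defined on $\preceq$-classes: if $\alpha \preceq \beta$, then for every $r$ there is a quasi-geodesic $\gamma_r$ agreeing with $\alpha$ on $B(\alpha(0), r)$ and eventually agreeing with $\beta$, so $\gamma_r$ has the same Bowditch limit as $\beta$; letting $r \to \infty$ and using that $\alpha$ and $\gamma_r$ fellow-travel on a larger and larger initial segment then forces $\alpha$ and $\beta$ to share the same Bowditch limit. Surjectivity follows because every parabolic and every conical limit point of $\partial_B G$ is the limit of some quasi-geodesic ray. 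The three QR-Assumptions (see Section~\ref{subsec:QRboundary}) will then be verified by analyzing the fibers of $\Phi$ separately: above a conical limit point, all representative rays are pairwise quasi-redirectable because their shadows in the coned-off Cayley graph are asymptotic hyperbolic rays that can be spliced together with controlled quasi-geodesic constants, so the fiber is a singleton; above a parabolic point fixed by $gPg^{-1}$, the fiber will be identified with the QR boundary $\partial P$ that exists by hypothesis.

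The main obstacle will be the identification of the parabolic fiber with $\partial P$. I need to promote quasi-redirections inside a peripheral coset $gP$ to quasi-redirections in $G$, which uses the quasi-convexity of peripheral cosets together with the well-controlled closest-point projection from Drutu--Sapir and Sisto. Conversely, given a quasi-redirection $\gamma$ in $G$ that eventually stabilizes near $gP$, I must project it back to obtain a quasi-redirection in $P$ with controlled constants, using BCP to ensure that the projection remains a quasi-geodesic. A delicate subpoint is to handle rays that enter and exit other horoballs before stabilizing inside $gP$: deep excursions into other horoballs are replaced by direct geodesic detours, which adds only bounded error by BCP, but care is required to keep the constants of the spliced paths uniformly bounded in $r$.

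With the fiber analysis in hand, the three QR-Assumptions for $G$ reduce to the corresponding properties inside each $P \in \mathbb{P}$, combined with standard hyperbolic-group behavior on the conical part; this yields the existence of $\partial G$ as a topological space. Continuity of $\Phi$ then follows directly from the cone topology: a neighborhood basis at $\Phi([\alpha])$ is generated by rays that track $\alpha$ for long initial segments, and any two such rays must have close Bowditch limits. Combined with the surjectivity already established, this completes the proof.
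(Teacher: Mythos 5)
Your proposal follows essentially the same strategy as the paper: use the standard dichotomy between rays that eventually stay near a single peripheral coset (the paper's \emph{non-transient} rays, detected via Sisto's deep/transition decomposition) and rays whose coned-off images remain unbounded (the paper's \emph{transient} rays), define a map $\xi\colon\partial G\to\partial_B G$ by sending each class to the corresponding parabolic or conical limit point, verify the QR-Assumptions case-by-case using the hypothesis on the $\partial P$'s for the parabolic case, and then establish surjectivity and continuity. The paper's non-transient case likewise projects a ray to a shadow quasi-geodesic in the relevant coset $A$, invokes QR-Assumption~1 for $A$, and splices to the basepoint via nearest-point projection surgery, so your ``promote redirections from $gP$ to $G$'' step is the same mechanism.

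Two places where your sketch is lighter than the real argument. First, for transient rays you assert the fiber is a singleton because asymptotic rays in the coned-off graph ``can be spliced together with controlled quasi-geodesic constants''; making that quantitative is exactly the content of Lemma~\ref{classification} (the Qing--Rafi result that every transient class admits a geodesic representative with redirecting function $f(q,Q)=(9q,Q)$), which rests on quasi-convexity of saturations rather than on a bare splicing argument in $K$. Second, continuity is not as automatic as ``rays tracking $\alpha$ for long initial segments have close Bowditch limits'': the Bowditch topology is generated by the $m(v,W)$ neighborhoods, and showing $\xi(\calU(\bfb,r))\subset m(\xi(\bfb),p)$ requires, in the transient case, choosing $r$ so that $(\beta_0)_r$ is a transition point far beyond the projection of $p$ and running a thin-quadrilateral argument in $K$, and, in the parabolic case, Hruska's control on how geodesics enter $N(A)$. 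These details are genuinely needed, but your roadmap reaches the same theorem via the same route. You also aim to identify each parabolic fiber homeomorphically with $\partial P$; that is plausible but more than the theorem requires, and the paper sidesteps it by defining $\xi$ through the canonical central element of each class.
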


In~\cite{QR24}, the authors show that if $(G, \calP)$ is a relatively hyperbolic group where the QR-boundaries of each $P$ is a mono-directional set, i.e.\ $\partial P$ is a point for each $P \in \calP$, then $\partial G$ exists and is homeomorphic to the Bowditch boundary of $(G, \calP)$. In Theorem~\ref{thm:introRHGs}, we drop the assumption that the $P$'s are mono-directional.

\subsection{Right-angled Coxeter groups}
A simplicial complex $\Delta$ is called \emph{flag} if any complete subgraph of the $1$-skeleton of $\Delta$ is the 1-skeleton of a simplex of $\Delta$. Let $\Gamma$ be a finite simplicial graph. The \emph{flag complex} of $\Gamma$ is the flag complex with 1-skeleton $\Gamma$. A simplicial subcomplex $B$ of a simplicial complex $\Delta$ is called \emph{full} if every simplex in $\Delta$ whose vertices all belong to $B$ is itself in $B$.

The flag complex of $\Delta$ is \emph{planar} if it can be embedded into the $2$-dimensional sphere $\field{S}^2$. From now on every time we consider a flag complex it will be as a subspace of the $2$-dimensional sphere $\field{S}^2$. 

\begin{defn}
Given a finite simplicial graph $\Gamma$, the associated \emph{right-angled Coxeter group} $W_\Gamma$ is generated by the set $S$ of vertices of $\Gamma$ and has relations $s^2 = 1$ for all $s$ in $S$ and $st = ts$ whenever $s$ and $t$ are adjacent vertices. The graph $\Gamma$ is the \emph{defining graph} of a right-angled Coxeter group $W_\Gamma$ and its flag complex $\Delta=\Delta(\Gamma)$ is the \emph{defining nerve} of the group. Therefore, sometimes we also denote the right-angled Coxeter group $W_\Gamma$ by $W_\Delta$ where $\Delta$ is the flag complex of $\Gamma$. 

Let $S_1$ be a subset of $S$. The subgroup of $W_\Gamma$ generated by $S_1$ is a right-angled Coxeter group $W_{\Gamma_1}$, where $\Gamma_1$ is the induced subgraph of $\Gamma$ with vertex set $S_1$ (i.e.\ $\Gamma_1$ is the union of all edges of $\Gamma$ with both endpoints in $S_1$). The subgroup $W_{\Gamma_1}$ is called a \emph{special subgroup} of $W_\Gamma$.
\end{defn}

\begin{introcor}[Theorem~\ref{maincox}]
\label{cor:RACG}
  Let $\Gamma$ be a graph whose flag complex $\Delta$ is planar.   Then the right-angled Coxeter group $W_\Gamma$ satisfies all three QR-Assumptions.
\end{introcor}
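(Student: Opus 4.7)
The strategy is to exhibit $W_\Gamma$ as a group hyperbolic relative to peripheral subgroups whose QR-boundaries are trivially well-defined, and then to invoke Theorem~\ref{thm:introRHGs}.

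The first step is to apply Caprace's characterization of relative hyperbolicity for Coxeter groups. In the right-angled setting, this says that $W_\Gamma$ is hyperbolic relative to the family $\mathbb P$ consisting of the maximal standard parabolic subgroups $W_J$, where $J$ ranges over the maximal full subcomplexes of $\Delta$ of the form of a join of two or more discrete edges. If no such subcomplex exists---equivalently, if $\Delta$ contains no induced $4$-cycle---then $W_\Gamma$ is word-hyperbolic and the QR-assumptions follow immediately from the hyperbolic case proved in \cite{QR24}.

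The second step is to analyze the peripherals under the planarity hypothesis. Each such $W_J$ is virtually $\Z^n$ for some $n\geq 2$. Planarity of $\Delta$ does not bound $n$ by $2$---for example the octahedral $1$-skeleton $K_{2,2,2}$ is a planar flag complex whose RACG is virtually $\Z^3$---but it is a direct check that every virtually abelian group has QR-boundary a single point: in $\Z^n$ any two quasi-geodesic rays can be quasi-redirected to each other at arbitrary radius by a ``bent'' piecewise-linear path staying within bounded additive error of a straight segment. Thus for every $P\in\mathbb P$ the boundary $\partial P$ exists and is a single point, and Theorem~\ref{thm:introRHGs} then yields the QR-assumptions for $W_\Gamma$ along with a surjection $\partial W_\Gamma \twoheadrightarrow \partial_B W_\Gamma$ onto the Bowditch boundary.

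The main obstacle is verifying the hypotheses of Caprace's theorem for an arbitrary planar $\Delta$, namely that the collection of maximal join-of-discrete-edges subcomplexes gives an almost-malnormal family of parabolics in the sense required to form a relatively hyperbolic structure. This is where planarity is used in an essential way: an induced $4$-cycle in $\Delta\subset\field{S}^2$ bounds a disk and therefore, by the Jordan curve theorem, separates $\field{S}^2$ into two components, which provides a topological obstruction preventing two distinct maximal virtually-abelian parabolics from intersecting in an infinite subgroup. A secondary, routine task is verifying the claim that every virtually $\Z^n$ group has a one-point QR-boundary; this can be done by exhibiting explicit quasi-geodesic redirections within a fixed quasi-isometry constant, which is straightforward from the geometry of the standard lattice.
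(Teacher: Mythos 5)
Your proposal has a genuine gap at its first step, and the gap is fatal rather than technical.

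You assert that Caprace's characterization shows $W_\Gamma$ is hyperbolic relative to the family of maximal standard parabolics of join-of-discrete-edges type (hence virtually abelian). That is not what Caprace's theorem says: it gives a \emph{criterion} for when a Coxeter group is relatively hyperbolic with respect to a \emph{candidate} family, and many right-angled Coxeter groups with planar flag complex fail that criterion for every virtually-abelian family. The simplest counterexamples are the Croke--Kleiner groups themselves: take $\Gamma$ to be a chain of induced $4$-cycles glued along edges so that $\Delta$ is a planar flag complex. Then $W_\Gamma$ has quadratic divergence (it is a $\mathcal{CFS}$ group and in fact quasi-isometric to a graph manifold group), and a group with quadratic divergence cannot be nontrivially hyperbolic relative to any proper collection of subgroups, virtually abelian or otherwise. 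More generally, whenever $\Gamma$ is $\mathcal{CFS}$ and not itself a join, the induced $4$-cycles overlap enough that no peeling-off into a virtually-abelian peripheral structure exists, regardless of almost-malnormality. Your Jordan-curve argument, even if pushed through, would at best yield almost-malnormality of the candidate peripherals; it cannot produce hyperbolicity of the coned-off complex, which is precisely what fails.

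The paper's route is different and is forced by exactly this obstruction. It uses the Haulmark--Nguyen--Tran decomposition (\cite[Theorem~1.2]{HNT20}): $W_\Gamma$ is hyperbolic relative to a family $\mathbb{P}$ of $\mathcal{CFS}$ special subgroups $W_J$, which are \emph{not} virtually abelian in general. One then still has to establish the QR-assumptions for each of those peripherals, which is the content of Proposition~\ref{prop:RACG}: by the Nguyen--Tran and Haulmark--Nguyen--Tran classification, each such $W_J$ is commensurable to the fundamental group of a Seifert, graph, mixed, or cusped hyperbolic $3$-manifold, so Theorem~\ref{thm:3-manifold} (and, behind it, the whole Croke--Kleiner admissible-group machinery of Sections~\ref{sec:template}--\ref{sec:QR-admissible-groups}) supplies the QR-assumptions for the peripherals before Theorem~\ref{thm:introRHGs} can be invoked. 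Your proposal skips this entire layer, which is the actual mathematical content of the result; without it the argument does not go through.

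Your secondary claim, that virtually abelian groups have a one-point QR-boundary, is true and is essentially Proposition~\ref{prop:product}, so that part would be fine had the decomposition existed.
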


\subsection{Open questions and future directions}
The quasi-redirecting boundary (QR boundary) introduced in \cite{QR24} provides a new perspective on boundaries of groups. We have established its well-definedness for several important classes of groups.

However, many questions remain open. Below, we outline several directions for future research.

\begin{enumerate}
    \item Does QR boundary exist for weakly hyperbolic groups, acylindrically hyperbolic groups, or hierarchically hyperbolic groups? Alternatively, we ask does QR boundary exist for all finitely generated groups?

    \item What is the QR boundary of CB-generated but not CB groups such as big mapping class groups of suitable surfaces?

    \item In what cases is the Martin boundary a subset of the QR boundary? 

    \item How does the QR boundary of a free-by-cyclic group reflect the algebraic or dynamical properties of the monodromy? For instance, when the monodromy is hyperbolic, can QR boundaries provide new insights into Cannon–Thurston maps?
\end{enumerate}

We hope that the techniques and results in this paper will inspire further developments in the study of QR boundaries and their applications in geometric group theory.

\subsection*{Overview} 
In Section~\ref{sec:background}, we review the preliminaries on quasi-redirecting boundaries and the necessary background on quasi-geodesics. Section~\ref{sec:QR-boundary-RHGs} establishes the existence of QR boundaries for relatively hyperbolic groups and proves their surjectivity onto the Bowditch boundary (Theorem~\ref{thm:introRHGs}). Section~\ref{sec:template} explores quasi-geodesics in templates, a key step in understanding QR boundaries of Croke-Kleiner admissible groups, which we address in Section~\ref{sec:QR-admissible-groups}. Finally, the proofs of Theorem~\ref{thm:3-manifold} and Corollary~\ref{cor:RACG} are presented in Sections~\ref{sec:3-manifolds} and \ref{sec:QR-RACG}, respectively.

\ref{sec:QR-RACG}, respectively.
 
\subsection*{Acknowledgments} 
The authors are very grateful to Alex Margolis for several critical readings of earlier drafts and especially for his help in proving Proposition~\ref{prop:smallslope}.

\section{Preliminaries}
\label{sec:background}
In this section, we recall the construction of quasi-redirecting boundary as presented in~\cite{QR24}. We refer  to~\cite{QR24} for a complete treatment.

Let $X$ and $Y$ be metric spaces and $f$ be a map from $X$ to $Y$. Let ${\qq}=(q, Q) \in[1,\infty) \times [0,\infty)$ be a pair of constants. % chktex 9 % chktex 44
\begin{enumerate}
	
	\item We say that $f$ is a \emph{$\qq$--quasi-isometric embedding} if  for all $x, y\in X$,
	      \[
		        \frac{1}{q} d(x, x') - Q \le d(f(x), f(x')) \le q d(x,x') + Q.
	      \]

	\item We say that $f$ is a \emph{$(q,Q)$--quasi-isometry} if it is a $\qq$--quasi-isometric embedding such that $Y = N_{Q}(f(X))$.

\end{enumerate}

\subsection{Quasi-redirecting boundary}\label{subsec:QRboundary}
  Let $X$ be a proper geodesic metric space.
\begin{defn}[Quasi-Geodesics]\label{Def:Quadi-Geodesic} 
A \emph{quasi-geodesic} in a metric space $X$ is a Lipschitz quasi-isometric embedding 
$\alpha: I \to X$ where $I \subset \mathbb{R}$ is a 
(possibly infinite) interval.  We use $\qq=(q, Q)$ to indicate the constants, so that  $\alpha :  I \to X$ is a $\qq$--quasi-geodesic if for all $s, t \in I$, we have 
\[
\frac{|t-s|}{q} - Q  \leq d_X \big(\alpha(s), \alpha(t)\big)  \leq q |s-t|. 
\]
\end{defn}

The assumption that $\alpha$ is Lipschitz is needed so we can apply the Arzel\`a--Ascoli theorem to a sequence of quasi-geodesics 
and obtain a limiting quasi-geodesic. However, the assumption that a quasi-isometric embedding $\alpha:I\to X$ is Lipschitz  can be achieved after replacing $\alpha$ with a quasi-geodesic fellow-traveling $\alpha$ (\cite[Lemma 2.3]{QR24}).

\subsection{Notation} Let $\go$ be a fixed base-point in  $X$. We use ${\qq}=(q, Q) \in[1, \infty) \times[0, \infty)$ to indicate a pair of constants. % chktex 9 % chktex 44
 For instance, one can say $\Phi \colon X \to Y$ is a ${\qq}$--quasi-isometry and $\alpha$ is a ${\qq}$--quasi-geodesic ray or segment. 

By a ${\qq}$--ray we mean a ${\qq}$--quasi-geodesic ray $\alpha:[0, \infty) \to X$ such that $\alpha(0)= \gothic o$. For an interval $[s, t] \subset[0, \infty)$, we denote the restriction of $\alpha$ to the time interval $[s, t]$ by $\alpha[s, t]$. % chktex 9 % chktex 44
However, if points $x, y \in X$ on the image of $\alpha$ are given, we denote the sub-segment of $\alpha$ connecting $x$ to $y$ by $[x, y]_\alpha$. That is, if $\alpha(s)=x$ and $\alpha(t)=y$ for $s \leq t$, then $[x, y]_\alpha=\alpha[s, t]$.

Let $\alpha:\left[s_1, s_2\right] \rightarrow X$ and $\beta:\left[t_1, t_2\right] \rightarrow X$ be two quasi-geodesics such that $\alpha\left(s_2\right)=\beta\left(t_1\right)$. In this paper we denote the concatenation of $\alpha$ and $\beta$ by $\alpha \cup \beta$ by which we mean the following:
\[
\alpha \cup \beta:\left[s_1, t'\right] \rightarrow X, \quad (\alpha \cup \beta)(t)=
\begin{cases} \alpha(t) & \textrm{for } t \in\left[s_1, s_2\right] \\
    \beta\left(t+t_1-s_2\right) & \textrm {for } t \in[s_2, t']\end{cases}
\]
where $t'\coloneqq t_2-t_1+s_2$.

For $r>0$, let $B_r^{\circ} \subset X$ be the open ball of radius $r$ centered at $\mathfrak{o}$, let $B_r$ be the closed ball centered at $\mathfrak{o}$ and let $B_r^c=X-B_r^{\circ}$.

For a $\qq$--ray $\alpha$ and $r>0$, we let $t_r \geq 0$ denote the first time when $\alpha$ first intersects $B_r^c$ and $T_r \geq t_r$ be the last time $\alpha$ intersects $B_r$. We denote $\alpha\left(t_r\right)$ by $\alpha_r \in X$. Also, let
\[
\left.\alpha\right|_r \coloneqq\alpha\left[0, t_r\right] \quad  
\text { and }\quad \alpha|_{\geq r} \coloneqq \alpha [T_r, \infty) % chktex 9 % chktex 36 
\] 
be the restrictions $\alpha$ to the intervals $\left[0, t_r\right]$ and $\left[T_r, \infty\right)$ respectively. % chktex 9
That is, $\left.\alpha\right|_r$ is the subsegment of $\alpha$ connecting $\gothic{o}$ to $\alpha_r$, and $\left.\alpha\right|_{\geq r}$ is the portion of $\alpha$ that starts at radius $r$ and never returns to $B_r$.

Lastly, if $p$ is a point on a ${\qq}$--ray $\alpha$,  we use $\alpha_{[p, \infty)}$ to denote the tail of $\alpha$ starting from the point $p$. % chktex 9 
Note such a point always exists as a quasi-geodesic is always assumed to be a ray without loss of generality. This is because, as discussed in~\cite[Definition 2.2]{QRT22}, one can adjust the quasi-isometric embedding of an interval slightly to make it continuous (see~\cite[Lemma III.1.11]{BH99}). 

We also use $d(\cdot, \cdot)$ instead of $d_X(\cdot, \cdot)$ when the metric space $X$ is fixed. For $x \in X,\|x\|$ denotes $d(\gothic{o}, x)$.
Now we recall the first of the three QR-Assumptions.

\addtocontents{toc}{\protect\setcounter{tocdepth}{0}}
\subsection*{QR-Assumption 0} (No dead ends) 
The metric space $X$ is proper and geodesic. 
Furthermore, there exists a pair of constants ${\qq}_0$ such that every point $x \in X$ lies on an infinite ${\qq}_0$--quasi-geodesic ray.  

\begin{rem}
QR-Assumption 0 is satisfied by the Cayley graph of an infinite finitely generated group with  respect to a finite generating set~\cite[Lemma~2.5]{QR24}.
\end{rem}

\begin{defn}\label{Def:Redirection}
Let $X$ be a geodesic metric space.
Let $\alpha, \beta$ and $\gamma$ be quasi-geodesic rays in $X$. We say
\begin{enumerate}
    \item $\gamma$ \emph{eventually coincides with 
$\beta$} if there are times $t_\beta, t_\gamma >0$ such that, 
for $t\geq t_\gamma$, we have 
$
\gamma(t) =\beta(t+t_\beta)
$.

\item For $r>0$, we say $\gamma$ \emph{quasi-redirects $\alpha$ to $\beta$ at radius $r$} if
$
\gamma|_r = \alpha|_r$  and $\beta$ 
eventually coincides with $\gamma$.
 If $\gamma$ is a ${\qq}$--ray, we say \emph{$\alpha$ can be ${\qq}$--quasi-redirected to $\beta$
at radius $r$} or \textit{$\alpha$ can be ${\qq}$--quasi-redirected to $\beta$
by $\gamma$ at radius $r$}. We refer to $t_\gamma$ as the \emph{landing time}. 

\item We say $\alpha$ is \textit{quasi-redirected} to $\beta$, denoted by $\alpha \preceq \beta$, if there is ${\qq} \in [1, \infty) \times[0,\infty)$ such that for every $r>0$, $\alpha$ can be ${\qq}$--quasi-redirected to $\beta$ at radius $r$. % chktex 9 chktex 44
\end{enumerate}
\end{defn}

\begin{figure}[ht]
\centering
\begin{tikzpicture}[scale=0.8] 
 \tikzstyle{vertex} =[circle,draw,fill=black,thick, inner sep=0pt,minimum size=.5 mm]
  \node[vertex] (go) at (0,0)[label=left:$\go$] {}; 
	\draw[thick,dashed] (0,0) -- (5,0) ; 
	\draw[thick,dashed] (0,0) -- (0,4) node[above] {$\alpha$}; 
	\draw [blue,ultra thick] (0,0) -- (0,1) -- (3,1) node[above] {$\gamma$} -- (3,0) -- (4.5,0) ; 
	%\draw (0,1) node[left] {$r_1$} ;
	\draw (1.5,0) node[below] {$\beta$} ; 
\end{tikzpicture}
\begin{tikzpicture} [scale=0.8]
 \tikzstyle{vertex} =[circle,draw,fill=black,thick, inner sep=0pt,minimum size=.5 mm]
  \node[vertex] (go) at (0,0)[label=left:$\go$] {}; 
		\draw[thick,dashed] (0,0) -- (5,0); 
		\draw[thick,dashed] (0,0) -- (0,4) node[above] {$\alpha$}; 
		\draw [blue,ultra thick] (0,0) -- (0,2.5) -- (2,2.5) node[above] {$\gamma$}  -- (2,0) -- (4.5,0) ; 
		%\draw (0,1.25) node[left] {$r_2$} ;
		\draw (1,0) node[below] {$\beta$} ;
\end{tikzpicture}
\caption{Two instances in which $\alpha$ can be quasi-redirected to $\beta$ by $\gamma$. Here, $\alpha$ and $\beta$ are shown as dashed lines, and $\gamma$ is shown as a solid blue line.}
\end{figure}
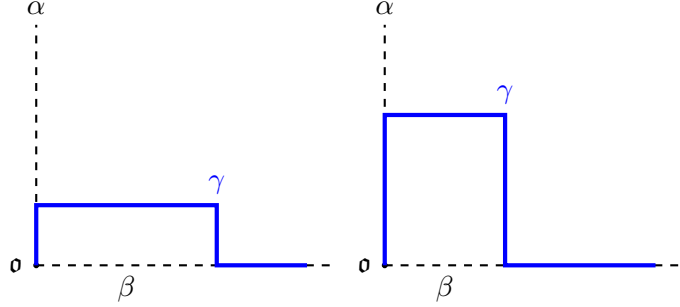

\begin{defn}
Define $\alpha \simeq \beta$ if and only if $\alpha  \preceq \beta$ and $\beta  \preceq \alpha$. Then  
$\simeq$ is an equivalence relation on the space of all quasi-geodesic rays in $X$. Let $P(X)$
denote the set of all equivalence classes of quasi-geodesic rays under $\simeq$. 
For a quasi-geodesic ray $\alpha$, let $[\alpha] \in P(X)$ denote the equivalence class containing 
$\alpha$. We extend $\preceq$ to $P(X)$ by defining $[\alpha] \preceq [\beta]$ if 
$\alpha \preceq \beta$. Note that this does not depend on the chosen representative
in the given class. The relation $\preceq$ is a partial order on elements of $P(X)$.
\end{defn}

\begin{lem}[{\cite[Lemma 3.2]{QR24}}]\label{lem:transitive}
    Let $\alpha, \beta, \gamma$ be quasi-geodesic rays. Suppose that $\alpha$ can be $(q_1, Q_1)$--quasi-redirected to $\beta$ at radius $r$, and that $\beta$ can be $(q_2, Q_2)$--quasi-redirected to $\gamma$ at every radius. Then $\alpha$ can be $(q_3, Q_3)$--quasi-redirected to $\gamma$ at radius $r$, where $q_3 = \max \{q_1, q_2 +1 \}$ and $Q_3 = \max \{Q_1, Q_2 \}$.
\end{lem}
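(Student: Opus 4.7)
\emph{Plan.} My plan is to splice the two given redirections at a sufficiently large radius. By the first hypothesis there is a $(q_1, Q_1)$--ray $\gamma_1$ with $\gamma_1|_r = \alpha|_r$ and constants $\tau_1, t_\beta^* > 0$ such that $\beta(t) = \gamma_1(t+\tau_1)$ for all $t \geq t_\beta^*$; equivalently, $\gamma_1(s) = \beta(s-\tau_1)$ for all $s \geq S_0 := t_\beta^* + \tau_1$. Using that $\beta \preceq \gamma$ at \emph{every} radius, I would choose $r' > \|\beta(t_\beta^*)\|$ sufficiently large and apply the second hypothesis at radius $r'$ to obtain a $(q_2, Q_2)$--ray $\gamma_2$ with $\gamma_2(t) = \beta(t)$ on $[0, t_{r'}^\beta]$ and with $\gamma$ eventually coinciding with $\gamma_2$. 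The condition on $r'$ ensures $t_\beta^* < t_{r'}^\beta$, so the three points $\gamma_1(S_0)$, $\beta(t_\beta^*)$, and $\gamma_2(t_\beta^*)$ all coincide.

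\emph{Construction.} I would then define the candidate ray
\[
  \delta(t) := \begin{cases} \gamma_1(t) & \text{if } t \in [0, S_0], \\ \gamma_2(t-\tau_1) & \text{if } t \geq S_0. \end{cases}
\]
At the junction $t = S_0$ both branches evaluate to $\beta(t_\beta^*)$, so $\delta$ is a well-defined continuous ray with $\delta|_r = \gamma_1|_r = \alpha|_r$; since $\gamma$ eventually coincides with $\gamma_2$, it also eventually coincides with $\delta$. Thus, once I verify that $\delta$ is a $(q_3, Q_3)$--quasi-geodesic, $\delta$ witnesses the required quasi-redirection of $\alpha$ to $\gamma$ at radius $r$.

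\emph{Main obstacle: the quasi-geodesic bound.} On $[0, S_0]$, $\delta = \gamma_1$ inherits the $(q_1, Q_1)$--quasi-geodesic property; on $[S_0, \infty)$, the reparametrization $t \mapsto \gamma_2(t-\tau_1)$ inherits the $(q_2, Q_2)$--quasi-geodesic property. The upper Lipschitz bound $d(\delta(s),\delta(t)) \leq \max\{q_1, q_2\}(t-s)$ in the mixed range $s \leq S_0 \leq t$ is immediate from the triangle inequality through $\delta(S_0)$. The subtle step is the lower bound in the mixed range, which I would establish by case analysis. When $t - \tau_1 \leq t_{r'}^\beta$, both definitions of $\delta$ coincide with $\beta(\cdot - \tau_1)$ past $S_0$, so $\delta(t) = \gamma_1(t)$ and the $(q_1, Q_1)$--property of $\gamma_1$ applies directly. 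When $t - \tau_1 > t_{r'}^\beta$, one combines two estimates: (i) the triangle inequality $d(\delta(s), \delta(t)) \geq d(\delta(s), \delta(T)) - d(\delta(T), \delta(t))$ through $T := t_{r'}^\beta + \tau_1$, using the $(q_1, Q_1)$--bound of $\gamma_1$ on $[s, T]$ and the $(q_2, Q_2)$--Lipschitz bound of $\gamma_2$ on $[T, t]$; and (ii) the norm comparison $d(\delta(s), \delta(t)) \geq \|\delta(t)\| - \|\delta(s)\|$ together with $\|\gamma_2(t-\tau_1)\| \geq (t-\tau_1)/q_2 - Q_2$. The first estimate handles the regime where $t$ is close to $T$, the second handles the regime where $t$ is far from $T$; interpolating between them is what forces $q_3 = \max\{q_1, q_2 + 1\}$, with the ``$+1$'' absorbing the time-shift $\tau_1$ between the $\gamma_1$- and $\gamma_2$--parametrizations of $\beta$. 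Taking $r'$ sufficiently large (in terms of $\gamma_1$'s data) makes the $r'$-dependent contributions drop out of the final estimate, leaving the uniform additive constant $Q_3 = \max\{Q_1, Q_2\}$.
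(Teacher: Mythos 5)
Your splice is the right construction, and the overall plan is correct. The paper does not reprove this lemma (it is cited verbatim from [QR24]), but your argument is essentially the standard one: concatenate $\gamma_1$ with a reparametrized copy of the second redirecting ray $\gamma_2$ at the point where $\gamma_1$ merges with $\beta$, using the freedom to take $r'$ arbitrarily large. The continuity at the junction, the inheritance of $\gamma_1|_r = \alpha|_r$, and the eventual coincidence of $\delta$ with $\gamma$ are all checked correctly.

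Two small remarks on the verification of the quasi-geodesic bound, which is where you are sketchiest. First, since $\gamma_1(t)=\beta(t-\tau_1)=\gamma_2(t-\tau_1)$ on the whole interval $[S_0,T]$ with $T:=t_{r'}^\beta+\tau_1$, the curve $\delta$ agrees with $\gamma_1$ on $[0,T]$ and with the shift of $\gamma_2$ on $[S_0,\infty)$; so the only genuinely mixed pairs are $s<S_0$ and $t>T$. For those, your estimate (ii) alone suffices and estimate (i) is superfluous: writing $K_1:=\max_{s\le S_0}\|\gamma_1(s)\|$, one has
\[
d(\delta(s),\delta(t))\ \ge\ \|\delta(t)\|-\|\delta(s)\|\ \ge\ \frac{t-\tau_1}{q_2}-Q_2-K_1,
\]
and since $q_3\ge q_2+1$ gives $\tfrac{1}{q_2}-\tfrac{1}{q_3}\ge\tfrac{1}{q_2q_3}>0$, this exceeds $\tfrac{t-s}{q_3}-Q_3$ for all $t\ge T_1:=q_2q_3\bigl(\tfrac{\tau_1}{q_2}+K_1\bigr)$ (the target is $\le \tfrac{t}{q_3}-Q_3$ as $s\ge 0$, and $Q_3\ge Q_2$). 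Choosing $r'$ so large that $T\ge T_1$ then disposes of every mixed pair. The triangle-inequality estimate through $\delta(T)$ only works for $t$ within a bounded window of $T$ and does no real work here. Second, your final sentence has the dependence slightly backward: it is the $\gamma_1$-data ($S_0$, $\tau_1$, $K_1$, which depend on $r$) that must be dominated, and taking $r'$ large is what pushes the crossover $T$ past the threshold $T_1$ determined by that data; the $r'$-dependent quantities do not ``drop out'' so much as they supply the slack needed to absorb the $r$-dependent constants.
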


\subsection*{QR-Assumption 1} (Quasi-geodesic representative) 
For ${\qq}_0$ as in QR-Assumption 0, every equivalence class of quasi-geodesics $\mathbf{a} \in P(X)$ contains a ${\qq}_0$--ray. 
We fix such a ${\qq}_0$--ray, denote it by $\underline{a} \in \mathbf{a}$, and call it a \emph{central element} of $\bfa$.

\subsection*{QR-Assumption 2} (Uniform redirecting function) 
\addtocontents{toc}{\protect\setcounter{tocdepth}{1}}
For every $\mathbf{a} \in P(X)$, there is a function 
\[
f_\mathbf{a} : \, [1, \infty) \times [0, \infty) \to [1, \infty) \times [0, \infty), % chktex 9 chktex 44
\] 
called the redirecting function of
the class $\mathbf{a}$, such that if $\mathbf{b} \prec \mathbf{a}$ then any ${\qq}$--ray $\beta \in \mathbf{b}$ can be
$f_\mathbf{a}({\qq})$--quasi-redirected to $\underline{a}$.

\begin{prop}[{\cite[Proposition~4.3]{QR24}}]\label{prop:product}
 Let $X = A \times B$   where $A$ and $B$ are proper metric spaces satisfying QR-Assumption~0, equipped with $L^{\infty}$--metric. Then $P(X)$ is a point.
\end{prop}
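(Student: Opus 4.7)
I want to show that any two quasi-geodesic rays $\alpha, \beta$ in $X = A\times B$ (with $L^\infty$ metric) based at $\go = (\go_A, \go_B)$ satisfy $\alpha \simeq \beta$; by symmetry it suffices to prove $\alpha \preceq \beta$ with constants depending only on the quasi-geodesic constants of $\alpha$ and $\beta$ (not on $r$). The key feature of $L^\infty$ that I will exploit is that any two points $(a_1,b_1), (a_2,b_2)\in X$ admit a geodesic of length exactly $\max\!\bigl(d_A(a_1,a_2),\, d_B(b_1,b_2)\bigr)$: take geodesics in each factor, parametrize both on the same interval of length equal to the max, and combine. This makes ``sideways'' detours in the product essentially free.

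\textbf{Construction.} Fix $r>0$, set $p = \alpha(t_r)\in X$, and note $\|p\| \le q_\alpha r + Q_\alpha$ with $t_r$ of the same order. Choose a time $T = T(r)$ on $\beta$ that is sufficiently large compared to $r$ (explicitly, large enough that $\|\beta(T)\|\ge 10\,\|p\|$, which is possible since $\beta$ is a quasi-geodesic ray and hence $\|\beta(t)\|\to\infty$) and let $q = \beta(T) = (q_A,q_B)$. Let $\sigma\colon [0,L]\to X$ be an $L^\infty$-geodesic from $p$ to $q$ constructed as above, where $L = \max\!\bigl(d_A(p_A,q_A),\, d_B(p_B,q_B)\bigr) \le d(p,q) \le \|p\|+\|q\|$. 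Define
\[
\gamma \;=\; \alpha\bigl|_{[0,t_r]} \;\cup\; \sigma \;\cup\; \beta\bigl|_{[T,\infty)}
\]
reparametrized continuously (all pieces are Lipschitz and the pieces meet at $p$ and $q$). By construction $\gamma|_r = \alpha|_r$ and $\gamma$ eventually coincides with $\beta$, so the only thing to check is that $\gamma$ is a $(q',Q')$--quasi-geodesic with constants independent of $r$.

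\textbf{Quasi-geodesic verification.} The Lipschitz (upper) bound is automatic since each piece is Lipschitz with constant bounded by $\max(q_\alpha, 1, q_\beta)$. For the lower bound, the only nontrivial case is when two parameters $s_1 < s_2$ lie in different pieces of $\gamma$; say one is on $\alpha|_{[0,t_r]}$ and the other lies past $q$ on the $\beta$-tail. Writing $\gamma(s_2)=\beta(T + s_2 - (t_r+L))$ and using the quasi-geodesic inequalities for $\beta$, the distance $d(\gamma(s_1),\gamma(s_2))$ is bounded below by $\|\beta(T+s_2-(t_r+L))\| - \|\gamma(s_1)\|$, which is comparable to $s_2$ once $T$ dominates $r$ (so that the length $t_r+L$ of the prefix is at most a constant multiple of $T$). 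Making $T \ge C r$ for a sufficiently large absolute constant $C = C(q_\alpha, Q_\alpha, q_\beta, Q_\beta)$ converts this into the desired lower bound $|s_1-s_2|/q' - Q'$. The constants $(q', Q')$ depend only on $(q_\alpha, Q_\alpha)$ and $(q_\beta, Q_\beta)$.

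\textbf{Main obstacle.} The bookkeeping step is to choose $T$ as a linear function of $r$ (not depending on the particular parametrization of $\beta$ beyond its quasi-geodesic constants) so that the prefix length $t_r + L$ is dominated by $T$ uniformly, and then to extract explicit $(q',Q')$. The $L^\infty$ inequality $d(p,q)\le \max(\|p\|,\|q\|) + \min(\|p\|,\|q\|)$ together with $\|q\|\gg\|p\|$ makes this clean, but one has to be a little careful at the two gluing points to rule out a backtracking that would spoil the lower quasi-isometric bound. Once this is done, $\alpha\preceq\beta$ is established with uniform constants; reversing roles gives $\beta\preceq\alpha$, hence $[\alpha]=[\beta]$ in $P(X)$ and $P(X)$ is a single point.
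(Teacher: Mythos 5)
The paper states Proposition~\ref{prop:product} without proof, citing \cite[Proposition~4.3]{QR24}, so there is no in-paper argument to compare against directly; nevertheless your construction as written has a genuine gap. The problem is in the connector $\sigma$: you take the $L^\infty$ geodesic from $p = \alpha(t_r)$ to a single predetermined target $q = \beta(T)$, with $T$ chosen so that $\|\beta(T)\|\ge 10\|p\|$, and nothing prevents that geodesic from passing through (or arbitrarily close to) the basepoint. Concretely, take $X = \R\times\R$ with the $L^\infty$ metric, $\go=(0,0)$, $\alpha(t)=(t,0)$, $\beta(t)=(-t,0)$. Then $p=(r,0)$, $\|p\|=r$, and $q=\beta(10r)=(-10r,0)$ satisfies $\|q\|=10\|p\|$, but your factor-wise $L^\infty$ geodesic is $\sigma(t)=(r-t,0)$, $t\in[0,11r]$, which hits $\go$ at $t=r$. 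The concatenation $\gamma=\alpha|_{[0,r]}\cup\sigma\cup\beta|_{[10r,\infty)}$ then has $\gamma(0)=\gamma(2r)=\go$, so $d(\gamma(0),\gamma(2r))=0$ at parameter separation $2r$, which destroys any $r$-independent lower quasi-isometric bound. You flag ``backtracking at the gluing points'' as the main obstacle, but then restrict your verification to the case ``$s_1$ on the $\alpha$-prefix and $s_2$ on the $\beta$-tail''; the failure above lives in exactly the case you skip ($s_1$ on the $\alpha$-prefix, $s_2$ on $\sigma$), and it is fatal to the construction as stated rather than a bookkeeping nuisance.

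The missing idea is visible in the paper's Lemma~\ref{lem:redirecting-in-product}, which is precisely the key step extracted from the \cite{QR24} argument: one must present \emph{two} well-separated candidate landing points $z,t$ on $\beta$ (with $d(z,t)>8\,d(\go,\alpha_r)$) and let the product structure decide which to aim at. Because $z$ and $t$ are far apart, at least one of them is far from the prefix in a way that rules out the backtrack, and for that choice the $L^\infty$ combination of factor geodesics does give a uniform quasi-geodesic agreeing with the prefix on the ball of radius $r$. Your structural observation --- that $L^\infty$ makes ``sideways'' detours free --- is the right reason the proposition is true, but it needs this choice-of-target step to be actionable. With it in place (and $T$ taken linear in $r$ as you suggest, followed by a segment-to-ray surgery onto $\beta$'s tail), the rest of your outline goes through.
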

Note that since $P(X)$ is invariant under quasi-isometries, Proposition~\ref{prop:product} also holds if we
equip $X$ with the $L^p$--metric with $p >0$.

\subsection{Topology on \texorpdfstring{$X \cup P(X)$}{X ∪ P(X)}}% chktex 36 
The topology on $X \cup P(X)$ is defined as follows. Recall that points
in $P(X)$ are equivalence classes of quasi-geodesic rays. For each point $x\in X$, we define
\[
 \mathbb{x} = \Big\{ \text{quasi-geodesics rays passing through $x$} \Big\}. 
\]
We let $\Ga, \Gb, \Gc$  denote elements of $P(X) \cup X$, that is, either a set of quasi-geodesic rays 
passing through a point $x \in X$ or an equivalence class of quasi-geodesic rays in $P(X)$. For $\bfa \in P(X)$, define
$F_\bfa : \, [1,\infty) \times [0, \infty) \to [1,\infty) \times [0, \infty)$ by % chktex 44 % chktex 9
\begin{equation}\label{actualtopology}
F_\bfa(\qq) = \max \{ {\mathbf f}_\bfa(\qq) + (1,0), (4q+3Q)\} \qquad\text{for}\qquad \qq \in   [1,\infty) \times [0, \infty). % chktex 9 % chktex 44
\end{equation}
\begin{defn}\label{Def:The-In-Topology}
For $\bfa \in P(X)$ and $r>0$, define   
\begin{align*}
 \calU(\bfa, r) \coloneqq \Big  \{  &\Gb \in P(X) \cup X \,  \Big\vert\,
 \text{every $\qq$--ray in $\Gb$ can be $F_\bfa(\qq)$--quasi-redirected to $\ua$ at radius $r$} \Big \}.  \end{align*}
\end{defn}

\subsection*{A system of neighborhoods}
For each $\bfa \in P(X)$, recall that  
\[
\calB(\bfa) = \Big\{ \calV \subset X \cup P(X) \,\Big\vert\,
\calU(\bfa, r) \subset \calV   \text{ for some $r>0$} \Big\}
\]
and for every $x \in X$, define 
\[
\calB(\mathbb{x}) = \Big\{ \calV \subset X \cup P(X) \,\Big\vert\,
B(\mathbb{x}, r) \subset \calV  \text{ for some $r>0$} \Big\}.
\]
We thus define the topology on $X \cup P(X)$ so that $\calB(\bfa)$ and $\calB(\mathbb{x})$ are a system of neighborhoods for each $\bfa \in P(X)$ and $x \in X$ respectively.
We collect some important facts from~\cite{QR24} about the QR boundary and the poset $P(X)$.
\begin{theorem}[{\cite[Theorem B]{QR24}}]\label{thm:collectfacts}
Let $X, Y$ be proper geodesic metric spaces satisfying all three QR-Assumptions.
\begin{enumerate}
\item Suppose that $\Phi: X \rightarrow Y$ is a quasi-isometry sending the base point $\mathfrak{o}_X \in X$ to the base point $\mathfrak{o}_Y \in Y$. Then there is a well-defined induced map
\[
\Phi^*: P(X) \rightarrow P(Y) \quad \text { where } \quad \Phi^*([\alpha])=[\Phi \circ \alpha]. \]
Furthermore, $\Phi^*$ preserves the partial order on $P(X)$ and $P(Y)$.
\item $\partial X$ and $X \cup \partial X$ are QI-invariant as topological spaces.
\item Sublinearly Morse boundaries are topological subspaces of $\partial X$.
\end{enumerate}
\end{theorem}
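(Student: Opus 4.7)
The plan is to establish each of the three items in turn, exploiting the single underlying principle that a quasi-isometry $\Phi \colon X \to Y$ sends $\qq$--quasi-geodesic rays to $\qq'$--quasi-geodesic rays with $\qq'$ depending only on $\qq$ and on the constants of $\Phi$. The only delicate point throughout is that the distinguished $\qq_0$--ray representative $\ua$ of a class $\bfa$ (guaranteed by QR-Assumption~1) is generally \emph{not} preserved by $\Phi$; one must always replace $\Phi \circ \ua$ by a new central element in the class $[\Phi \circ \ua]$ and track how the redirecting function changes.

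For (1), I would first show that if $\alpha \simeq \alpha'$ in $X$ then $\Phi \circ \alpha \simeq \Phi \circ \alpha'$ in $Y$. Given $r' > 0$ in $Y$, choose $r$ in $X$ large enough that $\Phi^{-1}(B_{r'}^{Y}) \subset B_r^{X}$, which is possible because $\Phi$ is a quasi-isometry. The quasi-redirection $\alpha' \preceq \alpha$ at radius $r$ produces a $\qq$--ray $\gamma$ equal to $\alpha'$ on $B_r^{X}$ and eventually coinciding with $\alpha$; then $\Phi \circ \gamma$ is a quasi-geodesic ray equal to $\Phi \circ \alpha'$ on $B_{r'}^{Y}$ (up to a uniform Hausdorff perturbation absorbed into the constants, using \cite[Lemma~2.3]{QR24} to re-Lipschitzify) and eventually coincides with $\Phi \circ \alpha$. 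Hence $\Phi^*$ is well-defined. Exactly the same construction applied to the relation $\alpha \preceq \beta$ shows that $\Phi^*$ preserves the partial order.

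For (2), I would show $\Phi^*$ is a homeomorphism by building its inverse from a quasi-inverse $\Psi$ of $\Phi$ and proving continuity for each. Continuity of $\Phi^*$ at $\bfa$ amounts to showing that for every $\calU(\Phi^*\bfa, r')$ in $Y$, one can find $\calU(\bfa, r)$ in $X$ whose image under $\Phi^*$ is contained in $\calU(\Phi^*\bfa, r')$. Pick $r$ as in (1); a $\qq$--ray $\beta$ representing some $\Gb \in \calU(\bfa, r)$ quasi-redirects to $\ua$ at radius $r$ with constants $F_\bfa(\qq)$, hence $\Phi \circ \beta$ quasi-redirects to $\Phi \circ \ua$ at radius $r'$ with controlled constants. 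Composing with the QR-Assumption~2 redirection from $\Phi \circ \ua$ to the chosen central element of $\Phi^*\bfa$, and invoking transitivity of redirection (Lemma~\ref{lem:transitive}), yields a redirection of $\Phi \circ \beta$ to the central element of $\Phi^*\bfa$ with constants dominated by $F_{\Phi^*\bfa}$, as required. The argument on basic neighborhoods of points $x \in X$ is routine since $\Phi$ itself is already a quasi-isometry.

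For (3), recall that if $\alpha$ is sublinearly Morse then any quasi-geodesic with endpoints on $\alpha$ fellow-travels $\alpha$ within a sublinear neighborhood. This rigidifies redirection: if $\alpha \preceq \beta$ and $\alpha$ is Morse, then the eventual coincidence of the redirecting ray $\gamma$ with $\beta$ forces $\beta$ to fellow-travel $\alpha$, giving $\alpha \simeq \beta$. Hence each sublinear Morse class injects into $P(X)$. To identify the subspace topology, I would compare the cone-like neighborhoods $\calU(\bfa, r)$ restricted to Morse classes with the neighborhood basis defining the sublinear Morse boundary in \cite{QRT22}: both require fellow-traveling $\ua$ out to (essentially) radius $r$ in a Morse-controlled way, so the two topologies agree on Morse classes.

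The main obstacle common to all three parts is bookkeeping: one must verify that the new quasi-geodesic constants, after composition with $\Phi$, redirection, and re-Lipschitzification, remain uniform over the class $\bfa$, so that a uniform $F_\bfa$ produces a uniform $F_{\Phi^*\bfa}$. Everything else is a direct unraveling of the definitions.
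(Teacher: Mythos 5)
The theorem under review is quoted in the paper (as Theorem~B of \cite{QR24}) without proof, so there is no in-paper argument to compare against; I evaluate your sketch on its own merits.

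Your items (1) and (2) follow the expected route: push forward redirecting quasi-geodesics by $\Phi$, track the change in constants, and match $\calU$-neighborhoods via a quasi-inverse. One point you wave at but do not actually settle: after replacing $\Phi\circ\gamma$ by a Lipschitz quasi-geodesic you must still recover \emph{exact} equality with $\Phi\circ\alpha'$ on the initial segment and \emph{exact} eventual coincidence with $\Phi\circ\alpha$ on a tail, because Definition~\ref{Def:Redirection} demands literal equality there. Saying the Hausdorff perturbation is ``absorbed into the constants'' cannot give that; you need the fellow-traveling surgery of Lemma~\ref{lem:surgery1}(4) on both ends to rebuild exact agreement after re-Lipschitzifying. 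That is routine, but it is precisely where the $F_\bfa$-bookkeeping lives, so it should not be elided.

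Item (3) contains a genuine directional error. You claim that if $\alpha$ is sublinearly Morse and $\alpha\preceq\beta$ then $\alpha\simeq\beta$. This is false. In a graph manifold group (Theorem~\ref{thm:3-manifold}(2)) every Morse geodesic ray $\alpha$ satisfies $\alpha\preceq\zeta^{*}$, since $\zeta^{*}$ is the unique maximum of $P(G)$ (Proposition~\ref{Prop:zeta-is-top}), yet $\zeta^{*}\not\preceq\alpha$ by Lemma~\ref{Thm:Classification}, so $\alpha\not\simeq\zeta^{*}$. The Morse property of $\alpha$ controls quasi-geodesic segments with \emph{both} endpoints on $\alpha$; but the rays $\gamma_r$ witnessing $\alpha\preceq\beta$ leave $\alpha$ at radius $r$ and land on $\beta$, not on $\alpha$, so the Morse contraction gives no control on where they go. The implication you actually need is the reverse one: if $\beta\preceq\alpha$ and $\alpha$ is sublinearly Morse, then $\beta$ lies in a sublinear neighborhood of $\alpha$ (and hence $\alpha\simeq\beta$). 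That direction is genuinely correct because the witnessing rays there begin at $\go\in\alpha$ and end on $\alpha$, so the Morse constant applies to each initial arc $\gamma_r$ up to its landing point, forcing $\alpha'|_r$ into a sublinear neighborhood of $\alpha$ as $r\to\infty$. Injectivity and the topological identification then follow from this corrected statement. As written, your argument would make every Morse class the top of $P(X)$, which contradicts the paper's own description of the poset in Figure~\ref{poset}.
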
 

\subsection{Surgery on quasi-geodesics}
We recall a few surgeries related to quasi-geodesics that will often be used in the subsequent arguments.
\begin{lem}[{\cite[Lemma 2.6]{QR24}}]\label{lem:surgery1}
Let $X$ be a metric space that satisfies QR-Assumption 0. 
\begin{enumerate}[label={(\arabic*)},ref={\thecor(\arabic*)}]
    \item\label{concate} (Nearest-point projection surgery)
Consider a point $x \in X$ and a $(q, Q)$--quasi-geodesic segment $\beta$ 
connecting a point $z \in X$ to a point $w \in X$. Let $y$ be a closest point in $\beta$
to $x$. Then 
\[
\gamma = [x, y] \cup [y, z]_\beta
\] 
is a $(3q, Q)$--quasi-geodesic.

\begin{figure}[!ht]
\begin{tikzpicture}[scale=0.9]
 \tikzstyle{vertex} =[circle,draw,fill=black,thick, inner sep=0pt,minimum size=.5 mm]

  \node[vertex] (z) at (0,0)[label=left:$z$] {}; 
  \node[vertex] (w) at (8,0) [label=right:$w$]  {}; 
  \node[vertex] (x) at (4,2) [label=right:$x$]  {};     
  \node[vertex] (y) at (4,0) [label=below:$y$]  {};     
  \node (a) at (.9,.8) [label=right:$\beta$]  {};    
  \draw[thick, dashed]  (4, 2)--(4, 0){};
 
  \pgfsetlinewidth{1pt}
  \pgfsetplottension{.75}
  \pgfplothandlercurveto
  \pgfplotstreamstart
  \pgfplotstreampoint{\pgfpoint{0cm}{0cm}}  
  \pgfplotstreampoint{\pgfpoint{1.4cm}{-.6cm}}   
  \pgfplotstreampoint{\pgfpoint{1.3cm}{.2cm}}
  \pgfplotstreampoint{\pgfpoint{3cm}{-.4cm}}
  \pgfplotstreampoint{\pgfpoint{4cm}{0cm}}
  \pgfplotstreampoint{\pgfpoint{5cm}{-.2cm}}
  \pgfplotstreampoint{\pgfpoint{6cm}{.3cm}}
  \pgfplotstreampoint{\pgfpoint{7cm}{-.7cm}}
  \pgfplotstreampoint{\pgfpoint{8cm}{0cm}}
  \pgfplotstreamend
  \pgfusepath{stroke} 
  \end{tikzpicture}
  
\caption{The concatenation of the geodesic segment $[x,y]$ 
and the quasi-geodesic segment $[y,z]_\beta$ is a quasi-geodesic.}\label{Fig:omega} 
\end{figure}
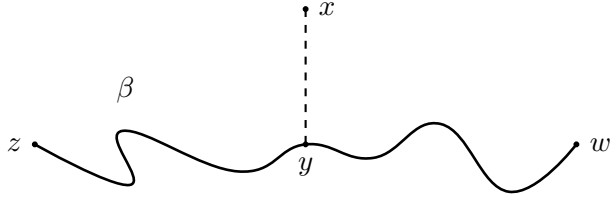

\item\label{redirect11} (Quasi-geodesic ray to geodesic ray surgery) Let $\beta$ be a geodesic ray and 
$\gamma$ be a $(q, Q)$--ray. 
For $r>0$, assume that $d_X(\beta_r, \gamma)\leq r /2$. Then there exists a
$(9q,Q)$--quasi-geodesic $\gamma'$ where $\gamma'(t) =\beta(t)$ for large values of $t$ and 
\[
\gamma|_{r/2} = \gamma'|_{r/2}. 
\]
\item (Segment to quasi-geodesic ray surgery)\label{item_surgery_segment}
Consider a $(q, Q)$-quasi-geodesic ray  $\alpha \colon [0, \infty) \to X$ % chktex 9% chktex 44
and a finite 
$(q, Q)$--quasi-geodesic segment $\beta \colon [a,b] \to X$. Then there is 
$s_0 \in [0, \infty)$ such that the following holds: for $ s \in [s_0, \infty)$ let $s_\gamma \in [s, \infty)$ and $t_\gamma \in [a,b]$ be such that  % chktex 9 % chktex 44
$[\beta(t_\gamma), \alpha(s_\gamma)]$ is a geodesic segment that realizes the set distance between 
$\alpha[s, \infty)$ and $\beta$. % chktex 9 % chktex 44
 Then  
\[
\gamma = \beta[a, t_\gamma] \cup [\beta(t_\gamma), \alpha(s_\gamma)] \cup \alpha[s_\gamma, \infty)  % chktex 9 
\] 
is a $(4q, 3Q)$--quasi-geodesic.

\begin{figure}[h!]
\begin{tikzpicture}[scale=0.5]
 \tikzstyle{vertex} =[circle,draw,fill=black,thick, inner sep=0pt,minimum size=.5 mm]
% [thick, 
%     scale=1,
%     vertex/.style={circle,draw,fill=black,thick,
%                    inner sep=0pt,minimum size=.5 mm},
                  
%       trans/.style={thick,->, shorten >=6pt,shorten <=6pt,>=stealth},
%    ]

  \node[vertex] (o) at (-3,5)[label=left:$\go$] {}; 
  \node[vertex]  at (-1.3,5)[label=above:\tiny $\alpha(s_0)$] {}; 
  \node[vertex]  at (.5,5)[label=above:\tiny $\alpha(s)$] {}; 
  \node(a) at (14,5)[label=right:$\alpha$] {}; 
  \draw (o)--(a){};

 % \node[vertex] at (9,5) [label=above:${x=\alpha(s_x)}$] {};    
  \node[vertex](c) at(3, 5)[label=above:\tiny ${x_\gamma=\alpha(s_\gamma)}$]{}; 
  
  \node[vertex](d) at(3, 1.1)[label=above right:${y_\gamma=\beta(t_\gamma)}$] {}; 
%  \node[vertex] at(6, 0)[label=above:${y=\beta(t_y)}$] {}; 
  \node[vertex] at (.5, -1) [label=left:$\beta(b)$] {}; 
  \node[vertex] at (9, -1) [label=right:$\beta(a)$] {}; 
 % \node at(3.5, 3)[label=right:$L_\gamma$] {}; 

  \draw[thick, dashed]  (c) to (d){};
  
  \pgfsetlinewidth{1pt}
  \pgfsetplottension{.55}
  \pgfplothandlercurveto
  \pgfplotstreamstart
  \pgfplotstreampoint{\pgfpoint{0.5cm}{-1cm}}
  \pgfplotstreampoint{\pgfpoint{1cm}{.5cm}}
  \pgfplotstreampoint{\pgfpoint{2cm}{0cm}}
  \pgfplotstreampoint{\pgfpoint{3cm}{1.1cm}}
  \pgfplotstreampoint{\pgfpoint{4cm}{-.5cm}}
  \pgfplotstreampoint{\pgfpoint{6cm}{-0cm}}
  \pgfplotstreampoint{\pgfpoint{9cm}{-1cm}}

  \pgfplotstreamend
  \pgfusepath{stroke} 
\end{tikzpicture}
\caption{Segment-to-geodesic-ray surgery.}
\end{figure}
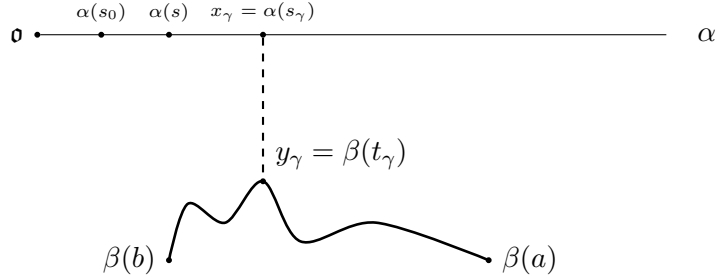

\item (Fellow-traveling surgery)\label{item_fellowtravel}  Let ${\qq}$-rays $\alpha, \beta$ and $t_0>0$ be such that, for all $t \leq t_0$, we have
$
d(\alpha(t), \beta(t)) \leq 1 
$.
Then there exists a $(q, Q+1)$-quasi-geodesic ray $\beta^{\prime}$ such that
\[
\left.\beta^{\prime}\right|_{t_0}=\beta|_{t_0} \quad \text { and }\quad \beta^{\prime}|_{\left(t_0+1, \infty\right)}=\left.\alpha\right|_{\left(t_0, \infty\right)}.
\]
\end{enumerate}
 \end{lem}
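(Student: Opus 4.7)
The plan is to prove each of the four surgery constructions by verifying the quasi-geodesic inequalities of Definition~\ref{Def:Quadi-Geodesic} directly on the concatenated path. In every case the upper Lipschitz bound is immediate, since each constituent piece is a $(q,Q)$-quasi-geodesic (or a geodesic) and the parameter of the concatenation is chosen to be arc length on the spliced-in segments, so the length of the new path between two points at parameter-distance $d$ is at most $qd$. The substantive work is the lower bound, and the key hypothesis in each part is precisely what is needed to bound the backtracking across the splice point.

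For \textbf{(1)}, I would parameterize $\gamma$ by arc length on $[x,y]$ followed by the natural parameter on $[y,z]_\beta$ and split into cases depending on which pieces contain the two sample points $p,q$. The only nontrivial case is $p \in [x,y]$ and $q \in [y,z]_\beta$. Two estimates then do the work: the closest-point property yields $d(p,q) \geq d(p,\beta) \geq d(x,y) - d(x,p)$, and the $(q,Q)$-inequality on $\beta$ gives $d(y,q) \geq \tfrac{1}{q}\ell_\beta(y,q) - Q$, whence $d(p,q) \geq d(y,q) - d(y,p) \geq \tfrac{1}{q}\ell_\beta(y,q) - Q - (d(x,y) - d(x,p))$. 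Taking a convex combination (weight $2$ on the first, weight $1$ on the second) produces the $(3q,Q)$-bound, with the multiplicative factor of $3$ coming directly from this averaging.

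For \textbf{(2)} and \textbf{(3)}, the strategy is to reduce to \textbf{(1)} applied twice. In \textbf{(2)}, I would pick $y \in \gamma$ with $d(y,\beta_r) \leq r/2$, form the path $\gamma|_{r/2} \cup [\gamma(t_{r/2}),y]_\gamma \cup [y,\beta_r] \cup \beta|_{[r,\infty)}$, and observe that all surgery errors are confined to the annulus $B_{2r} \setminus B_{r/2}$, whose controlled diameter compounds the $3q$ loss of \textbf{(1)} to a $9q$ loss. In \textbf{(3)}, the choice of $s_0$ is exactly such that for $s \geq s_0$ the distance from $\alpha[s,\infty)$ to the compact set $\beta$ is realized at interior parameters $(s_\gamma,t_\gamma)$ rather than at the endpoint $\alpha(s)$; this makes $\beta(t_\gamma)$ a nearest-point projection, and two nested applications of \textbf{(1)} yield a $(4q,3Q)$-quasi-geodesic, the additive $3Q$ coming from the three spliced pieces. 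Part \textbf{(4)} is essentially a relabeling: $\beta'$ agrees with $\beta$ up to time $t_0$, jumps a distance at most $1$, and then follows $\alpha$, so for any two points the triangle inequality transfers the $(q,Q)$-inequality from $\alpha$ (or $\beta$) to $\beta'$ with an additive loss of at most $1$.

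The main obstacle throughout is the additive-versus-multiplicative accounting in the lower quasi-geodesic inequality at parameter pairs $(s,t)$ that straddle a splice point, because a naive triangle-inequality estimate loses more than a bounded multiplicative factor. The nearest-point hypothesis in \textbf{(1)}, the $r/2$ proximity bound in \textbf{(2)}, the minimizing pair $(s_\gamma,t_\gamma)$ in \textbf{(3)}, and the fellow-traveling hypothesis in \textbf{(4)} are each designed precisely to cap this backtracking; verifying that the resulting constants are the ones stated is then a matter of careful but routine bookkeeping of the weighted convex combinations indicated above.
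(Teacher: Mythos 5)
This lemma is imported verbatim from \cite[Lemma 2.6]{QR24}; the present paper cites it without reproducing a proof, so there is no in-paper argument to compare yours against, and I can only assess the sketch on its own terms. Parts~(1) and~(4) are essentially right. For~(1), your two inequalities in the cross-piece case, combined with weights $2$ and $1$, give $3\,d(p,q)\geq d(p,y)+\frac{1}{q}|t_y-t_q|-Q\geq\frac{1}{q}L-Q$ where $L$ is the parameter distance, hence $d(p,q)\geq\frac{1}{3q}L-\frac{Q}{3}$, which is even slightly sharper than the claimed $(3q,Q)$; and the $3q$-Lipschitz upper bound is indeed immediate from the Lipschitz bounds of the two pieces. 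Part~(4) is, as you say, a triangle-inequality transfer across a jump of size at most $1$.

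The account of~(3), however, does not hold up. You claim the $(4q,3Q)$ constants arise from ``two nested applications of~(1),'' but applying~(1) changes $(q,Q)$ to $(3q,Q)$: it \emph{triples} the multiplicative constant and leaves the additive constant \emph{unchanged}. Two nested applications therefore produce $(9q,Q)$, and neither the factor $4$ nor the additive $3Q$ can come from iterating~(1). The correct argument for~(3) has to be a single direct lower-bound estimate for $d(\gamma(s),\gamma(t))$ across the central geodesic bridge, exploiting \emph{both} minimality properties of the pair $(s_\gamma,t_\gamma)$ at once — that $\beta(t_\gamma)$ is closest on $\beta$ to $\alpha(s_\gamma)$, that $\alpha(s_\gamma)$ is closest on $\alpha[s,\infty)$ to $\beta(t_\gamma)$, and moreover that $d(p,q)\geq d(\beta(t_\gamma),\alpha(s_\gamma))$ for \emph{every} $p\in\beta$ and $q\in\alpha[s,\infty)$ — together with a short case analysis; your observation about $s_0$ forcing the minimizing pair to be interior is correct, but the reduction to~(1) is not. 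Part~(2) is stated too loosely to check: the arithmetic $9q=3\cdot 3q$ is at least consistent with two applications of~(1), but you would need to exhibit the two nearest-point configurations explicitly and verify that the intermediate curve produced by the first application has the form required to apply~(1) a second time.
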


\begin{lem}\label{lem:infinitepoints}

	Let $\alpha,\beta$ be quasi-geodesic rays. Suppose there exists constants  $\qq$ and  a sequence of points $\{x_n\}$ on $\alpha$ such that $\norm{x_n} \to \infty$ and the following holds. For every 
  $n$, there exists a $\qq$-ray $\gamma_n$ such that
  $\gamma_n$ eventually coincides with $\beta$, and $\gamma_n$ and $\alpha$ are identical on the subsegment $[\gothic{o}, x_{n}]_{\alpha}$. Then  $\alpha$ can be $\qq$-quasi-redirected to $\beta$.
\end{lem}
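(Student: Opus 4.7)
\textbf{Proof plan for Lemma \ref{lem:infinitepoints}.} The plan is to show this is essentially a direct unraveling of Definition~\ref{Def:Redirection}(3): to check that $\alpha$ can be $\qq$--quasi-redirected to $\beta$, one must verify that for \emph{every} radius $r > 0$, there is a $\qq$--ray with the prescribed initial behavior and tail behavior. The hypothesis hands us a family of such rays $\{\gamma_n\}$, and the only thing to verify is that when $r$ is given, some $\gamma_n$ already does the job at radius $r$ without further modification.

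Here is how I would carry this out. Fix $r > 0$ arbitrarily and let $t_r \geq 0$ denote, as in the notation conventions, the first exit time of $\alpha$ from $B_r$, so that $\alpha|_r = \alpha[0,t_r]$. Writing $x_n = \alpha(s_n)$, the assumption $\|x_n\| \to \infty$ combined with the fact that $\alpha$ is a quasi-geodesic (hence has linearly bounded displacement as a function of its parameter) forces $s_n \to \infty$. In particular, for all sufficiently large $n$, we have $s_n \geq t_r$, and therefore the subsegment $[\gothic{o}, x_n]_\alpha = \alpha[0,s_n]$ contains $\alpha|_r$ as an initial subsegment. Since $\gamma_n$ and $\alpha$ are identical on $[\gothic{o}, x_n]_\alpha$, we conclude $\gamma_n|_r = \alpha|_r$.

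Combining this with the second hypothesis, namely that $\gamma_n$ eventually coincides with $\beta$, we see that $\gamma_n$ is a $\qq$--ray that quasi-redirects $\alpha$ to $\beta$ at radius $r$, in the precise sense of Definition~\ref{Def:Redirection}(2). Since the constants $\qq$ are uniform in $n$ (and hence uniform in $r$), this shows that $\alpha$ can be $\qq$--quasi-redirected to $\beta$ at every radius, which is precisely what Definition~\ref{Def:Redirection}(3) requires.

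There is no real obstacle in this proof; the only thing one must not overlook is that the hypothesis is stated in terms of points $x_n$ on the image of $\alpha$, while the notion of restriction $\alpha|_r$ is phrased in terms of the parameter exit time $t_r$. Translating between the two uses only that quasi-geodesics travel to infinity in the parameter whenever they travel to infinity in the metric.
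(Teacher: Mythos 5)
Your proof is correct and follows essentially the same route as the paper's: both arguments amount to checking that for any fixed radius $r$, some $\gamma_n$ with $n$ large enough already agrees with $\alpha$ past the exit time from $B_r$. The only cosmetic difference is that the paper phrases the comparison in terms of the radii $r_n = \norm{x_n}$ while you compare the parameter times $s_n$ and $t_r$ directly; these are the same argument.
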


\begin{proof}
    Let $s_n$ be the first time in $[0, \infty)$ such that $\alpha(s_n) = x_n$. % chktex 9 
    Consider the ball $B_{r_n}$, where $r_n \coloneqq \norm{x_n}$. Let $t_n$ be the first time $\alpha$ intersects $B_{r_n}^c$. It follows that $t_n \leq s_n$. According to the assumption, $(\gamma_n)|_{r_n} = \alpha|_{r_n}$ and $\gamma_n$ eventually coincides with $\beta$. Fix $r>0$. As $r_n \to \infty$, there exists $n$ such that $r_n \geq r$, and so $\alpha|_r=\gamma_n|_r$. This guarantees that $\alpha$ is $\qq$-quasi-redirected to $\beta$ at radius $r$ via $\gamma_n$. Consequently, $\alpha \preceq \beta$.
    \end{proof}

\section{QR boundary of relatively hyperbolic groups}
\label{sec:QR-boundary-RHGs}
In this section, we analyze the case when $X$ is a Cayley graph of a finitely generated relatively hyperbolic group pair $(G, \calP)$, where $G$ is a group and $\calP$ is a collection of infinite finitely generated subgroups. 
In~\cite{QR24}, the authors show that if $(G, \calP)$ is a relatively hyperbolic group where the QR-boundaries of each $P$ is a mono-directional set, i.e.\ $\partial P$ is a point for each $P \in \calP$, then $\partial G$ exists and is homeomorphic to the Bowditch boundary of $(G, \calP)$. In this section, we drop the assumption that the $P$'s are mono-directional. We  show that if  $\partial P$ exists for all $P \in \calP$, the quasi-redirecting boundary of $(G, \calP)$ exists. Furthermore, we show in Theorem~\ref{surjective} that when it exists, $\partial G$ maps surjectively onto the Bowditch boundary of  $(G, \calP)$.

\subsection{Redirecting in relatively hyperbolic groups}
We present definitions and relevant facts regarding the coarse geometry of relatively hyperbolic groups, which can be found in~\cite{QR24, DS05, Hru10} and~\cite{Sis12}.

\begin{defn}
Fix a finite generating set $S$ once and for all, and let $\Cay(G)$ denote the Cayley graph of $G$ with respect to this 
generating set. We refer to the subgroups $P \in \calP$ as \emph{peripheral} subgroups. Let $\calA$
be the set of subgraphs of $\Cay(G)$ associated to cosets of subgroups in $\calP$. Namely, 
for $P \in \calP$ and $g \in G$, $A_{P, g}$ is the induced subgraph of $\Cay(G)$ with vertex set $gP$. 
We form the \emph{coned-off} Cayley graph, denoted $K(G)$ or simply $K$,  by adding a vertex $*p_A$ for each 
$A \in \calA$, and adding edges of length $\frac12$ from $*p_A$ to each vertex of $A$. 
Since $\Cay(G)$ is a subgraph of $K$, for any two vertices $v, w \in \Cay(G)$, we have 
\begin{equation}\label{shorter}
d_{K} (v, w) \leq d_{\Cay(G)}(v,w).
\end{equation}
\end{defn}

\begin{defn} 
 A graph is \emph{fine} if for each integer $n$, every edge belongs to only finitely many simple cycles of length $n$. If the coned-off Cayley graph $K$ is both hyperbolic and fine, then $G$ is \emph{hyperbolic relative to $\calP$}.

  \end{defn}
 \begin{defn}[Bounded Coset Penetration]
 A key property of a relatively hyperbolic group is \emph{Bounded Coset Penetration}~\cite{Far98}, which we now state. An oriented path $\ell \in K$ is said to \emph{penetrate} $A \in \calA$ if it passes through the cone point $*p_A$ of $A$; 
its \emph{entering} and \emph{exiting} vertices are the vertices immediately before and after $*p_A$ on $\ell$. The path is \emph{without backtracking}
 if once it penetrates $A \in \calA$, it does not penetrate $A$ again.  If for each $q\geq 1$, there is a constant $a = a(q)$ such that if $\zeta, \zeta' \subset K$ are $(q, 0)$--quasi-geodesics without backtracking in $K$ and with the same pair of endpoints, then
 \begin{enumerate}
\item if $\zeta$ penetrates some $A \in \calA$, but $\zeta'$ does not, then the distance between the entering and exiting vertices of $\zeta$ in $A$ is at most $a(q)$; and

\item if $\zeta$ and $\zeta'$ both penetrate $A \in \calA$, then the distance between the entering vertices of $\zeta$ and $\zeta'$ in $A$ is at most $a(q)$, and similarly for the exiting vertices.
\end{enumerate}
\end{defn}
We note that if $(G,\cP)$ is relatively hyperbolic, then there are only finitely many  peripheral subgroups in $\cP$.

For the rest of this section, let $X = \Cay(G)$ denote the Cayley graph of $(G, \calP)$. 
\begin{defn}\cite[Definition 3.9]{Sis12}\label{deep}
 Let $\alpha$ be a path in $X$. For $M, c>0$, define  $\deep_{M,c} (\alpha)$ to be the set of points
 $x \in \alpha$ such that there exists a subpath of $\alpha$ containing $x$ 
 with endpoints $x_1,x_2$ and $A \in \calA$ where
 \[
x_1,x_2 \in N_M(A)  
\qquad\text{and}\qquad 
d(x,x_i) \geq c \quad \text{for $i = 1, 2$}. 
\]
Thinking of $\alpha$ as a subset of $X$, define 
\[
\trans_{M,c}(\alpha) = \alpha \setminus \deep_{M,c} (\alpha)
\]
to be the set of $(M,c)$--transition points of $\alpha$. 
\end{defn}

\begin{proposition}[{\cite{Sis12, DS05}}]\label{sis12}
Let $X = \Cay(G)$.
For every $\qq$ there exist constants $M = M(\qq)$, $c=c(\qq)$,  $D=D(\qq)$ and $\rho(\qq)$ such that the followings hold. 
Let $\alpha: \, [a,b] \to X$ be a $\qq$--quasi-geodesic segment.
\begin{enumerate}
\item  The set $\deep_{M, c}(\alpha)$ is a disjoint union 
of subpaths, each of which is  contained in $N_{\rho M}(A)$ for distinct sets $A \in \calA$. 
\item For any pair of $\qq$--quasi-geodesic 
segments $\alpha, \beta$ with the same endpoints, we have
\[ 
d_{\rm Haus} \big( \trans_{M,c}(\alpha), \trans_{M,c}(\beta)\big) \leq D.
\]

\item Moreover, for every $A \in \calA$ there are times $s, t \in [a,b]$ such that:
\begin{itemize}
	\item  During the interval $[a, s]$, $\alpha$ approaches $A$ at a linear speed.
	\item  During the interval $[t, b]$, $\alpha$ moves away from $A$ at a linear speed.
	\item  $\alpha[s,t] \subset N_{\rho M} (A)$. 
\end{itemize}
\end{enumerate}
The same also holds for quasi-geodesic rays. 
\end{proposition}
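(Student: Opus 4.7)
The plan is to derive all three parts from two fundamental ingredients: Gromov hyperbolicity together with fineness of the coned-off Cayley graph $K$, and the Bounded Coset Penetration (BCP) property. The first step in all cases is to pass from $\alpha$ in $X$ to its natural image in $K$: after replacing each maximal excursion of $\alpha$ into a coset $A$ by a detour through the cone point $*p_A$, one obtains (with controlled modification) a $(q',0)$--quasi-geodesic in $K$ for some $q'=q'(\qq)$. Since $K$ is hyperbolic, these images fellow-travel with geodesics and with each other at a scale depending only on $q'$.

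For (1), I would choose $M$ large compared to the fineness constant and $c$ large compared to the fellow-traveling constant in $K$. Given $x \in \alpha$ with two companions $x_1, x_2 \in \alpha \cap N_M(A)$ satisfying $d(x,x_i) \geq c$, the image of the subpath from $x_1$ to $x_2$ in $K$ must travel within bounded $K$--distance of $*p_A$; fineness then forces this subpath to actually penetrate $A$, so $x \in N_{\rho M}(A)$ for some $\rho = \rho(\qq)$. That each deep component is a single subpath follows from applying BCP with $\beta = \alpha$ (no backtracking), and disjointness across distinct cosets $A_1 \neq A_2$ follows from the fact, provable via fineness applied to bigons in $K$, that $N_{\rho M}(A_1) \cap N_{\rho M}(A_2)$ has uniformly bounded diameter.

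For (2), the main step is to push $\alpha$ and $\beta$ down to $K$ as above, obtaining $(q',0)$--quasi-geodesics in $K$ with matching endpoints, and to apply BCP at each cone point. If $\alpha$ penetrates $A$ and $\beta$ does not, BCP bounds the entering-exiting distance in $X$, forcing that subpath of $\alpha$ to consist entirely of deep points (not transition points). If both penetrate $A$, BCP gives that the entering (resp.\ exiting) vertices are $a(q')$--close in $X$. Since each transition point of $\alpha$ is within bounded $X$--distance of such an entering/exiting vertex, these comparisons assemble into the Hausdorff bound $D=D(\qq)$. Part (3) is then a direct consequence of (1): either $\alpha$ has no deep component associated with $A$ and the statement is vacuous (take $s=t$), or (1) supplies a unique maximal interval $[s,t]$ with $\alpha[s,t]\subset N_{\rho M}(A)$. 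Outside $[s,t]$, linear speed of approach and retreat follows because otherwise a near-constant-distance subsegment of $\alpha$ to $A$ would, after coning, shortcut through $*p_A$ in $K$ and violate the $(q',0)$--quasi-geodesic property of the coned image; the quasi-geodesic constants of $\alpha$ then give the linear rate explicitly.

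The main obstacle is purely bookkeeping: one has to choose $M, c, \rho, D$ in the right order so that each subsequent estimate controls the next. The fellow-traveling constant in $K$ determines $c$; fineness at that scale determines $\rho$; the BCP constant $a(q')$ determines $D$; and all of them are uniform functions of $\qq$ by construction. For the ray version, one exhausts $\alpha \colon [0,\infty) \to X$ by segments $\alpha|_{[0,n]}$, applies (1)--(3) uniformly, and passes to a limit using that all constants depend only on $\qq$.
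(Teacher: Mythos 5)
The proposition in question is not actually proved in the paper; the text immediately following it attributes parts (1) and (2) to \cite[Proposition~5.7]{Sis12} and part (3) to \cite[Lemma~4.17]{DS05} and says nothing further. So you are attempting a from-scratch argument the authors never supply, and doing so via Farb's coned-off-graph/BCP formalism rather than via Sisto's intrinsic saturations-and-transition-points analysis (his route to (1)--(2)) or Dru\c{t}u--Sapir's asymptotic-cone argument (their route to (3)). The broad shape of your sketch is reasonable, but two steps are under-justified and one of them is a genuine gap.

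First, the opening move --- replacing excursions of $\alpha$ into cosets by detours through cone points and asserting the result is a $(q',0)$--quasi-geodesic in $K$ --- is itself a nontrivial theorem whose standard proofs already go through saturations or (equivalently) transition points, so you should check you are not circularly assuming what you are trying to establish; as written it is asserted, not derived. Second, and more substantially, in part (3) the inference from ``a near-constant-distance subsegment of $\alpha$ to $A$ would shortcut through $*p_A$ in $K$'' to ``$\alpha$ approaches/retreats at linear speed'' does not follow. The shortcut argument only rules out $\alpha$ \emph{lingering} at roughly constant distance from $A$; it does not produce a monotone linear decay of $d(\alpha(u),A)$ in $u$, which is what linear-speed approach means and what part (3) asserts. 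To get that you need a strong-contraction/nearest-point-projection estimate for $N_{M}(A)$ (or $\Sat$ of a geodesic) in $X$ itself, not just BCP in $K$. So the sketch is a plausible road map in the Farb style, but as written it neither matches the paper's (purely citational) treatment nor closes the linear-speed gap in (3).
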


The statements of (1) and (2) are contained~\cite[Proposition 5.7]{Sis12}. The statement (3) follows from~\cite[Lemma 4.17]{DS05}.

 \begin{defn}\label{Def:saturation}
Let $\alpha$ be a $\qq$--ray or $\qq$--segment in $X$. The \emph{saturation} of $\alpha$, denoted by $\Sat(\alpha)$, is 
the union of $\alpha$ and all $A \in \calA$ with $N_{M(\qq)}(A) \cap \alpha \neq \emptyset$, where $M(\qq)$ is as in Proposition~\ref{sis12}. 
\end{defn} 

Saturations are quasi-convex (see~\cite[Lemma 4.25]{DS05}):

\begin{lemma}[Uniform quasi-convexity of saturations]\label{Lem:sat-convex}
 For every $\qq$, there exists $\tau(\qq)>0$ such that for every $L>1$ and every $\qq$--ray or $\qq$--segment $\alpha$, $\Sat(\alpha)$ has the property that, for every $\qq$--segment $\gamma$ with endpoints in $N_L(\Sat(\alpha))$, 
 we have  
 \[
  \gamma \subset N_{\tau(\qq) \cdot L}(\Sat(\alpha)).
 \] 
\end{lemma}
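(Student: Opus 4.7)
The plan is to reduce the problem to quasi-convexity in the hyperbolic graph $K$, and then transfer the conclusion back to $X$ using the bounded coset penetration property.

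\textbf{Step 1 (Quasi-convexity of $\Sat(\alpha)$ in $K$).} First I would argue that the image of $\Sat(\alpha)$ in the $\delta$-hyperbolic graph $K$ is uniformly quasi-convex. The standard relatively-hyperbolic trick (using Proposition~\ref{sis12} together with bounded coset penetration) says that if one replaces each maximal deep component of $\alpha$ inside a peripheral $A$ by a length-$1$ detour through the cone point $*p_A$, the resulting path $\hat\alpha$ is a $\qq'$-quasi-geodesic in $K$, with $\qq'$ depending only on $\qq$. Hence $\hat\alpha$ is $K$-quasi-convex by hyperbolicity. For each $A\in\calA$ appearing in $\Sat(\alpha)$, the cone point $*p_A$ lies at $K$-distance at most $M(\qq)+1$ from $\hat\alpha$, and every vertex of $A$ lies at $K$-distance $1$ from $*p_A$. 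Therefore the image of $\Sat(\alpha)$ in $K$ sits in a bounded $K$-neighborhood of $\hat\alpha$, and is $K$-quasi-convex with constants depending only on $\qq$.

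\textbf{Step 2 (Controlling $\gamma$ in $K$).} Let $\gamma$ be a $\qq$-segment with endpoints $x,y\in N_L(\Sat(\alpha))$ in $X$; by~\eqref{shorter}, the same inclusion holds in $K$. Replacing the deep components of $\gamma$ by cone-point detours gives a $\qq'$-quasi-geodesic $\hat\gamma$ in $K$ whose endpoints still lie within $K$-distance $L$ of $\Sat(\alpha)$. Stability of quasi-geodesics in the hyperbolic graph $K$, combined with Step~1, yields a constant $C=C(\qq)$ such that
\[
d_K\bigl(z,\Sat(\alpha)\bigr)\le C\cdot L\qquad\text{for every } z\in\hat\gamma.
\]

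\textbf{Step 3 (Back to $X$).} Given $z\in\gamma$, pick a vertex $w\in\Sat(\alpha)$ nearly realizing the $K$-distance to $\Sat(\alpha)$, and a $\qq$-quasi-geodesic $\eta$ from $z$ to $w$ in $X$. By Proposition~\ref{sis12} applied to $\eta$, its transition portion has $X$-length controlled linearly in its $K$-length, hence at most $C'(\qq)\cdot L$. For each deep component of $\eta$ inside a peripheral $A_i$, apply bounded coset penetration to $\eta$ versus a second quasi-geodesic from $z$ to $w$ passing through $\Sat(\alpha)$: either $A_i$ already appears in $\Sat(\alpha)$ (so the deep portion sits in $N_{\rho M}(A_i)\subset N_{\rho M}(\Sat(\alpha))$ and contributes nothing to $d_X(z,\Sat(\alpha))$), or the peripheral excursion has $X$-length bounded by the BCP constant $a(\qq')$ and occurs at most a linear-in-$L$ number of times. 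Summing these contributions gives $d_X(z,\Sat(\alpha))\le \tau(\qq)\cdot L$, for some $\tau(\qq)$ independent of $L$.

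\textbf{Main obstacle.} The delicate step is Step~3: a short $K$-path from $z$ to $\Sat(\alpha)$ may traverse cone points, each of which corresponds in $X$ to a potentially huge peripheral excursion, so one cannot na\"ively upgrade a $d_K$-bound to a $d_X$-bound. The core of the argument is to use bounded coset penetration to force any long peripheral excursion made by $\eta$ to lie in a coset already belonging to $\Sat(\alpha)$, where it does not affect the distance to $\Sat(\alpha)$, while every other peripheral can only be crossed briefly. Executing this dichotomy carefully is essentially the content of~\cite[Lemma~4.25]{DS05}.
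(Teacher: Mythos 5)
The paper itself does not prove this lemma; it cites \cite[Lemma 4.25]{DS05} directly, so there is no internal proof to compare against. I will therefore assess your argument on its own.

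Your Steps 1 and 2 are sound in outline: coning-off the deep components of $\alpha$ and of $\gamma$ gives uniform quasi-geodesics $\hat\alpha,\hat\gamma$ in $K$, the image of $\Sat(\alpha)$ in $K$ lies in a bounded neighbourhood of $\hat\alpha$ and so is uniformly quasi-convex there, and stability of quasi-geodesics in the hyperbolic graph $K$ then forces $d_K(z,\Sat(\alpha))\le C(\qq)\,L$ for every vertex $z$ on $\gamma$. This $K$-metric control is genuine.

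Step 3 is where the proof breaks down, and the gap is exactly the content of the lemma. You assert a dichotomy for each peripheral coset $A_i$ that your auxiliary path $\eta$ (or its $K$-geodesic lift) penetrates: either $A_i\subset\Sat(\alpha)$, or the $X$-length of the excursion through $A_i$ is at most the BCP constant $a(\qq')$. The second alternative does not follow from bounded coset penetration. BCP bounds the excursion of $\eta$ through $A_i$ only when there exists a \emph{competitor} quasi-geodesic with the same endpoints $z,w$ that does \emph{not} penetrate $A_i$. You refer to "a second quasi-geodesic from $z$ to $w$ passing through $\Sat(\alpha)$", but $w$ already lies in $\Sat(\alpha)$ and nothing in your construction guarantees the competitor avoids $A_i$: it is entirely possible that every geodesic from $z$ to $w$ enters $A_i$ deeply, which is exactly the situation in which $A_i$ could be far from $\Sat(\alpha)$ yet still carry a long excursion. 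Thus the $K$-distance bound of Step 2 cannot be upgraded to the required $X$-distance bound by BCP alone; a cone-point edge of $K$-length $1$ lifts to an $X$-excursion of \emph{a priori} unbounded length, and ruling this out for cosets not in $\Sat(\alpha)$ is where the real work lies. You essentially acknowledge this in your "Main obstacle" paragraph when you say that executing the dichotomy is "essentially the content of~\cite[Lemma~4.25]{DS05}" --- which amounts to citing the result to be proved rather than proving it.

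A secondary point: the multiplicative form of the conclusion ($N_{\tau L}$ rather than $N_{\sigma+L}$) is not something that falls out automatically from hyperbolicity and stability; it has to be tracked carefully through any reduction, and your sketch does not address why an additive constant does not need to be absorbed for small $L>1$. This is a minor issue compared with the gap above, but it should be addressed if you try to complete the argument.
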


Quasi-convexity of saturations allows us to understand quasi-geodesic rays by considering the parabolic sets near which they pass. The subsequent definitions and results make this concrete.

 \begin{defn}\label{Def:transition} 
 Let $\alpha$ be a $\qq$--quasi-geodesic segment or $\qq$--ray in $X$. We say a point $\alpha(t)$ 
 is a \emph{$\qq$--transition point} of $\alpha$ if
\[
\alpha(t)  \in \trans_{M(\qq), c(\qq)}(\alpha),
\] 
where $M(\qq), c(\qq)$ are as in Proposition~\ref{sis12}. 
\end{defn}
 
\begin{defn}\label{Def:Transient} 
Let $\alpha$ be a $\qq$--ray. We say $\alpha$ is a \emph{$\qq$--transient} ray if,
there is a sequence of times $t_i \to \infty$ such that $\alpha(t_i)$ is a 
$\qq$--transition point of $\alpha$. 
\end{defn}

Note that if $\qq' \geq \qq$ and $\alpha$ is a $\qq$--ray, then $\alpha$ is also a 
$\qq'$--ray. But, the set of $\qq$--transition points is not necessarily a subset or a superset
of the set of $\qq'$--transition points because to ensure 
\[
 \deep_{M_1,c_1} (\alpha)  \subseteq \deep_{M_2,c_2} (\alpha),
\] 
we require $c_1 \geq c_2$ and  $M_1\leq M_2$. However, as we shall see, the property of being
a transient ray is independent of the choice of $\qq$. We summarize here that there are exactly two disjoint scenarios of redirecting based on whether a ray is transient or not.
\begin{lemma}[{\cite[Lemma 8.7, Proposition 8.12]{QR24}}]\label{classification}
Let $\alpha$ be a $\qq$--ray, and let $M, c$ and $\rho$ be as in Proposition~\ref{sis12}.
Then either:
\begin{itemize}
\item $\alpha$ is a $\qq$--transient ray, then all quasi-geodesic rays in $\bfa =[\alpha]$ are transient. The class $\bfa$ has a geodesic representative $\alpha_0$, and every $\qq'$-ray in $\bfa$ can be $f_\bfa(\qq')$-quasi-redirected to $\alpha_0$, where \[f_\bfa(q, Q) = (9q, Q).\]
\item  Otherwise, $\alpha$ is not transient, then $\alpha$ is eventually contained in $N_{\rho M}(A)$ 
for some $A \in \calA$. Likewise all quasi-geodesic rays in $[\alpha]$ are non-transient, and all $\qq'$-rays in $[\alpha]$ are eventually contained in $N_{\rho(\qq')M(\qq')}(A)$ for the same $A$.
\end{itemize}
Furthermore, if $\alpha$ is a $\qq$--transient ray and $\qq' \geq \qq$, then $\alpha$ is also a 
$\qq'$--transient ray. 
\end{lemma}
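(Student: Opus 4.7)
The plan is to first establish the dichotomy, then deduce class-invariance of transience, and finally construct the geodesic representative and the redirection procedure.

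\emph{Dichotomy.} By Proposition~\ref{sis12}(1), $\deep_{M,c}(\alpha)$ decomposes as a disjoint union of subpaths, each contained in $N_{\rho M}(A)$ for \emph{distinct} sets $A \in \calA$. If $\alpha$ is not $\qq$-transient, then by definition there are only finitely many transition times, so some tail $\alpha[t_0, \infty)$ lies entirely in $\deep_{M,c}(\alpha)$. Since this tail is connected and the decomposition is by disjoint pieces associated to distinct $A$'s, the tail must be contained in a single such piece, giving one $A \in \calA$ with $\alpha[t_0, \infty) \subset N_{\rho M}(A)$. The last statement of the lemma ($\qq'$-transience from $\qq$-transience for $\qq' \geq \qq$) then reduces to showing that a $\qq$-transient ray cannot become $\qq'$-non-transient; otherwise the above would force the same ray to eventually enter $N_{\rho' M'}(A)$, making all but finitely many $\qq$-transition points lie in the same parabolic neighborhood, contradicting the fact (from Proposition~\ref{sis12}(3)) that $\alpha$ exits every such neighborhood at linear speed beyond the window it spends near $A$.

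\emph{Class invariance.} The cleanest way I see is to pass to the coned-off graph $K$: a transient ray projects to a quasi-geodesic in $K$ whose image is unbounded (transition points escape every bounded set in $K$), while a non-transient ray projects to a bounded set near a single cone point $*p_A$. Suppose $\alpha \sim \beta$ and $\alpha$ is non-transient, eventually in $N_{\rho M}(A)$. For each $r$, the ray $\gamma_r$ witnessing $\alpha \preceq \beta$ agrees with $\alpha$ on $B_r$ and eventually coincides with $\beta$. Apply Bounded Coset Penetration and uniform quasi-convexity of saturations (Lemma~\ref{Lem:sat-convex}) to the pair $(\gamma_r, \alpha)$: both penetrate $A$ deeply, so their exiting vertices are within uniform distance, and $\gamma_r$ must stay in a uniform neighborhood of $\Sat(\alpha)$. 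Letting $r \to \infty$ forces $\beta$ itself to eventually lie in a uniform neighborhood of $A$, i.e.\ $\beta$ is non-transient with the same $A$. The converse (if $\alpha$ is transient then so is $\beta$) follows by contraposition.

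\emph{Geodesic representative and redirection.} For a transient class $\bfa$, pick transition points $\alpha(t_i)$ with $t_i \to \infty$ and take geodesic segments $[\gothic{o}, \alpha(t_i)]$. By Proposition~\ref{sis12}(2), their transition point sets have uniformly bounded Hausdorff distance from those of $\alpha|_{[0,t_i]}$, so these geodesics fellow-travel $\alpha$ on arbitrarily long initial intervals. Arzel\`a--Ascoli yields a limiting geodesic ray $\alpha_0$, which is transient (and lies in $\bfa$ by construction). To show any $\qq'$-ray $\beta \in \bfa$ can be $(9q', Q')$-quasi-redirected to $\alpha_0$ at radius $r$: choose a $\qq'$-transition point $\beta(s)$ past radius $r$; by Proposition~\ref{sis12}(2) there is a point on $\alpha_0$ within uniformly bounded distance of $\beta(s)$, and in particular within $r/2$ of the appropriate sphere. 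Apply the quasi-geodesic-to-geodesic-ray surgery (Lemma~\ref{redirect11}) to concatenate $\beta|_r$ with $\alpha_0$ through this nearby point; this produces a $(9q', Q')$-quasi-geodesic redirection as required.

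\emph{Main obstacle.} The hardest step is class-invariance of transience. The subtle point is that the redirection rays $\gamma_r$ have constants that are allowed to depend only on $\alpha$ and $\beta$ (not on $r$), so one can apply Proposition~\ref{sis12} with \emph{uniform} constants to every $\gamma_r$; combined with Bounded Coset Penetration, this forces the tails of all $\gamma_r$ (and hence the tail of $\beta$) into a single parabolic neighborhood. Making this ``eventual containment'' truly forced by the redirection constants, rather than varying with $r$, is where the argument becomes delicate and where the full strength of fine hyperbolicity of $K$ is used.
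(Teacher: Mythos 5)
The paper does not prove this lemma: it is stated as a direct citation of \cite[Lemma 8.7, Proposition 8.12]{QR24} and is used as a black box, so there is no in-paper argument to compare against. Evaluating your proposal on its own terms, the overall organization (dichotomy, class-invariance of transience, geodesic representative plus the $(9q,Q)$ redirection) is the natural one, and the dichotomy paragraph and the redirection construction via Lemma~\ref{redirect11} are broadly in line with how one would proceed.

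The class-invariance paragraph, however, has a real gap. You apply Bounded Coset Penetration to the pair $(\gamma_r,\alpha)$ and speak of both rays' ``exiting vertices'' of $A$, but BCP applies to quasi-geodesics in $K$ with a common pair of endpoints, and $\alpha$ is precisely the ray that \emph{never} exits $N_{\rho M}(A)$, so it has no exiting vertex; BCP should instead be played off between a long segment $\gamma_r|_{[0,T]}$ and a $\Cay(G)$-geodesic joining its two endpoints, to force that geodesic (and hence, via Proposition~\ref{sis12}(2), $\beta$ itself) to penetrate $A$ deeply. Your invocation of Lemma~\ref{Lem:sat-convex} is also circular as written: to say $\gamma_r$ stays in a uniform neighborhood of $\Sat(\alpha)$ you need both of its segment endpoints to lie in a bounded neighborhood of $\Sat(\alpha)$, and the far endpoint is on $\beta$, which is exactly what you are trying to locate near $A$. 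Finally, the ``Furthermore'' clause is not discharged by your sketch: the paper explicitly warns, just before the Lemma, that $\deep_{M_1,c_1}(\alpha)$ and $\deep_{M_2,c_2}(\alpha)$ are not nested as $\qq$ varies, so a tail of $\alpha$ lying in $N_{\rho(\qq')M(\qq')}(A)$ does not by itself produce a tail in $N_{M(\qq)}(A)$ nor kill $\qq$-transition points; and Proposition~\ref{sis12}(3) concerns linear-speed entry and exit for each parabolic visited, which has no bite for a ray whose tail never leaves $N_{\rho' M'}(A)$. This step needs a separate argument (e.g.\ via the quasi-convexity/shadow machinery of Section~\ref{sec:QR-boundary-RHGs}) rather than a one-line appeal to Proposition~\ref{sis12}(3).
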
 

We remark  that $K = K(G)$ is a proper hyperbolic space on which $G$ acts properly discontinuously, and this action is geometrically finite. Every limit point of $\partial K$ is either a conical limit point or a bounded parabolic point~\cite{Bow12}. In particular, a limit point is a \emph{conical limit point} if the associated geodesic ray is a $(1,0)$-transient ray.

\subsection{Bowditch boundary}

Now we define the Bowditch boundary for relatively hyperbolic groups. Recall let $K$ be the coned-off Cayley graph introduced in the definition of relatively hyperbolic groups. Let $\partial K$ denote the Gromov boundary of $K$.  Let $V(K)$ denote the vertex set of $K$, 
 let $V_\infty K=  \{  *p_A \mid A \in \calA\}$ and let $\triangle K = V_\infty (K) \cup \partial K$. 
 
 \begin{defn}
 For $v, w \in V(K) \cup \partial K$, let $[v, w]_K$ denote a geodesic segment (or a geodesic ray) in $K$ 
 connecting $v$ to $w$. Given any $v \in V(K) \cup \partial K$ and  a finite set $W\subseteq V(K)$, we write
\[ 
m(v, W) = \Big\{ w \in \triangle K \,\Big\vert \,  W \cap [v, w]_K  \subseteq \{v \} \, \text{for every geodesic $[v, w]_K$} \Big\}.
\]
The \textit{Bowditch boundary} $\partial_B G$ of the relatively hyperbolic group $G$ is the set $\triangle K$
equipped with a topology generated by the neighborhoods of the form $m(v, W)$. 
\end{defn}

Every geodesic ray or segment in $K$ can be associated to some quasi-geodesic in $X=\Cay(G)$ as follows. 
Let $\ell$ be a path in $K$, a \emph{lift} of $\ell$, denoted $\overline{\ell}$, is a path formed from $\ell$ by replacing edges incident to vertices in $V_{\infty}(K)$ with a geodesic in $\Cay(G)$.

\begin{lemma}[{\cite[Lemma 9.4]{QR24}}]\label{ellgeodesic}
	There exists a uniform bound $D$ such that the following holds.
Let $\ell$ be a geodesic line or segment in $K$ such that $|\ell| \geq 3$. Then there exists a geodesic line $\overline{\ell}_0$ in $\Cay(G)$ such that the projection of $\overline{\ell}_0$ to $K$ is contained in the $D$-neighborhood  of $\ell$ in $K$.
\end{lemma}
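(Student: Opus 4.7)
The plan is to start from $\ell$, lift it to a uniform quasi-geodesic $\overline{\ell}$ in $X = \Cay(G)$ via the definition preceding the statement, replace $\overline{\ell}$ with a genuine geodesic $\overline{\ell}_0$ in $X$ sharing its endpoints (or ideal endpoints), and then show that the projection of $\overline{\ell}_0$ back to $K$ stays uniformly close to $\ell$. The key inputs are Proposition~\ref{sis12}(2) (Hausdorff-closeness of transition points of two quasi-geodesics with the same endpoints) and Proposition~\ref{sis12}(3) (control of deep portions).

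First, I would set up $\overline{\ell}$. Let $\ell$ be a geodesic in $K$ with $|\ell|\geq 3$. Replacing each pair of half-edges of $\ell$ incident to a cone point $*p_A$ by a geodesic in the coset $A$ connecting the entering and exiting vertices produces a path $\overline{\ell}$ in $X$. A standard argument (using that $\ell$ is a geodesic in $K$ and that penetrations happen at most once per coset along a geodesic) shows $\overline{\ell}$ is a $\qq_1$-quasi-geodesic in $X$, for some uniform $\qq_1$ depending only on $(G,\calP)$.

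Next, I would produce $\overline{\ell}_0$. If $\ell$ is a finite segment, let $\overline{\ell}_0$ be any geodesic in $X$ joining the endpoints of $\overline{\ell}$ (which are honest vertices of $X$ once we pick entering/exiting vertices at terminal cone points; here the hypothesis $|\ell|\geq 3$ ensures these endpoints are well-defined and distinct). If $\ell$ is a bi-infinite line, apply the segment case to an exhaustion $\ell[-n,n]$ to obtain geodesics $\overline{\ell}_0^{\,n}$ in $X$ passing within bounded distance of the basepoint (using the uniform bound produced by the segment case), and extract a subsequential limit by the Arzelà--Ascoli theorem, using properness of $X$. This limit is a geodesic line in $X$.

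For the Hausdorff estimate in $K$, fix the constants $M=M(\qq_1)$, $c=c(\qq_1)$, $\rho=\rho(\qq_1)$, and $D_0 = D(\qq_1)$ from Proposition~\ref{sis12} applied to $\qq_1$ (and also to the pair $(\overline{\ell},\overline{\ell}_0)$ whose larger constants are still uniform). By Proposition~\ref{sis12}(2), the transition points of $\overline{\ell}$ and $\overline{\ell}_0$ are $D_0$-Hausdorff close in $X$, hence also in $K$ by~\eqref{shorter}. For a deep point $x$ of $\overline{\ell}_0$, Proposition~\ref{sis12}(1) places $x$ in $N_{\rho M}(A)$ for some $A\in\calA$, so $d_K(x,*p_A)\leq \rho M + \tfrac{1}{2}$. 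It then remains to show $*p_A$ lies within uniformly bounded $K$-distance of $\ell$. For this, I would apply Proposition~\ref{sis12}(3) to $\overline{\ell}_0$ to bound below the length of the deep excursion of $\overline{\ell}_0$ through $A$; by the Hausdorff comparison, $\overline{\ell}$ has a comparable deep excursion through the same $A$ (up to enlarging $M$), which forces $\overline{\ell}$, and therefore $\ell$, to pass through $*p_A$ (or within a bounded $K$-neighborhood of it). Taking the maximum of all constants produced yields a uniform $D$.

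The main obstacle will be the last step: matching a deep excursion of $\overline{\ell}_0$ to a penetration of $\ell$ through the associated cone point, and doing so uniformly. This requires combining the transition-point Hausdorff bound with Bounded Coset Penetration and the quasi-convexity of saturations (Lemma~\ref{Lem:sat-convex}), so that even when $\overline{\ell}_0$ dives deeply into some $A$ that $\overline{\ell}$ merely skirts, one still gets $\Sat(\overline{\ell}_0)\subseteq N_{\tau\cdot L}(\Sat(\overline{\ell}))$ for a controlled $L$, which pushes the relevant cone point of $\overline{\ell}_0$'s excursion into a bounded $K$-neighborhood of $\ell$. The infinite case adds only a mild complication handled by the Arzelà--Ascoli limit argument above.
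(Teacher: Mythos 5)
This lemma is imported from \cite[Lemma~9.4]{QR24} and is not proved in the present paper, so there is no internal proof to compare your plan against; I evaluate it on its own terms. Your overall strategy --- lift $\ell$ to a uniform quasi-geodesic $\overline{\ell}$ in $\Cay(G)$ (via Sisto's \cite[Proposition~1.14]{Sis13}, which the paper itself invokes for the same purpose in the surjectivity proof), replace it by a $\Cay(G)$-geodesic $\overline{\ell}_0$ with the same (ideal) endpoints, and then use Proposition~\ref{sis12}(2) to control transition points and deeper machinery to control deep points --- is the natural route and is almost certainly the one taken in the source.

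Two places need more than you provide. The crux you flag is real, and the issue is sharper than ``combine the Hausdorff bound with BCP.'' Proposition~\ref{sis12}(2) controls only \emph{transition} points; for a deep point $x\in\overline{\ell}_0\cap N_{\rho M}(A)$ you must genuinely argue that $\ell$ passes boundedly close to $*p_A$ in $K$, and to run BCP for this you need to know that the $K$-projection of the $\Cay(G)$-geodesic $\overline{\ell}_0$ is an unparameterized quasi-geodesic \emph{without backtracking} in $K$ --- a separate theorem (Farb/Sisto electrification) that never enters your plan. Without it, the BCP hypotheses, which are stated for quasi-geodesics in $K$, do not apply to $\overline{\ell}_0$, and Proposition~\ref{sis12}(2) alone does not exclude the scenario where $\overline{\ell}_0$ dives into a coset whose cone point is $K$-far from $\ell$. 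Secondly, the Arzel\`a--Ascoli step for the bi-infinite case needs a $\Cay(G)$-basepoint: closeness of the $\overline{\ell}_0^{\,n}$ to $\ell$ in $K$ does not give a convergent subsequence in the proper space $X$, since the approximants could run deeper and deeper into a fixed coset. This is repairable (fix a transition point $p$ of $\overline{\ell}$ and use Proposition~\ref{sis12}(2) along the exhaustion to find points of $\overline{\ell}_0^{\,n}$ within $D(\qq_1)$ of $p$ in $\Cay(G)$), but as written the limit is not justified. Finally, note a mismatch with the statement as quoted: for $\ell$ a finite segment you produce a geodesic \emph{segment} $\overline{\ell}_0$, while the conclusion demands a geodesic \emph{line}; you should say how the segment is extended to a bi-infinite geodesic whose ends remain deep in the terminal cosets, or flag that the statement is being read with ``line'' meaning ``line, ray, or segment as appropriate.''
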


We also recall the \emph{relative thin triangle} property  geodesic triangles in $\Cay(G)$~\cite[Theorem 1.1]{Sis12}:
\begin{proposition}[{\cite[Definition 3.11]{Sis12}}]\label{Prop:thin} 
There exists a constant $\delta_1$ such that the following holds. 
For points $x, y, z \in \Cay(G)$ consider a geodesic triangle $(x, y, z)$ and let $w$ be a 
$(1,0)$--transition point along $[x,y]$. Then there exists $w' \in [x,z] \cup [z,y]$ such that 
\[d_{\Cay(G)}(w, w') \leq \delta_1.\] 
\end{proposition}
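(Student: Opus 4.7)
The plan is to derive the relative thin triangle property by descending the thin-triangle property from the coned-off Cayley graph $K$ down to $\Cay(G)$, with the transition point hypothesis controlling the slippage at cone points.

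First, I would observe that under the natural inclusion $\iota \colon \Cay(G) \hookrightarrow K$, each side of the geodesic triangle $(x,y,z)$ projects to a uniform quasi-geodesic in $K$. Indeed, $d_K \leq d_{\Cay(G)}$ by \eqref{shorter}, and one only needs to rule out long backtracks in $K$; this follows from Proposition~\ref{sis12}(1) applied with $\qq = (1,0)$, since a $(1,0)$-geodesic in $\Cay(G)$ enters each peripheral neighborhood in a single controlled subpath, so after coning off it becomes a uniform quasi-geodesic in $K$. Since $K$ is $\delta$-hyperbolic by the definition of relative hyperbolicity, quasi-geodesic triangles are $\delta'$-thin in $K$ for a constant $\delta' = \delta'(\delta)$ depending only on the quasi-geodesic constants from the previous step.

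Next, applying $\delta'$-thinness to the image of $w$ in $K$, I obtain a point $w^{*}$ on $\iota([x,z]) \cup \iota([z,y])$ with $d_K(w, w^{*}) \leq \delta'$. The task is to upgrade this to a bound on $d_{\Cay(G)}(w, w^{*})$. Let $\eta$ be a $K$-geodesic from $w$ to $w^{*}$. Either $\eta$ passes through no cone point, in which case its lift is a path of length at most $\delta'$ in $\Cay(G)$ and we are done; or $\eta$ penetrates some cone points $*p_{A_1}, \ldots, *p_{A_k}$ with $k \leq \delta'$. The crux is to show that, because $w$ is a $(1,0)$-transition point of $[x,y]$, each such penetration can only contribute a bounded amount to $d_{\Cay(G)}(w, w^{*})$.

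For the critical step, I would concatenate $\eta$ (suitably lifted) with subsegments of $[x,z]\cup [z,y]$ to form a quasi-geodesic without backtracking from $x$ to $y$ with the same endpoints as $[x,y]$; then Bounded Coset Penetration forces the entering/exiting vertices of the two paths in each $A_i$ to lie at bounded $\Cay(G)$-distance. Since $w$ is not $(M(1,0), c(1,0))$-deep in any $A\in\calA$, the constraint $d_K(w,w^{*})\leq \delta'$ together with BCP forces the lifted path from $w$ to $w^{*}$ to have length bounded by some $\delta_1 = \delta_1(\delta', M, c, a)$, yielding the desired constant. The main obstacle is this last step: controlling the total $\Cay(G)$-length contribution of the peripheral excursions of $\eta$ requires a careful bookkeeping, balancing the BCP constant $a(q)$ for the quasi-geodesic constants arising from the lifts against the transition depth parameters $M(1,0)$ and $c(1,0)$ from Definition~\ref{deep}.
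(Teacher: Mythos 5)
The paper does not prove this proposition; it is quoted verbatim from Sisto (the relative thin triangle property, Definition~3.11 there, established as one of the equivalent characterizations of relative hyperbolicity in his Theorem~1.1). So there is no internal argument to compare against, and your proposal has to stand on its own.

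The first two steps are fine: a $\Cay(G)$-geodesic projects to a uniform quasi-geodesic without backtracking in $K$, and hyperbolicity of $K$ supplies a point $w^{*}$ on the other two sides with $d_K(w,w^{*})\le\delta'$. The gap is in the ``critical step,'' and it is a real one, not bookkeeping. To invoke Bounded Coset Penetration you must first produce a quasi-geodesic in $\Cay(G)$ with endpoints $x,y$ that runs through the lifted path $\hat\eta$; but whether the concatenation $[x,w]_{[x,y]}\cup\hat\eta\cup(\text{piece of } [x,z]\cup[z,y])$ is a quasi-geodesic is controlled precisely by the $\Cay(G)$-length of $\hat\eta$, which is the quantity you are trying to bound. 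That is circular. There is also no guarantee the concatenation is without backtracking: $[x,w]_{[x,y]}$ may itself enter one of the same cosets $A_i$ that $\eta$ penetrates, in which case BCP (as stated in the paper) does not apply to the pair. Finally, the transition-point hypothesis only says $w$ is not $(M,c)$-deep in any $A$; it does not control where the exit vertex $u_2$ of $\hat\eta$ sits inside $A_i$, and two points at bounded $K$-distance can be arbitrarily far in $\Cay(G)$ precisely when one of them lies deep in a coset. A correct proof (as in Sisto, building on Druţu--Sapir and Osin) does not recover this directly from hyperbolicity of $K$ at this step; it uses the tree-graded structure of asymptotic cones, or equivalently the Hausdorff proximity of transition-point sets of quasi-geodesics with common endpoints (Proposition~\ref{sis12}(2)) combined with a separate excursion-bounding argument, neither of which your sketch invokes.
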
 

%%%%%%%%%%%%%%%%

%%%%%%%%%%%%%%%

We now show that $(G, \calP)$ satisfies the assumptions associated to QR boundaries if the parabolic subgroups do. We first define the  \emph{shadow} of a non-transient quasi-geodesic into a parabolic subset $A$.
\begin{defn}\label{def:shadow}
 Let $\alpha$ be a $(q, Q)$-quasi-geodesic ray emanating from $\go$, such that $\alpha$ is non-transient. By Lemma~\ref{classification}, all but a finite segment $\alpha|_{[0,t_0]}$ of $\alpha$ is contained in $N_{\rho M}(A)$. Define   $\Sh_A(\alpha)$ by composing $\alpha|_{[t_0, \infty)}$ % chktex 9 
 with the closest-point projection to $A$, and by~\cite[Lemma 2.3]{QR24} the resulting map can be tamed to be a $(q', Q')$-quasi-geodesic that is also $2(q + Q)$--Lipschitz and fellow travels $\alpha$. We call this 
 $(q', Q')$-quasi-geodesic the \emph{shadow} of $\alpha$ in $A$, and we write it as $\Sh_A(\alpha)$. 
\end{defn}

\begin{theorem}\label{assumps}
If the QR boundaries exist for each subgroup $P\in \calP$, then the QR boundary of $(G, \calP)$ exists.
\end{theorem}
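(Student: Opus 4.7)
The plan is to verify the three QR-Assumptions for $X = \Cay(G)$. QR-Assumption~0 is automatic for an infinite finitely generated group. By Lemma~\ref{classification}, every class $\bfa \in P(G)$ is either \emph{transient} or \emph{non-transient}, in the latter case having all representatives eventually contained in $N_{\rho M}(A)$ for a single $A \in \calA$. For a transient class, Lemma~\ref{classification} supplies a geodesic central element $\alpha_0 \in \bfa$ and a redirecting function $f_\bfa(q, Q) = (9q, Q)$ within the class; combined with the transitivity of redirection (Lemma~\ref{lem:transitive}), this handles both QR-Assumptions~1 and~2 for every transient $\bfa$ along the lines of~\cite{QR24}. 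The remaining and central work is to verify QR-Assumptions~1 and~2 for non-transient classes, which is where the hypothesis on the peripherals enters.

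Fix a non-transient class $\bfa$ with associated parabolic coset $A = gP$. I would take a representative $\alpha \in \bfa$ and form its shadow $\Sh_A(\alpha) \subseteq A$ as in Definition~\ref{def:shadow}. Translating by $g^{-1}$ yields a quasi-geodesic ray $\tilde\alpha$ in $P$; bounded coset penetration together with the uniform quasi-convexity of saturations (Lemma~\ref{Lem:sat-convex}) guarantees that $[\tilde\alpha] \in P(P)$ depends only on $\bfa$, so denote this class $\tilde\bfa$. By QR-Assumption~1 for $P$, the class $\tilde\bfa$ contains a $\qq_0^P$-ray central element $\underline{\tilde{a}}$. I would then set
\[
\ua := [\go, p]_{\Cay(G)} \,\cup\, g \cdot \underline{\tilde{a}},
\]
where $p$ is the starting vertex of $g \cdot \underline{\tilde{a}}$. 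The nearest-point projection surgery of Lemma~\ref{concate}, combined with bounded coset penetration, shows that $\ua$ is a $\qq_0$-ray in $G$ for a fixed $\qq_0$ depending only on $\qq_0^P$ and on the relative-hyperbolicity constants of $(G,\calP)$. The fellow-traveling of $\alpha$ with $\Sh_A(\alpha)$ (Lemma~\ref{item_fellowtravel}) then gives $\ua \in \bfa$, establishing QR-Assumption~1.

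For QR-Assumption~2, I first claim that if $\bfb \prec \bfa$ then $\bfb$ is non-transient and associated to the same coset $A$. A transient $\beta_0 \in \bfb$ cannot be uniformly redirected to $\ua$ at arbitrarily large radii: the required concatenation $\beta_0|_r \cup [\beta_0(t_r),\,\cdot\,] \cup \ua_{[\,\cdot\,,\infty)}$ would have to turn back toward $A$ from points $\beta_0(t_r)$ whose $G$-distance to $A$ grows linearly in $r$, destroying the quasi-geodesic constants. A parallel bounded-coset-penetration argument rules out non-transient $\bfb$ associated to a different coset $A' \ne A$. Hence the assignment $\bfb \mapsto \tilde\bfb$ is order-preserving from the sub-poset of non-transient classes attached to $A$ into $P(P)$, and I can redirect an arbitrary $\qq$-ray $\beta \in \bfb$ to $\ua$ at radius $r$ in three steps: (i) shadow $\beta$ into $P$, obtaining a $\qq'$-ray with $\qq'$ controlled by $\qq$; (ii) apply the $P$-redirecting function $f_{\tilde\bfa}^P$ to this shadow to produce a redirection to $\underline{\tilde a}$; (iii) lift the resulting redirection back to $G$ using the surgery lemmas~\ref{concate}, \ref{redirect11}, and~\ref{item_surgery_segment}. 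The redirecting function $f_\bfa$ obtained this way depends only on $f_{\tilde\bfa}^P$ and the relative-hyperbolicity constants.

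The hard part is step (iii): producing uniform quasi-geodesic constants for the lifted redirection. Concretely, one must verify that shadowing a $\qq$-ray in $G$ produces a $\qq'$-ray in $P$ with $\qq'$ controlled purely by $\qq$ and the relative-hyperbolicity constants, and symmetrically that the lift of an $f_{\tilde\bfa}^P(\qq')$-redirection in $P$ is an $f_\bfa(\qq)$-quasi-geodesic in $G$. Both implications rest on bounded coset penetration and on carefully tracking the entry and exit vertices of rays with respect to $N_M(A)$, so that the pieces of the lifted redirection concatenate without backtracking and form uniform quasi-geodesics in $\Cay(G)$.
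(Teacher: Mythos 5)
Your plan follows the same route the paper takes: verify QR-Assumption~0 automatically, dispose of transient classes via Lemma~\ref{classification}, and for a non-transient class with associated coset $A$ build a central element by shadowing into $A$, invoking the peripheral QR-Assumptions, and using nearest-point-projection surgery (Lemma~\ref{concate}) to run a $\qq_1$-ray from $\go$ that eventually coincides with the peripheral central element. The one place you go beyond the paper's write-up is in explicitly isolating the claim that $\bfb \prec \bfa$ (for non-transient $\bfa$) forces $\bfb$ to be non-transient with the same coset $A$, ruling out transient $\bfb$ and $\bfb$ attached to a different coset via a bounded-coset-penetration argument. The paper's proof simply redirects $\Sh_A(\alpha)$ to its peripheral central element and asserts that QR-Assumptions~1 and~2 follow, leaving the $\bfb \prec \bfa$ analysis implicit; your version makes that step visible, which is a genuine strengthening of the exposition even though the underlying mechanism (BCP plus quasiconvexity of saturations) is the same. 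Just be sure, when you write up the BCP argument that excludes different cosets or transient $\bfb$, to track constants uniformly in $r$ so that the failure of redirectability is genuinely at every scale, as Lemma~\ref{classification} and Proposition~\ref{sis12} provide.
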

\begin{proof}
By~\cite[Lemma 2.5]{QR24}, any metric space quasi-isometric to an infinite  finitely generated group satisfies QR-Assumption 0. For QR-Assumption 1, it was shown in Lemma~\ref{classification} that all transient classes have a geodesic ray with a redirecting function 
\[f_\bfa (q, Q) = (9q, Q).\]
Now we address the case where a quasi-redirecting equivalence class $[\alpha]$  is non-transient.  

Since there are only finitely many elements of $\cP$ and every such subgroup satisfies QR-Assumption 1, there exists a $\qq_0$ such that for every $A\in \calA$, every element of $\partial A$ can be represented by a central $\qq_0$-ray $\beta_0$. 
We first claim is that for every  such  $\beta_0$, there is a $\qq_1$-ray in $X$, starting at the basepoint $\go$, which eventually coincides with $\beta_0$, where $\qq_1=\qq_1(\qq_0)$. Indeed, let $\beta_0$ be a central element in  $A$ for some $A\in \calA$, and consider a nearest point projection from $\go$ to $\beta_0$. Surgery Lemma~\ref{concate} implies that there exists a $\qq_1$-ray that starts from $\go$ and eventually coincides with  $\beta_0$, where $\qq_1=\qq_1(\qq_0)$.

Now consider any non-transient $\qq$-ray $\alpha$ in $X$. By Lemma~\ref{classification}, $\alpha$ is eventually in the bounded neighborhood of  some $A \in \calA$. 
By Definition~\ref{def:shadow}, $\Sh_A(\alpha)$ is a $\qq'$-ray in $A$ for some $\qq'=\qq'(\qq)$, and $\alpha \sim \Sh_A(\alpha)$. Since $A$ satisfies QR-Assumption 1, there is a central $\qq_0$-ray $\alpha_0$ in $[\Sh_A(\alpha)]$ and a redirecting function $f_{[\alpha_0]}$ such that $\Sh_A(\alpha)$ can be  $f_{[\alpha_0]}(\qq')$-quasi-redirected to $\alpha_0$. By the previous paragraph, there is a $\qq_1$-ray  emanating from $\go$, denoted $(\alpha_0)_\go$, which eventually coincides with $\alpha_0$. Therefore, $\Sh(\alpha)$ can be $f_{[\alpha_0]}(\qq')$-quasi-redirected to  $(\alpha_0)_\go$. Since $\alpha \sim \Sh(\alpha)$, there is a redirecting function $f_{[\alpha]}$ such that every non-transient $\qq$-ray $\alpha$ in $X$ can be $f_{[\alpha]}(\qq)$-quasi-redirected to $(\alpha_0)_\go$, where $f_{[\alpha]}$ depends on $f_{[\alpha_0]}$ and the constants in Definition~\ref{def:shadow}, 
 the Transitivity Lemma (Lemma~\ref{lem:transitive}) and the Surgery Lemma~\ref{lem:surgery1}.  Thus QR-Assumptions~1 and 2 are satisfied for non-transient rays.
Combining both cases, we see that all three assumptions are always satisfied. 
\end{proof}

\begin{defn}\label{defxi}
 We define a map 
\[
\xi: \partial G \to \partial_B G
\] 
as follows. Let $\bfa \in \partial G$ and  $\alpha_0 \in \bfa$ be the central element of $\bfa$. 
If $\alpha_0$ is not transient, then by Lemma~\ref{classification}, there exists some $A \in \calA$ such that a tail of $\alpha_0$ is in a bounded neighborhood of $A$.  In this case we define \[\xi(\bfa) \coloneqq *p_A.\]

Otherwise, $\alpha_0$ is transient. By the construction and hyperbolicity of $K$, $\alpha_0$ is an unbounded unparameterized quasi-geodesic in $K$ 
and hence converges to a point $\hat{\alpha}_0$ in $\partial K$. We define
\[\xi(\bfa) \coloneqq \hat{\alpha}_0. \]
\end{defn}

\begin{lemma}
The map $\xi: \, \partial G \to \partial_B G $ is surjective. 
\end{lemma}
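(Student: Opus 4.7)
My plan is to split $\partial_B G = V_\infty K \cup \partial K$ into parabolic cone points $*p_A$ and Gromov boundary points of $K$, and exhibit a preimage under $\xi$ in each case. The parabolic case is essentially immediate from arguments already carried out in the proof of Theorem~\ref{assumps}: for $A = gP$ with $P \in \calP$ infinite, QR-Assumption 1 applied to $P$ yields a central $\qq_0$-ray $\beta_0$ in $A$, and the argument there produces a $\qq_1$-ray $\alpha$ from $\go$ that eventually coincides with $\beta_0$. The tail of $\alpha$ sits in $A \subset N_{\rho M}(A)$, so $\alpha$ is non-transient by Lemma~\ref{classification}, and by the uniqueness of the peripheral set associated to a non-transient class, Definition~\ref{defxi} gives $\xi([\alpha]) = *p_A$.

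For the remaining case, I would fix $\hat{\xi} \in \partial K$ and a $K$-geodesic ray $\ell$ from $\go$ to $\hat{\xi}$, and pick a sequence of group elements $v_n \in G \subset K$ tracking $\ell$ (for example, vertices at bounded $K$-distance from points on $\ell$ at $K$-distance roughly $n$ from $\go$), so that $\|v_n\| \geq d_K(\go, v_n) \to \infty$ since $\Cay(G) \to K$ is $1$-Lipschitz. Passing to the $\Cay(G)$-geodesic segments $\gamma_n = [\go, v_n]$, properness and the Arzelà–Ascoli theorem yield a subsequence converging, uniformly on compact sets, to a geodesic ray $\alpha$ in $\Cay(G)$ based at $\go$. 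My goal is to show that $\alpha$ is transient and that, viewed as an unparameterized quasi-geodesic in $K$, it converges to $\hat{\xi}$; by Definition~\ref{defxi} and Lemma~\ref{classification} this gives $\xi([\alpha]) = \hat{\xi}$.

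The main obstacle is the comparison between the $\Cay(G)$-geodesic $\gamma_n$ and the $K$-geodesic $\ell_n = [\go, v_n]_K$: $\gamma_n$ is typically not a $K$-quasi-geodesic because it may linger in parabolic neighborhoods. To handle this, I would invoke Proposition~\ref{sis12} to decompose $\deep_{M, c}(\gamma_n)$ as a disjoint union of excursions into $N_{\rho M}(A_i)$ for distinct parabolic sets $A_i$, and then \emph{telescope} each such excursion by replacing it with a pair of cone edges through $*p_{A_i}$; by the Bounded Coset Penetration property the resulting path $\tilde{\gamma}_n$ in $K$ is a uniform quasi-geodesic from $\go$ to $v_n$, so by $K$-hyperbolicity it fellow-travels $\ell_n$ up to a uniform constant. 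Letting $n \to \infty$, the telescoped projection $\tilde{\alpha}$ of $\alpha$ becomes an unparameterized quasi-geodesic ray in $K$ that fellow-travels $\ell$, and so converges to $\hat{\xi}$. Since $\hat{\xi} \in \partial K$ rather than $V_\infty K$, the sequence of visited parabolic sets cannot stabilize, so the transition points of $\alpha$ go to infinity and $\alpha$ is transient, completing the argument.
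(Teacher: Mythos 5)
Your proposal is correct in outline but takes a genuinely different route from the paper in the conical (Gromov boundary) case, and it is worth flagging one spot where the logic as written is slightly out of order.

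For the parabolic case the two arguments are essentially the same: the paper takes a geodesic ray inside $A$ starting at $\go_A$, pre-concatenates $[\go,\go_A]$, and cites~\cite[Lemma~4.19]{DS05} for the uniform quasi-geodesic constant, while you reuse the nearest-point-projection surgery from the proof of Theorem~\ref{assumps} to reach a central ray of $\partial P$; both produce a non-transient ray with tail in $A$ and hence land on $*p_A$ under Definition~\ref{defxi}.

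For $\hat\xi\in\partial K$ the paper is much shorter because it cites~\cite[Proposition~1.14]{Sis13}: take the $K$-geodesic ray $\alpha$ to $\hat\xi$ and invoke that proposition to obtain directly a lift $\alpha'$ that is a $\Cay(G)$-quasi-geodesic ray with uniform constants; then the central element $\alpha_0$ of $[\alpha']$ is a $\Cay(G)$-geodesic that is an unparameterized $K$-quasi-geodesic fellow-traveling $\alpha$, hence converges to $\hat\xi$. You instead build the lift from scratch: track $\hat\xi$ by group elements $v_n$, take $\Cay(G)$-geodesics $\gamma_n=[\go,v_n]$, extract an Arzel\`a--Ascoli limit $\alpha$, and argue via Proposition~\ref{sis12} plus Bounded Coset Penetration that the telescoped $K$-projection $\tilde\alpha$ fellow-travels $\ell$. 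This works, but note that $\tilde\alpha$ is an unbounded $K$-ray \emph{only} if $\alpha$ is transient; if $\alpha$ were non-transient, $\tilde\alpha$ would terminate at a cone point and you could not say it ``converges to $\hat\xi$.'' So you should first rule out non-transience (e.g.\ via BCP: a long deep excursion of $\gamma_n$ into a single $A$ starting at bounded radius, shared with $\ell_n$ at the same coset, forces the exit point of $\gamma_n$ from $N_{\rho M}(A)$ to have bounded norm, contradicting $\|\gamma_n(T_n')\|\to\infty$), and only then conclude $\tilde\alpha$ is a ray converging to $\hat\xi$. With that reordering your argument is complete; in exchange for not citing~\cite{Sis13} you pay for the limiting argument and the BCP bookkeeping yourself.
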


\begin{proof}
 Let $v \in V_\infty (K)$ be a point in the Bowditch boundary and let $A$ be the associated set in $\calA$. Let $\alpha$ be a quasi-geodesic ray that connects $[\go, \go_A]$ with a geodesic ray starting at  $\go_A$ and lie entirely in $A$. By~\cite[Lemma 4.19]{DS05} $\alpha$ is a bounded constant quasi-geodesic ray in the class of $\partial A$. It follows that $\xi([\alpha]) = v$. 
 
 Otherwise, let $v$ be a point in $\partial K$. Since $K$ is hyperbolic, there exists an equivalence class of quasi-geodesic rays associated with $v$ and in fact there exists a geodesic 
representative in this class (for instance by the Arzel\'a--Ascoli  Theorem), which we refer to as $\alpha$. Since $\alpha$ is a geodesic ray in $K$, by~\cite[Proposition 1.14]{Sis13}, there
exists a bounded constant quasi-geodesic ray  $\alpha'$ in $\Cay(G)$ that is a lift of $\alpha$. 
We claim that, for $\bfa=[\alpha']$, we have 
\[ \xi(\bfa) = v. \]
Indeed, the central element $\alpha_0$ of $\bfa$ is a geodesic in $\Cay(G)$, and an unparameterized  
quasi-geodesic in $K$. Thus it stays in a bounded neighborhood of $\alpha$ and hence 
converges to $v$. This finishes the proof. 
 \end{proof}

We now show that $\xi$ and $\xi^{-1}$ are both continuous. First we show that for every $v \in \Delta(K)$ 
and every finite subset $W \subset V(K)$, $m(v, W)$ is open in $\partial G$. It suffices to verify this when 
$W$ has one element as a finite intersection of open sets is open.

\begin{lemma}\label{516}
For every $\bfb \in \partial G$ and $p \in V(K)$ there exists $r>0$ such that  
\[
\xi(\calU(\bfb, r)) \subset  m(\xi(\bfb), p). 
\]
Therefore, $\xi$ is continuous. 
\end{lemma}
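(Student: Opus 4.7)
The case $p=\xi(\bfb)$ is immediate, since $m(\xi(\bfb),p)=\triangle K$ for any $r$. So assume $p\neq \xi(\bfb)$. The plan is to choose $r$ large enough that, for every $\Gc\in\calU(\bfb,r)$, no $K$-geodesic from $\xi(\bfb)$ to $\xi(\Gc)$ passes through $p$.

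Let $\ub$ be the central $\qq_0$-ray representing $\bfb$. Since $\ub$ converges in $K$ to $\xi(\bfb)\neq p$, I first establish a threshold $R=R(\ub,p)$ such that for every $t\geq T_R$, every $K$-geodesic from $\ub(t)$ to $\xi(\bfb)$ avoids $p$. This uses Lemma~\ref{classification} with a short case split: if $\ub$ is transient, then $\ub$ projects to an unparameterized quasi-geodesic in $K$ that leaves every $K$-ball around $p$, and a standard thin-triangle argument in the hyperbolic graph $K$ forces the geodesic to $\xi(\bfb)\in\partial K$ to avoid $p$; if $\ub$ is non-transient with $\xi(\bfb)=*p_{A'}$, then for large $t$ the point $\ub(t)$ lies in $N_{\rho M}(A')$ and the $K$-geodesic from $\ub(t)$ to $*p_{A'}$ is essentially a single cone-edge, which avoids $p\neq *p_{A'}$.

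Now choose $r\geq R$ large enough to absorb the redirection function $F_\bfb(\qq_0)$, the lift constant $D$ from Lemma~\ref{ellgeodesic}, and the hyperbolicity constants of $K$. Fix $\Gc\in\calU(\bfb,r)$ with central $\qq_0$-ray $\gamma$, and let $\eta$ be the $F_\bfb(\qq_0)$-ray redirecting $\gamma$ to $\ub$ at radius $r$: $\eta|_r=\gamma|_r$ and $\eta$ eventually coincides with $\ub$. Since $\eta\in\bfb$, the ray $\eta$ converges in $K$ to $\xi(\bfb)$, so $\gamma|_r$ is the initial segment of a quasi-geodesic ray to $\xi(\bfb)$, and the endpoint $\eta(t_r)$ lies on such a ray at $X$-radius at least $R$. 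Suppose, for contradiction, that some $K$-geodesic $\ell$ from $\xi(\bfb)$ to $\xi(\Gc)$ passes through $p$. By Lemma~\ref{ellgeodesic} I lift $\ell$ to a geodesic line $\bar\ell_0$ in $\Cay(G)$ whose $K$-projection lies in $N_D(\ell)$, so $\bar\ell_0$ passes within $K$-distance $D$ of $p$. Applying the relative thin triangle property (Proposition~\ref{Prop:thin}), together with its standard quasi-geodesic extension, to the ideal triangle on $\go,\xi(\bfb),\xi(\Gc)$ with sides $\bar\ell_0$, $\ub$ and $\gamma$ yields: transition points of $\bar\ell_0$ on the $\xi(\bfb)$-half are uniformly close (in $X$) to transition points of $\ub$, and those on the $\xi(\Gc)$-half are close to transition points of $\gamma=\eta$ on $[0,t_r]$. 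This forces $p$ to lie within bounded $K$-distance of the tail of $\ub$ beyond $X$-radius $R$, contradicting the choice of $R$.

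The main obstacle will be making the thin-triangle comparison uniform across the transient and non-transient settings and controlling the ``$K$-length'' of the $\xi(\bfb)$-half of $\bar\ell_0$: a long initial $X$-segment of $\gamma$ matching $\eta$ need not correspond to a long $K$-segment if $\gamma$ spends a lot of time deep inside a parabolic coset. Bounded coset penetration (Proposition~\ref{sis12}) is the key tool here, ensuring that parabolic excursions of $\bar\ell_0$ are coarsely matched to those of $\ub$ and $\gamma$, so that any detour through $p$ would force an incompatible excursion on one of the two reference sides.
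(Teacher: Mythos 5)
Your threshold strategy is in the right spirit but does not close, and the paper takes a more direct route that avoids the two places where your argument stalls. First, your threshold $R$ is defined by the property that $K$-geodesics from $\ub(t)$, $t\geq T_R$, to $\xi(\bfb)$ avoid $p$; but the contradiction you extract at the end is that \emph{$p$ lies within bounded $K$-distance of the tail of $\ub$ beyond $X$-radius $R$}. These are different statements — a point can be $K$-close to $\ub(t)$ without lying on any geodesic from $\ub(t)$ to $\xi(\bfb)$ — so the final step does not actually contradict the definition of $R$. Second, the relative thin triangle property only tells you a transition point of $\bar\ell_0$ near $p$ is close to \emph{one} of the other two sides; it could equally well be the $\gamma$-side. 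You implicitly assume it is close to $\ub$, but nothing in your setup controls which side it falls on, and for the $\gamma$-side you have no threshold analogous to $R$. Third, the mismatch you flag yourself — that a long initial $X$-segment of $\gamma$ agreeing with $\eta$ may be $K$-short if $\gamma$ dives deep into a peripheral coset — is exactly where the argument needs work, and invoking ``bounded coset penetration'' without executing it leaves the core of the transient case open.

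By contrast, the paper's Case~I stays entirely in $K$ and never lifts: it picks $r$ so that $(\beta_0)_r$ is a $(1,0)$-transition point with $d_K(\go,(\beta_0)_r)$ far past $d_K(\go,\pi_{\xi(\bfb)}(p))$, observes that redirection at radius $r$ forces the central ray $\alpha_0$ of any $\bfa\in\calU(\bfb,r)$ to be $D(9,0)$-close to $(\beta_0)_r$ at some point $q$, and then uses the ideal quadrilateral $((\beta_0)_r,\xi(\bfb),\xi(\bfa),q)$ in $K$ to show $\ell$ lies near $\beta_0|_{\geq r}\cup\alpha_0|_{\geq r}\cup[(\beta_0)_r,q]$, which is far from $p$. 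Choosing $(\beta_0)_r$ to be a transition point is precisely what guarantees the $K$-distance from $\go$ is large, resolving the $K$-vs-$X$ worry. In Case~II the paper uses Hruska's entry-point estimate in $N_\tau(A)$ rather than a thin-triangle comparison, which is again more economical. If you want to salvage your lift-to-$\Cay(G)$ route, you would need to (a) restate the threshold as a $K$-distance bound between $p$ and the tail of $\ub$, (b) establish the analogous statement for $\gamma$ beyond radius $r$, and (c) show via transition points (not just ``bounded coset penetration'') that $X$-radius $r$ translates into $K$-depth. At that point you would have essentially reconstructed the paper's argument with extra steps.
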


\begin{proof}
Let $\beta_0$ be the central element of $\bfb$.

\noindent Case~I\@: We first assume that  $\bfb$ is transient. 
Consider $\beta_0$ as a subset of $K$ and let $\pi_{\xi(\bfb)}(p)$ be the closest point projection 
of $p$ to $\beta_0$ in $K$ (see Figure~\ref{Fig:quad}). 
Since $K$ is hyperbolic, $\pi_{\xi(\bfb)}(p)$ has bounded diameter in $K$. 
%Let $t_0 > 0$ such that all $\beta_0(t)$ for $t > t_0$ has norm  greater than the norm of $\pi_{\xi(\bfb)}(p)$. Lastly, since $K$ is $\delta$-hyperbolic, there exists a constant $C(\delta)$ such that all
%quadrilaterals are $C(\delta)$-thin, i.e. every edge is contained in the union of the $C(\delta)$-neighborhoods of the three other edges.
As $\bfb$ is transient, $\beta_0$ has transition points that are arbitrarily far from $\go$. 
Choose $r>0$ such that, $(\beta_0)_r$ is a $(1,0)$--transition point of $\beta_0$ and 
\begin{equation}\label{case91}
d_K(\go, (\beta_0)_r) \gg d_K(\go, \pi_{\xi(\bfb)}(p))+D(9,0)+2\delta,
\end{equation}
where $\delta$ is the hyperbolicity constant of $K$, $D(9,0)$ is as in~\cite[Corollary 8.8]{QY24}, and 
$d_K(\go, \pi_{\xi(\bfb)}(p))$ is the maximum distance in $K$ between any point in $\pi_{\xi(\bfb)}(p)$ to $\go$. 

Let $\bfa \in  \calU(\bfb, r)$ and let $\alpha_0$ be the central element in $\bfa$. Since $(\beta_0)_r$ is a transition 
point, there exists a point $q \in \alpha_0$ such that  
 \[
 d(q, (\beta_0)_r) < D(9,0).
 \]
 Thus $\Norm{q} \geq r - D(9,0)$. As $K$ is hyperbolic, there exists  a geodesic $\ell$ in $K$ connecting
$\xi(\bfa)$ to $\xi(\bfb)$. The line $\ell$ is an edge in the ideal quadrilateral $((\beta_0)_r, \xi(\bfb), \xi(\bfa), q)$
hence it stays in a bounded neighborhood of 
\[
\beta_0|_{\geq r} \cup \alpha_0|_{\geq r} \cup [(\beta_0)_r, q].
\] 
Thus $\ell$ is far from $p$ in $K$, and hence does not pass through $p$. Therefore, 
$\xi(\bfa) \in m(\xi(\bfb), p)$.

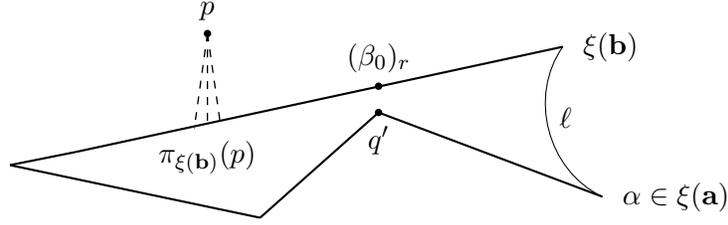
\begin{figure}
\centering
\begin{tikzpicture}[scale=0.7]
\tikzstyle{vertex} =[circle,draw,fill=black,thick, inner sep=0pt, minimum size=2pt] 
\node[vertex] at (4.5,9)[label=above:$p$]{};

%\tikzstyle{every node}=[font=\LARGE]
\draw[thick] (2,6.75) -- (11.25,8.75);
\draw[thick] (2,6.75) -- (0.75,6.5);
\draw [line width=0.5pt, dashed] (4.5,9) -- (4.25,7.2);
\draw [line width=0.5pt, dashed] (4.5,9) -- (4.5,7.25);
%\draw [line width=0.5pt, dashed] (4.5,7.5) -- (4.5,7.25);
\draw [line width=0.5pt, dashed] (4.5,9) -- (4.75,7.25);
\node at (4.5,7.35) [label=below:$\pi_{\xi(\bfb)}(p)$]{};
\draw [thick](0.75,6.5) -- (5.5,5.5);
\draw[thick] (5.5,5.5) -- (7.75,7.5);
\node at  (11.25,8.75)[label=right:$\xi(\bfb)$]{};
\node[vertex] at  (7.75,8)[label=above:$(\beta_0)_r$]{};
\node[vertex] at  (7.75,7.5)[label=below:$q'$]{};
\draw[thick] (7.75,7.5) -- (12,5.9);
\node at (12,5.9)[label=right:$\alpha \in \xi(\bfa)$]{};
 \draw(12,5.9) to[bend left=50](11.25,8.75){};
\node at (10.8, 7.4) [label=right:$\ell$]{};
\end{tikzpicture}
\caption{A transition point $(\beta_0)_r$ separates the point $p$ and any geodesic line that connects $\xi(\bfb)$ and $\xi(\bfa)$.}\label{Fig:quad}
\end{figure}

\noindent Case II\@: Suppose  that $\bfb$ is not transient.  By Lemma~\ref{classification}, there exists a unique set $A \in \calA$ such that $\xi(\bfb) = *p_A$.  Let $\beta_0$ be the central element of $\bfb$. Let 
\[
r \gg  2\big(\Norm{\go_A}+ \Norm{p} \big).
\]
Let $\bfa \in \calU(\bfb, r)$ and let $\alpha_0$ be the central element of $\bfa$. Then $\alpha_0$ can be 
$f_\bfb(1,0)$--quasi-redirected to $\beta_0$ at radius $r$. Let $\gothic{e} \in A$ be the point where $\alpha_0$ 
leaves the $M_0$--neighborhood of $A$, where $M_0\coloneqq M(1,0)$ is as in Proposition~\ref{sis12}.

%
%Thus 
% $\gamma|_r$ is on a $f_\bfb(\qq)$-segment that starts and ends on $\beta_0$ and thus by quasi-convexity of saturations (Lemma~\ref{Lem:sat-convex}), $\gamma|_r$ is in a $\tau(f_\bfb(\qq))$-neighborhood of $A$. We we have that $\gamma$ exits $N_{\tau(f_\bfb(\qq))}(A)$ at a point $q$ that occurs on $\gamma$ further from the basepoint than $\gamma_r$.
 
 Consider any geodesic segment or ray $\ell$ in $K$ connecting $\xi(\bfa)$ to $*p_A$. By~\cite[Proposition 8.13]{Hru10}, 
 $\ell$ enters $N_{\tau(f_\bfb(1,0))}(A)$ at a point that is boundedly close to $\gothic{e}$. Since $*p_A$ is an endpoint 
 of $\ell$, $*p_A$ does not appear in interior of $\ell$ and hence, for any other vertex $x$ in $\ell$, we have 
 $\Norm{x} \geq \Norm{\gothic{e}} - D(1,0)$. This implies $\Norm{x} \gg \Norm{p}$ and hence $\ell$ does not pass through $p$. 
 Therefore, 
 \[
 \bfa \in m(\xi(\bfb), p)
 \]
 and hence $\calU(\bfb, r) \subset m(\xi(\bfb), p)$. 
 \end{proof}

%%%%%%%%%%%%%%%%%%%%

Now we are ready to conclude:
\begin{theorem}\label{surjective}
Let  $G$ be a relatively hyperbolic group with respect to subgroups $P_1, P_2,\dots,P_k$. Assume that $\partial P$ exists for each Cayley graph of the subgroups $P \in \calP$, then the quasi-redirecting boundary $\partial G$ exists and $\partial G$ surjects onto $\partial_B G$.
\end{theorem}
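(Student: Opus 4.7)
The plan is to assemble the results established earlier in this section. Existence of $\partial G$ is immediate from Theorem~\ref{assumps}, which shows that QR-Assumptions~0, 1, and 2 all hold for $G$ under the hypothesis on the peripheral subgroups. It then remains to produce a continuous surjection $\partial G \to \partial_B G$, which is precisely the map $\xi$ of Definition~\ref{defxi}.

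For surjectivity I would treat the two types of points of $\partial_B G$ separately. If $v = *p_A \in V_\infty(K)$ is a parabolic point, I would take a geodesic ray $\eta$ inside $A$ based at the closest-point projection $\go_A$ of $\go$ to $A$, and form $\alpha = [\go, \go_A] \cup \eta$; by Lemma~\ref{concate} this is a quasi-geodesic ray in $\Cay(G)$, and because its tail is contained in $A$ the dichotomy of Lemma~\ref{classification} forces $\alpha$ to be non-transient, so $\xi([\alpha]) = *p_A$. If $v \in \partial K$ is a conical limit point, I would instead take a geodesic ray in $K$ converging to $v$ and use Lemma~\ref{ellgeodesic} to lift it to a geodesic line $\overline{\alpha}_0$ in $\Cay(G)$ whose projection to $K$ stays within a bounded neighborhood of the original ray; this lift is a transient ray whose projection to $K$ converges to $v$, so $\xi([\overline{\alpha}_0]) = v$.

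Continuity then follows directly from Lemma~\ref{516}, since the basic open sets $m(v, W)$ defining the topology on $\partial_B G$ are indexed by finite $W \subset V(K)$ and hence are finite intersections of sets of the form $m(v, p)$ for $p \in V(K)$, and a finite intersection of open sets is open. Combining existence, surjectivity, and continuity then completes the argument.

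The genuine technical obstacle is really packaged inside Lemma~\ref{516}, where the two cases require quite different tools: for transient $\bfb$ one exploits Proposition~\ref{Prop:thin} to show that a sufficiently deep $(1,0)$-transition point along $\beta_0$ separates any geodesic $\ell \subset K$ joining $\xi(\bfa)$ to $\xi(\bfb)$ from the fixed vertex $p$, while for non-transient $\bfb$ the Bounded Coset Penetration property forces any geodesic in $K$ from $\xi(\bfa)$ to $*p_A$ to enter $A$ near the exit point $\gothic{e}$ of $\alpha_0$ from $N_{M_0}(A)$, hence outside a prescribed large ball around $\go$. Once those estimates are in hand, the surjectivity statement itself requires only elementary concatenation and lifting arguments.
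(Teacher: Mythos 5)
Your proposal is correct and follows essentially the same route as the paper: existence via Theorem~\ref{assumps}, surjectivity of $\xi$ by exhibiting a non-transient preimage of each parabolic point and a transient preimage of each conical point, and continuity via Lemma~\ref{516}, which is exactly what the paper assembles (the paper's own proof of Theorem~\ref{surjective} is a one-line citation of its surjectivity lemma and Lemma~\ref{516}). The only cosmetic difference is that for the conical case you invoke Lemma~\ref{ellgeodesic} to lift a $K$-geodesic ray, whereas the paper cites Sisto's Proposition~1.14 to extract a bounded-constant quasi-geodesic lift; both yield the same conclusion.
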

\begin{proof}

Since the map $\xi : \, \partial X \to \partial_B X$ is onto and $\xi$ is continuous, we conclude that $\xi : \, \partial G \to \partial_B G$ is a surjective homomorphism.
\end{proof}
\begin{cor}
Let  $G$ be a relatively hyperbolic group with respect to subgroups $P_1, P_2,\dots,P_k$. Then the conical limit points of $K$ are embedded as a subset in $P(G)$.
\end{cor}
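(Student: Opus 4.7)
The statement to prove asserts that the conical limit points of $K$, viewed as a subspace of $\partial_B G$, inject (indeed, embed homeomorphically) into $P(G)$. Since Theorem~\ref{surjective} already gives continuous surjectivity of $\xi \colon \partial G \to \partial_B G$ and Lemma~\ref{516} gives continuity, the plan is to prove that $\xi$ restricts to an injective, continuously invertible map on $\xi^{-1}(\{\text{conical limit points}\})$. By Lemma~\ref{classification}, every class mapping to a point of $\partial K$ is transient and carries a $(1,0)$--transient geodesic central representative, so the real work is injectivity over conical limit points and continuity of the restriction of $\xi^{-1}$.

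The main step is as follows. Suppose $\bfa,\bfb \in \partial G$ are transient with $\xi(\bfa)=\xi(\bfb)=v$, where $v$ is a conical limit point, and let $\alpha_0,\beta_0$ be their central geodesic representatives. Both are $(1,0)$--transient rays in $\Cay(G)$ whose projections to $K$ converge to $v \in \partial K$. Fix $r>0$ and choose a $(1,0)$--transition point $p=\alpha_0(t)$ with $\|p\|>r$, which exists by conicity of $v$. Since $K$ is hyperbolic, $\alpha_0$ and $\beta_0$ fellow-travel in $K$ with constant depending only on $\delta$, so I can pick $s$ large with $d_K(p,\beta_0(s))$ bounded. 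Applying the relative thin triangle property (Proposition~\ref{Prop:thin}) to the $\Cay(G)$--triangle $(\go,\alpha_0(t),\beta_0(s))$ and using Proposition~\ref{sis12}(2) to control the side $[\alpha_0(t),\beta_0(s)]$ (whose endpoints are $K$--close to $v$, forcing its $(1,0)$--transition points to be Hausdorff close to those on $\alpha_0$ and $\beta_0$), I find a $(1,0)$--transition point $q$ on $\beta_0$ with $d_{\Cay(G)}(p,q)\leq D$ for a uniform $D$. Concatenating via the Surgery Lemma~\ref{lem:surgery1}, the path
\[
\gamma \,=\, \alpha_0|_{[0,t]} \,\cup\, [p,q] \,\cup\, \beta_0|_{[s_q,\infty)}
\]
is a $(q_*,Q_*)$--quasi-geodesic with constants depending only on $D$; it coincides with $\alpha_0$ up to radius $r$ and eventually coincides with $\beta_0$. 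Hence $\alpha_0 \preceq \beta_0$ with uniform redirecting constants, and by symmetry $\bfa=\bfb$.

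For continuity of the inverse on the image, I mirror Case~I of the proof of Lemma~\ref{516}. Given a transient $\bfb$ with $v=\xi(\bfb)$ conical and a basic neighborhood $\calU(\bfb,r)$, choose a $(1,0)$--transition point $(\beta_0)_r$ far enough out; then any finite vertex set $W\subset V(K)$ containing $(\beta_0)_r$ and chosen to separate the $K$--geodesic from $v$ to any $p$ as in~\eqref{case91} gives
\[
\xi^{-1}(m(v,W)) \,\cap\, \xi^{-1}(\{\text{conical limit points}\}) \,\subset\, \calU(\bfb,r),
\]
because for any transient $\bfa$ in this intersection its central representative must pass $(1,0)$--close to $(\beta_0)_r$ and then can be redirected to $\beta_0$ by the injectivity argument above.

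The main obstacle is the injectivity step: producing the transition point $q$ on $\beta_0$ uniformly close to $p$ on $\alpha_0$ in the Cayley graph metric. Conicity of $v$ is essential, since two rays to the same parabolic limit point can fellow-travel in $K$ while diverging arbitrarily in $\Cay(G)$. Once the $D$--bound on $d_{\Cay(G)}(p,q)$ is in place, the rest is a routine application of the surgery lemmas together with the machinery already set up in Lemmas~\ref{classification}, \ref{516}, and Proposition~\ref{sis12}.
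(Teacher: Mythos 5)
Your proof is substantially more detailed than the paper's, which disposes of the corollary in two sentences by appealing to Case~I of Lemma~\ref{516} and directly asserting the resulting one-to-one correspondence, without explicitly arguing injectivity of $\xi$ on transient classes. You instead undertake that argument head-on: given two transient classes with the same conical image $v$, you attempt to quasi-redirect one central geodesic to the other by matching transition points via the relative thin-triangle inequality and then concatenating with the surgery lemmas. That is a valid and genuinely different route from what is on the page, and your remark that conicity is essential (parabolic limit points admit rays that fellow-travel in $K$ while diverging in $\Cay(G)$) is correct.

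However, the transition-point chase as written has a gap. Applying Proposition~\ref{Prop:thin} to the triangle $(\go, \alpha_0(t), \beta_0(s))$ only guarantees that the $(1,0)$--transition point $p$ on $\alpha_0|_{[0,t]}$ is $\delta_1$--close to a point on the \emph{union} $\beta_0|_{[0,s]}\cup[\alpha_0(t),\beta_0(s)]$. Two issues remain. First, $p$ could land near the third side $[\alpha_0(t),\beta_0(s)]$; since $\alpha_0(t)$ and $\beta_0(s)$ are only $K$--close and need not be $\Cay(G)$--close (the two rays may separate inside a peripheral coset well before the radius of $p$), you cannot directly invoke Proposition~\ref{sis12}(2) to ``control'' that side, and this case must be ruled out or handled separately. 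Second, even when $p$ lands $\delta_1$--close to some $\beta_0(s')$, you still have to show that $\beta_0(s')$ is itself close to a $(1,0)$--transition point \emph{of $\beta_0$}: being near a transition point of a different path does not by itself make a point non-deep on its own path, so an argument (via the deep/transition dichotomy, or via quasi-convexity of saturations, Lemma~\ref{Lem:sat-convex}, together with the uniform visibility of transition points in $K$) is required. Both gaps look fixable with the tools already in Section~3, but as written the injectivity step is not complete.
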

\begin{proof}
Case I of Lemma~\ref{516} shows that if $\bfb$ has a transient geodesic ray representative then it maps to exactly one point in $\partial K$. Therefore there is a 1-1 map between the set of conical limit points of $G$ and the set of  transient classes in $P(G)$.
\end{proof}

\section{Quasi-geodesics in templates}
\label{sec:template}
\subsection{Templates}
In this section, we will revisit the concept of \textit{templates} introduced in \cite{CK02} and study its quasi-redirecting boundary. Roughly speaking, templates are essentially piecewise Euclidean Hadamard spaces that can be embedded in $\R^3$. They approximate certain subspaces of the spaces we are studying and contain a great deal of information about the spaces at infinity. Our analysis of quasi-redirecting for quasi-geodesics in templates will serve as a foundation for studying quasi-redirecting in quasi-geodesics of Croke-Kleiner admissible groups in the subsequence section.

%We will use our understanding of quasi-redirecting of quasi-geodesics of templates in this section to investigate the quasi-redirecting of quasi-geodesics  of admissible groups in the next sections. 

\begin{defn}
 \label{defn:template}
A \textit{template} is a connected Hadamard space  $\mathcal{T}$ (indeed piecewise Euclidean) obtained from the disjoint collection of Euclidean planes $\{F\}_{F \in \operatorname{Wall}_{\mathcal{T}}}$ (called \textit{walls}) and  Euclidean strips $\{ S \simeq I \times \mathbb R\}_{\mathcal{S} \in \operatorname{Strip}_{\mathcal{T}}}$ (where $I$ is a closed interval of $\mathbb R$) by isometric gluing subject to the following conditions.
\begin{enumerate}
    \item The boundary geodesics of each $S \in \operatorname{Strip}_{\mathcal{T}}$, which we will refer to as \textit{singular geodesics}, are glued isometrically to distinct walls in $\operatorname{Wall}_{\mathcal{T}}$.
    \item Each wall $F \in \operatorname{Wall}_{\mathcal{T}}$ is glued to at most two strips, and the gluing lines are not parallel.
 \end{enumerate}   
\end{defn}

A template $\mathcal{T}$ can be visualized in $\mathbb{R}^3$ with its walls as parallel planes and its strips meeting the walls orthogonally.

%One can think of template $\mathcal{T}$ as sitting in $\R^3$ so that its walls are parallel planesand the strips meet the walls orthogonally.
\begin{itemize}
    \item Two walls $F_1, F_2 \in \operatorname{Wall}_{\mathcal{T}}$ are \textit{adjacent} if there is a strip $S \in \operatorname{Strip}_{\mathcal{T}}$ such that $S \cap F_1 \neq \emptyset$ and $S \cap F_2 \neq \emptyset$.

    \item  A wall
is an \textit{interior wall} if it is incident to two strips, and a strip is an \textit{interior strip}
if it is incident to two interior walls. The sets of interior walls and strips are denoted $\operatorname{Wall}^{0}_{\mathcal{T}}$ and $\operatorname{Strip}^{0}_{\mathcal{T}}$  respectively.

\item  There is an associated \textit{angle function}
$$
\theta \colon \operatorname{Wall}^{0}_{\mathcal{T}} \to (0, \pi)
$$ that assigns to each interior wall the angle between the oriented singular geodesics $F \cap S_1$, $F \cap S_2$ where $S_1$ and $S_2$ are the two strips incident to $F$.
\end{itemize}
%%%%%%%%%%%%%%%%%%%%%%%%%%%%%%%%%%%
%\end{lem}
%\begin{lem}\label{lem:block-redirect} 
%     Let $X_v$ be a vertex space, let $r\geq 0$ and $R \geq(1+\rho) \cdot r>0$ be a pair of radii from $\go_v$, then there exists a $q'(\rho)$-quasi-geodesic segment $\gamma$ between two points $p, q \in X_v$ with radii $r, R$ such that $\gamma \in X_v$ and $\gamma$ is in the annulus in $X_v$ around $o_v$.   
%\end{lem}
%\begin{proof}
%the old proof.
%\end{proof}

%\begin{lem}
%Let $\alpha, \beta$ be $q$-quasi-geodesic rays with infinite itinerary in $X$ such that there exists a $X_v$ where the first intersection $p = \alpha \cap X_v$ and some point on $\beta \cap X_v$ satisfies the assumption of the preceding lemma, then $\alpha$ can be $q'$-redirected to $\beta$ at radius $r'$ where $r' > ??\norm{p}$.
 %   \end{lem}
  %  \begin{proof}
   % ??
    %\end{proof}
%%%%%%%%%%%%%%%%%%%%%%%%

\subsection{Backward spiral paths in templates}
\label{subsec:backwardspiral}
In this subsection, let  
\[
\mathcal{T} := F_0 \cup S_{01} \cup F_1 \cup S_{12} \cup \dots \cup F_n
\]  
be a template in $\mathbb{R}^3$ as defined in Definition~\ref{defn:template}, with a constant angle function $\theta \equiv \pi/2$. That is, every pair of singular geodesics in the same wall meet at a right angle. We refer to such a template as a \textit{right-angled template} throughout this paper.  

For notation, we set:  
\begin{enumerate}
    \item A fixed basepoint $\gothic{o} \in F_0$.  
    \item The intersection of two adjacent strips:  
    \[
    p_i := S_{(i-1)i} \cap S_{i(i+1)}.
    \]  
    \item For each $i \geq 1$, the two singular geodesics in the wall $F_i$:  
    \[
    f_i^- := F_i \cap S_{(i-1)i}, \quad f_i^+ := F_i \cap S_{i(i+1)}.
    \]  
\end{enumerate}

To understand how quasi-geodesics behave in templates, we introduce two fundamental types of paths: $L$--paths, which stay inside a wall, and $Z$--paths, which cross between strips and walls. These paths will be key building blocks in our construction of backward spiral paths.

\begin{defn}
For each $ i \geq 1 $, an \textit{$ L $-path} in a wall $ F_i $ of $ \mathcal{T} $ is a concatenation of two geodesics $ l $ and $ l' $ in $ F_i $, where $ l $ is parallel to the singular geodesic $ f_i^- $ and $ l' $ is parallel to $ f_i^+ $.  

A \textit{$ Z $-path} in $ \mathcal{T} $ consists of an $ L $-path in $ F_i $ followed by a geodesic segment in a strip adjacent to $ F_i $, perpendicular to the singular geodesic $ f_i^- $.

\end{defn} 

\begin{rem}
\label{rem:extended-L-path}
Since an $ L $-path consists of two perpendicular segments in the Euclidean plane $ \mathbb{E}^2 $, it is a $(\sqrt{2},0)$-quasi-geodesic. By the geometry of the template $ \mathcal{T} $ and Lemma~\ref{concate}, a $ Z $-path is a $(3\sqrt{2}, 0)$-quasi-geodesic.

\end{rem}

\subsection*{Construction of backward spiral path}
We construct a \textit{backward spiral path} by concatenating a sequence of $L$--paths and $Z$--paths, ensuring that each step moves deeper into the template in a controlled way.
 For a path $ \gamma $, we denote its initial and terminal points by $ \gamma_{-} $ and $ \gamma_{+} $, respectively.

Given $ q \ge 1, Q \ge 0, \delta \in (0,1] $, and a constant $ \rho > \frac{q}{\delta} + Q $, we construct the following paths:
\begin{enumerate}
\item Given $ x_n $ in the last wall $ F_n $, we attach  a $ Z $-path  
    \[
    Z_n \coloneqq v_n \cdot h_n \cdot \eta_{n-1},
    \]  
    where 
    \begin{itemize}
        \item $ \eta_{n-1} $ is a geodesic in the strip $ S_{(n-1)n} $ perpendicular to the singular geodesic $ f_n^- $.
        \item $ v_n $ is a geodesic segment based at $ x_n $ and parallel to the singular geodesic $ f_n^- $ such that $ \operatorname{Length}(v_n) \ge 2 q Q $.
        \item $ h_n $ is a geodesic segment in $ F_n $ parallel to $ f_n^+ $ with terminal point $ (h_n)_+ \in f_n^- $.
    \end{itemize}   Note that $ \operatorname{Length}(h_n) $ is the distance from $ x_n $ to $ f_n^- $.

\item  Repeat the process for each wall $F_{n-1}, F_{n-2}, \ldots, F_1$. At each step $i$,  we attach a $ Z $-path 
%Next, at the terminal point $ (Z_n)_+ $ of $ Z_n $, we attach a $ Z $-path  
    \[
    Z_{n-1} \coloneqq v_{n-1} \cdot h_{n-1} \cdot \eta_{n-2},
    \]  to the terminal point $ (Z_i)_+ $ 
    where 
    \begin{itemize}
        \item $ v_{i-1} $ is a geodesic segment in $ F_{i-1} $ based at $ (Z_i)_+ $ and parallel to the singular geodesic $ f_{i-1}^- $;

        \item $ h_{i-1} $ is a geodesic segment in $ F_{i-1} $ parallel to $ f_{i-1}^+ $;

        \item  $ \eta_{n-2} $ is a geodesic in the strip $ S_{(n-2)(n-1)} $ perpendicular to $ f_{n-1}^- $. Additionally, we require  
    \[
    v_{n-1} > \rho \cdot \max \left\{ d((Z_n)_+, (Z_n)_-), h_{n-1} \right\}.
    \]
    \end{itemize} 

\item  We continue this pattern to define extended $ L $-paths:  
   \[
   Z_{n-2}, Z_{n-3}, \ldots, Z_i, \ldots, Z_1.
   \]

\item Terminate the sequence by attaching a final geodesic ray $Z_0$ in $F_0$. Unlike previous steps, this final segment is a straight geodesic without an additional $L$--path.

\end{enumerate}

The resulting concatenation:
$$\mathcal{Z}_{q, Q, \delta, \rho, x_n} : = Z_{n} \cdot Z_{n-1} \cdots Z_1$$ is called  a \textit{backward spiral path} (see Figure~\ref{backward}).

\begin{figure}[htb]
\centering 
%\def\svgwidth{\columnwidth}
%\input{star.pdf_tex}
 %\resizebox{0.8\textwidth}{!}{\input{Fig1.pdf_tex}}
 \def\svgwidth{0.5\textwidth}
    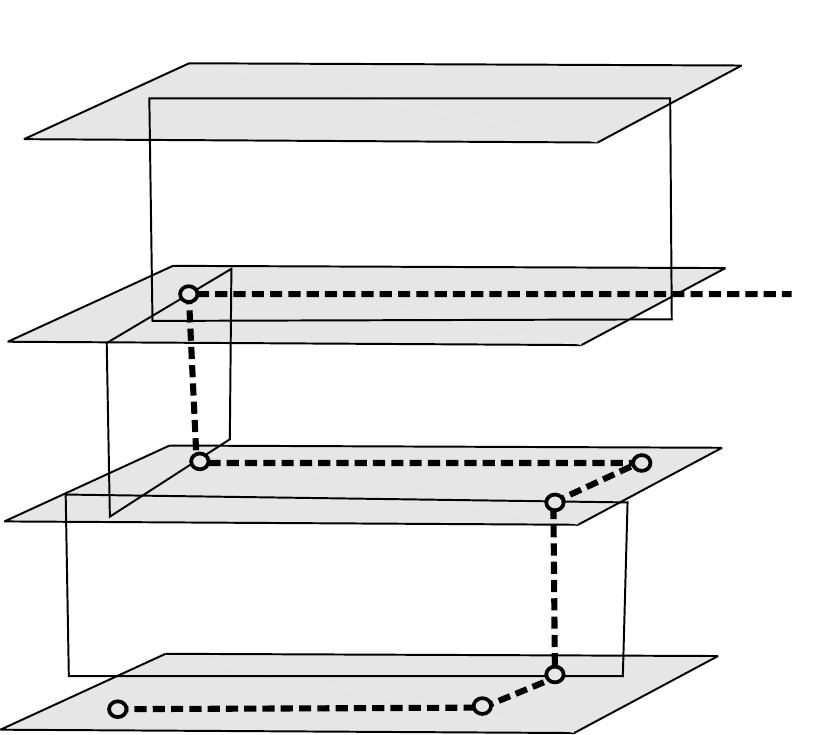
\caption{The figure illustrates a portion of a backward spiral path in a right-angled template}
\label{backward}
\end{figure}

\begin{rem}
\label{rem:extended-L-pathsarequasigeodesics}
    For each $ i $, since $ v_i \cdot h_i $ is a concatenation of two perpendicular segments in a plane, it is a $ (\sqrt{2}, 1) $-quasi-geodesic. Since $ \eta_{i-1} $ lies in the strip $ S_{(i-1)i} $ and is perpendicular to the singular geodesic $ f_i^- $, it follows that $ (h_i)_+ $ is the closest point on $ F_i $ to $ (\eta_{i-1})_+ $. Hence, $ Z_i $ is a $ (3\sqrt{2}, 1) $-quasi-geodesic by Lemma~\ref{concate}. Similarly, from the geometry of the template $ \mathcal{T} $, $ (\eta_{i-1})_+ $ is the closest point on $ S_{(i-1)i} \cup F_i $ to $ v_{i-1} $, and therefore $ Z_i \cdot v_{i-1} $ is a $ (9\sqrt{2}, 1) $-quasi-geodesic by Lemma~\ref{concate}.

According to the geometry of $ \mathcal{T} $, we have $ d((h_{i-1})_+, x) > \operatorname{Length}(v_{i-1}) $ for all $ x \in Z_i $. Recall that  
\[
\operatorname{Length}(v_{i-1}) > \rho \operatorname{max} \{ d((Z_i)_+, (Z_i)_-), h_{i-1} \}.
\]  
Thus, we have $ d((h_{i-1})_+, x) > \operatorname{Length}(h_{i-1}) $. As a result, $ (v_{i-1})_+ $ is the closest point on $ Z_i \cdot v_{i-1} $ to $ (h_{i-1})_+ $. By Lemma~\ref{concate}, $ Z_i \cdot v_{i-1} \cdot h_{i-1} $ is a $ (27\sqrt{2}, 1) $-quasi-geodesic. Similarly, $ Z_i \cdot Z_{i-1} = Z_i \cdot v_{i-1} \cdot h_{i-1} \cdot \eta_{i-2} $ is a $ (81\sqrt{2}, 1) $-quasi-geodesic.

For each point $ u \in Z_i $, we have $ d(u, Z_{i-1}) \ge \operatorname{Length}(v_{i-1}) $. Since $ \operatorname{Length}(v_{i-1}) > \rho \operatorname{max} \{ d((Z_i)_+, (Z_i)_-), h_{i-1} \} $, it follows that  
\[
d(u, Z_{i-1}) > \rho d((Z_i)_+, (Z_i)_-)
\]  
for all points $ u $ in $ Z_i $.
\end{rem}

\subsection*{Backward spiral paths are uniform quasi-geodesics}
We will now show that backward spiral paths are quasi-geodesics with uniform quasi-geodesic constants. To do so, we need the following result.
%We are going to show that backward spiral paths are quasi-geodesics with uniform quasi-geodesic constants. To do so, we need the following result.

\begin{prop}
\label{prop:constructQg}
Let $ X $ be a metric space. Given constants $ q \ge 1 $, $ Q \ge 0 $, and $ \delta \in (0,1] $, and for every positive constant $ \rho > q/\delta + Q $, there exist uniform constants $ L = L(\rho, q, Q, \delta) $ and $ C = C(\rho, q, Q) $ such that the following property holds:

Let
\[
\gamma := \gamma_1 \cdot \gamma_2 \cdots \gamma_n
\]
be a concatenation of $ (q, Q) $-quasi-geodesics $ \gamma_i $, such that the following conditions are satisfied:
\begin{enumerate}
    \item \label{item1} $ d((\gamma_1)_-, (\gamma_1)_+) \ge 2qQ $.
    \item \label{item2} $ (\gamma_i)_+ = (\gamma_{i+1})_- $ for $ 1 \le i \le n-1 $.
    \item \label{item3} The concatenation $ \gamma_i \cdot \gamma_{i+1} $ is a $ (q, Q) $-quasi-geodesic for each $ 1 \le i \le n-1 $.
    \item \label{item4} $ d((\gamma_i)_-, (\gamma_i)_+) \ge \rho d((\gamma_{i-1})_-, (\gamma_{i-1})_+) $ for $ 2 \le i \le n $.
    \item \label{item5} For any point $ u \in \gamma_{i+1} $, we have $ d(u, (\gamma_i)_-) \ge \delta \, d((\gamma_i)_-, (\gamma_i)_+) $.
\end{enumerate}
Then $ \gamma $ is a $ (L, C) $-quasi-geodesic.

\end{prop}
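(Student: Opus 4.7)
The plan is to verify both quasi-geodesic inequalities for $\gamma = \gamma_1 \cdots \gamma_n$, parameterizing each $\gamma_k$ on a closed interval $I_k$ and concatenating to parameterize $\gamma$ on $\bigcup I_k$. Write $p_k := (\gamma_k)_+ = (\gamma_{k+1})_-$ and $L_k := d((\gamma_k)_-, (\gamma_k)_+)$.

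\textbf{Upper (Lipschitz) bound.} For $x = \gamma(s) \in \gamma_i$ and $y = \gamma(t) \in \gamma_j$ with $i \le j$, apply the triangle inequality along the vertices $p_i, p_{i+1}, \ldots, p_{j-1}$. The individual $(q,Q)$-quasi-geodesic property bounds $d(x, p_i)$ and $d(p_{j-1}, y)$ linearly in parameter distance, and condition~(4) bounds the telescoping sum $\sum_{k=i+1}^{j-1} L_k$ by a geometric series dominated by $L_{j-1} \cdot \rho/(\rho-1)$. Combining these gives $d(\gamma(s), \gamma(t)) \le L|s-t| + C$ for suitable $L, C$. This part is routine.

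\textbf{Lower bound.} The core is showing $d(\gamma(s), \gamma(t)) \ge |s-t|/L - C$. When $i = j$, the individual $(q,Q)$-qg property of $\gamma_i$ suffices; when $j = i+1$, condition~(3) applied to the concatenation $\gamma_i \cdot \gamma_{i+1}$ suffices. The nontrivial case is $j \ge i+2$: set $w := p_{j-2} = (\gamma_{j-1})_-$. Condition~(5), applied with index $j-1$, gives
\[
 d(y, w) \ge \delta L_{j-1}.
\]
On the other hand, the triangle inequality along $x, p_i, p_{i+1}, \ldots, p_{j-2}$, together with the diameter bound $d(x, p_i) \le q^2 L_i + q^2Q + Q$ on the $(q,Q)$-quasi-geodesic $\gamma_i$ and the geometric growth of condition~(4), yields
\[
 d(x, w) \le \Bigl(\tfrac{q^2}{\rho} + \tfrac{1}{\rho-1}\Bigr) L_{j-1} + q^2 Q + Q.
\]
Subtracting, $d(x, y) \ge \bigl(\delta - q^2/\rho - 1/(\rho-1)\bigr) L_{j-1} - q^2Q - Q$. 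The hypothesis $\rho > q/\delta + Q$, combined with condition~(1) ($L_1 \ge 2qQ$, so that $L_{j-1} \gg Q$ by the geometric growth), makes the coefficient strictly positive and the additive error negligible, giving $d(x, y) \gtrsim L_{j-1}$. Finally, using the qg lower bound on parameter lengths and condition~(4) once more, $|s-t|$ is bounded above by a uniform multiple of $L_{j-1}$, converting this into $d(x, y) \ge |s-t|/L - C$.

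\textbf{Main obstacle.} The delicate step is calibrating the constants: the diameter bound on a $(q,Q)$-quasi-geodesic piece introduces a factor of $q^2$, which must be absorbed by $\rho$ via the growth condition. Condition~(1) is what makes the additive terms $Q$ and $q^2 Q$ negligible compared to the dominant $L_{j-1}$ (since $L_{j-1} \ge L_1 \rho^{j-2} \gg Q$), so the estimates are genuinely linear rather than being spoiled by accumulated additive errors. The precise form of $L = L(\rho, q, Q, \delta)$ and $C = C(\rho, q, Q)$ is then extracted by tracking the constants through both bounds, with special care at corner cases where $s, t$ are close to the concatenation vertices $p_k$.
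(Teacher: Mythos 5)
Your proof follows essentially the same route as the paper's: parameterize $\gamma$ piece by piece, prove the Lipschitz upper bound by telescoping with the geometric growth from condition~(4), and for the lower bound (in the nontrivial case $j \ge i+2$) anchor at the initial vertex of the last piece containing~$y$, use condition~(5) to bound $d(y,\text{anchor})$ from below and conditions~(3),(4) to bound $d(x,\text{anchor})$ from above, subtract, and finally convert back to parameter time. This matches the paper's two-claim structure exactly, down to the choice of anchor point and the way conditions~(4) and~(5) are invoked; the only difference is bookkeeping notation ($p_k$ versus $\gamma(a_k)$).
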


\begin{proof}
Let $ 0 = a_0 < a_1 < a_2 < \cdots < a_n $ such that $ \gamma_i = \left. \gamma \right|_{[a_{i-1}, a_i]} $. Now, let $ t_1 $ and $ t_2 $ be distinct points in $ [a_0, a_n] $. To simplify the notation, we define:
\[
\abs{\gamma_i} := d\left(\gamma(a_{i-1}), \gamma(a_i)\right)
\]
for $ i = 1, 2, 3, \dots, n $.
\textbf{Claim 1:}  
\[
d\left(\gamma\left(t_1\right), \gamma\left(t_2\right)\right) \leq L (t_2 - t_1)+ C
\] where $L$ and $C$ defined explicitly in the proof.

We consider the case where $ t_1 \in [a_k, a_{k+1}] $ and $ t_2 \in [a_j, a_{j+1}] $ with $ j-k \geq 1 $. We have:
\[
t_2 - t_1 \geq a_j - a_{k+1} = \sum_{s=k+2}^{j} (a_s - a_{s-1})
\]

By (\ref{item4}), we know $ \abs{\gamma_s} \geq \rho \abs{\gamma_1} > Q+1 $, so we obtain:
\[
\abs{a_s - a_{s-1}} \geq \frac{\rho \abs{\gamma_1} - Q}{q} > \frac{1}{q}
\]
Thus:
\[
t_2 - t_1 \geq \sum_{s=k+2}^{j} (a_s - a_{s-1}) \geq \frac{j-k-1}{q}
\]

Next, by the triangle inequality and since $ a_{k+1} - a_j \leq 0 $, we have:
\begin{align*}
 d(\gamma(t_1), \gamma(t_2)) &\leq d(\gamma(t_1), \gamma(a_{k+1})) + \sum_{s=k+2}^{j} d(\gamma(a_s), \gamma(a_{s-1})) + d(\gamma(a_j), \gamma(t_2))   \\~\\
 &\le q (t_2 - t_1) + q \sum_{s=k+2}^{j} (a_{s} - a_{s-1}) + (j-k-1) Q + 2Q \\~\\
 &\leq q (t_2 - t_1) + q (t_2 - t_1) + qQ(t_2 -t_1) + 2Q \\~\\
 &= (2q + qQ) (t_2 -t_1) + 2Q
\end{align*}

\textbf{Claim~2:}$$d\left(\gamma\left(t_1\right), \gamma\left(t_2\right)\right) \ge (1/L) (t_2 -t_1) - C$$By (\ref{item3}), $\gamma_i \cdot \gamma_{i+1}$ is a $(q, Q)$--quasi-geodesic for every $i$, we only need to consider the case where $t_1 \in\left[a_k, a_{k+1}\right]$ and $t_2 \in\left[a_j, a_{j+1}\right]$ with $j \geq k+ 2$. By the triangle inequality,
\begin{equation}
\tag{$\diamondsuit$}
\label{equation1}
d\left(\gamma\left(t_2\right), \gamma\left(t_1\right)\right) \geq d\left(\gamma\left(t_2\right), \gamma\left(a_{j-1}\right)\right)-d\left(\gamma\left(a_{j-1}\right), \gamma\left(t_1\right)\right) 
\end{equation}
By (\ref{item4}), we have

\begin{equation}
\tag{$\clubsuit$}
\label{equation2}\sum_{i=1}^{j-1} \abs{\gamma_i} \le \frac{1}{\rho-1}\abs{\gamma_j} \le \frac{2}{\rho}\abs{\gamma_j}
\end{equation}

From the triangle inequality, we have:
\begin{align*}
 d\left(\gamma\left(t_1\right), \gamma\left(a_{j-1}\right)\right) &\leq d\left(\gamma\left(t_1\right), \gamma\left(a_{k+1}\right)\right) + \sum_{i=k+2}^{j-1} \abs{\gamma_i}   \\~\\
 &\leq q ( a_{k+1} - a_k ) + Q + \sum_{i=k+2}^{j-1} \abs{\gamma_i} \\~\\
 & \leq q \left( \sum_{i=k+1}^{j-1} \abs{\gamma_i} \right) + q Q + Q
\end{align*}

This expresses the upper bound on the distance $ d(\gamma(t_1), \gamma(a_{j-1})) $ in terms of the lengths of the segments $ \gamma_i $.

%From the triangle inequality  we have\begin{align*}d\left(\gamma\left(t_1\right), \gamma\left(a_{j-1}\right)\right) & \leq d\left(\gamma\left(t_1\right), \gamma\left(a_{k+1}\right)\right)+\sum_{i=k+2}^{j-1} \abs{\gamma_i} \\ &\leq q ( a_{k+1}-a_k ) + Q+ \sum_{i=k+2}^{j-1} \abs{\gamma_i} \\& \le qQ + q \abs{\gamma_{k+1}}  + Q+\sum_{i=k+2}^{j-1} \abs{\gamma_i} \\&\leq q  \left(\sum_{i=k+1}^{j-1} \abs{\gamma_i} \right) + q Q + Q\end{align*}

By applying (\ref{item5}), we have:

\[
d(\gamma(t_2), \gamma(a_{j-1})) \ge \delta \, d(\gamma(a_j), \gamma(a_{j-1})) = \delta \, \abs{\gamma_j}
\]

Then, using inequality (\ref{equation2}), we get:
\begin{align*}
 d(\gamma(t_1), \gamma(a_{j-1})) &\le q \left(\sum_{i=k+1}^{j-1} \abs{\gamma_i} \right) + qQ + Q    \\~\\
 &\le \frac{2q}{\rho} \abs{\gamma_j} + qQ + Q \\~\\
 &\le \frac{2q}{\delta \rho} d(\gamma(t_2), \gamma(a_{j-1})) + qQ + Q
\end{align*}

This provides an upper bound for $ d(\gamma(t_1), \gamma(a_{j-1})) $ in terms of $ d(\gamma(t_2), \gamma(a_{j-1})) $.

By substituting the previous result into inequality (\ref{equation1}) and applying the fact from (\ref{item3}) that $ \gamma_s \cdot \gamma_{s+1} $ is a $(q, Q)$-quasi-geodesic, we obtain:
\begin{align*}
   d\left(\gamma(t_2), \gamma(t_1)\right) &\ge d\left(\gamma(t_2), \gamma(a_{j-1})\right) - d\left(\gamma(a_{j-1}), \gamma(t_1)\right)  \\~\\
   &\ge (1 - \frac{2q}{\delta \rho}) d (\gamma(t_2), \gamma(a_{j-1})) - qQ - Q \\~\\
   &\ge \frac{\delta \rho - 2q}{\delta \rho} \left( \frac{1}{q} \left|t_2 - a_{j-1}\right| - Q \right) - qQ - Q \\~\\
   & = \frac{\delta \rho - 2q}{\delta \rho} \frac{1}{q} \left|t_2 - a_{j-1}\right| - \frac{(\delta \rho - 2q) Q}{\delta \rho} - qQ - Q \\~\\
   & \ge c_1 \left|t_2 - a_{j-1}\right| - c_2
\end{align*}
where $ c_1 = c_1(\rho, q) $ and $ c_2 = c_2 (\rho, q, Q) $. We note that $ c_1 > 0 $ because $ \rho > \frac{2q}{\delta} $.

Recall that
\[
a_s - a_{s-1} \geq \frac{\rho |\gamma_1| - Q}{q} \geq 2qQ,
\]
which implies
\[
|\gamma_s| \geq \frac{1}{q} |a_s - a_{s-1}| - Q \geq \frac{1}{2q} |a_s - a_{s-1}|.
\]

From inequality (\ref{equation2}) and the fact that $ \gamma_j $ is a $ (q, Q) $-quasi-geodesic, we obtain:
\[
\frac{2}{\rho} \left( q |a_j - a_{j-1}| + Q \right) \geq \frac{2}{\rho} |\gamma_j| \geq \sum_{i=1}^{j-1} |\gamma_i| \geq \frac{1}{2q} \sum_{i=1}^{j-1} |a_i - a_{i-1}|.
\]
This simplifies to
\[
\frac{1}{2q} a_{j-1} \geq \frac{1}{2q} (a_{j-1} - t_1),
\]
which gives the bound
\[
a_{j-1} - t_1 \leq c_3 (a_j - a_{j-1}),
\]
for some constant $ c_3 = c_3 (\rho, q, Q) $.

Next, we have:
\[
(t_2 - t_1) = (t_2 - a_j) + (a_j - a_{j-1}) + (a_{j-1} - t_1),
\]
which is bounded by
\[
(t_2 - a_{j-1}) \left( 2 + c_3 \right),
\]
leading to
\[
(t_2 - a_{j-1}) \geq \frac{1}{2 + c_3} (t_2 - t_1).
\]

Thus,
\[
d(\gamma(t_2), \gamma(t_1)) \geq \frac{1}{L} (t_2 - t_1) - C,
\]
where $ L = \frac{2 + c_3}{c_1} $ and $ C = c_3 $.

By adjusting $ L $ and $ C $ if necessary, we conclude that $ \gamma $ is an $ (L, C) $-quasi-geodesic, as shown in Claim 1 and Claim 2.

\end{proof}

\begin{cor}
\label{cor:backwardlogisquasigeodesic}
    Let $ q = 81 \sqrt{2} $, $ Q = 1 $, $ \delta \in (0,1] $, and $ \rho > \frac{q}{\delta} + Q $. For $ x_n \in F_n $, the backward spiral path $ \mathcal{Z}_{q, Q, \delta, \rho, x_n} $ is an $ (L, C) $-quasi-geodesic, where $ L = L(q, Q, \delta, \rho) $ and $ C = C(q, Q, \delta, \rho) $.
\end{cor}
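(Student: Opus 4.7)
The plan is to verify the five hypotheses of Proposition~\ref{prop:constructQg} with parameters $q = 81\sqrt{2}$, $Q = 1$, and the given $\delta, \rho$, applied to the pieces $\gamma_i := Z_{n-i+1}$ of the concatenation $\mathcal{Z}_{q,Q,\delta,\rho,x_n} = \gamma_1 \cdot \gamma_2 \cdots \gamma_n$. The matching-endpoints hypothesis (2) holds by construction, and the quasi-geodesic hypotheses on the individual $\gamma_i$ and on the consecutive concatenations $\gamma_i \cdot \gamma_{i+1}$ in (3) follow directly from Remark~\ref{rem:extended-L-pathsarequasigeodesics}, which establishes that each $Z_j$ is a $(3\sqrt{2},1)$-quasi-geodesic and each $Z_j \cdot Z_{j-1}$ is a $(81\sqrt{2},1)$-quasi-geodesic.

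The remaining hypotheses (1), (4), and (5) all rely on a lower bound for the endpoint-to-endpoint distance of each $Z$-path. The key geometric input is a projection argument onto the singular geodesic $f_j^-$: since $\mathcal{T}$ is CAT(0) and $f_j^-$ is a convex line, the nearest-point projection $\pi_j \colon \mathcal{T} \to f_j^-$ is $1$-Lipschitz. By the construction of $Z_j = v_j \cdot h_j \cdot \eta_{j-1}$, the segment $v_j$ is parallel to $f_j^-$ in the wall $F_j$, while $h_j$ is perpendicular to $f_j^-$ in $F_j$ and $\eta_{j-1}$ is perpendicular to $f_j^-$ in the strip $S_{(j-1)j}$. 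Therefore $\pi_j$ collapses $h_j$ and $\eta_{j-1}$ and maps $v_j$ isometrically, yielding
\[
d((Z_j)_-, (Z_j)_+) \;\geq\; d\bigl(\pi_j((Z_j)_-), \pi_j((Z_j)_+)\bigr) \;=\; \operatorname{Length}(v_j).
\]

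With this estimate in hand, hypothesis (1) follows from the requirement $\operatorname{Length}(v_n) \geq 2qQ$ built into step (1) of the construction, and hypothesis (4) follows from the requirement $\operatorname{Length}(v_{j}) > \rho \cdot d((Z_{j+1})_-, (Z_{j+1})_+)$ built into step (2). For hypothesis (5), I will combine the final observation of Remark~\ref{rem:extended-L-pathsarequasigeodesics}, namely that $d(u, Z_{j-1}) \geq \operatorname{Length}(v_{j-1})$ for all $u \in Z_j$, which by symmetry of the distance yields $d(Z_{j-1}, Z_j) \geq \operatorname{Length}(v_{j-1})$, with the same construction inequality. Since $\rho > q/\delta + Q > 1/\delta \geq 1 \geq \delta$, the resulting chain gives the required bound $d(u, (\gamma_i)_-) \geq \delta\, d((\gamma_i)_-, (\gamma_i)_+)$ for every $u \in \gamma_{i+1}$.

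The main point that needs care, and the only step that is not a direct citation, is the $1$-Lipschitz projection argument establishing $d((Z_j)_-, (Z_j)_+) \geq \operatorname{Length}(v_j)$; once this lower bound is available, the verification of hypotheses (1), (4), (5) reduces to short algebraic manipulations of the parameters $q, Q, \delta, \rho$, and Proposition~\ref{prop:constructQg} then immediately supplies uniform constants $L = L(q, Q, \delta, \rho)$ and $C = C(q, Q, \delta, \rho)$.
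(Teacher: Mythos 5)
Your approach matches the paper's exactly: verify hypotheses (1)--(5) of Proposition~\ref{prop:constructQg} using Remark~\ref{rem:extended-L-pathsarequasigeodesics}. The $1$-Lipschitz projection argument you introduce for the bound $d((Z_j)_-, (Z_j)_+) \geq \operatorname{Length}(v_j)$ is correct and gives a clean derivation of an estimate the paper's Remark invokes only as ``according to the geometry of $\mathcal{T}$''. With that bound, hypotheses (1) and (4) do reduce to the construction's own requirements ($\operatorname{Length}(v_n)\geq 2qQ$ and $\operatorname{Length}(v_{j-1})>\rho\,d((Z_j)_-,(Z_j)_+)$), as you state.

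The step you use for hypothesis (5), however, does not hold up. You cite the Remark's assertion that $d(u, Z_{j-1}) \geq \operatorname{Length}(v_{j-1})$ for all $u \in Z_j$ and conclude $d(Z_{j-1}, Z_j) \geq \operatorname{Length}(v_{j-1})$. But $Z_j$ and $Z_{j-1}$ are consecutive pieces of a concatenation, so they share the endpoint $(Z_j)_+ = (Z_{j-1})_-$; the set distance $d(Z_{j-1}, Z_j)$ is therefore $0$, and the Remark's asserted inequality fails at $u = (Z_j)_+$. (This imprecision is present in the paper's own Remark as well.) The chain $d(u, (Z_j)_-) \geq d(Z_{j-1}, Z_j) \geq \cdots$ consequently cannot establish (5). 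A sound argument must bound $d(u, (Z_j)_-)$ directly for $u \in Z_{j-1}$: for $u$ on $h_{j-1}$ or $\eta_{j-2}$ the separating singular line $f_{j-1}^+$ yields $d(u, (Z_j)_-) \geq \operatorname{Length}(v_{j-1}) > \rho\, d((Z_j)_-, (Z_j)_+)$, while for $u$ on $v_{j-1}$ the projection $\pi_{f_j^-}$ sends all of $v_{j-1}$ to the single point $(h_j)_+$ and sends $(Z_j)_-$ to $p_j$, giving $d(u, (Z_j)_-) \geq \operatorname{Length}(v_j)$, which then has to be compared against $\delta\, d((Z_j)_-, (Z_j)_+)$; packaging all of this as a bound on the set distance between adjacent pieces is what makes the stated inequality literally false.
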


\begin{proof}
    By Remark~\ref{rem:extended-L-pathsarequasigeodesics}, the backward spiral path $ \mathcal{Z}_{q, Q, \delta, \rho, x_n} $ satisfies conditions (\ref{item1}), (\ref{item2}), (\ref{item3}), (\ref{item4}), and (\ref{item5}) in Proposition~\ref{prop:constructQg}. Hence, $ \mathcal{Z}_{q, Q, \delta, \rho, x_n} $ is an $ (L, C) $-quasi-geodesic, with constants $ L $ and $ C $ given by Proposition~\ref{prop:constructQg}.
\end{proof}

\subsection{Forward spiral paths in templates: Type I}
\label{subsection:logspiralTypeI}
In this section, we construct forward spiral paths of Type I, which play a crucial role in understanding the quasi-geodesic structure in right-angled templates. These paths exhibit a controlled growth pattern, ensuring that they satisfy the quasi-geodesic property.

Let $$ \mathcal{T} := F_0 \cup S_{01} \cup F_1 \cup S_{12} \cup \ldots \cup F_n $$ be a right-angled template in $ \mathbb{R}^3 $, as defined in Definition~\ref{defn:template}. Recall  for each $ 1 \le i \le n-1 $, we define $ p_i $ as the intersection point of the two singular geodesics $ f_{i}^{-} $ and $ f_{i}^{+} $ in $ F_i $.

\begin{lem} \cite[Lemma 3.3]{NY23}
There exists a uniform constant $ \mu \ge 1 $ such that for each $ n \ge 2 $, let $ \mathcal{T} := F_0 \cup S_{01} \cup F_1 \cup S_{12} \cup \ldots \cup F_n $ be a right-angled template. Then the concatenation $ [p_1, p_2] \cdots [p_{n-2}, p_{n-1}] $ is a $ (\mu, \mu) $-quasi-geodesic.

\end{lem}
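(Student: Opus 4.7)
The plan is to compare the length of $\gamma := [p_1, p_2] \cdots [p_{n-2}, p_{n-1}]$ with the intrinsic distance $d_{\mathcal{T}}(p_1, p_{n-1})$ in the \CAT(0) template. The upper inequality $d_{\mathcal{T}}(p_1, p_{n-1}) \le |\gamma|$ is automatic since $\gamma$ is a path in $\mathcal{T}$, so the content of the lemma is the reverse inequality $|\gamma| \le \mu\, d_{\mathcal{T}}(p_1, p_{n-1}) + \mu$ uniformly in $n$, together with the same estimate applied to every sub-concatenation of $\gamma$.

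Because $\mathcal{T}$ has the chain structure $F_0 \cup S_{01} \cup \cdots \cup F_n$ and is \CAT(0), any competing path $\sigma$ from $p_1$ to $p_{n-1}$ must visit the walls and strips in order, entering each intermediate wall $F_i$ through the singular geodesic $f_i^-$ and exiting through $f_i^+$. The right-angled hypothesis $\theta(F_i) = \pi/2$ forces these two singular geodesics to meet perpendicularly at $p_i$, and this is the local rigidity I would exploit: after unfolding the triple $S_{(i-1)i} \cup F_i \cup S_{i(i+1)}$ into a planar Euclidean region (by taking compatible half-planes of $F_i$ on each side of $f_i^-$ and $f_i^+$, which is possible because these three pieces are flat and meet only along perpendicular lines through $p_i$), both the sub-concatenation $[p_{i-1}, p_i] \cup [p_i, p_{i+1}]$ and the corresponding sub-arc of $\sigma$ embed as planar curves with prescribed endpoints in the two outer strips.

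In this planar development $[p_{i-1}, p_i] \cup [p_i, p_{i+1}]$ appears as an $L$-shape of two perpendicular segments through $p_i$, while any competing sub-arc of $\sigma$ is a connected planar arc with prescribed entry and exit ends. An explicit Euclidean minimization, optimizing over the entry position on $f_i^-$ and the exit position on $f_i^+$, bounds the length of $\sigma$'s sub-arc below by a universal fraction of the length of the $L$-shape; the key input is that $f_i^-$ and $f_i^+$ are perpendicular at $p_i$, so the diagonal that could in principle shortcut the $L$-shape is itself not much shorter. Partitioning $\sigma$ across the walls and summing these local bounds gives $|\sigma| \ge (1/\mu)|\gamma| - O(1)$; the same argument applied to each sub-concatenation $[p_i, p_{i+1}] \cdots [p_{j-1}, p_j]$ upgrades this to the full $(\mu, \mu)$-quasi-geodesic conclusion with $\mu$ independent of $n$.

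The main obstacle I expect is the planar minimization at each $F_i$: producing a lower bound on the length of any connecting planar arc in the developed region, in terms of the $L$-shape through $p_i$, that is independent of the strip widths and of the positions of the singular geodesics inside each wall. The perpendicularity of $f_i^-$ and $f_i^+$ is essential here, since if the angle $\theta(F_i)$ were allowed to be arbitrarily small the constant produced by the minimization would degenerate to $0$. I expect this local step to reduce to a short explicit Euclidean calculation yielding an absolute value for $\mu$.
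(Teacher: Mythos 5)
The paper does not actually prove this statement; it cites \cite[Lemma~3.3]{NY23}. Evaluated on its own terms, your overall plan --- bound $\operatorname{Length}(\gamma)$ above by a uniform multiple of $d_{\mathcal T}(p_1, p_{n-1})$, run the same estimate on all sub-concatenations, and exploit the right angle at each $p_i$ --- is the correct strategy, but the key local estimate you assert is not true, and the argument does not close as written.

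You claim that after developing a triple $S_{(i-1)i} \cup F_i \cup S_{i(i+1)}$, the length of the competing sub-arc (optimized over entry position on $f_i^-$ and exit position on $f_i^+$) is bounded below by a universal fraction of the length of the $L$-shape through $p_i$. This fails: if the competing path $\sigma$ enters and exits $F_i$ near the orthogonal projection of $p_{i}$ onto the boundaries $f_{i-1}^+$ and $f_{i+1}^-$ of the two adjacent strips, and those strips are narrow, the local sub-arc of $\sigma$ has length close to the sum of the two widths, which can be arbitrarily small relative to the $L$-shape. The perpendicularity does \emph{not} give a local ratio bound; its role is global. What it gives is the wall-detour lower bound $\operatorname{Length}(\sigma \cap F_i) \ge \sqrt{\bar a_i^2 + \bar b_i^2} \ge \tfrac{1}{\sqrt 2}(\bar a_i + \bar b_i)$, where $\bar a_i, \bar b_i$ are the distances of $\sigma$'s exit/entry points from $p_i$ along $f_i^+$ and $f_i^-$, and that cost must be paired against the reduction of $\sigma$'s length inside the two \emph{adjacent} strips, via an inequality of the form $\sqrt{w_{i+1}^2 + \delta_{i+1}^2} \le \sqrt{w_{i+1}^2 + (a_i - b_{i+1})^2} + \bar a_i + \bar b_{i+1}$. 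Summing this over the chain telescopes (the boundary terms vanish because $\sigma$ starts and ends at $p_1$ and $p_{n-1}$) and yields $\operatorname{Length}(\gamma) \le \sqrt{2}\operatorname{Length}(\sigma)$. Without this cross-wall cancellation, adding up naive local ratios does not produce a uniform $\mu$. A minor additional issue: a full neighborhood of $p_i$ in $\mathcal T$ cannot be developed isometrically into the Euclidean plane, since the link of $p_i$ has total angle exceeding $2\pi$; the development only makes sense for the thin slab actually traversed by the two competing arcs.
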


\subsection*{Construction of forward spiral paths in templates: Type I}
Given a sufficiently small $ \rho_0 \in (0, 1/4) $, suppose there exists $ C > 0 $ such that for all $ i $,
$$ w_i := d(p_i, p_{i+1}) \le C (1 + \rho_0)^i. $$

\begin{enumerate}
    \item (Choosing geodesic rays). For each $i$, we define two geodesic rays:
    \begin{itemize}
        \item $ \ell_i^+ \subset f_i^+ $ based at $ p_i $.
        \item $ \ell_{i+1}^- \subset f_{i+1}^- $ based at $ p_{i+1} $.
    \end{itemize}
    We ensure that the projection of $ p_{i+1} $ onto $ f_i^+ $ lies on $ \ell_i^+ $. Additionally, we require that $ \ell_i^+ $ and $ \ell_{i+1}^- $ fellow-travel (see Figure~\ref{choiceofelli}).
    \item (Defining the forward spiral segments) For every $r > C$ we define
\[
\kappa_i : = r (1 + \rho_0)^i
\]
On $\ell_{i}^{+}$, choose $z_i$ so that $d(z_i, p_i) = \kappa_i$.
Let the width of the strip $ \mathcal{S}_{i(i+1)} $ be denoted by $ \delta_i $.

 Let $ y_i $ be the projection of $ z_i $ onto $ f_{i+1}^- $. 
 Since $ w_i < \kappa_i $, we have 
 \begin{itemize}
     \item $ y_i \in \ell_{i+1}^- $,
     \item $ d(y_i, z_i) = \delta_i $,
     \item $ d(y_i, p_{i+1}) \leq \kappa_i $.
 \end{itemize} 

\item (Concatenating the path)
The \textit{forward spiral path of Type I to $F_k$}, with $k < n$, denoted by $ L_{r,k} $, is the path obtained by concatenating the following segments:
\[
L_{r,k}(\alpha) \coloneqq \zeta|_{[0, r]} \cup [\zeta(r), z_1] \cup [z_1, y_1] \cup [y_1, z_2] \cup \dots \cup [z_{k-1}, y_{k-1}]
\] where 
\begin{itemize}
    \item $ \zeta $ is the given main flat ray.
    \item   $ \{ z_i \}, \{ y_i \} $ represent the intermediate points in the construction of the path (see Figure~\ref{template}) as in the prervious step.
\end{itemize}
\end{enumerate}

\begin{figure}[htb]
\centering 
%\def\svgwidth{\columnwidth}
%\input{star.pdf_tex}
 %\resizebox{0.8\textwidth}{!}{\input{Fig1.pdf_tex}}
 \def\svgwidth{0.5\textwidth}
 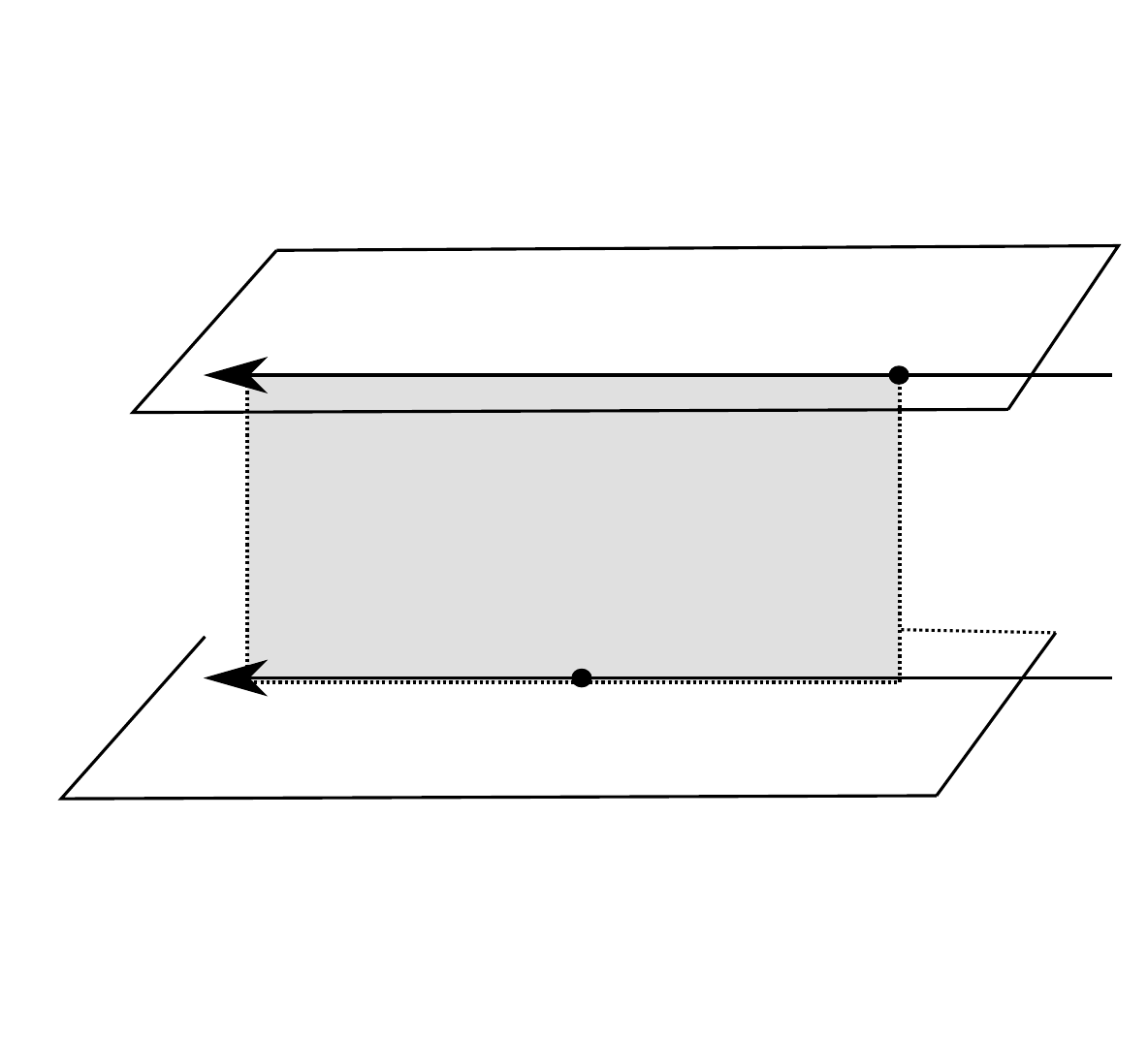
\caption{The figure illustrates how we choose geodesic rays $\ell_{i}^{+}$ and $\ell_{i+1}^{-}$ on the strip. Our choice of constant $\kappa_i > w_i = d(p_i, p_{i+1})$ ensures that the projection point $y_i$ of $z_i$ into $f_{i+1}^{-}$ will lie in $\ell_{i+1}^{-}$ and $d(y_i, p_{i+1}) \le \kappa_i$.}\label{choiceofelli}
\end{figure}

\subsection*{Forward spiral paths are uniform quasi-geodesics}
We note that if $i < j < n$ then $L_{r,i}$ is a subpath of $L_{r, j}$.

\begin{figure}[htb]
\centering 
%\def\svgwidth{\columnwidth}
%\input{star.pdf_tex}
 %\resizebox{0.7\textwidth}{!}{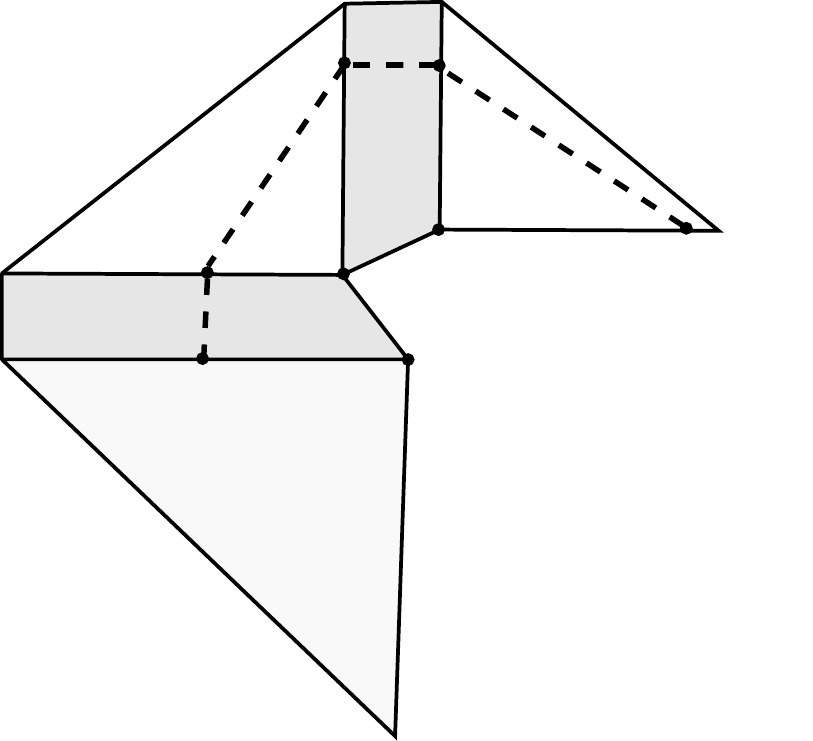}
 \def\svgwidth{0.5\textwidth}
 \input{Fig2.pdf_tex}
\caption{The figure illustrates a portion of $L_{r, k}$ which is a concatenation of dashed segments. The sum of all dashed segments is bounded above by an exponential function $(1+\rho_0)^k$ up to some multiplicative constant.}\label{template}
\end{figure}

%The \textit{forward spiral path} $\mathcal{L}_{r,k}$ is the path obtained by concatenating $L_{r,k}(\alpha)$ with the geodesic ray in $\mathfrak{f}_{k}^{-}$ based at $y_{k-1}$ that does not contain $p_{k}$.

Since $r > C$ we have

\[
\sum_{i=1}^{k-1} w_i < r \sum_{i=1}^{k-1} (1 + \rho_0)^i = r \frac{1 + \rho_0}{\rho_0} \bigl( (1 + \rho_0)^{k-1} - 1 \bigr),
\]
and
\[
\sum_{i=1}^{k} \kappa_i = \sum_{i=1}^{k} r (1 + \rho_0)^i = r \frac{1 + \rho_0}{\rho_0} \bigl( (1 + \rho_0)^k - 1 \bigr) \leq \frac{r (1 + \rho_0)^{k+1}}{\rho_0}.
\]

From our construction, we have $ d(z_i, y_i) = \delta_i \leq w_i $ and $ d(y_i, z_{i+1}) \leq d(y_i, p_{i+1}) + d(p_{i+1}, z_{i+1}) \leq \kappa_i + \kappa_{i+1} $.

For $ i < j $, let us denote the subpath of $ L_{r,k} $ from $ y_{i-1} $ to $ z_j $ by $ L_{r,k}|_{[y_{i-1}, z_j]} $. We then have:

\[
\operatorname{Length} \left( L_{r,k}|_{[y_{i-1}, z_j]} \right) \leq d(y_{i-1}, z_i) + d(z_i, y_i) + \cdots + d(z_{j-1}, y_{j-1}) + d(y_{j-1}, z_j).
\]

This simplifies to:
\begin{align*}
    \operatorname{Length} \left( L_{r,k}|_{[y_{i-1}, z_j]} \right) &\le 2 \sum_{m=i-1}^{j} \kappa_m + 2 \sum_{m=i-1}^{j-1} w_m \\~\\
    &\le 4r \sum_{m=i-1}^{j} (1 + \rho_0)^m \leq \frac{4r}{\rho_0} (1 + \rho_0)^{j+1}.
\end{align*}

By~\cite[Proposition~3.8]{NY23}, the subpath $ [p_i, p_{i+1}] \cdots [p_{j-1}, p_j] $  is a $ (\mu, \mu) $-quasi-geodesic. By Lemma~\ref{concate}, the concatenation

\[
\sigma \coloneqq [y_{i-1}, p_i] \cdot [p_i, p_{i+1}] \cdots [p_{j-1}, p_j] \cdot [p_j, z_j]
\]

is a $ (9\mu, 9\mu) $-quasi-geodesic. Specifically, $ p_i $ is the closest point on $ [p_i, p_{i+1}] \cdots [p_{j-1}, p_j] $, and thus $ [y_{i-1}, p_i] \cdot [p_i, p_{i+1}] \cdots [p_{j-1}, p_j] $ is a $ (3\mu, 3\mu) $-quasi-geodesic by Lemma~\ref{concate}. Similarly, as $ p_j $ is the closest point on $ [y_{i-1}, p_i] \cdot [p_i, p_{i+1}] \cdots [p_{j-1}, p_j] $ to $ z_j $, it follows from Lemma~\ref{concate} that $ \sigma $ is a $ (9\mu, 9\mu) $-quasi-geodesic.

We then have

\[
d(y_{i-1}, z_j) \ge \frac{\operatorname{Length}(\sigma)}{9\mu} - 9\mu \ge \frac{d(p_j, z_j)}{9\mu} - 9\mu \ge \frac{\kappa_j}{9\mu} - 9\mu = \frac{r (1 + \rho_0)^j}{9\mu} - 9\mu.
\]

Since $ 81\mu^2 < \frac{r}{2} < \frac{r}{2}(1 + \rho_0)^j $, it follows that

\[
\frac{r (1 + \rho_0)^j}{9\mu} - 9\mu \ge \frac{r (1 + \rho_0)^j}{18\mu}.
\]

We thus can control the upper bound of the ratio:
\[
\frac{ \operatorname{Length} \bigl ( L_{r, k}|_{[y_{i-1}, z_j]} \bigr )}{ d(y_{i-1}, z_{j})} \le \frac{72 \mu }{\rho_0} (1+ \rho_0)
\]

A similar argument shows that there is an uniform constant $\Delta = \Delta(\mu, \rho_0)$ such that  for any points $x, y$ in $L_{r, k}$, we have
\[
\frac{\operatorname{Length}\bigl (L_{r,k}|_{[x,y]} \bigr )}{ d(x,y)} \le \Delta
\]
In other words, $L_{r, k}$ is a $(\Delta, \Delta)$--quasi-geodesic for every $k < n$.

\subsection*{Growths of forward spiral paths of Type I}
The collection of forward spiral paths $ L_{r,k}$ with $k <n$ has the following property: 
\begin{align*}
    \operatorname{Length}(L_{r,k+1}) - \operatorname{Length}(L_{r,k}) &\leq d(y_{k-1}, p_k) + d(p_k, z_k) + d(z_k, y_k) \\
    &\leq \kappa_{k-1} + \kappa_k + \delta_k \\
    &< 2r(1+\rho_0)^k + \delta_k.
\end{align*}

\textit{Claim:} There exists a constant $ \rho = \rho(\rho_0) $, which tends to 0 as $ \rho_0 \to 0 $, such that:

\[
2r(1+\rho_0)^k < \rho \operatorname{Length}(L_{r,k}) 
\]

for sufficiently large $ k $.

Indeed, we have:
\begin{align*}
 \operatorname{Length}(L_{r,k}) &= r + d(\zeta(r), z_1) + d(z_1, y_1) + \cdots + d(z_{k-1}, y_{k-1}) \\
 &\geq r + \sum_{i=1}^{k-1} \kappa_i \\
 &= r + r \sum_{i=1}^{k-1} (1+\rho_0)^i   \\
 & \geq r + r \frac{(1+\rho_0)}{\rho_0} \left( (1+\rho_0)^{k-1} - 1 \right) \\
 &= r \left( \frac{(1+\rho_0)^k}{\rho_0} - \frac{1}{\rho_0} \right).
\end{align*}

Thus,

\[
\frac{2r (1+\rho_0)^k}{\operatorname{Length}(L_{r,k})} \leq \frac{2\rho_0 (1+\rho_0)^k}{(1+\rho_0)^k - 1} \to 2\rho_0
\]

as $ k \to \infty $. Therefore, there exists $ n_0 \in \mathbb{N} $ such that for all $ k \geq n_0 $, we have:

\[
\frac{2r (1+\rho_0)^k}{\operatorname{Length}(L_{r,k})} < 3\rho_0 =: \rho.
\]

This confirms the claim. We then conclude:

\[
\operatorname{Length}(L_{r,k+1}) < (1+\rho) \operatorname{Length}(L_{r,k}) + \delta_k
\]

for sufficiently large $ k $.

We summarize the above discussion in the next proposition.

\begin{prop}[N-Qing-Margolis]
\label{prop:smallslope}
 Let $\alpha$ be a $\qq$--ray of Type II of sub-exponential excursion. Let $\rho >0$ be an arbitrary sufficiently small constant. Then there exists a sufficiently large $k_0 = k_0(\rho)$ and a constant $C >0$ such that for every $r > C$ and for every $k \ge k_0$, the collection of forward spiral path of Type~I $\{L_{r,i} \}_{i < k}$ constructed as above satisfies the following properties:
 \begin{enumerate}
     \item It is a $(L, A)$--quasi-geodesic with some constants $L, A$ are independent of $r, k$.
     \item 
     \[
\operatorname{Length}(L_{r,k+1}) < (1+\rho) \operatorname{Length}(L_{r,k}) + \delta_k
\]
 \end{enumerate}
\end{prop}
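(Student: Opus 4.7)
The plan is to verify both assertions directly from the construction developed in the preceding subsection, so the proof amounts to careful bookkeeping of estimates that have essentially already appeared there. First I would use the sub-exponential excursion hypothesis on $\alpha$ to extract a constant $\rho_0 \in (0, 1/4)$ and $C > 0$ with $w_i = d(p_i, p_{i+1}) \le C(1+\rho_0)^i$, choosing $\rho_0 < \rho/3$ at the outset so that the bound in (2) matches the given $\rho$. The constant $C$ of the proposition is then determined by this choice of $\rho_0$.

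For assertion (1), the idea is to bound, for any two points $x, y$ on $L_{r,k}$, the ratio $\operatorname{Length}(L_{r,k}|_{[x,y]})/d(x,y)$ by a constant $\Delta = \Delta(\mu, \rho_0)$ independent of $r$ and $k$. The generic case (where $x$ lies on the leg ending at $y_{i-1}$ and $y$ lies on the leg starting at $z_j$ for some $i < j$) is handled as in the preceding paragraphs: the length is bounded above by the geometric sum $\sum_{m=i-1}^{j} \kappa_m \le r(1+\rho_0)^{j+1}/\rho_0$, while the distance $d(y_{i-1}, z_j)$ is bounded below through the $(9\mu, 9\mu)$-quasi-geodesic property of the concatenation $[y_{i-1}, p_i] \cup [p_i, p_{i+1}] \cup \cdots \cup [p_{j-1}, p_j] \cup [p_j, z_j]$, which is obtained via two applications of Lemma~\ref{concate} starting from the previously established $(\mu, \mu)$-quasi-geodesic property of the $p$-chain. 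The remaining cases (points on a single linear piece, or one of them on the initial portion $\zeta|_{[0,r]}$) are routine. The key point is that the resulting $\Delta$ depends only on $\mu$ and $\rho_0$.

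For assertion (2), the length increment satisfies
\[
\operatorname{Length}(L_{r,k+1}) - \operatorname{Length}(L_{r,k}) \le \kappa_{k-1} + \kappa_k + \delta_k \le 2r(1+\rho_0)^k + \delta_k,
\]
while $\operatorname{Length}(L_{r,k}) \ge r\bigl((1+\rho_0)^k - 1\bigr)/\rho_0$, so the ratio $2r(1+\rho_0)^k / \operatorname{Length}(L_{r,k})$ converges to $2\rho_0$ as $k \to \infty$. Choosing $k_0 = k_0(\rho)$ large enough that this ratio is strictly smaller than $3\rho_0 < \rho$ for all $k \ge k_0$ yields the stated inequality.

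The main obstacle I anticipate is verifying the uniformity of the constants $L, A$ in $r$ and $k$. This hinges on the choice $r > C$, which ensures $\kappa_i > w_i$ and therefore places each projection $y_i$ onto the ray $\ell_{i+1}^-$ with $d(y_i, p_{i+1}) \le \kappa_i$; this geometric uniformity is precisely what lets the nearest-point projection surgery be applied at every scale with the same constants. Without this arrangement, one would be forced to apply Lemma~\ref{concate} with scale-dependent constants, breaking uniformity in $k$.
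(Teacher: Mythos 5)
Your proposal is correct and follows essentially the same route as the paper: the proposition is stated as a summary of the preceding discussion in Section~\ref{subsection:logspiralTypeI}, and your outline reproduces exactly those estimates — the geometric-sum upper bound on $\operatorname{Length}(L_{r,k}|_{[y_{i-1},z_j]})$, the lower bound on $d(y_{i-1},z_j)$ via the $(9\mu,9\mu)$-quasi-geodesic obtained from two applications of Lemma~\ref{concate} to the $p$-chain, and the ratio computation $2r(1+\rho_0)^k/\operatorname{Length}(L_{r,k}) \to 2\rho_0$ with the choice $\rho_0 < \rho/3$ (equivalently, the paper's $\rho := 3\rho_0$). Your identification of the link between the sub-exponential hypothesis and the bound $w_i \le C(1+\rho_0)^i$, via the later Lemma~\ref{lem:upperboundofDelta}, is also how the paper ties the template construction to the admissible-space setting.
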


\subsection{Forward spiral paths: Type II}
\label{subsection:forward-spiral-II}

In this section, we define and analyze forward spiral paths of Type II, which differ from Type I in their geometric construction and quasi-geodesic properties. These paths exhibit controlled sub-exponential growth, ensuring their use in the study of QR boundaries.

Given a sufficiently small constant $\rho_0 \in (0, 1/16)$, $r >0$. Let $t_1, t_2, \ldots t_k$ be positive numbers so that 
\[
\begin{cases}
t_k - t_{k-1} &\ge r (1 + \rho_0)^k \\
t_i - t_{i-1} &< r (1 +\rho_0)^i \quad
\forall 1 \le i \le k-1
\end{cases}
\]
We define 
\[
\kappa_{i} \coloneqq r \rho_0 (1+ \rho_0)^i
 \quad \text{for each} \quad 1  \le i \le k  \]

We have
\[ \kappa_k = r \rho_{0} (1 + \rho_0)^k < \rho_{0} (t_k - t_{k-1}) 
\]
and hence it implies that  $$ \frac{\kappa_k} { \rho_0} < t_k - t_{k-1}$$ 

\begin{figure}[htb]
\centering 
%\def\svgwidth{\columnwidth}
%\input{star.pdf_tex}
 %\resizebox{0.9\textwidth}{!}{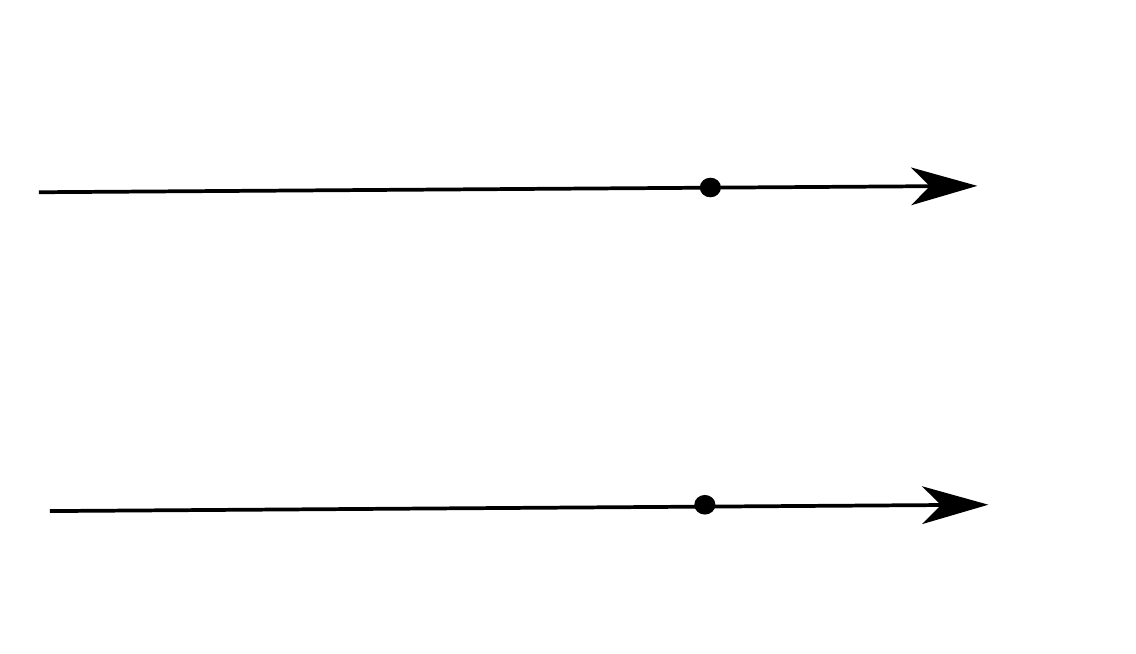}
 \def\svgwidth{0.5\textwidth}
 \input{Fig6.pdf_tex}
\caption{The figure illustrates how we choose geodesic rays $\ell_{i}^{+}$ and $\ell_{i+1}^{+}$ on the strip which is slightly different from Figure~\ref{choiceofelli} as the constant $\kappa_i$ is chosen to be smaller than $w_i$. }\label{choiceofellimodify}
\end{figure}

\begin{enumerate}
    \item (Choosing geodesic rays) For each $i$, we define two geodesic rays:
    \begin{itemize}
        \item $\ell_{i}^{+} \subset f_{i}^{+}$ based at $p_i$.

        \item $\ell_{i+1}^{-} \subset f_{i+1}^{-}$ based at $p_{i+1}$.
    \end{itemize}
   We ensure that the projection point of $p_{i}$ into $f_{i+1}^{-}$ will belong to $\ell_{i+1}^{-}$ (see Figure~\ref{choiceofellimodify}). 

\item (Defining the forward spiral segments) On $\ell_{i}^{+}$ choose $z_i$  such that
$$d(p_i, z_i) = \kappa_i$$
Let $y_i$ be the projection of $z_i$ on $\ell_{i+1}^{-}$.  We have
\begin{itemize}
    \item $d(z_i, y_i) = \delta_i$
    \item $d(y_i, p_{i+1}) \le \kappa_i + 2 w_i$.
\end{itemize}

    \item (Concatenating the path) Define the \textit{forward spiral path of Type II}  $J_{r, k}$  as the concatenation:
\[
J_{r, k} \coloneqq \zeta|_{[0, r]} \cdot [\zeta(r), z_1] \cdot [z_1, y_1] \cdot [y_1, z_2] \cdots [y_{k-1}, z_{k-1}]
\]
where
$ \zeta $ is the given main flat ray.
\end{enumerate}

Using similar arguments as forward spiral paths of Type I, we can verify that $J_{r,k}$ is a $(L, A)$--quasi-geodesic for some constantS $L = L (\rho_0, \qq)$, $A = A (\rho_0, \qq)$. 

\begin{lem}
    There are constants $L = L (\rho_0, \qq)$, $A = A (\rho_0, \qq)$ such that every spiral path of Type II $J_{r,l}$ is a $(L,A)$--quasi-geodesic.
\end{lem}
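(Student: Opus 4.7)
The plan is to mirror the argument given for forward spiral paths of Type~I in Section~\ref{subsection:logspiralTypeI}, adapting the geometric series estimates to the slightly different definition $\kappa_i = r\rho_0(1+\rho_0)^i$ used in Type~II. The key observation is that although $\kappa_i$ is smaller than $w_i = d(p_i, p_{i+1})$ here (so that $y_i \in \ell_{i+1}^-$ still holds by the choice of geodesic rays in the construction), both quantities grow at most like $(1+\rho_0)^i$, and $\kappa_j$ is still exponentially larger than the sum of all preceding terms.

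First I would obtain the upper bound on the length of any subpath $J_{r,l}|_{[y_{i-1}, z_j]}$ by summing the bounds $d(z_m, y_m) = \delta_m \le w_m$, $d(y_m, z_{m+1}) \le d(y_m, p_{m+1}) + d(p_{m+1}, z_{m+1}) \le \kappa_m + 2w_m + \kappa_{m+1}$, together with the upper bound $w_m \le r(1+\rho_0)^m$ (coming from the hypothesis $t_m - t_{m-1} < r(1+\rho_0)^m$ for $m < k$, which controls $w_m$ up to the template's Lipschitz constants). This yields a bound of the form
\[
\operatorname{Length}\bigl(J_{r,l}|_{[y_{i-1}, z_j]}\bigr) \le \frac{K\, r}{\rho_0}(1+\rho_0)^{j+1}
\]
for some uniform constant $K$ depending only on $\qq$.

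Next, I would lower-bound $d(y_{i-1}, z_j)$ by invoking the $(\mu,\mu)$-quasi-geodesic property of $[p_1,p_2]\cdots[p_{n-2},p_{n-1}]$ (Lemma 3.3 of~\cite{NY23}). Applying the nearest-point projection surgery from Lemma~\ref{concate} twice, first to append $[y_{i-1}, p_i]$ and then $[p_j, z_j]$, produces a $(9\mu, 9\mu)$-quasi-geodesic $\sigma$ whose length is at least $d(p_j, z_j) = \kappa_j$. Hence
\[
d(y_{i-1}, z_j) \ge \tfrac{1}{9\mu}\kappa_j - 9\mu \ge \tfrac{\kappa_j}{18\mu} = \tfrac{r\rho_0(1+\rho_0)^j}{18\mu}
\]
whenever $r$ is large enough that $162\mu^2 < r\rho_0$. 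Taking the ratio of the upper and lower bounds gives a universal constant $\Delta = \Delta(\mu, \rho_0, \qq)$ controlling the ratio of path length to endpoint distance, which by a standard bootstrap argument (exactly as in the Type~I case) upgrades to the $(L,A)$-quasi-geodesic conclusion.

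The main technical obstacle is the last segment of $J_{r,l}$, where the hypothesis only gives $t_k - t_{k-1} \ge r(1+\rho_0)^k$, so $w_{k-1}$ could in principle be much larger than $\kappa_{k-1}$. However, when $l < k$ the bound $w_m \le r(1+\rho_0)^m$ applies throughout the indices $m \le l-1$ appearing in the path, and the final leg $[y_{l-1}, z_{l-1}]$ of $J_{r,l}$ (or its truncation) is bounded uniformly in terms of $\kappa_{l-1}$ and $w_{l-1}$ as above. Absorbing the extra factor of $\rho_0$ from the lower bound into the constants, one obtains $L$ and $A$ depending only on $\rho_0$ and $\qq$, independent of $r$ and $l$, as required.
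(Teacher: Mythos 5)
Your overall plan is the right one and matches what the paper intends (the paper's entire proof is a one-sentence appeal to ``similar arguments as for Type~I''). The structure --- upper-bound the length of any subpath by a geometric series, lower-bound the endpoint distance via the $(\mu,\mu)$-quasi-geodesicity of $[p_1,p_2]\cdots[p_{n-2},p_{n-1}]$ and two applications of nearest-point projection surgery, then compare --- is exactly the Type~I argument, and the adaptations you make (replacing $\kappa_j = r(1+\rho_0)^j$ by $\kappa_j = r\rho_0(1+\rho_0)^j$, inserting the extra $2w_m$ in the $d(y_m,z_{m+1})$ bound, and requiring $r\rho_0 > 162\mu^2$) are the correct ones.

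The one place that deserves more care is your claim that the hypothesis $t_m - t_{m-1} < r(1+\rho_0)^m$ ``controls $w_m$ up to the template's Lipschitz constants.'' That is not a direct Lipschitz consequence. The times $t_i$ are parametrization data of the source $\qq$-ray $\alpha^{*}$, while $w_m = d(p_m, p_{m+1})$ is a purely geometric quantity of the template; linking them is exactly the content of Lemma~\ref{lem:upperboundofDelta}, which uses quasi-convexity of unions of vertex spaces and the fact that $\alpha^{*}$ realizes, up to the constants $\qq$, the distance between consecutive vertex spaces. Concretely, the Lemma~\ref{lem:upperboundofDelta}-style estimate $w_m \lesssim d(\alpha^{*}(t_{m-2}^{*}), \alpha^{*}(t_{m+2}^{*}))$ is what converts $t_m - t_{m-1} < r(1+\rho_0)^m$ into $w_m \le Cr(1+\rho_0)^m$, and this only works when $m+2 \le k$ so that the long excursion $t_k - t_{k-1} \ge r(1+\rho_0)^k$ is not implicated; note also that this inequality is responsible for the constants depending on $\qq$, which you otherwise never use. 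You should either invoke Lemma~\ref{lem:upperboundofDelta} explicitly for the indices $m$ with $m+2 \le k$, or record that $J_{r,l}$ is considered for $l$ at least a few steps below $k$, and treat the final leg separately as you start to do in your last paragraph. Once that is spelled out your proof is complete.
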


In this section, we have constructed backward spiral paths and forward spiral paths in templates and established that they are quasi-geodesics. This provides a crucial tool for proving that QR boundaries exist in Croke-Kleiner admissible groups. In the next section, we apply these results to show that the QR boundary of these groups is well-defined and contains important structural information.

\section{QR boundary of  Croke-Kleiner admissible groups}
\label{sec:QR-admissible-groups}
$\CAT(0)$ Croke-Kleiner admissible groups were  introduced by Croke--Kleiner in~\cite{CK02}. They are a particular class of graph
of groups that includes fundamental groups of 3-dimensional graph manifolds. The QR-boundary of a specific case of $\CAT(0)$ admissible group is computed in~\cite{QR24}. In this section we follow the arguments in~\cite[Section 11]{QR24} closely but  adapt and expand them to suit all $\CAT(0)$ Croke-Kleiner admissible groups. We remark here that Croke-Kleiner admissible groups are not relatively hyperbolic groups (see~\cite[Lemma~4.7]{MN24}).

\begin{defn}
	A \emph{graph of groups} $\mathcal{G} = (\Gamma, \{G_{v}\}, \{G_{e} \}, \{\tau_{e} \})$ consists of the following data:
	\begin{enumerate}
		\item a graph $\Gamma$, called the \emph{underlying graph},
		\item a group $G_v$ for each vertex $v \in V \Gamma$, called a \emph{vertex group},
		\item a subgroup $G_e\leq G_{e_-}$ for each edge $e \in E \Gamma$, called an \emph{edge group},
		\item an isomorphism $\tau_{e} \colon {G}_{e} \to {G}_{\bar e}$  for each $e\in E\Gamma$ such that $\tau^{-1}_e=\tau_{\bar e}$, called an \emph{edge map}.
	\end{enumerate}
\end{defn}
The \emph{fundamental group} $\pi_{1}(\cG)$ of a graph of groups $\cG$ is as defined in~\cite{SW79}.

\begin{defn}\label{defn:admissible}
A graph of groups $\mathcal{G}$ is \emph{admissible} if
\begin{enumerate}
    \item $\mathcal{G}$ is a finite graph with at least one edge.
    \item Each vertex group ${ G}_v$ has center $Z({ G}_v) \cong \Z$, ${Q}_v \coloneqq { G}_{v} / Z({ G}_v)$ is a non-elementary hyperbolic group, and every incident edge subgroup ${ G}_{e}$ is isomorphic to $\Z^2$. 
    \item Let $e_1$ and $e_2$ be distinct directed edges entering a vertex $v$, and for $i = 1,2$, let $K_i \subset { G}_v$ be the image of the edge homomorphism ${G}_{e_i} \to {G}_v$. Then for every $g \in { G}_v$, $gK_{1}g^{-1}$ is not commensurable with $K_2$, and for every $g \in  G_v \setminus K_i$, $gK_i g^{-1}$ is not commensurable with $K_i$.
    \item For every edge group ${ G}_e$, if $\alpha_i \colon { G}_{e} \to { G}_{v_i}$ is the edge monomorphism, then the subgroup generated by $\alpha_{1}^{-1}(Z({ G}_{v_1}))$ and $\alpha_{2}^{-1}(Z({ G}_{v_1}))$ has finite index in ${ G}_e$.
\end{enumerate}
\end{defn}

\begin{defn}
A group $G$ is \emph{admissible} if it is the fundamental group of an admissible graph of groups. We say that a Croke-Kleiner  admissible group $G$ is a \emph{$\CAT(0)$ Croke-Kleiner  admissible group} if there is  a complete proper $\CAT(0)$
space $X$ such that $G$ acts on $X$ isometrically,  properly discontinuously and cocompactly. Such an action $G \curvearrowright X$ is called a \emph{CKA action} and the space $X$ is called a \emph{$\CAT(0)$ admissible space} of $G$.
\end{defn}

Below are some examples of $\CAT(0)$ Croke-Kleiner admissible groups.
\begin{exmp} \label{exmp:CKA} \hfill
   \begin{enumerate}
       \item (Tori complexes) \label{toruscomplexes}
        Let $n \ge 3$ be an integer. Let $T_1, T_2, \ldots, T_n$ be a family of flat two-dimensional tori. For each $i$, we choose a pair of simple closed geodesics $a_i$ and $b_i$ such that $\operatorname{length} (b_i) = \operatorname{length} (a_{i+1})$, identifying $b_i$ and $a_{i+1}$ and denote the resulting space by $X$.
		      The space $X$ is a graph of spaces with $n-1$ vertex spaces $V_{i} \coloneqq T_{i} \cup T_{i+1} / \{ b_i =  a_{i+1} \}$ (with $i \in \{1, \ldots, n-1\}$) and $n-2$ edge spaces $E_{i} \coloneqq V_{i} \cap V_{i+1}$.

		      The fundamental group $G = \pi_{1}(X)$ has a graph of groups structure where each vertex group is the fundamental group of the product of a figure eight and $S^1$. Vertex groups are isomorphic to  $F_{2} \times \Z$ and edge groups  are isomorphic to $\pi_1(E_i) \cong \Z^2$. The generators $[a_i], [b_i]$ of the edge group $\pi_1(E_i)$ each map to a generator of either a  $\Z$ or  $F_2$ factor of $F_{2} \times \Z$. It is clear that with this graph of groups structure, $\pi_1(X)$ is a Croke-Kleiner  admissible group.

       \item (Graph manifolds) \label{exmp:graphmanifolds} Let $M$ be a non-geometric graph manifold that admits a nonpositively curved metric. Lift this metric to the universal cover $\tilde{M}$ of $M$, and we denote this metric by $d$. Then the action $\pi_1(M) \curvearrowright (\tilde{M}, d)$ is a CKA action.

       \item \label{exmp:contructionflip}  One may build $\CAT(0)$ Croke-Kleiner admissible groups algebraically from any finite number of  hyperbolic $\CAT(0)$ groups. The following example is for $n =2$ but the same principle works for any $n \ge 2$. Let $H_1$ and $H_2$ be two torsion-free hyperbolic groups that act geometrically on $\CAT(0)$ spaces $X_1$ and $X_2$ respectively. Then $G_i = H_i \times \langle t_i \rangle$ (with $i =1,2$)  acts geometrically on the $\CAT(0)$ space $Y_ i = X_{i} \times \mathbb R$. Any primitive hyperbolic element $h_i$ in $H_i$ gives rise to a totally geodesic torus $T_i$ in the quotient space $Y_{i}/G_i$ with basis $([h_i], [t_i])$. We re-scale $Y_ i$ so that the translation length of $h_i$ is equal to that of $t_i$ for each $i$.  Let $f \colon T_1 \to T_2$ be a \textit{flip} isometry respecting these lengths, that is, an orientation-reversing isometry mapping $[h_1]$ to $[t_2]$ and $[t_1]$ to $[h_2]$.   Let $M$ be the space obtained by gluing $Y_1$ to $Y_2$ by the isometry $f$. There is a metric on the space $M$ that gives rise to a locally $\CAT(0)$ space (see e.g.\ \cite[Proposition II.11.6]{BH99}). By the Cartan--Hadamard Theorem, the universal cover $\widetilde M$ with the induced length metric from $M$ is a $\CAT(0)$ space. Let $G$ be the fundamental group of $M$. Then the action $G \curvearrowright \widetilde M$ is geometric, and $G$ is an example of a Croke--Kleiner admissible group.  
   \end{enumerate} 
\end{exmp}

\subsection{Vertex and edge spaces in $\CAT(0)$ admissible spaces}\label{sec:blocks}
Let $G$ be a Croke-Kleiner  admissible group that acts properly discontinuously, cocompactly, and by isometries on a complete proper $\CAT(0)$ space $X$. 
Let $G \curvearrowright T$ be the action of $G$ on the associated Bass--Serre tree $T$ of the graph of group $\mathcal{G}$ (we refer the reader to~\cite[Section~2.5]{CK02} for a brief discussion).

Let $V(T)$ and $E(T)$ be the vertex and edge sets of $T$.  
For each ${\sigma} \in V(T) \cup E(T)$, we let $G_{ \sigma} \le G$ be the stabilizer of $\sigma$. 
We review facts from~\cite[Section~3.2]{CK02} that will be used thoroughly in this paper, and refer the reader to~\cite{CK02} for further explanation.
From the given actions $G \curvearrowright X$ and $G \curvearrowright T$ we have
\begin{enumerate}
    \item for every vertex ${v}\in V(T)$, the set $X_{v}\coloneqq\cap_{g\in Z(G_{v})} \operatorname{Minset}(g)$ splits as metric product \[X_{ v} = H_{  v} \times \mathbb R\] where $Z(G_{v})$ acts by translation on the $\mathbb{R}$--factor and  the quotient $Q_{v}\coloneqq G_{ v}/Z(G_{ v})$ acts geometrically on the $\CAT(0)$ space $H_{v}$. 
    
    \item for every edge $ e \in E(T)$, the minimal set $X_{e}\coloneqq\cap_{g\in G_{e}} \operatorname{Minset}(g)$ splits as \[X_e=\overline{X_{e}} \times \mathbb R^2\subset X_{v},\] where $\overline{X_{e}}$ is a compact $\CAT(0)$ space and $G_{e}=\mathbb Z^2$ acts co-compactly on the Euclidean plane $\mathbb R^2$.

\end{enumerate}
\begin{defn}
    For every vertex $v \in V(T)$ and edge $e \in E(T)$, the spaces $X_v$ and $X_e$ are called the \textit{vertex space} and \textit{edge space} of $X$ respectively.
\end{defn}

\begin{rem}
    For each vertex space $X_{ v}$, since the quotient $Q_{ v}\coloneqq G_{ v}/Z(G_{ v})$ is a non-elementary hyperbolic group and acts geometrically on $H_{ v}$, it follows that $H_{ v}$ is a hyperbolic space.
\end{rem}

In the sequel, it will be   useful to make the following specific choices.
\begin{defn}\label{defn:indexfunction}
There exists a $G$-equivariant coarse $L$--Lipschitz map  $\gothic i \colon X \to T^0$  such that  $x \in X_{\gothic i (x)}$ for all $x \in X$. The map $\gothic i$ is called an \emph{index map}. We refer the reader to Section~3.3 in~\cite{CK02} for the existence of such a map $\gothic i$.

\end{defn}

%\begin{rem}
%If $G \curvearrowright X$ freely, such a map $\gothic i$ can be constructed as follows.  Choose a fundamental set $\Sigma$  so that $\Sigma$ contains exactly one point from each orbit. Define $\gothic i: \Sigma\to T^0$ so that  $x\in B_{\gothic i (x)}$, and extend equivariantly $\gothic i$ to the whole space $X$. One can show that $\gothic i$ is a coarsely Lipschitz map: $$|\gothic i(x)-\gothic i (y)|_T\le L |x-y|_X+L$$ for some $L>0$. See     \cite[Section~3.3]{CK02} for more details.
%If $G \curvearrowright X$ geometrically, we could replace $X$ with a $G$-orbit $Go$ for a basepoint $o$ with a trivial stabilizer. This does not matter much as we are only interested in the coarse geometry hereafter.  By modifying $X$, we could always assume such a basepoint $o$ exists. Indeed, this can be achieved by attaching an Euclidean cone to a point $o$ so that its nontrivial but finite stabilizer acts freely on its boundary circle. Then we do the modification equivariantly for all translates in $Go$.   
%\end{rem}

\subsection{Admissible strips and admissible planes in $\CAT(0)$ admissible spaces}\label{sec:strips}\cite[Section~4.2]{CK02}
We note that the assignments $ v \to X_{ v}$ and $ e \to X_{ e}$ are $G$-equivariant  in the sense that $g X_{ v}  = X_{g  v}$ and $g X_{ e} = X_{g  e}$ for any $g \in G$.

\begin{defn}[admissible planes and admissible strips]
 We first choose, in a $G$-equivariant way, a plane  $F^{*}_{ e} \subset X_{ e}$ which we will call the \emph{admissible plane} for each edge $ e \in E(T)$.
 
 For every pair of adjacent edges ${e}$, ${e}'$, we choose, again equivariantly, a minimal geodesic from $F^{*}_{{e}}$ to $F^{*}_{{e}'}$; by the convexity of $X_{ v} = H_{ v} \times \R$ where $ v\coloneqq {e} \cap {e}'$, this geodesic determines an \textit{admissible strip} in the $\CAT(0)$ admissible space $X$: 
 \[
 \mathcal{S}^{*}_{{e} {e}'} \coloneqq h_{{e} {e}'} \times \R\subset X_v
 \]
 (possibly of width zero) for some geodesic segment $h_{{e} {e}'} \subset H_v$. 
\end{defn}

\begin{rem} \hfill
\begin{enumerate}
   
    \item Note that lines $\mathcal{S}^{*}_{{e} {e}'} \cap F^{*}_{{e}}$ and $\mathcal{S}^{*}_{{e} {e}'} \cap F^{*}_{{e}'}$ are axes of $Z(G_{{v}})$. Hence if ${e}, {e}', {e}'' \in E(T)$ are three consecutive edges,  then the angle between the geodesics $\mathcal{S}^{*}_{{e} {e}'} \cap F^{*}_{{e}'}$ and $\mathcal{S}^{*}_{{e}' {e}''} \cap F^{*}_{ e'}$ is bounded away from zero.

    \item Suppose $e$ is adjacent to incident to vertices $v_1,v_2\in V(T)$. If $\langle f_1\rangle=Z(G_{v_1}), \langle f_2\rangle=Z(G_{v_2})$, then $\langle f_1, f_2\rangle$ generates a finite index subgroup of $G_e$. Thus if $e_1,e, e_2$ are three consecutive edges,  the intersection of the two admissible strips $\mathcal{S}^{*}_{{e}_1  e}$ and $\mathcal{S}^{*}_{{e}_2 {e}}$ is a point. 
    Indeed, we have 
    \[
    \mathcal{S}^{*}_{ {e}_1  e} \cap \mathcal{S}^{*}_{{e}_2 {e}} = (\mathcal{S}^{*}_{{e}_1 {e}} \cap F^{*}_{ e}) \cap (\mathcal{S}^{*}_{{e}_2 {e}} \cap F^{*}_{ e}).
    \]
    As two lines $\mathcal{S}^{*}_{{e}_1 {e}} \cap F^{*}_{ e}$ and $\mathcal{S}^{*}_{{e}_2  e} \cap F^{*}_{ e}$ in the wall $F^{*}_{ e}$ are axes of $\langle f_{{v}_1} \rangle = Z(G_{{v}_1})$, $\langle f_{{v}_1} \rangle = Z(G_{{v}_2})$ respectively and $\langle f_1, f_2\rangle$ generates a finite index subgroup of $G_{ e}$, it follows that these two lines are non-parallel, and hence their intersection must be a single point.
    \end{enumerate}
\end{rem}

Recall that each $X_v$ decomposes as a metric product of a hyperbolic Hadamard space $H_v$ with the real line $\mathbb R$ such that $H_v$ admits a geometric action of $Q_v$.  Recall  that we choose a $G$--equivariant family of Euclidean planes $\{F^{*}_e: F^{*}_e\subset X_e\}_{e \in E(T)}$. 

\begin{defn} 
    The space $X$ is called a \textit{flip admissible space} if for each edge $e:=[v,w]\in E(T)$,  the   boundary line  $\ell := H_v \cap F^{*}_e$ is parallel  to the $\R$--line in $X_w = H_w \times \R$. We also call the group $G$ a \textit{flip admissible group}.
\end{defn}

\begin{exmp}  
Examples (\ref{toruscomplexes}) and (\ref{exmp:contructionflip}) in Example~\ref{exmp:CKA} are instances of flip-admissible groups. The \textit{flip graph manifolds} introduced by Kapovich–Leeb \cite{KL98} are also typical examples of flip-admissible spaces. A \emph{flip manifold} is a graph manifold constructed as follows: Take a finite collection of products of $ S^1 $ with compact orientable hyperbolic surfaces and glue them along boundary tori using maps that interchange the base and fiber directions. Kapovich–Leeb proved that for any graph manifold $ M $, there exists a flip graph manifold $ N $ whose fundamental group is quasi-isometric to that of $ M $.  
\end{exmp}

\subsection{Embedded templates into admissible spaces}
Let $X$ be a CAT(0) admissible space.
We are going to recall a template associated with a geodesic in the Bass-Serre tree in \cite[Section 4.2]{CK02}  as the following.

\begin{defn}
\label{defn:standardtemplate}
Let $\gamma$ be a geodesic segment or ray in the Bass-Serre tree $T$. We may write $\gamma = e_1 \cdot e_2 \cdots e_k$ (or $\gamma = e_1 \cdot e_2 \cdots e_k \cdots$ in case $\gamma$ is a geodesic ray). 

We begin with a collection of walls $F_e$ and an isometry $\phi_e \colon F_e \to F^{*}_e$ for each edge $e \subset \gamma$. For every pair $e_i$, $e_{i+1}$ of adjacent edges of $\gamma$, we let $S_{i (i+1)}$ be a strip which is isometric to $\mathcal{S}^{*}_{e_i e_{i+1}}$ if the width of  $\mathcal{S}^{*}_{e_i e_{i+1}}$ is at least $1$, and isometric to $[0,1] \times \R$ otherwise; we let $\phi_{e_i e_{i+1}} \colon S_{i(i+1)} \to \mathcal{S}^{*}_{e_i e_{i+1}}$ be an affine map which respects product structure ($\phi_{e_i e_{i+1}}$ is an isometry if the width of $\mathcal{S}^{*}_{e_i e_{i+1}}$ is greater than or equal to $1$ and compresses the interval otherwise).

\textit{The standard template $\mathcal{T}$ associated with $\gamma \subset T$} is the template obtained by 
 gluing the strips $S_{i (i+1)}$ and walls $F_e$ so that the maps $\phi_{e_i}$ and $\phi_{e_i e_{i+1}}$ descend to continuous maps on the quotient, we denote the map from $\mathcal{T}_{\gamma} \to X$ by $\phi_{\gamma}$.
\end{defn}

The following lemma is cited from \cite[Lemma~4.5]{CK02} and \cite[Proposition~4.6]{CK02} which basically say each template associated with a geodesic segment/ ray in the Bass-Serre tree $T$ is quasi-isometrically embedded in $X$ with uniform quasi-isometric constants.

\begin{lem}
\label{lem:CK02}
Let $G \curvearrowright X$ be a CKA action. Then
\begin{enumerate}
    \item There exists $\beta = \beta(X) >0$ such that the following holds. For any geodesic segment $\gamma \in T$, the angle function $\alpha_{\gamma} \colon Wall_{\mathcal{T}_{\gamma}} \to (0, \pi)$ satisfies $0 < \beta  \le \alpha_{\gamma} \le \pi - \beta < \pi$.
    \item There are universal constants $L, A >0$ such that the following holds. Let $\gamma$ be a geodesic segment in $T$, and let $\phi_{\gamma} \colon \mathcal{T}_{\gamma} \to X$ be the map given by Definition~\ref{defn:standardtemplate}. 
    Then $\phi_{\gamma}$ is a $(L,A)$--quasi-isometric embedding. Moreover, for any $x, y \in [\cup_{e \subset \gamma} X_{e}] \cup [\cup_{ee' \subset \gamma} \mathcal{S}^{*}_{ee'}]$, there exists a continuous map $\alpha \colon [x,y] \to \mathcal{T}_{\gamma}$ such that $d(\phi_{\gamma} \circ \alpha , id|_{[x,y]}) \le L$.
\end{enumerate}
\end{lem}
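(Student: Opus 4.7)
The plan is to prove the two parts separately, with part (1) providing the geometric input needed for part (2).

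For part (1), the strategy is to reduce to finitely many cases by using cocompactness. Any pair of adjacent edges $e, e'$ in $T$ meets at some vertex $v$, and the angle in $F^*_{e}$ between the singular geodesics $\mathcal{S}^*_{e_- e} \cap F^*_e$ and $\mathcal{S}^*_{e e'} \cap F^*_e$ is the angle between two axes of $Z(G_v)$-translates, viewed in the wall. Because the action $G \curvearrowright X$ is cocompact and the $G$-action on $T$ has finitely many orbits of ordered pairs of adjacent edges, there are only finitely many possible values of this angle up to $G$-equivariance. The angle is strictly in $(0, \pi)$: it cannot be $0$ or $\pi$ because Definition~\ref{defn:admissible}(4) forces $\langle f_{v_1}, f_{v_2}\rangle$ to have finite index in the edge group $\Z^2$, so the two axes corresponding to adjacent-edge central elements are not parallel in $F^*_e$. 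Thus taking $\beta$ to be the minimum of these finitely many angles and $\pi$ minus the maximum yields the desired uniform bound.

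For part (2), I would first observe that by part (1) and the finiteness of orbits of triples (edge, wall, strip), the template $\mathcal{T}_\gamma$ is itself a piecewise-Euclidean CAT(0) space whose local geometry (widths of strips, angles at walls) is drawn from a finite menu independent of $\gamma$. This allows distances inside $\mathcal{T}_\gamma$ to be computed from bounded-combinatorics local models. To show $\phi_\gamma$ is a quasi-isometric embedding, the upper bound $d_X(\phi_\gamma(u), \phi_\gamma(v)) \le L \, d_{\mathcal{T}_\gamma}(u,v)$ is immediate because $\phi_\gamma$ restricted to each wall or strip is an isometry (or a bounded affine distortion for narrow strips). For the lower bound, I would project: given $x, y$ in the relevant subset of $X$, connect them by a CAT(0) geodesic in $X$ and use the convexity of the vertex spaces $X_v$ and edge spaces $X_e$, together with the $G$-equivariant index map $\gothic{i}$ from Definition~\ref{defn:indexfunction}, to produce a controlled path in $\mathcal{T}_\gamma$ that tracks it. Concretely, the projection of $[x,y]$ to successive vertex spaces along $\gamma$ yields a sequence of subsegments that can be lifted to walls and strips of $\mathcal{T}_\gamma$; the bounded-angle condition from part (1) ensures that concatenating geodesic pieces at the singular lines does not cause cancellation, so the total template length is within a universal multiplicative constant of $d_X(x,y)$.

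The main obstacle is controlling the last step, namely that the projection of the $X$-geodesic between two template points does not wander far from $\mathcal{T}_\gamma$ or double back in ways that make the template-length much larger than the $X$-distance. Here one uses that if a geodesic in $X$ passes through a vertex space $X_v = H_v \times \R$, its image under projection to $H_v$ is a geodesic in a hyperbolic space, and transitions between adjacent vertex spaces happen along the admissible strips, which are convex in $X$. The bounded angle from part (1) turns the broken template path into a uniform quasi-geodesic in the local Euclidean model, and iterating edge-by-edge along $\gamma$ with uniform constants (because the combinatorics is $G$-equivariantly finite) gives the global $(L,A)$-quasi-isometric embedding.

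Finally, for the continuous tracking statement, I would construct $\alpha$ piecewise: on each maximal subinterval of $[x,y]$ that sits in one vertex space $X_v$ or in a consecutive pair of strips, use the isometric identifications between walls/strips of $\mathcal{T}_\gamma$ and their images $F^*_e$, $\mathcal{S}^*_{ee'}$ to lift the restricted geodesic, and glue these lifts at the transition points (which lie on admissible strips common to both parametrizations). Continuity is automatic from the gluing, and the bound $d(\phi_\gamma \circ \alpha, \mathrm{id}|_{[x,y]}) \le L$ follows because each lift differs from the identity only by the width-compression of narrow strips, which is uniformly bounded by construction in Definition~\ref{defn:standardtemplate}.
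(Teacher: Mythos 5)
The paper does not actually prove this lemma; it is cited verbatim from Croke--Kleiner \cite[Lemma~4.5, Proposition~4.6]{CK02}, so there is no ``paper's proof'' to compare against in the strict sense. Your sketch of part (1) is essentially the right argument: the admissibility hypothesis that $\langle \alpha_1^{-1}(Z(G_{v_1})), \alpha_2^{-1}(Z(G_{v_2}))\rangle$ has finite index in $G_e \cong \Z^2$ forces the two singular geodesics in each wall to be non-parallel, and $G$-cocompactness on $X$ plus the $G$-equivariance of the choice of admissible planes and strips reduces to finitely many angle values, giving the uniform $\beta$.

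For part (2), however, your ``lower bound via projection'' step has a genuine gap. You write that you would connect two template points $x,y$ by a CAT(0) geodesic in $X$, project it to successive vertex and edge spaces along $\gamma$, and lift the pieces into $\mathcal{T}_\gamma$; the conclusion that this yields a template path of length comparable to $d_X(x,y)$ depends on the $X$-geodesic $[x,y]$ actually staying within a uniformly bounded neighborhood of the chain $X_{v_0} \cup \mathcal{S}^*_{e_0 e_1} \cup F^*_{e_1}\cup \cdots$ along $\gamma$. Individual vertex spaces $X_v$ and edge spaces $X_e$ are convex (being Minsets), but a finite concatenation of them along a $T$-geodesic is not obviously convex, only \emph{uniformly quasi-convex}, and this is a substantial fact (Croke--Kleiner's Lemma~4.3, quoted later in this paper as well). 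Without it, the geodesic between two points on the image of $\mathcal{T}_\gamma$ could wander far outside the chain and your controlled-path construction collapses. The bounded-angle estimate from part (1) alone gives you that the \emph{template} concatenation of perpendicular segments is a local quasi-geodesic in the Euclidean models, but that does not by itself bound the $X$-geodesic. In short: your outline has the right shape for the Lipschitz direction and the angle bound, but it should flag uniform quasi-convexity of block-chains as a prerequisite lemma that carries most of the weight in the lower bound, rather than treating the projection step as routine.
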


\subsection{Main flat rays}
\label{sec:mainflatray}
This section assumes that $X$ is a flip admissible space.
Let $\gothic i \colon X \to T$ be the index map given by Definition~\ref{defn:indexfunction}, and fix a admissible plane $F^{*}$ in $X$. We also assume that the basepoint $\gothic o \in F^{*}$ and $F^{*} \subset X_{v_0}$ where $v_0 \coloneqq \gothic{i} (\go)$.
Recall that $X_{v_0}$ splits as a metric product $H_{v_0} \times \R$. In the rest of this paper, we fix a geodesic ray $\zeta^{*}$ based at $\go$ that follows the line $\R$ in the $\R$ factor of $X_{v_0}$, and call it the \textit{admissible main flat ray}.

We remark that the choice of $\zeta^{*}$ is arbitrary since any quasi-geodesic ray in $X_{v_0}$ is in the same equivalent class as $\zeta^{*}$ by Proposition~\ref{prop:product}.

We first show that every $\qq$--ray $\alpha^{*}$  can be quasi-redirected to the admissible main flat ray $\zeta^{*}$ at every radius $r >0$, via a quasi-geodesic $\gamma^{*}_r$ with uniform quasi-geodesic constants; see Proposition~\ref{Prop:zeta-is-top}.

\begin{prop}[Quasi-redirecting to the main flat ray]
\label{Prop:zeta-is-top} 
Let $\alpha^{*}$ be a $\qq$--ray in the flip admissible space $X$. Then $\alpha^{*}$ can be quasi-redirected to the admissible main flat ray $\zeta^{*}$  at every radius $r>0$ via a quasi-geodesic $\gamma^{*}_r$ with uniform quasi-geodesic constants, In particular, we have $\alpha^{*} \preceq \zeta^{*}$.
 %Let $\alpha^{*}$ be a $\qq$--ray in the flip admissible space $X$. Then $\alpha^{*}$ can be $\qq'$--quasi-redirected to $\zeta^{*}$ where
 %$\qq'$ depends only on $\qq$. In particular,  $\alpha^{*} \preceq \zeta^{*}$. 
 \end{prop}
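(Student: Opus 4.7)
The plan is to build $\gamma^*_r$ in three concatenated stages: follow $\alpha^*$ out to radius $r$; spiral backwards through an embedded template in $X$ to reach an admissible plane containing $\go$; then align with and eventually coincide with $\zeta^*$. Uniform quasi-geodesic constants will be guaranteed by Corollary~\ref{cor:backwardlogisquasigeodesic}, Lemma~\ref{lem:CK02}, and the surgery lemmas in Lemma~\ref{lem:surgery1}.

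First I would declare $\alpha^*|_r$ to be the initial segment of $\gamma^*_r$. Let $v_r \coloneqq \gothic{i}(\alpha^*_r)$ and let $\tau = e_1 \cdot e_2 \cdots e_n$ be the geodesic in the Bass--Serre tree $T$ from $v_0$ to $v_r$. Let $\mathcal{T}_\tau$ be the associated template and $\phi_\tau \colon \mathcal{T}_\tau \to X$ the uniform quasi-isometric embedding from Lemma~\ref{lem:CK02}. The flip admissibility hypothesis ensures that in each wall the two singular geodesics meet at right angles, so $\mathcal{T}_\tau$ is a right-angled template in the sense of Section~\ref{subsec:backwardspiral}. Since $\alpha^*_r \in X_{v_r}$ and the plane $F^{*}_{e_n}$ lies in $X_{v_r}$, closest-point projection to $F^{*}_{e_n}$ yields a point $\phi_\tau(x_n)$ at bounded distance from $\alpha^*_r$, where $x_n$ is a point in the last wall of $\mathcal{T}_\tau$.

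Next I would run the backward spiral $\mathcal{Z}_{q,Q,\delta,\rho,x_n}$ of Section~\ref{subsec:backwardspiral} starting at $x_n$, which by Corollary~\ref{cor:backwardlogisquasigeodesic} is an $(L_0,C_0)$-quasi-geodesic in $\mathcal{T}_\tau$ with constants independent of $\tau$; hence $\phi_\tau \circ \mathcal{Z}$ is a uniform quasi-geodesic in $X$. The spiral terminates in a geodesic ray $Z_0$ in the first wall $F_0 \cong F^{*}_{e_1}$. The flip condition lets me pick $Z_0$ in the $Z(G_{v_0})$-axis direction of $F^{*}_{e_1}$, which is parallel to the $\R$-factor of $X_{v_0}$, hence parallel to $\zeta^*$. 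Concatenating the geodesic bridge $[\alpha^*_r, \phi_\tau(x_n)]$ with $\phi_\tau(\mathcal{Z})$ via the nearest-point projection surgery (Lemma~\ref{concate}) and then merging the tail $\phi_\tau(Z_0)$ with $\zeta^*$ by the fellow-traveling surgery of Lemma~\ref{lem:surgery1} produces a ray $\gamma^*_r$ that agrees with $\alpha^*$ on $B_r$ and eventually coincides with $\zeta^*$. All surgeries are uniform, so the resulting constants depend only on $\qq$ and on the CKA action $G \curvearrowright X$, independent of $r$.

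The main obstacle I expect is verifying that the parameters in the backward spiral construction — particularly the length lower bound $\mathrm{Length}(v_n) \ge 2qQ$ on the first leg and the recursive ratio condition $\mathrm{Length}(v_{i-1}) \ge \rho\, \max\{d((Z_i)_+,(Z_i)_-), \mathrm{Length}(h_{i-1})\}$ — can be met uniformly in $r$, no matter where $\alpha^*_r$ lands inside $X_{v_r}$. The spiral has enough freedom in choosing each $v_i$ that this reduces to picking the initial leg long enough and then letting the geometric ratio $\rho$ control the rest. The flip admissibility is used twice in an essential way: once to make $\mathcal{T}_\tau$ right-angled so that Section~\ref{subsec:backwardspiral} applies, and once to guarantee that $F^{*}_{e_1}$ carries a direction parallel to $\zeta^*$ so the final merge is possible. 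Given this, $\alpha^* \preceq \zeta^*$ follows either directly from the definition of quasi-redirecting or via Lemma~\ref{lem:infinitepoints}.
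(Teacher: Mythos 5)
Your plan shares the paper's broad skeleton (follow $\alpha^*$ out, bridge to an admissible plane, run a backward spiral through a template, merge with $\zeta^*$), but there is a concrete error in the middle step that the paper is engineered to avoid. You write that ``closest-point projection to $F^{*}_{e_n}$ yields a point $\phi_\tau(x_n)$ at bounded distance from $\alpha^*_r$.'' This is false: $X_{v_r} = H_{v_r}\times\R$ has infinite diameter, $F^{*}_{e_n}$ is a single plane $\ell_{e_n}\times\R$ inside it, and $\alpha^*_r$ can lie arbitrarily far from $\ell_{e_n}$ in the $H_{v_r}$-factor. Being in the same vertex space gives no distance bound.

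More importantly, even if you could produce a short bridge, the resulting concatenation would not be a quasi-geodesic, and this is not a defect fixable ``inside'' the spiral as you suggest. The only quasi-geodesity criterion available for a concatenation with an initial leg followed by a backward spiral is Proposition~\ref{prop:constructQg}, whose condition (\ref{item4}) forces each constituent to have displacement at least $\rho>1$ times the previous one. With $\gamma_1=\alpha^*|_r$ of displacement $\approx r$ and $\gamma_2$ the bridge, you need the bridge to have length $\gtrsim \rho r$, not $O(1)$; a short bridge immediately violates the hypotheses, and no choice of the later legs $v_i$ of the spiral repairs a violation between the first two terms. The paper's proof sidesteps this exactly by \emph{not} projecting $\alpha^*_r$ to the nearest plane: it chooses the attachment plane $F^{*}_{e_n}$ \emph{far} from the whole segment $\alpha^*|_{[0,t^*_n]}$, so that the nearest point $w^*_n$ on the segment satisfies $d(w^*_n,F^{*}_{e_n}) > \rho\,\sup_s d(\go,\alpha^*(s))$, yielding a long bridge that dominates the initial leg. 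It then concludes via Lemma~\ref{lem:infinitepoints} using the sequence of redirection points $w^*_n$ with $\|w^*_n\|\to\infty$, rather than redirecting directly at each radius $r$. The legitimate observation in your proposal---that flip admissibility makes the template right-angled and puts a $\zeta^*$-parallel direction in the terminal wall---matches the paper, but the choice of where and how far to attach the spiral is the load-bearing point and you have it backwards.
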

 \begin{proof}
   If $\alpha^{*}$ does not intersect any admissible plane, then $\alpha^{*}$ necessarily lies in a neighborhood of same vertex space as the basepoint $\go$.  By Proposition~\ref{prop:product}, $\alpha^{*}$ and $\zeta^{*}$ redirect to each other. Otherwise, $\alpha^{*}$ intersects a non-empty (finite or infinite)  collection of admissible planes. 

   Given $q = 81 \sqrt{2}$, $Q =1$, $\delta \in (0,1]$, and $\rho > q/\delta + Q$.

   Choose a sequence $\{t^{*}_n > 0\}$ so that $t^{*}_n \to \infty$. For each $n$, choose a admissible plane, denoted by $F^{*}_{e_n}$, that is that is sufficiently far from $\alpha^{*}(t^{*}_n)$ so that if $w^{*}_n$ is a point in $\alpha^{*}|_{[0, t^{*}_n]}$  such that
         \[
         d(w^{*}_n, F^{*}_{e_{n}}) = \inf \left\{ d(x, F^{*}_{e_{n}}) \;\middle|\; x \in \alpha^{*}|_{[0, t^{*}_n]} \right\}
         \] then $d(\go , w^{*}_n) \to \infty$ when $n \to \infty$ and  $r_n : = d(w^{*}_n, F^{*}_{e_{n}}) > \rho \, \sup_{0\leq s\leq t^{*}_n} d(\go, \alpha^{*}(s))$ 

         Let $x^{*}_{n}$ be a point in $F^{*}_{n}$ that realizes $d(w^{*}_n, F^{*}_{e_{n}})$, and denote $d(x^{*}_{n}, w^{*}_n)$ by $r_n$.  By Lemma~\ref{concate}, we have that the concatenation $[\go, w^{*}_n]_{\alpha^{*}} \cup [w^{*}_n, x^{*}_{n}]$ is a $(3q, Q)$--quasi-geodesic. Also, if $\ell$ is a $(q, Q)$--quasi-geodesic starting at $x^{*}_{n}$ and contained in $F^{*}_{e_{n}}$, then the concatenation $[w^{*}_n, x^{*}_{n}] \cup \ell$ is also a $(3q, Q)$--quasi-geodesic by Lemma~\ref{concate}.

         Let $\gamma \subset T$ be the geodesic segment in the tree $T$ starting at $v_0$ and ending at $(e_n)_{+}$ (where $e_n$ is the last edge of $\gamma$). Define $\mathcal{T}$ as the standard template in $\mathbb{R}^3$ associated with $\gamma$, as per Definition~\ref{defn:standardtemplate}. Since $X$ is a flip-admissible space, this template is right-angled. We write  $$\mathcal{T} : = F_0 \cup S_{01} \cup F_1 \cup S_{12} \cup \ldots \cup F_n$$ By Lemma~\ref{lem:CK02}, there exists an $(A_1,A_2)$--quasi-isometric embedding $\Phi \colon \mathcal{T} \to X$, where $A_1$ and $A_2$ are uniform constants depending only on the geometry of $X$. Choose $x_n \in F_n$ such that $\Phi(x_n) = x^{*}_n$.

Given that $q = 81 \sqrt{2}$, $Q =1$, $\delta \in (0,1]$, and $\rho > q/\delta + Q$, the backward spiral path $$Z_{q, Q, \delta, \rho, x_n} = Z_{n} \cdot Z_{n-1} \cdots Z_1$$ as constructed in Section~\ref{subsec:backwardspiral}, is a $(L,C)$--quasi-geodesic by Corollary~\ref{cor:backwardlogisquasigeodesic}. Moreover, the part $v_n$ in $Z_n$, introduced in part~(1) of the construction in Section~\ref{subsec:backwardspiral}, can be chosen arbitrarily large. Hence, we can select $v_n$ so that the ratio $\operatorname{Length}(v_n)/ d(w^{*}_n, x^{*}_n)$ is sufficiently large.

          We define $$\mathcal{Z}^{*} : = \Phi(\mathcal{Z}_{q, Q, \delta, \rho, x_n}) = Z^{*}_n \cdot Z^{*}_{n-1} \cdots Z^{*}_1$$ where $Z^{*}_i = \Phi(Z_i)$.
          
It is straightforward to verify that $[\go, w^{*}_n]_{\alpha^{*}}$, $[w^{*}_n, x^{*}_n]$, $Z^{*}_n$, ..., $Z^{*}_1$ satisfy conditions (\ref{item1}), (\ref{item2}), (\ref{item3}), (\ref{item4}), (\ref{item5}) in Proposition~\ref{prop:constructQg} for some $q^{*} \ge 1$, $Q^{*} \ge 0$, $\delta^{*} \in (0,1]$ and $\rho^{*}$. Consequently, the path $$\gamma^{*}  : = [\go, w^{*}_n]_{\alpha^{*}} \cdot [w^{*}_n, x^{*}_n] \cdot Z^{*}$$ is a $(L^{*}, C^{*})$--quasi-geodesic where $L^{*}$ and $C^{*}$ are constants given by Proposition~\ref{prop:constructQg}. 

Thus, we have shown that $\alpha^{*}$ can be quasi-redirected to $\zeta^{*}$ at $w^{*}_n$ via $\gamma^{*}$, which is an $(L^{*}, C^{*})$--quasi-geodesic. Since $d(\go , w^{*}_n) \to \infty$, Lemma~\ref{lem:infinitepoints} implies that $\alpha^{*}$ can be $(q', Q')$--quasi-redirected to $\zeta^{*}$.
\end{proof}
% \end{proof}

\subsection{Type I and Type II quasi-geodesics}
\begin{defn}\label{defn:typesIandII} 
Let ${\alpha^{*}}$ be an arbitrary ${\qq}$--ray in the CKA space $X$ emanating from ${\gothic o}$.   Recall that ${\qq}$--rays are always assumed to be continuous. 
\begin{itemize}
    \item Let $u_1>0$ be the supremum of times $t$ such that $\alpha^{*}(t)$ lies in the vertex space $X_{v_0}$. If $u_1 = \infty$ we stop here.
    \item If $u_1$ is finite, we then let $X_{v} \neq X_{v_0}$ be the vertex space $\alpha^{*}$ enters immediately after it exists $X_{v_0}$ and define $u_2$ to be the supremum of times $t$ such that $\alpha^{*}(t)$ lies in $X_{v}$.
    \item Repeat this process to define a sequence $u_1 <u_2 < \ldots$ as long as $u_i$ is finite.
\end{itemize}
We classify $\alpha^{*}$ into two types

\begin{enumerate}
    \item Type I: If there exists an index $i$ such that $u_i = \infty$, then $\alpha^{*}$ remains in a finite set of vertex spaces. 
    \item Type II: If $u_i$ is finite for all $i$, then  the radii of  $v_i$ (i.e, $d_{T}(v_0, v_i)$) in $T$ tends to infinity monotonically. Since $T$ is a tree, there is exactly one geodesic ray whose vertex set is contained in $\gothic i ({\alpha^{*}})$. Denote this geodesic ray $\gamma^{*}$.  Relabel again such that ${\gamma^{*}}$ traverses vertices ${v}_0$, ${v}_1$, ${v}_2, \dots$ etc. In this case, we say the ${\qq}$--ray ${\alpha^{*}}$ is of \emph{Type~II}.
\end{enumerate} 

\end{defn}

\begin{figure}[htb]
\centering 
%\def\svgwidth{\columnwidth}
%\input{star.pdf_tex}
 %\resizebox{0.8\textwidth}{!}{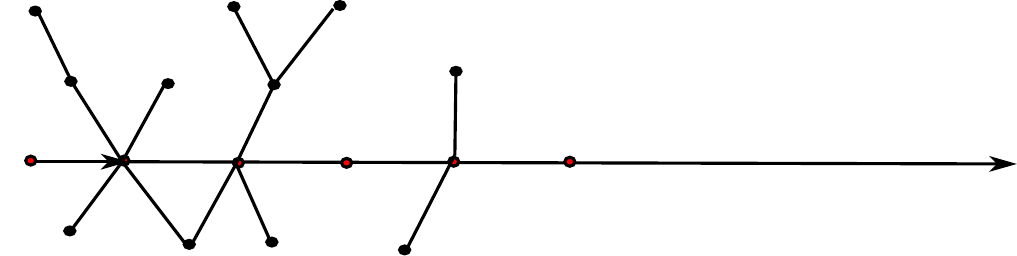}
 \def\svgwidth{0.8\textwidth}
 %% Creator: Inkscape 1.2.2 (b0a8486541, 2022-12-01), www.inkscape.org
%% PDF/EPS/PS + LaTeX output extension by Johan Engelen, 2010
%% Accompanies image file 'Fig4.pdf' (pdf, eps, ps)
%%
%% To include the image in your LaTeX document, write
%%   \input{<filename>.pdf_tex}
%%  instead of
%%   \includegraphics{<filename>.pdf}
%% To scale the image, write
%%   \def\svgwidth{<desired width>}
%%   \input{<filename>.pdf_tex}
%%  instead of
%%   \includegraphics[width=<desired width>]{<filename>.pdf}
%%
%% Images with a different path to the parent latex file can
%% be accessed with the `import' package (which may need to be
%% installed) using
%%   \usepackage{import}
%% in the preamble, and then including the image with
%%   \import{<path to file>}{<filename>.pdf_tex}
%% Alternatively, one can specify
%%   \graphicspath{{<path to file>/}}
%% 
%% For more information, please see info/svg-inkscape on CTAN:
%%   http://tug.ctan.org/tex-archive/info/svg-inkscape
%%
\begingroup%
  \makeatletter%
  \providecommand\color[2][]{%
    \errmessage{(Inkscape) Color is used for the text in Inkscape, but the package 'color.sty' is not loaded}%
    \renewcommand\color[2][]{}%
  }%
  \providecommand\transparent[1]{%
    \errmessage{(Inkscape) Transparency is used (non-zero) for the text in Inkscape, but the package 'transparent.sty' is not loaded}%
    \renewcommand\transparent[1]{}%
  }%
  \providecommand\rotatebox[2]{#2}%
  \newcommand*\fsize{\dimexpr\f@size pt\relax}%
  \newcommand*\lineheight[1]{\fontsize{\fsize}{#1\fsize}\selectfont}%
  \ifx\svgwidth\undefined%
    \setlength{\unitlength}{488.09098371bp}%
    \ifx\svgscale\undefined%
      \relax%
    \else%
      \setlength{\unitlength}{\unitlength * \real{\svgscale}}%
    \fi%
  \else%
    \setlength{\unitlength}{\svgwidth}%
  \fi%
  \global\let\svgwidth\undefined%
  \global\let\svgscale\undefined%
  \makeatother%
  \begin{picture}(1,0.25114053)%
    \lineheight{1}%
    \setlength\tabcolsep{0pt}%
    \put(0,0){\includegraphics[width=\unitlength,page=1]{Fig4.pdf}}%
    \put(-0.00048758,0.06026481){\color[rgb]{1,0,0}\makebox(0,0)[lt]{\lineheight{1.25}\smash{\begin{tabular}[t]{l}$v_0$\end{tabular}}}}%
    \put(0.14878201,0.10965543){\color[rgb]{1,0,0}\makebox(0,0)[lt]{\lineheight{1.25}\smash{\begin{tabular}[t]{l}$v_1$\end{tabular}}}}%
    \put(0.26073421,0.05806976){\color[rgb]{1,0,0}\makebox(0,0)[lt]{\lineheight{1.25}\smash{\begin{tabular}[t]{l}$v_2$\end{tabular}}}}%
    \put(0.3309787,0.10746038){\color[rgb]{1,0,0}\makebox(0,0)[lt]{\lineheight{1.25}\smash{\begin{tabular}[t]{l}$v_3$\end{tabular}}}}%
  \end{picture}%
\endgroup%

\caption{The figure illustrates a portion of vertices $\gothic{i}(\alpha^{*})$ visits. With respect to $\gothic{i}(\alpha^{*})$, there is the unique geodesic ray $\gamma_{\alpha^{*}} \coloneqq [v_0, v_1] \cdot [v_1, v_2] \cdot [v_2, v_3] \cdots$ associated to $\alpha^{*}$.}\label{itinerary}
\end{figure}

In the case $\alpha^{*}$ is of Type II, since $\gamma^{*}$ as above  is uniquely determined by $\alpha^{*}$, we denote it by $\gamma_{\alpha^{*}}$. We call $\gamma_{\alpha^{*}}$ and the associated ordered, infinite sequence of vertices $v_0, v_1, v_2,\dots$, the \textit{simplified itinerary} associated to $\alpha^{*}$. 

We define $e_{i} \coloneqq [v_{i-1}, v_i]$ and let $v_0 \coloneqq \gothic i(\go)$. Let $h_{e_0e_1}$ be the geodesic in $H_{v_0}$ realizing the shortest geodesic between $\pi_{v_0}(\go)$ and $\pi_{v_0}(F^{*}_{e_1})$, where $\pi_{v_0}:X_{v_0}=H_{v_0} \times \R\to H_{v_0}$ is the projection. Let  $\mathcal{S}^{*}_{e_0 e_1} \coloneqq h_{e_0 e_1} \times \R$ be the corresponding admissible strip. For the rest of this paper, we adopt the following notation:
\begin{enumerate}
    \item  We denote the intersection point of two adjacent admissible strips  by \[p^{*}_i \coloneqq\mathcal {S}^{*}_{{e}_{i-1} {e}_i}\cap \mathcal{S}^{*}_{{e}_{i} {e}_{i+1}}\]

    \item For each $i \ge 2$, denote the two \textit{singular geodesics} in the admissible plane $F^{*}_{e_i}$ by
\[
(f^{*}_{i})^{-} \coloneqq F^{*}_{e_i} \cap \mathcal{S}^{*}_{{e}_{i-1} {e}_i} \quad \text{and} \quad (f^{*}_{i})^{+} = F^{*}_{e_i} \cap \mathcal{S}^{*}_{{e}_{i} {e}_{i+1}}
\]
\end{enumerate}

In Section~\ref{sec:mainflatray}, we constructed backward spiral paths that redirect any $\qq$-ray (Type I or Type II) to $\zeta^{*}$ (see Proposition~\ref{Prop:zeta-is-top}. The proof can be adapted to show that if $\alpha^{*}$ is of Type I, then $\zeta^{*}$ can be quasi-redirected to $\alpha^{*}$
\begin{lem}
\label{lem:TypeIequivalentmainflatray}
Let $\alpha^{*}$ be an arbitrary $\qq$--ray of Type I in the flip admissible space $X$. Then $\alpha^{*} \sim \zeta^{*}$. 
\end{lem}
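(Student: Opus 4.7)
The direction $\alpha^{*} \preceq \zeta^{*}$ is immediate from Proposition~\ref{Prop:zeta-is-top}, so the plan is to establish the reverse $\zeta^{*} \preceq \alpha^{*}$.

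Since $\alpha^{*}$ is Type~I, there exists a vertex $v$ of the Bass--Serre tree and a time $T_0>0$ with $\alpha^{*}|_{[T_0,\infty)} \subset X_v = H_v \times \mathbb{R}$. By Proposition~\ref{prop:product}, $P(X_v)$ is a single point, and because $X_v$ is a convex subspace of $X$, any redirecting path constructed inside $X_v$ is also a valid redirecting path in $X$ (with possibly slightly worse constants). Consequently, the tail $\alpha^{*}|_{[T_0,\infty)}$ is equivalent in $X$ to any other quasi-geodesic ray in $X_v$ based at $\alpha^{*}(T_0)$, and we may therefore assume without loss of generality that the tail is a geodesic ray in the $\mathbb{R}$-direction of $X_v$.

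The main step is to invoke Lemma~\ref{lem:infinitepoints}: for each large $n$, I will construct a quasi-geodesic ray $\gamma_n$, with uniform quasi-geodesic constants, that coincides with $\zeta^{*}$ on $\zeta^{*}|_{[0,n]}$ and eventually coincides with $\alpha^{*}$. When $v = v_0$ both $\zeta^{*}$ and the tail of $\alpha^{*}$ lie inside $X_{v_0}$, and the construction reduces to a single L-shape inside $X_{v_0}$, whose quasi-geodesic property follows directly from Proposition~\ref{prop:product}. When $v \neq v_0$, let $v_0, v_1, \ldots, v_k = v$ denote the vertices along the Bass--Serre geodesic with edges $e_i = [v_{i-1}, v_i]$. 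Starting from $\zeta^{*}(n) \in F^{*}_{e_1}$, the plan is to execute a right-angle turn onto the $\mathbb{R}$-direction of $X_{v_1}$ (the perpendicular singular geodesic of $F^{*}_{e_1}$, by flip-admissibility) and progress a length proportional to $n$; then cross $X_{v_1}$ by a quasi-geodesic segment (using Proposition~\ref{prop:product} to choose the crossing) to reach $F^{*}_{e_2}$; iterate this turn-and-traverse step across the remaining admissible planes $F^{*}_{e_i}$; and finally, inside $X_v$, use Proposition~\ref{prop:product} once more to interpolate to the tail of $\alpha^{*}$. Since the number of legs is bounded by a constant depending only on $k$, and each leg's length is proportional to $n$, Proposition~\ref{prop:constructQg} together with the surgery lemmas in Lemma~\ref{lem:surgery1} certify that $\gamma_n$ is a quasi-geodesic with constants independent of $n$.

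The principal technical obstacle will be verifying that the successive right-angle turns and the final interpolation into the tail of $\alpha^{*}$ do not destroy the quasi-geodesic property. Flip-admissibility is crucial here, as it produces orthogonal rather than obtuse turn angles inside each $F^{*}_{e_i}$; combined with the linear-in-$n$ growth of leg lengths, which dominates the bounded offset coming from the fixed endpoint of $\alpha^{*}$'s tail, this prevents the ``backward overshoot'' that would otherwise produce a U-turn. The uniform quasi-geodesic constants thus obtained are precisely what Lemma~\ref{lem:infinitepoints} requires to conclude $\zeta^{*} \preceq \alpha^{*}$, completing the proof.
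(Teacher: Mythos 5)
Your strategy is the correct one and matches what the paper gestures at: the direction $\alpha^{*}\preceq\zeta^{*}$ is indeed Proposition~\ref{Prop:zeta-is-top}, and the substantive direction $\zeta^{*}\preceq\alpha^{*}$ is handled by building, for each $n$, a turn-and-traverse ray $\gamma_n$ through the bounded template from $v_0$ to $v$ and invoking Lemma~\ref{lem:infinitepoints}. However, two steps as written do not go through and need patching.

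First, the claim $\zeta^{*}(n)\in F^{*}_{e_1}$ is not automatic and is in general false. The ray $\zeta^{*}$ runs along the $\R$--factor of $X_{v_0}=H_{v_0}\times\R$ through $\go$, whereas $F^{*}_{e_1}=\ell_1\times\R$ for some line $\ell_1\subset H_{v_0}$; these meet along $\zeta^{*}$ only if the $H_{v_0}$--coordinate of $\go$ lies on $\ell_1$, i.e.\ only if the fixed admissible plane $F^{*}$ containing $\go$ happens to be $F^{*}_{e_1}$, which the setup does not guarantee. You need an initial projection leg through the admissible strip $\mathcal{S}^{*}_{e_0 e_1}$ (as in the paper's backward/forward spiral constructions) before the first right-angle turn. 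Second, the appeal to Proposition~\ref{prop:constructQg} does not immediately follow from ``each leg's length is proportional to $n$'': condition~(\ref{item4}) of that proposition asks each leg to be at least a factor $\rho>q/\delta+Q$ longer than the preceding one, which fails if the proportionality constants are equal. The fix is to take the $i$-th leg of length on the order of $\rho^{i}n$; since $k$ is fixed for a Type~I ray this is still $O(n)$ and keeps the constants uniform, but the exponential escalation has to be stated (and condition~(\ref{item5}) checked at the final landing onto $\alpha^{*}$'s tail, where the segment-to-quasi-geodesic-ray surgery of Lemma~\ref{item_surgery_segment} is the cleaner tool than a bare concatenation). Your ``without loss of generality straighten the tail'' reduction also introduces a kink at $\alpha^{*}(T_0)$ whose quasi-geodesicity is not free; it is safer to avoid the WLOG and instead have $\gamma_n$ land directly on $\alpha^{*}$ via a redirecting path inside $X_v$ furnished by Proposition~\ref{prop:product}.
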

In the rest of this section, we address the case when $\alpha^{*}$ is of Type II.

\subsection*{Excursion}
Following Definition~\ref{defn:typesIandII}, we introduce further refinements to the classification of $\qq$--rays of Type II and their behavior in admissible spaces

  We first establish notation that will be used for the remainder of this section. Let  $\alpha^{*}$ be a $\qq$--ray of Type II\@. Let $e_i=[v_{i-1},v_i]$, where $v_0,v_1,\dots$ is a simplified itinerary of $\alpha^{*}$. Let $t^{*}_i$ be the first time $\alpha^{*}$ intersects $ F^{*}_{e_i}$.

\begin{defn}[Sub-exponential Excursion]  We say that $\alpha^{*}$ has \emph{sub-exponential excursion} with respect to the distance in $T$ if
\[ \lim_{i \to \infty}  \frac {\log |t^{*}_i -t^{*}_{i-1} |}{i} = 0\]

\end{defn}
 Let $\gamma^{*} := \gamma^{*}_{\alpha^{*}}$ be the geodesic ray in the Bass--Serre tree $T$ associated to $\alpha^{*}$. Recall that $p^{*}_i = (f^{*}_{i})^{-} \cap (f^{*}_{i})^{+}$ and $p^{*}_{i+1} = (f^{*}_{i+1})^{-} \cap (f^{*}_{i+1})^{+}$, where $(f^{*}_{i})^{+}$ and $(f^{*}_{i+1})^{-}$ are the two singular geodesics of the admissible strip $\mathcal{S}^{*}_{e_{i} e_{i+1}}$. Also recall that $p^{*}_0 \coloneqq \go$ %and the special path $s_\gamma$ is the concatenation 
%\[
%[p_0, p_1] \cup [p_1, p_2] \cup \cdots.
%\]

\begin{lem}
\label{lem:upperboundofDelta}
 Assume that the excursion of $\alpha^{*}$ is sub-exponential.  Given a constant $0 < \rho_0 < 1/4$.  Then there exists a constant $C >0$ such that $\Delta^{*}_{i} : = d(p^{*}_i, p^{*}_{i+1}) \le C (1 + \rho_0)^i$.
\end{lem}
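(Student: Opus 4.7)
The strategy is to combine the sub-exponential excursion hypothesis with the quasi-isometric template embedding $\phi_\gamma : \mathcal{T}_\gamma \to X$ from Lemma~\ref{lem:CK02}, via an inductive triangle-inequality argument. First, I convert the hypothesis into a quantitative bound: by the definition of sub-exponential excursion, for any $\epsilon > 0$ there exists $N = N(\epsilon)$ such that $t_i^* - t_{i-1}^* \leq (1+\epsilon)^i$ for all $i \geq N$. Choosing $\epsilon$ strictly smaller than $\rho_0$ and summing the geometric series yields $t_i^* = O((1+\epsilon)^i)$, and since $\alpha^*$ is a $(q,Q)$-quasi-geodesic, also $\|\alpha^*(t_i^*)\| \leq q t_i^* + Q = O((1+\epsilon)^i)$.

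Next, I induct on $i$ with hypothesis $\Delta_j^* \leq C(1+\rho_0)^j$ for all $j < i$. The inductive step estimates $d(p_i^*, p_{i+1}^*)$ by the triangle inequality passing through $\alpha^*$:
\[
d(p_i^*, p_{i+1}^*) \leq d(p_i^*, \alpha^*(t_i^*)) + d(\alpha^*(t_i^*), \alpha^*(t_{i+1}^*)) + d(\alpha^*(t_{i+1}^*), p_{i+1}^*).
\]
The middle term is bounded by $q(t_{i+1}^* - t_i^*) + Q \leq q(1+\epsilon)^{i+1} + Q$. For each outer term, I use that both $p_j^*$ and $\alpha^*(t_j^*)$ lie in the same admissible plane $F^*_{e_j} \subset X_{v_{j-1}} = H_{v_{j-1}} \times \mathbb{R}$: hyperbolicity of the factor $H_{v_{j-1}}$ and Morse stability of quasi-geodesics there force the $H_{v_{j-1}}$-coordinate of $\alpha^*(t_j^*)$ to remain within a bounded neighborhood of the geodesic $h_{e_{j-1} e_j}$, and in particular close to the corresponding coordinate of $p_j^*$, while the $\mathbb{R}$-coordinate difference is controlled by the quasi-geodesic length $q t_j^* + Q$. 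Telescoping $\|p_j^*\| \leq L^2 \sum_{k<j} \Delta_k^* + O(j)$ via the template QIE and applying the inductive hypothesis, we conclude $d(\alpha^*(t_j^*), p_j^*) = O((1+\rho_0)^j)$. Substituting back, $\Delta_i^* = O((1+\rho_0)^{i+1}) \leq C(1+\rho_0)^i$ after adjusting $C$, closing the induction.

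The main technical obstacle is the circular dependence between $\|p_{i+1}^*\|$ and $\Delta_i^*$: the naive bound $\Delta_i^* \leq \|p_i^*\| + \|p_{i+1}^*\|$ is too weak because $\|p_{i+1}^*\|$ in turn requires knowing $\Delta_i^*$. This is resolved by decomposing the distance $d(\alpha^*(t_j^*), p_j^*)$ into its $H_{v_{j-1}}$- and $\mathbb{R}$-components separately: hyperbolic stability in $H_{v_{j-1}}$ controls the $H$-component by a universal constant depending only on $\qq$ and the geometry of $X$, whereas the $\mathbb{R}$-component is controlled by $\|\alpha^*(t_j^*)\|$, completely avoiding the circularity. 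The finitely many small-$i$ indices $i < N$ are absorbed into the constant $C$ by choosing $C$ sufficiently large, completing the proof.
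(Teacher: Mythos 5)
Your proof has a genuine gap in the handling of the outer terms $d(p_j^*, \alpha^*(t_j^*))$, and in fact the paper's proof deliberately avoids ever bounding that quantity. You claim that "hyperbolicity of the factor $H_{v_{j-1}}$ and Morse stability of quasi-geodesics there force the $H_{v_{j-1}}$-coordinate of $\alpha^*(t_j^*)$ to remain within a bounded neighborhood of the geodesic $h_{e_{j-1}e_j}$," and hence that the $H$-component of $d(\alpha^*(t_j^*), p_j^*)$ is controlled by a universal constant. This is not justified: the projection of a quasi-geodesic in the product $X_{v_{j-1}} = H_{v_{j-1}} \times \R$ to the factor $H_{v_{j-1}}$ is not a quasi-geodesic in $H_{v_{j-1}}$, so stability of quasi-geodesics in hyperbolic spaces does not apply to it. A $\qq$--ray can spend a long time inside $X_{v_{j-1}}$ drifting almost entirely in the $H_{v_{j-1}}$-direction before first hitting $F^*_{e_j}$, so the $H_{v_{j-1}}$-coordinate at that first hitting time can be far from the endpoint of $h_{e_{j-1}e_j}$. (Note also that in the flip case the "$H_{v_{j-1}}$-direction" inside the wall $F^*_{e_j}$ is exactly the $\R_{v_j}$-direction $f_j^{*,+}$, so your split is really into the $f^{*,-}_j$- and $f^{*,+}_j$-coordinates, and neither piece is controlled by a universal constant.) Without this universal bound, the circularity you correctly identified — $\|p_{i+1}^*\|$ depends on $\Delta_i^*$ — is not actually broken, and the induction does not close. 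There is also a secondary vagueness: the "\,$\R$-component is controlled by $\|\alpha^*(t_j^*)\|$" step presupposes a canonical $\R_{v_{j-1}}$-coordinate for $\go$, but $\go$ need not lie in $X_{v_{j-1}}$.

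The paper's argument takes a different route that sidesteps this entirely. It fixes a window of five consecutive vertex spaces and uses that the subspace $\mathcal{X}^* = X_{v_j} \cup \mathcal{S}^*_{e_je_{j+1}} \cup \cdots \cup X_{v_{j+4}}$ is uniformly quasi-convex (Croke--Kleiner, Lemma 4.3). Letting $\beta^*$ be the shortest geodesic from $X_{v_j}$ to $X_{v_{j+4}}$, the geometry of the template forces $\operatorname{Length}(\beta^*) \geq \frac{1}{A_2}\, d(p_{j+2}^*, p_{j+3}^*) - A_2$ for a uniform $A_2$. Since $\alpha^*(t_j^*) \in X_{v_j}$ and $\alpha^*(t_{j+4}^*) \in X_{v_{j+4}}$, we get $d(\alpha^*(t_j^*), \alpha^*(t_{j+4}^*)) \geq \operatorname{Length}(\beta^*)$, and the left side is $O((1+\rho_0)^j)$ by the sub-exponential excursion hypothesis. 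This directly bounds $\Delta^*_{j+2}$ in terms of the excursion, with no induction and no need to compare $p_j^*$ to $\alpha^*(t_j^*)$. If you want to salvage a self-contained argument, you would need to either import an analogue of the quasi-convexity/lower-bound step, or find a different way to break the circular dependence; appealing to stability of quasi-geodesics in the hyperbolic factor does neither.
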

\begin{proof}
%\begin{enumerate}
Since the excursion of $\alpha^{*}$ is sub-exponential, it follows that there exists a constant $C'$  such that 
 \[
 d(\alpha^{*}(t^{*}_i), \alpha(t^{*}_{i+1})) < C'(1+ \rho_0)^i
 \]
We define $$\mathcal{X}^{*} : = X_{v_j} \cup \mathcal{S}^{*}_{e_j e_{j+1}} \cup F^{*}_{e_{j+1}} \cup \mathcal{S}^{*}_{e_{j+1} e_{j+2}} \cup F^{*}_{e_{j+2}} \cup \mathcal{S}^{*}_{e_{j+2} e_{j+3}} \cup F^{*}_{e_{j+3}} \cup \mathcal{S}^{*}_{e_{j+3} e_{j+4}} \cup X_{v_{j+4}}$$
We recall from~\cite[Lemma~4.3] {CK02} that the subspace $\mathcal{X}^{*}$ is a $A_1$--quasiconvex for some uniform constant $A_1$ depending only on the geometry of $X$.

Let $\beta^{*}$ be the shortest geodesic joining $X_{v_j}$ to $X_{v_{j+4}}$. It follows that $\beta^{*} \subset N_{A_1} (\mathcal X)$ because of quasi-convexity. The length of $\beta^{*}$ is necessarily greater than $d(p^{*}_{j+2}, p^{*}_{j+3})$ up to some uniform multiplicative and additive constants, that is
 \[
 \operatorname{Length}(\beta^{*}) > \frac{1}{A_2} d(p^{*}_{j+2}, p^{*}_{j+3}) - A_2
 \]
 for some uniform constant $A_2>0$.
 
 Since $\beta^{*}$ is a shortest geodesic connecting $X_{v_j}$ to $X_{v_{j+4}}$, we have $$d(\alpha^{*}(t^{*}_j), \alpha^{*}(t^{*}_{j+4})) > \operatorname{Length}(\beta^{*})$$  We thus obtain
 \begin{align*}
      \Delta^{*}_{j+2} &= d(p^{*}_{j+2}, p^{*}_{j+3}) \le A_{2}( A_2 + \operatorname{Length}(\beta^{*}) ) \\
      &\le A_{2}^2 + A_2 d (\alpha(t^{*}_j), \alpha(t^{*}_{j+4})) \\
       &\le A_{2}^2 + A_{2} C' \, \sum_{s = j}^{j+4} (1+\rho_0)^s \le C (1+\rho)^{j+2}
 \end{align*}
 for some constant $C  = C(\rho_0, A_2, C')$. The claim is confirmed.    
 %\begin{align*}
 %    \Delta^{*}_{j+2} &= d(p^{*}_{j+2}, p^{*}_{j+3}) \le A_{2}^2 + A_2 d (X_{v_j}, X_{v_{j+4}}) \le A_{2}^2 + A_2 d (\alpha(t^{*}_j), \alpha(t^{*}_{j+4}))  \\
 %    &\le A_{2}^2 + A_{2} C' \, \sum_{s = j}^{j+4} (1+\rho_0)^s \le C (1+\rho)^{j+2}
 %\end{align*} for some constant $C  = C(\rho_0, A_2, C')$. The claim is confirmed.    
\end{proof}

\begin{lemma}\label{Thm:Classification} 
Let $\alpha^{*}$ be a $\qq$--ray of Type II\@. 
If the excursion of $\alpha^{*}$ is sub-exponential then $\zeta^{*}$ can not be quasi-redirected to $\alpha^{*}$
\end{lemma}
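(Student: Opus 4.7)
The strategy is proof by contradiction. Suppose $\zeta^* \preceq \alpha^*$, so there exist constants $(q, Q)$ such that for every $r > 0$ there is a $(q, Q)$--quasi-geodesic ray $\gamma_r$ with $\gamma_r|_r = \zeta^*|_r$ that eventually coincides with $\alpha^*$, say $\gamma_r(t) = \alpha^*(t - t_\gamma + s_\gamma)$ for $t \geq t_\gamma$. First I will gather the elementary constraints: the quasi-geodesic property applied to the pairs $(0, t_\gamma)$ and $(r, t_\gamma)$ gives $t_\gamma > r$, $s_\gamma$ comparable to $t_\gamma$ up to factor $q^2$, and $t_\gamma(r), s_\gamma(r) \to \infty$ as $r \to \infty$. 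A short argument also rules out $t_\gamma \leq r$: this would force $\zeta^*$ and $\alpha^*$ to agree on a long interval, contradicting $\alpha^*$ being Type II and $\zeta^*$ being Type I.

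Next I will fix $\rho_0 > 0$ small (to be chosen at the end in terms of $q$, $Q$, and the template quasi-isometric embedding constants $L, A$ from Lemma~\ref{lem:CK02}) and invoke Lemma~\ref{lem:upperboundofDelta} to obtain $\Delta^*_i = d(p^*_i, p^*_{i+1}) \leq C_0 (1+\rho_0)^i$. Using the template $\mathcal{T}$ associated to $\alpha^*$'s simplified itinerary and its $(L,A)$--quasi-isometric embedding $\phi \colon \mathcal{T} \to X$, I will transport the question to the right-angled template, where $\zeta^*$ corresponds to a line in $F_0$ and $\alpha^*(s_\gamma)$ corresponds (up to $(L,A)$) to a point in a wall $F_k$, where $k = k(r)$ is the level determined by $s_\gamma \in [t^*_{k-1}, t^*_k]$.

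The geometric heart of the argument will use the forward spiral structure of Section~\ref{sec:template}. Applying the construction from Section~\ref{subsection:logspiralTypeI} with base radius $r$ to $\alpha^*$'s itinerary (valid since $\Delta_i^* \leq C_0(1+\rho_0)^i$ by sub-exponential excursion), Proposition~\ref{prop:smallslope} gives a forward spiral path $L_{r,k}$ that is a uniform $(L', A')$--quasi-geodesic of length $\sim r(1+\rho_0)^k/\rho_0$. I will argue that the quasi-geodesic constraint $d(\gothic{o}, \gamma_r(t)) \geq t/q - Q$ (which prevents $\gamma_r$ from returning near the basepoint after time $r$) together with the right-angled template geometry forces $\gamma_r|_{[r, t_\gamma]}$ to traverse the walls of $\mathcal{T}$ in a way comparable to a forward spiral, giving $t_\gamma - r \gtrsim r(1+\rho_0)^k$ up to constants determined by $q, Q, L, A, L', A'$. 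Combined with $s_\gamma \leq t^*_k \leq C'(1+\rho_0)^k$ and $s_\gamma \approx t_\gamma$, this forces $r \leq $ const, contradicting $r$ arbitrarily large once $\rho_0$ is chosen small enough that the exponential factors match up favorably.

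The main obstacle is establishing the amplified lower bound $t_\gamma - r \gtrsim r(1+\rho_0)^k$ rather than the naive $t_\gamma - r \gtrsim r$ that follows from just $d(\zeta^*(r), X_{v_k}) \geq r - O(1)$. The difficulty is that within the right-angled template, one can in principle cross many walls with relatively small incremental cost by exploiting cheap near-corner crossings, so the lower bound needs to incorporate both the qg constraint and the flip structure — specifically, that in each vertex space $X_{v_j}$ the "height in $X_{v_0}$" direction (which starts at $r$ at $\zeta^*(r)$) becomes a displacement in the hyperbolic factor $H_{v_j}$, and non-commensurable edge groups force non-trivial divergence between $\ell_{v_j, e_j}^H$ and $\ell_{v_j, e_{j+1}}^H$. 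A careful accounting of these costs vertex-space by vertex-space, coordinated across the template via the flip identifications, is the technical core of the argument, and the correct choice of $\rho_0$ will be $\rho_0 < 1/(q^2 L^2)$ or a similar explicit threshold.
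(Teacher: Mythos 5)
There is a genuine gap. You correctly identify the shape of the contradiction — one must show that any quasi-geodesic $\gamma_r$ redirecting $\zeta^*$ to $\alpha^*$ at radius $r$ reaches the $k$-th wall $F^*_{e_k}$ only after time growing like $r(1+\rho_1)^k$, while $\alpha^*$'s sub-exponential excursion puts it at that wall exponentially earlier — but you flag the key quantitative lower bound ("the amplified lower bound $t_\gamma - r \gtrsim r(1+\rho_0)^k$ rather than the naive $t_\gamma - r \gtrsim r$") as ``the technical core of the argument'' and supply no proof of it. That lower bound is precisely where all the work is, and the near-corner-crossing objection you raise is real: a quasi-geodesic is \emph{not} forced to stay near a forward spiral, so you cannot read the lower bound off Proposition~\ref{prop:smallslope}, which is an upper-bound construction.

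The paper's proof supplies exactly the missing mechanism. Setting $\ell^*_k := d\bigl(p^*_k, \xi^*(t^*_k)\bigr)$ where $t^*_k$ is $\xi^*$'s first entry time to $F^*_{e_k}$, it derives two inequalities from the quasi-geodesic property of $\xi^*$: (a) $t^*_{k+1} - t^*_k \geq \rho_0\,\ell^*_k$, i.e.\ moving to the next wall costs at least a fixed fraction of the current displacement from the corner, and (b) $\ell^*_k + \sum_{i<k}\Delta^*_i \geq \rho_0\, t^*_k$, obtained by comparing $\xi^*$'s length to the alternative route through $p^*_1,\dots,p^*_k$. These feed a \emph{joint} induction on $(t^*_k, \ell^*_k)$ showing both grow like $(1+\rho_1)^k$ with $\rho_1 = \rho_0^2/2$. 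This is the step absent from your write-up: the displacement $\ell^*_k$ itself must grow exponentially, and the reason is the interplay of (a) and (b), not a pointwise geometric estimate in a single vertex space.

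One more substantive issue: you use a single constant $\rho_0$ both for the sub-exponential upper bound on $\Delta^*_i$ from Lemma~\ref{lem:upperboundofDelta} and for the forward-spiral rate, hoping ``the exponential factors match up favorably.'' The paper keeps these separate — it fixes $\rho_1 = \rho_0^2/2$ as the lower-bound exponent coming from $\xi^*$'s quasi-geodesic constants and then invokes Lemma~\ref{lem:upperboundofDelta} with a strictly smaller $\rho < \rho_1$ (possible because sub-exponential excursion beats any exponential rate), so that $t^*_k / \sum_{i\le k}\Delta^*_i \to \infty$ and the two rays provably diverge. With a single $\rho_0$ the two exponentials cancel and the contradiction would have to come entirely from the multiplicative constants, which is a much more fragile comparison than the paper's diverging-gap argument and is not justified in your sketch.
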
 

\begin{proof}
By way of contradiction, suppose that at every radius $r$, there is always a uniform quasi-geodesic $\xi^{*}$ (say $\xi^{*}$ is a $\qq$--ray for some $\qq = (q, Q)$) that quasi-redirects $\zeta^{*}$ to $\alpha^{*}$ at the radius $r$.
Let $t^{*}_k$ be the first time $\xi^{*}$ visits $F^{*}_{e_k}$ and denote 
\[
\ell^{*}_k \coloneqq d(p^{*}_k, \gamma(t^{*}_k))
\]
Since $\xi^{*}$ is a ${\qq}$--ray, there exists a constant $\rho_0 = \rho_0(q, Q) > 0$ such that 
\begin{equation}
    t^{*}_0 = r \quad \text{and} \quad t^{*}_{k+1} - t^{*}_k \ge \rho_0 \ell^{*}_k 
\end{equation}
%EXPLANATION: WE choose an edge e and look back the Eucledean template, then see how it works in the template
Another way to travel from $\gothic o = p^{*}_0$ to $\xi^{*}(t^{*}_k)$ is to go along the path $[p^{*}_0, p^{*}_1], [p^{*}_1,p^{*}_2], \ldots, [p^{*}_{k-1}, p^{*}_k]$ which is a $(\mu, \mu)$--quasi-geodesic where $\mu$ is an uniform constant (see \cite[Proposition~3.8]{NY23} and then go up or down a distance of $\ell^{*}_k$ to reach $\xi^{*}(t^{*}_k)$. 

Recall that $\Delta^{*}_{i} : = d(p^{*}_i, p^{*}_{i+1})$. Again since $\xi^{*}$ is a ${\qq}$--ray we have that
\begin{equation}
\label{equ:lowerboundofell}
    \ell^{*}_k + \sum_{i=0}^{k-1} \Delta^{*}_i \ge \rho_0 t^{*}_k
\end{equation}
Define \[\rho_1 = \rho_{0}^{2}/2\] and pick an arbitrary $0 < \rho < \rho_1$. 

Since the excursion of $\alpha^{*}$ is sub-exponential, it follows from Lemma~\ref{lem:upperboundofDelta} that there exists a constant $C = C(\alpha^{*})>0$ such that for every $i \ge 0$ then 
$\Delta^{*}_{i}  = d(p^{*}_i, p^{*}_{i+1}) \le C (1 + \rho)^i$
and hence \[\sum_{i=0}^{k} \Delta^{*}_i \le C \sum_{i=0}^{k} (1+ \rho)^i \le \frac{C}{\rho} (\rho+1)^k\]

\textbf{Claim:} 
\begin{equation}
    \forall r > 2C/ (\rho \rho_0) \Longrightarrow t^{*}_{k+1} \ge r (1+ \rho_1)^{k+1} \quad \text{and} \quad \ell^{*}_k \ge \frac{r \rho_0}{2} (1 + \rho_1)^{k+1} 
\end{equation}
Indeed, we prove the above claim by induction. The base case is obvious, so we assume the claim is true for all $i \le k$. We have
\begin{align*}
    t^{*}_{k+1} & \ge t^{*}_k + \rho_0 \ell^{*}_k \\
    &\ge r (1+\rho_1)^{k} + \frac{r \rho_{0}^2}{2} (1 + \rho_1)^k 
    \ge r (1 + \rho_1)^k (1 + \frac{\rho_{0}^2}{2}) \\
    &\ge r (1 + \rho_1)^{k+1}
\end{align*}
Using this and (\ref{equ:lowerboundofell}), we have
\begin{align*}
    \ell^{*}_{k+1} &\ge \rho_{0} t^{*}_{k+1} - \sum_{i=0}^{k} \Delta^{*}_i 
    \ge \rho_{0} t^{*}_{k+1}  - \frac{C}{\rho_0} (1 + \rho_0)^{k+1} \\
    &\ge r \rho_0 (1+\rho_1)^{k+1} - \frac{C}{\rho_0} (1+ \rho_1)^{k+1} \\ 
    &= (1+\rho_1)^{k+1} (r \rho_0 - \frac{C}{\rho}) 
    \ge \frac{r \rho_0}{2} (1+ \rho_1)^{k+1}
\end{align*}
On the other hand,  we have 
\[
\sum_{i=0}^{k+1} d(\alpha^{*}(t_i), \alpha^{*}(t_{i-1})) \le \frac{C}{\rho} (1 + \rho)^{k+1} < \frac{C}{\rho} \frac{t^{*}_{k+1}}{r}
\] for $r$ sufficiently large.

In other words, $\xi^{*}$ arrives in $F^{*}_{e_k}$ long after $\alpha^{*}$ has left $F^{*}_{e_k}$. It is a routine computation to shown that for a sufficiently large $r $, we have $d(\xi^{*}(t^{*}_{k+1}), \alpha^{*}(t_{k+1})) \to \infty$. Therefore it is impossible for $\xi^{*}$ to eventually coincide with $\alpha^{*}$.

In conclusion, we have shown that for every ${\qq} = (q, Q)$, there exists a sufficiently large constant $r >0$ such that there is no ${\qq}$--ray $\xi^{*}$ with $\xi^{*}|_r = \zeta^{*}|_r$ and $\xi^{*}$ is eventually equal to $\alpha^{*}$. Therefore $\zeta^{*}$ can not be quasi-redirected to $\alpha^{*}$.
\end{proof}

The following lemma is extracted from the proof of \cite[Proposition~4.2]{QR24}. %We state it here for the convenience of the reader.

\begin{lem}
    \label{lem:redirecting-in-product}
Let $X = A \times B$ be a product of two proper geodesic metric spaces. Then there exists a pair of constants $\qq = (q, Q) \in [1, \infty) \times [0, \infty)$ such that the following holds. For every four points $x, y,z,t \in X$. Suppose that $d(z,t) > 8  d(x,y)$ then there exists a $(q, Q)$--quasi-geodesic $\gamma$ in $X$ such that $\gamma|_{r} = [x,y]$ where $r:= d(x, y)$ and $\gamma_{+} \in \{z, t\}$.
\end{lem}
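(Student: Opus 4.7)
The plan is to work with the $L^\infty$ product metric on $X = A \times B$, which by quasi-isometry invariance may be adopted without loss of generality since the various $L^p$ product metrics are bi-Lipschitz equivalent. Writing the four points in coordinates as $x = (x_A, x_B)$, $y = (y_A, y_B)$, $z = (z_A, z_B)$, $t = (t_A, t_B)$ and setting $r := d(x, y)$, the $L^\infty$ formula $d(p, q) = \max(d_A(p_A, q_A), d_B(p_B, q_B))$ together with the hypothesis $d(z, t) > 8r$ forces at least one coordinate---say $A$, after relabeling---to satisfy $d_A(z_A, t_A) > 8r$. Since $d_A(x_A, y_A) \leq r$, the triangle inequality in the one-dimensional factor $A$ yields a choice of $w \in \{z, t\}$ with $d_A(w_A, x_A) > 4r$, and hence $d_A(w_A, y_A) > 3r$.

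With this $w$ chosen, I would construct $\gamma$ as the concatenation $[x, y] \cup [y, w]$, parametrized by $L^\infty$-arc length so that $\gamma$ is automatically $1$-Lipschitz. For the lower bound in the quasi-geodesic inequality, I would exploit the $A$-projection: $d(\gamma(s_1), \gamma(s_2)) \geq |\gamma_A(s_1) - \gamma_A(s_2)|$. In the favorable case, where $w_A$ lies on the same side of $y_A$ as $y_A$ does of $x_A$, one can select the $L^\infty$-geodesic $[y, w]$ (using the non-uniqueness of $L^\infty$-geodesics) so that the $A$-coordinate of $\gamma$ is monotone throughout, with total progress $d_A(w_A, x_A) > 4r$ over parameter length $r + d(y, w) \leq 2r + d(x, w)$; since $d(x, w) \geq d_A(w_A, x_A) > 4r$, this directly produces uniform quasi-geodesic constants $(q, Q)$ depending only on the factor $8$ appearing in the hypothesis.

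The main technical obstacle is the unfavorable case, where $w_A$ lies on the opposite side of $y_A$ from $x_A$, so any direct concatenation reverses direction in the $A$-coordinate at the junction $y$ and can fail to be a quasi-geodesic (because $\gamma$ might return to a neighborhood of $[x, y]$). I would handle this either by applying the symmetric argument in the $B$-factor (if $d_B(z_B, t_B)$ is comparable to or larger than $r$, which in many configurations follows from a secondary application of the pigeonhole), or by inserting a controlled excursion of length $O(r)$ in the $B$-direction immediately after $y$, which reroutes $\gamma$ to approach $w$ from a direction disjoint from $[x, y]$; the additive cost of such a detour is bounded by a uniform multiple of $r$, while the direct distance $d(x, w) > 3r$ keeps the ratio of total path length to endpoint distance uniformly bounded. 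Since the detour happens strictly after parameter time $r$, the requirement $\gamma|_r = [x, y]$ is preserved. The generous multiplicative factor $8$ in the hypothesis provides precisely the slack needed for all cases of the sign analysis to yield constants $(q, Q)$ depending only on that factor and not on the specific points.
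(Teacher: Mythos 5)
There are genuine gaps in your argument, and they appear already in the ``favorable'' case before you ever reach the detour.

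\textbf{The $A$-projection alone does not give uniform constants.} Even when $\gamma_A$ is monotone, its speed on the second leg $[y,w]$ is $d_A(y_A,w_A)/d(y,w)$, which can be made arbitrarily small by taking $d_B(y_B,w_B)$ large relative to $d_A(y_A,w_A)$. Concretely, in $\mathbb R\times\mathbb R$ with the $L^\infty$ metric take $x=(0,0)$, $y=(r,0)$, $w=(4.5r,\,100r)$: then $d(x,y)=r$, $d_A(x_A,w_A)=4.5r>4r$, $y_A$ lies between $x_A$ and $w_A$ (your favorable case), the $A$-coordinate of $[y,w]$ is monotone, yet it moves at speed $3.5/100$. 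The path is in fact a $(2,0)$--quasi-geodesic, but this is visible only by combining the $A$- and $B$-projections; the $A$-projection alone certifies at best a quasi-geodesic constant proportional to $d(y,w)/d_A(y_A,w_A)$, which is unbounded. Your endpoint-to-endpoint estimate (``total progress $>4r$ over parameter length $\le 2r + d(x,w)$'') checks only the pair of endpoints, not the intermediate subintervals where the failure occurs, and even at the endpoints the ratio $4r/(2r+d(x,w))$ is not bounded below when $d(x,w)$ is large. So the conclusion ``this directly produces uniform quasi-geodesic constants'' does not follow.

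\textbf{The favorable/unfavorable dichotomy and the detour direction are not meaningful in a general factor.} The factor $A$ is an arbitrary proper geodesic metric space, not $\mathbb R$, so ``$w_A$ lies on the same side of $y_A$ as $y_A$ does of $x_A$'' and ``monotone $A$-coordinate'' do not have a well-defined sense. The metrically meaningful distinction is whether the geodesic $[y_A,w_A]$ passes near $[x_A,y_A]$, and you do not articulate or exploit this. More seriously, in the unfavorable case the $B$-excursion you propose can itself re-enter a neighborhood of $[x,y]$: the $B$-projection of $[x,y]$ is a $1$-Lipschitz path of diameter up to $r$ emanating from $y_B$, and when $B$ is not $\mathbb R$ there need not be any point $b'$ at distance $2r$ from $y_B$ whose geodesic to $y_B$ avoids that footprint (e.g.\ $B=[0,\infty)$ with $y_B=0$). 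What actually saves the construction in that regime is the $A$-separation $d_A(\gamma_A(s_1),y_A)$, which is uniformly comparable to $r-s_1$ precisely when $A$ realizes $d(x,y)$; the correct argument must switch which factor carries the detour and which factor supplies the separation according to which factor realizes $d(x,y)$, and then still analyze the junction $[y,y']\cup[y',w]$ using both projections. None of this is in your sketch, and the phrase ``which reroutes $\gamma$ to approach $w$ from a direction disjoint from $[x,y]$'' asserts rather than proves the key point. For comparison, the paper's own proof is a one-line reduction (choose $w\in\{z,t\}$ with $d(x,w)>4r$) followed by a direct appeal to the product-space construction of \cite{QR24}, where these issues are handled by a detour in the coordinate realizing the far distance together with estimates using both projections.
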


\begin{proof}
    Since $d(z,t) > 8 d(x,y) = 8r$, it follows that either $d(x, z) > 4 d(x, y) = 4r$ or $d(x, t) > 4 d(x,y) = 4r$. The proof follows line by line from that of \cite[Proposition~4.2]{QR24}.
\end{proof}

\begin{prop}\label{prop:classifyspecialQR}
    Let $\alpha^{*}$ be an arbitrary $\qq$--ray of Type~II in $X$.  
   If the excursion of $\alpha^{*}$ is not sub-exponential then $\alpha^{*} \sim \zeta^{*}$.
\end{prop}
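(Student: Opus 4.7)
The plan is to show $\zeta^{*} \preceq \alpha^{*}$; combined with Proposition~\ref{Prop:zeta-is-top}, which gives $\alpha^{*} \preceq \zeta^{*}$, this yields the equivalence $\alpha^{*} \sim \zeta^{*}$. The strategy is to use the forward spiral paths of Type~II from Section~\ref{subsection:forward-spiral-II} to quasi-redirect $\zeta^{*}$ to $\alpha^{*}$ at arbitrarily large radii, then apply Lemma~\ref{lem:infinitepoints}.

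First, fix a sufficiently small constant $\rho_0 \in (0, 1/16)$. Since the excursion of $\alpha^{*}$ is not sub-exponential, we have $\limsup_{i\to\infty} \log|t^{*}_i - t^{*}_{i-1}|/i > 0$, so there exist infinitely many indices $i$ with $|t^{*}_i - t^{*}_{i-1}| \geq (1+\rho_0)^i$. Consequently, for every sufficiently large $r > 0$, one can choose an index $k = k(r)$, with $k(r)\to\infty$ as $r\to\infty$, such that
\[
t^{*}_k - t^{*}_{k-1} \geq r(1+\rho_0)^k \quad\text{and}\quad t^{*}_i - t^{*}_{i-1} < r(1+\rho_0)^i \text{ for all } 1 \le i < k,
\]
by taking $k$ to be the minimal index above the threshold. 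This data exactly matches the input for the forward spiral paths of Type~II.

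Next, consider the standard template $\mathcal{T}$ associated to the geodesic segment $[v_0, v_k] \subset T$ and the uniform $(L,A)$--quasi-isometric embedding $\Phi : \mathcal{T} \to X$ provided by Lemma~\ref{lem:CK02}. Inside $\mathcal{T}$, build the forward spiral path of Type~II, $J_{r,k}$, whose initial segment is the main flat ray restricted to $[0,r]$ and which subsequently spirals through the walls $F_1, \dots, F_{k-1}$. Push it forward by $\Phi$ to obtain a quasi-geodesic $\Phi(J_{r,k})$ in $X$ with uniform constants independent of $r$ and $k$; by our choice of the admissible main flat ray $\zeta^{*}$, the initial segment of $\Phi(J_{r,k})$ coincides with $\zeta^{*}|_r$, and its terminal point lies on (or very near) the admissible plane $F^{*}_{e_{k-1}}$, close to the admissible strip $\mathcal{S}^{*}_{e_{k-1}e_k}$.

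Finally, we concatenate $\Phi(J_{r,k})$ with a path reaching $\alpha^{*}$ and then the tail of $\alpha^{*}$ itself. Between times $t^{*}_{k-1}$ and $t^{*}_k$, the ray $\alpha^{*}$ lies in the vertex space $X_{v_{k-1}} = H_{v_{k-1}} \times \R$ and travels a distance at least $r(1+\rho_0)^k$ within it, while the endpoint of $\Phi(J_{r,k})$ is at distance $O(r(1+\rho_0)^k \cdot \rho_0)$ from a suitably chosen reference point. This exponential gap verifies the hypothesis $d(z,t) > 8\,d(x,y)$ of Lemma~\ref{lem:redirecting-in-product} inside $X_{v_{k-1}}$, producing a uniform quasi-geodesic segment joining the endpoint of $\Phi(J_{r,k})$ to a point on $\alpha^{*}|_{[t^{*}_{k-1}, t^{*}_k]}$; Lemma~\ref{lem:redirecting-in-product} also lets us choose the forward direction along $\alpha^{*}$. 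Using the surgery lemmas (in particular Lemma~\ref{concate} and the fellow-travel surgery of Lemma~\ref{lem:surgery1}) we splice into the tail of $\alpha^{*}$. The resulting path $\gamma^{*}_r$ is a $(q',Q')$--quasi-geodesic with constants independent of $r$, it agrees with $\zeta^{*}$ on $\zeta^{*}|_r$, and it eventually coincides with $\alpha^{*}$. The main obstacle will be verifying that this last concatenation preserves uniform quasi-geodesic constants and that the direction along $\alpha^{*}$ is chosen so no backtracking occurs; both issues are handled by exploiting the exponential gap $r(1+\rho_0)^k$ together with the product structure of $X_{v_{k-1}}$. Since $r$ was arbitrary and $k(r) \to \infty$, Lemma~\ref{lem:infinitepoints} yields $\zeta^{*} \preceq \alpha^{*}$, completing the proof.
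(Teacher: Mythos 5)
Your proposal follows essentially the same route as the paper's proof: fix $\rho_0$, pick the minimal index $k=k(r)$ witnessing the exponential gap, embed the standard template via Lemma~\ref{lem:CK02}, build the forward spiral path of Type~II $J_{r,k}$ whose initial segment matches $\zeta^{*}|_r$, use the product structure together with Lemma~\ref{lem:redirecting-in-product} to land on $\alpha^{*}$ across the exponential gap, and splice via the surgery lemmas to conclude $\zeta^{*}\preceq\alpha^{*}$ at every radius. The only cosmetic difference is that the paper applies Lemma~\ref{lem:redirecting-in-product} in the product subspace $F_{k-1}\cup S_{(k-1)k}\cup F_k$ of the template before pushing forward by $\phi$, whereas you apply it directly in $X_{v_{k-1}}=H_{v_{k-1}}\times\R$ after pushing forward; both rely on the same product structure and exponential gap, so this is an equivalent bookkeeping choice rather than a genuinely different argument.
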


\begin{proof}
Let $t^{*}_i$ be the first time $\alpha^{*}$ intersects the admissible plane $F^{*}_{e_i}$.
Since $\alpha^{*}$ does not exhibit sub-exponential excursions, there exists a sufficiently small constant $\rho_{0} \in (0,1/16)$ such that for every $r >0$, there exists $k \in \mathbb{Z}_{+}$ satisfying
    \[
    t^{*}_k - t^{*}_{k-1} \geq r(1+ \rho_0)^k.
    \]
Define $k \in \mathbb{Z}_{+}$ to be the smallest integer such that
\[
\begin{cases}
    t^{*}_k - t^{*}_{k-1} &\geq r (1 + \rho_0)^k, \\
    t^{*}_i - t^{*}_{i-1} &< r (1 +\rho_0)^i, \quad \forall 1 \leq i \leq k-1.
\end{cases}
\]

Consider the geodesic path $e_1 \cdot e_2 \cdots e_{k+1}$ in the Bass-Serre tree $T$. Let $\mathcal{T}$ be the standard template associated with this geodesic segment, as defined in Definition~\ref{defn:standardtemplate}. Denote $$\phi \colon \mathcal{T} \to X$$ be the $(L,A)$--quasi-isometric embedding given by Lemma~\ref{lem:CK02}. Define $x^{*}_{k} = \alpha(t^{*}_k)$ and $x^{*}_{k-1} = \alpha(t^{*}_{k-1})$, and let $x_k, x_{k-1} \in \mathcal{T}$ such that $\phi(x_k) = x^{*}_k$ and $\phi(x_{k-1}) = x^{*}_{k-1}$. 

We recall that $\phi$ maps planes $F_i$ and strips $S_{i(i+1)}$ of $\mathcal{T}$ to the $K$--neighborhood (where $K$ depends only on the geometry of $X$) of the admissible planes $F^{*}_{e_i}$ and the admissible strips $\mathcal{S}^{*}_{e_i e_{i+1}}$ in $X$. Consequently, $\phi$ maps $p_i$ to a $K$--neighborhood of $p^{*}_i$. Thus, up to uniform errors, we assume $x_k \in F_{k}$ and $x_{k-1} \in F_{k-1}$.

Within the template $\mathcal{T}$, and with respect to the sequence $t_1, t_2, \ldots, t_k$, we define a forward spiral path of Type II, denoted by $J_{r, k}$, as introduced in Section~\ref{subsection:forward-spiral-II}. As discussed in Section~\ref{subsection:forward-spiral-II}, this path satisfies the following properties:
\begin{enumerate}
    \item It is a $(\nu, \nu)$--quasi-geodesic for some $\nu$ independent of $r$.
    \item $d(z_{k-1}, p_{k-1})/ \rho_0 < t_k -t_{k-1} \sim d(x_k , x_{k-1})$.
\end{enumerate}

Applying Lemma~\ref{lem:redirecting-in-product} to the four points $p_{k-1}, z_{k-1}, x_k, x_{k-1}$ in the product space $F_{k-1} \cup S_{k-1, k} \cup F_k = (l_{k-1} \cup w_{k-1, k} \cup l_k) \times \mathbb{R}$ which is a subspace of the template $\mathcal{T}$, we obtain a $(q, Q)$--quasi-geodesic $\gamma$ in $F_{k-1} \cup S_{k-1, k} \cup F_k$ such that $\gamma|_{s} = [z_{k-1} , q_{k-1}]$, where $s := d(z_{k-1}, q_{k-1})$, and $\gamma_{+} \in \{x_k, x_{k-1}\}$. 

Since $[y_{k-1}, z_{k-1}]$ is the shortest segment realizing the distance between the two sets $J_{r,k} \backslash [y_{k-1}, z_{k-1}]$ and $\gamma$, it follows that $J_{r, k} \cup \gamma$ is a $(q', Q')$--quasi-geodesic for some $q' = q'(q, Q)$ and $Q' = Q'(q, Q)$. 

Defining $L^{*} := \phi ( J_{r,k} \cup \gamma) \subset X$, we conclude that it is a $(q'', Q'')$--quasi-geodesic for some $q'' = q'' (q, Q, L, A)$ and $Q'' = Q'' (q, Q, L, A)$. Moreover, this quasi-geodesic satisfies $L^{*}|_{r} = \zeta^{*}|_{r}$ and $(L^{*})_{+} \in \{x^{*}_k, x^{*}_{k-1} \} \subset \alpha^{*}$. 

Applying the Segment-to-quasi-geodesic ray Surgery (see Lemma~\ref{item_surgery_segment}), we conclude that $\zeta^{*}$ can be quasi-redirected to $\alpha^{*}$ at radius $r$. Since this holds for every $r>0$, it follows that $\zeta^{*} \preceq \alpha^{*}$. By Proposition~\ref{Prop:zeta-is-top}, we obtain $\alpha^{*} \preceq \zeta^{*}$. Therefore, $\zeta^{*} \sim \alpha^{*}$.
\end{proof}

\begin{rem}
The idea of the above proof is that
we can transition from $J_{r,k}$ to $\alpha^{*}|_{\geq R}$ for sufficiently large $R$, provided that a buffer region exists between them. This buffer must have a product structure and a thickness that grows linearly with $r$, ensuring sufficient space for a smooth transition or ``landing" between the two paths.
\end{rem}

\begin{prop}\label{prop:differentitinerariesnotthesame}
    Let $\alpha^{*}$ and $\alpha'^{*}$ be two $\qq$--rays of Type~II in $X$ with different simplified itineraries and with sub-exponential excursions. Then $\alpha^{*}$ can not be quasi-redirected to $\alpha'^{*}$ and vice versa. 
\end{prop}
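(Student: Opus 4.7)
The strategy is to adapt the proof of Lemma~\ref{Thm:Classification}, now with $\alpha^{*}|_r$ (rather than $\zeta^{*}|_r$) as the initial segment, and with $\xi^{*}$'s visits tracked against the admissible planes along $\gamma_{\alpha^{\prime *}}$ rather than $\gamma_{\alpha^{*}}$. By the symmetric roles of the two rays, it suffices to derive a contradiction from $\alpha^{*} \preceq \alpha^{\prime *}$. Let $m$ be maximal with $v_i = v'_i$ for $i \le m$, so that $e_{m+1} \neq e'_{m+1}$, and introduce for $\alpha^{\prime *}$ the analogues $p^{\prime *}_k, \Delta^{\prime *}_i, t^{\prime *}_k$ of the quantities $p^{*}_k, \Delta^{*}_i, t^{*}_k$ used earlier. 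For each large $r$, pick a $\qq'$--ray $\xi^{*}$ with $\xi^{*}|_r = \alpha^{*}|_r$ eventually coinciding with $\alpha^{\prime *}$, let $s^{*}_k$ denote the first time $\xi^{*}$ visits $F^{*}_{e'_k}$, and set $\ell^{*}_k := d(p^{\prime *}_k, \xi^{*}(s^{*}_k))$.

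The two inequalities underpinning the proof of Lemma~\ref{Thm:Classification} carry over essentially verbatim: the $\qq'$--quasi-geodesic property of $\xi^{*}$ combined with the uniform angle bound of Lemma~\ref{lem:CK02}(1) at $p^{\prime *}_k$ yields $s^{*}_{k+1} - s^{*}_k \ge \rho_0 \ell^{*}_k$, while the $(\mu,\mu)$--quasi-geodesic spine $[\go, p^{\prime *}_1] \cdots [p^{\prime *}_{k-1}, p^{\prime *}_k]$ of $\alpha^{\prime *}$ combined with the $\qq'$--quasi-geodesic property of $\xi^{*}$ gives $\ell^{*}_k + \sum_{i=0}^{k-1} \Delta^{\prime *}_i \ge \rho_0 s^{*}_k$, for a uniform $\rho_0 = \rho_0(\qq', \beta) > 0$. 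The genuinely new input is the base case. Because $\alpha^{*}$ is a Type~II $\qq$--quasi-geodesic whose simplified itinerary $v_0, v_1, \ldots$ diverges from that of $\alpha^{\prime *}$ at $v_m$, standard fellow-traveling of $\alpha^{*}$ with the spine of its own associated template (a consequence of Lemma~\ref{lem:CK02}(2)) produces a constant $R_0 = R_0(\qq, X)$ such that $\alpha^{*}$ never meets $F^{*}_{e'_k}$ for $k \ge m + R_0$. Fixing $k_0 := m + R_0$, the identity $\xi^{*}|_{t_r} = \alpha^{*}|_{t_r}$ then forces $s^{*}_{k_0} \ge t_r \ge r/q - Q$.

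Feeding this base case into the same induction as in the proof of Lemma~\ref{Thm:Classification}, and using Lemma~\ref{lem:upperboundofDelta} applied to $\alpha^{\prime *}$ to bound $\sum_{i=0}^{k-1} \Delta^{\prime *}_i \le C(1+\rho)^k/\rho$ for arbitrarily small $\rho > 0$, yields $s^{*}_k \ge A(1+\rho_1)^k$ for all $k \ge k_0$, where $\rho_1 = \rho_0^2/2$ and $A$ is comparable to $r$ once $r$ is large enough. On the other hand, sub-exponential excursion of $\alpha^{\prime *}$ gives $t^{\prime *}_k \le C'(1+\rho)^k/\rho$, while eventual coincidence of $\xi^{*}$ with $\alpha^{\prime *}$ supplies a shift $t_\beta$ with $\xi^{*}(t) = \alpha^{\prime *}(t + t_\beta)$ for all $t$ large; the $\qq$--quasi-geodesic property of $\alpha^{\prime *}$ then forces $|s^{*}_k + t_\beta - t^{\prime *}_k|$ to be uniformly bounded. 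Choosing $\rho < \rho_1$ makes $s^{*}_k - t^{\prime *}_k \to \infty$ as $k \to \infty$, contradicting this boundedness. The main obstacle I anticipate is precisely the fellow-traveling step that produces the base case $s^{*}_{k_0} \gtrsim r$: without ruling out excursions of $\alpha^{*}$ into the $\alpha^{\prime *}$--branch of the Bass--Serre tree, $\xi^{*}$ could cross $F^{*}_{e'_{k_0}}$ already within $[0, t_r]$ (inherited from $\alpha^{*}$) and the exponential lower bound on $s^{*}_k$ would collapse.
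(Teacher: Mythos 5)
Your proof takes a genuinely different route from the paper's. The paper proves the proposition \emph{indirectly}: assuming (say) $\alpha'^{*} \preceq \alpha^{*}$, it builds a forward spiral path $\gamma^{*} = \phi(L_{r,k})$ in the template along $\gamma_{\alpha^{*}}$ (possible by Lemma~\ref{lem:upperboundofDelta} applied to $\alpha^{*}$), takes a redirecting ray $\beta^{*}$ at a huge radius $R$, notes that because the itineraries differ $\beta^{*}$ reaches each $F^{*}_{e_k}$ far later than $\gamma^{*}$, and then transitions $\gamma^{*}$ onto $\beta^{*}$ as in Proposition~\ref{prop:classifyspecialQR} to conclude $\zeta^{*} \preceq \alpha^{*}$ — contradicting Lemma~\ref{Thm:Classification}. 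You instead adapt the proof of Lemma~\ref{Thm:Classification} directly, with $\alpha^{*}|_r$ as the prefix and wall-crossing times tracked along $\gamma_{\alpha'^{*}}$. The two induction inequalities and the endgame do carry over as you claim (though for the final contradiction all you need is the one-sided bound $s^{*}_k \le t'^{*}_k - t_\beta$ from eventual coincidence, not the two-sided boundedness you assert).

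The base case is, however, a genuine gap — and you correctly flag it yourself. Lemma~\ref{lem:CK02}(2) is a quasi-isometric-embedding statement for templates along Bass--Serre geodesics; it does not control how far a Type~II $\qq$-ray can stray from the vertex spaces along its own simplified itinerary. Definition~\ref{defn:typesIandII} only records the \emph{last-exit} times $u_i$ from each $X_{v_i}$; between $u_{m-1}$ and $u_m$ the ray $\alpha^{*}$ may dip into $X_{v'_{m+1}}, X_{v'_{m+2}}, \ldots$ and return, so visits to $F^{*}_{e'_k}$ for $k > m$ inside $\alpha^{*}|_r$ are not excluded by anything you cite. Closing this requires a separate estimate bounding the Bass--Serre depth of such a round-trip detour in terms of $\qq$ (plausible — a detour to depth $d$ and back costs $\gtrsim d$ in $X$, which a $\qq$-quasi-geodesic cannot absorb for $d$ large — but not supplied by the paper and not what Lemma~\ref{lem:CK02}(2) says). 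The paper's indirect route through $\zeta^{*}$ evades this precisely because $\zeta^{*}$ never leaves $X_{v_0}$, so the base case $t^{*}_0 = r$ in Lemma~\ref{Thm:Classification} is automatic there.
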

\begin{proof}
   By way of contradiction, suppose that $[\alpha^{*}] = [\alpha'^{*}]$. In particular, we have $\alpha'^{*} \preceq \alpha^{*}$.

   \textit{Claim: $\zeta^{*} \preceq \alpha^{*}$}.
   
      Indeed, by Lemma~\ref{lem:upperboundofDelta}, for a sufficiently small constant $\rho_0$, there exists $C >0$ such that $$\Delta^{*}_{i} : = d(p^{*}_{i}, p^{*}_{i+1}) \le C (1+\rho_0)^i$$
      Let $\gamma_{\alpha^{*}} = e_1 \cdot e_2 \cdots$ be the simplified itinerary associated to $\alpha^{*}$ as defined in Definition~\ref{defn:typesIandII}.
Let $\mathcal{T}$ be the standard template associated with this geodesic segment, as defined in Definition~\ref{defn:standardtemplate}. Denote by $\phi \colon \mathcal{T} \to X$ the $(L,A)$--quasi-isometric embedding given by Lemma~\ref{lem:CK02}.
      We recall that $\phi$ maps planes and strips of $\mathcal{T}$ to the $K$--neighborhood (where $K$ depends only on the geometry of $X$) of the planes $F^{*}_{e_i}$ and the strips $\mathcal{S}^{*}_{e_i e_{i+1}}$ in $X$. Consequently, $\phi$ maps $p_i$ to a $K$--neighborhood of $p^{*}_i$, and hence
      $$w_i : = d(p_i, p_{i+1}) \le C' (1+\rho_0)^i$$ for some $C' = C'(C, L,A)$.
      
Let $r>0$ be a sufficiently large constant. With respect to the above data, let $\gamma : = L_{r,k}$ be the forward spiral path of Type I in the template $\mathcal{T}$ constructed in Section~\ref{subsection:logspiralTypeI}   such that $\left.\gamma\right|_r=\left.\zeta\right|_r$.
We then define $$\gamma^{*} : = \phi(\gamma)$$
Let $t^{*}_k$ be the first time $\gamma^{*}\left(t^{*}_k\right) \in F^{*}_{e_k}$ and denote \[\ell^{*}_k\coloneqq d\left(\gamma^{*}\left(t^{*}_k\right), p^{*}_k\right)\]

Now choose $R \gg \ell_k$  we  consider a quasi-geodesic $\beta^{*}$ quasi-redirecting $\alpha^{*}$ to $\alpha'^{*}$ at radius $R$. Such a $\beta^{*}$ exists since $\alpha^{*} \preceq \alpha'^{*}$. Recall that $\alpha^{*}$ and $\alpha'^{*}$ have different simplified itineraries.
Then $\beta^{*}$ arrives at and leaves $F^{*}_{e_k}$ much later than $\gamma^{*}$. Hence, similar arguments as in the proof of Proposition~\ref{prop:classifyspecialQR}, we can redirect $\gamma^{*}$ to $\beta^{*}$, that is, construct a quasi-geodesic ray $\gamma'^{*}$ where $\gamma^{*}[0, t^{*}_k]=\gamma'^{*}\left[0, t^{*}_k\right]$, and $\gamma'^{*}$ is eventually equal to $\beta^{*}$.

Since $\beta^{*}$ is eventually equal to $\alpha^{*}$ it implies that $\gamma'^{*}$ is eventually equal to $\alpha^{*}$, and thus $\gamma'^{*}$ quasi-redirects $\zeta^{*}$ to $\alpha^{*}$ at radius $r$. This can be done for every $r$ with uniform constants. Hence $\zeta^{*} \preceq \alpha^{*}$. This would contradict Lemma~\ref{Thm:Classification}.
\end{proof}

\begin{prop}\label{prop:important}
Let $\alpha^{*}$ be a $\qq$-ray that is of Type II and is sub-exponential. Then there exists a geodesic ray $\alpha^{*}_0$ in $X$ whose simplified itinerary is the sequence $\gamma^{*}_{\alpha^{*}}$ such that $\alpha^{*} \sim \alpha^{*}_{0}$.
\end{prop}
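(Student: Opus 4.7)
The plan is to obtain $\alpha^*_0$ as a subsequential Arzel\`a--Ascoli limit of $\CAT(0)$ geodesic segments from $\gothic{o}$ to points of $\alpha^*$ on successive admissible planes, and then to establish the equivalence $\alpha^* \sim \alpha^*_0$ by redirecting the two rays to each other through the common admissible planes $F^*_{e_n}$.

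For each $n \ge 1$, set $x_n := \alpha^*(t^*_n) \in F^*_{e_n}$. Since $\alpha^*$ is a $\qq$--quasi-geodesic ray and $t^*_n \to \infty$, we have $\|x_n\| \to \infty$. Let $\sigma_n := [\gothic{o}, x_n]$ be the $\CAT(0)$ geodesic in $X$. As $X$ is a proper $\CAT(0)$ space with $\|x_n\| \to \infty$, the Arzel\`a--Ascoli theorem furnishes, after passing to a subsequence, a geodesic ray $\alpha^*_0 \colon [0,\infty) \to X$ with $\alpha^*_0(0) = \gothic{o}$ that is the uniform-on-compact-sets limit of $\sigma_n$. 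To identify the simplified itinerary of $\alpha^*_0$, I invoke the coarse $L$--Lipschitz $G$--equivariant index map $\gothic{i} \colon X \to T^0$ from Definition~\ref{defn:indexfunction}. Each $\gothic{i}(\sigma_n)$ is a Lipschitz path in the tree $T$ from $v_0$ to $v_n$, and in a tree any such path must contain every vertex on the unique geodesic $v_0 v_1 \cdots v_n$. Passing to the limit gives $\gothic{i}(\alpha^*_0) \supseteq \{v_0, v_1, v_2, \ldots\}$, so $\alpha^*_0$ is of Type~II, and by the uniqueness clause of Definition~\ref{defn:typesIandII} its simplified itinerary equals $\gamma^*_{\alpha^*}$.

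To show $\alpha^* \sim \alpha^*_0$, I verify both redirecting relations. For $\alpha^* \preceq \alpha^*_0$: given $r > 0$, choose $n$ large enough so that $t^*_n > r$ and so that $\sigma_n$ fellow-travels $\alpha^*_0$ past radius $r$. Let $y_n := \alpha^*_0(s_n) \in F^*_{e_n}$ be the first hit of $\alpha^*_0$ on $F^*_{e_n}$. Inside the product vertex space $X_{v_{n-1}} = H_{v_{n-1}} \times \R$, apply Lemma~\ref{lem:redirecting-in-product} by taking the "short" segment to be one joining $x_n$ to $y_n$ and the two "long" target points far along the tails of $\alpha^*$ and $\alpha^*_0$ respectively; the product structure then yields a quasi-geodesic in $X_{v_{n-1}}$ beginning with the segment $[x_n,y_n]$ and terminating on $\alpha^*_0|_{[s_n,\infty)}$. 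Concatenating $\alpha^*|_{[0,t^*_n]}$ with this quasi-geodesic and with the tail $\alpha^*_0|_{[s_n,\infty)}$ via the surgery lemmas (Lemma~\ref{lem:surgery1}) yields a $(q'',Q'')$--quasi-geodesic, with $(q'',Q'')$ independent of $r$, that agrees with $\alpha^*|_r$ on $B_r$ and eventually coincides with $\alpha^*_0$. The relation $\alpha^*_0 \preceq \alpha^*$ is proved by swapping the roles of $\alpha^*$ and $\alpha^*_0$.

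The main difficulty will be ensuring uniformity of the quasi-geodesic constants $(q'', Q'')$ over all $r$. The sub-exponential excursion of $\alpha^*$ supplies, via Lemma~\ref{lem:upperboundofDelta}, the bound $d(p^*_i,p^*_{i+1}) \le C(1+\rho_0)^i$ for arbitrarily small $\rho_0 > 0$, which in turn controls $d(x_n,y_n)$ inside $F^*_{e_n}$. The infinite geodesic tail $\alpha^*_0|_{[s_n,\infty)}$ supplies unbounded "runway" into which the transition can be amortized, so that the ratio hypothesis of Lemma~\ref{lem:redirecting-in-product} is satisfied for all sufficiently large $n$ and yields uniform constants independent of $r$.
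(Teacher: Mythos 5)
The gap is in the last paragraph, and you flagged it yourself: nothing you've written actually controls $d(x_n, y_n)$, and this is precisely the heart of the paper's proof. Lemma~\ref{lem:upperboundofDelta} bounds the distances between \emph{consecutive strip-corner points} $p^*_i \in F^*_{e_i}$; it says nothing about where two different quasi-geodesic rays ($\alpha^*$ and $\alpha^*_0$) intersect the \emph{same} wall $F^*_{e_n}$. A priori, $\alpha^*$ and $\alpha^*_0$ can drift apart along the $\R$--fibre of each wall, and unless $d(x_n,y_n)$ is small relative to $\|x_n\|$ (say at most half), the concatenation $\alpha^*|_{[0,t^*_n]} \cup [x_n,y_n] \cup \alpha^*_0|_{[s_n,\infty)}$ will not be a quasi-geodesic with constants independent of $n$, and Lemma~\ref{lem:redirecting-in-product} is simply not applicable in the way you describe. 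Saying the infinite tail of $\alpha^*_0$ ``supplies unbounded runway'' handles only the far target, not the size of the jump at $F^*_{e_n}$.

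The paper establishes exactly this fellow-travelling at some sequence of walls, and it does so by contradiction using the heaviest machinery in the section: assume $d(\alpha^*_0(q^*_i), \alpha^*(t^*_i)) > d(\go,q^*_i)/2$ for \emph{all} $i$; deduce a quantitative lower bound $t^*_{k+1} - t^*_k > \rho_0 t^*_k + q\delta_k$ on the excursion increments; use this, together with the forward spiral path construction of Type~I and Proposition~\ref{prop:smallslope}, to build a spiral path $L_{r,k}$ in the template that grows \emph{slower} than $\alpha^*$, allowing $\zeta^*$ to be redirected to $\alpha^*$ at every radius; and then derive a contradiction with Lemma~\ref{Thm:Classification}, which forbids $\zeta^* \preceq \alpha^*$ when $\alpha^*$ has sub-exponential excursion. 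Once this contradiction produces an infinite sequence of walls with $d(\alpha^*_0(q^*_i), \alpha^*(t^*_i)) \le d(\go,q^*_i)/2$, Surgery Lemma~\ref{redirect11} immediately gives $\alpha^* \preceq \alpha^*_0$ with uniform constant $(9q,Q)$, and the product-space Lemma~\ref{lem:redirecting-in-product} is not needed at all for this direction. Also note that the reverse direction $\alpha^*_0 \preceq \alpha^*$ is obtained for free from the construction of $\alpha^*_0$ (the paper takes $x_i$ to be the start of $\alpha^*|_{\ge r_i}$ and cites \cite[Lemma~3.5]{QR24}), so your proposal to ``swap roles'' would incur the same gap twice while the paper avoids it entirely on one side.
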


\begin{proof}
    Choose a sequence $r_i \to \infty$ and let $x_i$ be starting point of the 
quasi-geodesic $\alpha^{*}|_{\geq r_i}$. Then $x_i$ is also the closest point in $\alpha^{*}|_{\geq r_i}$
to $\go$. Let
\[
\alpha^{*}_i = [\go, x_i] \cup \alpha^{*}|_{\geq r_i}. 
\]
By Lemma~\ref{concate}, $\alpha^{*}_i$ is a $(3q, Q)$--quasi-geodesic ray. Up to taking a subsequence, 
the geodesic segments $[\go, x_i]$ converge to a geodesic ray $\alpha^{*}_0$. It is shown in \cite[Lemma~3.5]{QR24} that $\alpha^{*}_0 \preceq \alpha^{*}$ with quasi-redirecting constants $(3q, Q+1)$. We thus only need to show that $\alpha^{*} \preceq \alpha^{*}_{0}$. 

Let $t^{*}_i$ be the first time $\alpha^{*}$ visits $F^{*}_{e_i}$ and let $q^{*}_i$ be the first time $\alpha^{*}_0$ visits $F^{*}_{e_i}$. 

Suppose that $$d(\alpha^{*}_{0}(q^{*}_i), \alpha^{*}(t^{*}_i)) > \frac{d(\go, q^{*}_i)}{2}$$ for every $i$. Since $\alpha^{*}$ is a $(q, Q)$--quasi-geodesic, there exists a constant $\rho_0 >0, q^{*}>0$ depending on $\qq$ so that for $k$ large enough then
\[
t^{*}_{k+1} -t^{*}_k > \rho_{0} t^{*}_{k} + q \delta_k  
\]

Let $\mathcal{T}$ be the standard template associated with $\gamma^{*}_{\alpha^{*}_0}$, as defined in Definition~\ref{defn:standardtemplate}. Denote by $\phi \colon \mathcal{T} \to X$ the $(L,A)$--quasi-isometric embedding given by Lemma~\ref{lem:CK02}.

Pick a constant $\rho > 0$ sufficiently small. According to Proposition~\ref{prop:smallslope}, we can construct a quasi-geodesic $L_{r,k+2}$ in the standard template $\mathcal{T}$  with \[
l_{k+1} - l_k < \rho l_k + \delta_k
\] where $l_i : = \operatorname{Length} (L_{r,i})$.

As usual, we define $L^{*}_{r, k+2} = \phi (L_{r, k+2})$ is  a path in $X$.

Since the sequence $\{l_k\}$ grows more slowly than $\{t^{*}_k\}$. That is to say for sufficiently large $k$, $\alpha^{*}$ arrives at and leaves $F^{*}_{e_k}$ much later than $L^{*}_{r,k}$.  Hence, similar arguments as in the proof of Proposition~\ref{prop:classifyspecialQR}, we can redirect $\zeta^{*}$ to $\alpha^{*}$ at radius $r$ as $\zeta^{*}$ is identical with $L^{*}_{r,k}$ up to radius $r$. Therefore $\zeta^{*} \preceq \alpha^{*}$. But this would contradict Lemma~\ref{Thm:Classification}.

Therefore there must be a sequence of $i \to \infty$ such that $$d(\alpha^{*}_{0}(q^{*}_i), \alpha^{*}(t^{*}_i)) \le \frac{d(\go, q^{*}_i)}{2}$$ By Surgery Lemma~\ref{redirect11} we obtain that $\alpha^{*} \preceq \alpha^{*}_0$ with redirecting constant $(9q, Q)$.  Therefore $ \alpha^{*} \sim \alpha^{*}_0$.
\end{proof}

Now we have enough ingredients to claim the existence of the QR-boundary of $X$.
\begin{thm}\label{mainwithproof}
The quasi-redirecting boundary $\partial X$ exists and is non-Hausdorff.
\end{thm}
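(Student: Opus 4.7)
The plan is to verify all three QR-Assumptions for $X$ directly, and then read off non-Hausdorffness from the resulting poset. QR-Assumption~0 is automatic because $X$ is a proper geodesic $\CAT(0)$ space with a cocompact isometric action, so every point lies on an infinite geodesic ray with uniform constants. For QR-Assumption~1 I would use the classification assembled in this section: Lemma~\ref{lem:TypeIequivalentmainflatray} and Proposition~\ref{prop:classifyspecialQR} together say that every Type~I ray and every Type~II ray with non-sub-exponential excursion represents $[\zeta^{*}]$, so $\zeta^{*}$ serves as a central $\qq_0$-ray for this class; every remaining class comes from a Type~II ray with sub-exponential excursion, for which Proposition~\ref{prop:important} supplies a geodesic representative $\alpha^{*}_{0}$ with the same simplified itinerary, and these are central $\qq_0$-rays for their classes.

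For QR-Assumption~2 the top class is handled by Proposition~\ref{Prop:zeta-is-top}, which produces a redirecting function $f_{[\zeta^{*}]}$ whose constants depend only on $\qq$. For each sub-exponential Type~II class $[\alpha^{*}]$ I would show the class is minimal in $P(X)$, which makes the defining hypothesis $\mathbf{b} \prec [\alpha^{*}]$ vacuous and lets one take any $f_{[\alpha^{*}]}$. Indeed, any candidate $\mathbf{b}$ must again be represented by a Type~II sub-exponential ray, since Lemma~\ref{Thm:Classification} forbids $[\zeta^{*}] \prec [\alpha^{*}]$, and Proposition~\ref{prop:differentitinerariesnotthesame} rules out different simplified itineraries; the remaining case of two sub-exponential Type~II rays with the same itinerary is disposed of using the product structure $X_v = H_v \times \mathbb{R}$ together with Proposition~\ref{prop:product} to show that the geodesic representatives produced by Proposition~\ref{prop:important} redirect to each other with uniform constants.

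With all three assumptions verified, $\partial X$ exists. For the non-Hausdorff claim, the key observation is that by Proposition~\ref{Prop:zeta-is-top} every equivalence class and every point of $X$ already lies in $\calU([\zeta^{*}], r)$ for every $r>0$, after enlarging $F_{[\zeta^{*}]}$ if necessary in line with formula~\eqref{actualtopology}. Consequently every basic neighborhood of $[\zeta^{*}]$ coincides with all of $P(X) \cup X$, so for any sub-exponential Type~II class $[\alpha^{*}]$ every neighborhood of $[\zeta^{*}]$ contains $[\alpha^{*}]$; the two points then admit no separation by disjoint open sets. The main obstacle in turning this outline into a complete proof is the same-itinerary step of QR-Assumption~2: one needs a careful backward-spiral construction in the spirit of Section~\ref{subsec:backwardspiral}, combined with the product redirecting inside each vertex space, to confirm that within a fixed sub-exponential itinerary all representatives lie in a single $\simeq$-class.
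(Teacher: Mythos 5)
Your plan matches the paper's verification of QR-Assumptions~0 and 1, but your approach to Assumption~2 takes a different route from the proof in the paper. The paper does not argue minimality of the sub-exponential Type~II classes at all; it simply reads off the required redirecting functions from the preceding results---Proposition~\ref{Prop:zeta-is-top} supplies $f_{[\zeta^*]}$ with constants depending only on $\qq$, and Proposition~\ref{prop:important} supplies $f_{[\alpha^*]}(\qq)=(9q,Q)$ for each sub-exponential Type~II class. This reading sidesteps the same-itinerary step you flag as a gap: Proposition~\ref{prop:important} already certifies uniform redirecting onto the central geodesic $\alpha^*_0$ for rays in the class, and Lemma~\ref{Thm:Classification} together with Proposition~\ref{prop:differentitinerariesnotthesame} eliminates redirections arriving from other classes. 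Your minimality route is conceptually appealing but, as you note, would require confirming that sub-exponential Type~II rays sharing a simplified itinerary all lie in a single $\simeq$-class; that is genuinely the delicate issue, and you would also still need within-class uniform redirecting to ensure $\bfa\in\calU(\bfa,r)$ for all $r$, which the paper's choice of $f_{[\alpha^*]}$ already provides.

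On the non-Hausdorff assertion, your argument is correct and in fact supplies something the paper's own proof omits: the proof of Theorem~\ref{mainwithproof} in the paper stops as soon as the three QR-Assumptions are established and says nothing explicit about non-Hausdorffness. Your observation that Proposition~\ref{Prop:zeta-is-top} forces $\calU([\zeta^*],r)=P(X)\cup X$ for every $r>0$, combined with Lemma~\ref{Thm:Classification} producing a sub-exponential Type~II class distinct from $[\zeta^*]$, is exactly the missing justification and is sound.
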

\begin{proof}
By~\cite[Lemma 2.3]{QR24}, all finitely generated groups satisfy QR Assumption 0. Here we check QR-Assumptions $1$ and 2. That is, for every $\bfa \in P(X)$, there is a geodesic representative, and there is a function 
\[
f_\bfa : \, [1, \infty) \times [0, \infty) \to [1, \infty) \times [0, \infty), % chktex 9 % chktex 44
\] 
 any ${\qq}$--ray $\alpha^{*} \in \mathbf a$ can be
$f_{\mathbf a}({\qq})$--quasi-redirected to the representative of $\mathbf{a}$.

If $\alpha^{*}$ is of Type I or of Type II, but it does not have sub-exponential excursion, then by Proposition~\ref{Prop:zeta-is-top}  $\alpha^{*} \preceq \zeta^{*}$ with constants $\qq'(\qq)$. If otherwise, Proposition~\ref{prop:classifyspecialQR} show that $\zeta^{*} \preceq \alpha^{*}$ with uniform constants. Thus $\zeta^{*}$ is a suitable geodesic representative of $[\alpha^{*}]$ and $f_{[\alpha^{*}]} = \qq'(\qq)$. 
Otherwise, $\alpha^{*}$ is of Type II and sub-exponential, then Proposition~\ref{prop:important} shows that $\alpha^{*}_0$ is a geodesic representative of $[\alpha^{*}]$ and the redirecting function is $f_{[\alpha^{*}]} = (9q, Q)$. Thus $X$ satisfies all three QR-Assumption $0, 1,2$, and $\partial X$ is well-defined and QI-invariant.

% To see that $\partial X$ is not Hausdorff, we first argue that $\preceq$ on $\partial G$ is not symmetric, then by~\cite[Theorem 7.3]{QR24}, $\partial G$ is not Hausdorff. To see this, let $\alpha^{*}$ be a $\qq$--ray $\alpha^{*}$ with sub-exponential excursion. By Proposition~\ref{Prop:zeta-is-top}, we have $\alpha^{*} \preceq \zeta^{*}$ and by Proposition~\ref{prop:classifyspecialQR}, $\zeta^{*} \npreceq \alpha^{*}$. Therefore, QR relation $\preceq$ on $\partial G$ is not symmetric. 
\end{proof} 

\subsection{Proof of Theorem~\ref{thm:introQRofadmissibleCAT0}}
Kapovich and Leeb demonstrate that for every graph manifold $ M $, there exists a flip graph manifold $ N $ whose fundamental groups are quasi-isometric \cite{KL98}. This result is further extended to Croke-Kleiner admissible groups in \cite[Theorem 1.6]{Ngu25}, which specifically establishes the existence of a flip admissible group $ G' $ (that is a Croke-Kleiner  admissible group acts geometrically on a flip admissible space) such that $ G $ and $ G' $ are quasi-isometric. Noting that the quasi-redirecting boundary is a quasi-isometric invariant, the conclusion then follows from Theorem~\ref{mainwithproof}.

\section{Application to 3-manifold groups and certain right-angled coxeter groups}\label{sec:3-manifolds}

\subsection{QR boundary of 3-manifold groups}

Now we are ready to prove Theorem~\ref{thm:3-manifold}.

\begin{proof}[Proof of Theorem~\ref{thm:3-manifold} ]
 Let $M$ be a non-geometric 3-manifold.  Then $M$ is either a graph manifold or a mixed 3-manifold.

 \textit{Case~1:} $M$ is a graph manifold. At first, by passing to a finite cover $M'$ of $M$, we
can assume that each Seifert piece $M_v$ of $M$ is a product $S_v \times S^1$ where $S_v$ is a hyperbolic surface with nonempty boundary \cite{KL98}. This is allowable since quasi-redirecting boundary is a quasi-isometric invariant. The fundamental group $\pi_1(S_v)$ is free, and hence it is omnipotent. Therefore $\pi_1(M)$ is a Croke-Kleiner  admissible group where each vertex group $\pi_1(M_v)$ is a CAT(0) group and its quotient $\pi_1(S_v)$ is omnipotent. Theorem~\ref{thm:introQRofadmissibleCAT0} implies that $\pi_1(M)$ satisfies all three QR-Assumptions and $\partial \pi_1(M)$ is well-defined.
 %By \cite{KL98}, there exists a flip graph manifold $N$ such that $\pi_1(M)$ and $\pi_1(N)$ are quasi-isometric. By Theorem~\ref{thm:introQRofadmissibleCAT0}, Hence, as the quasi-redirecting boundary is a quasi-isometric invariant, Theorem~\ref{thm:introQRofadmissibleCAT0} can be applied to graph manifolds.

 \textit{Case~2:} $M$ is a mixed 3-manifold.  Let $M_1, \dots, M_k$ be the maximal graph manifold components and Seifert fibered pieces of the torus decomposition of $M$. Let $S_1, \dots, S_{\ell}$ be the tori in the boundary of $M$ that bound a hyperbolic piece, and let $T_1, \dots,T_m$ be the tori in the torus decomposition of $M$ that separate two hyperbolic components. According to~\cite{Dah03} (see also~\cite{BW13}), $\pi_1(M)$ is hyperbolic relative to 
\[
\mathbb{P} = \{\pi_1(M_p)\}_{p=1}^k \cup \{\pi_1(S_q)\}_{q=1}^\ell \cup \{\pi_1(T_r)\}_{r=1}^m.
\]
We note that the quasi-redirecting boundaries of 
$\pi_1(S_q)$, $\pi_1(T_r)$ exist since they are isomorphic to $\Z^2$. Case~1 implies the existence of the quasi-redirecting boundary of $\pi_1(M_p)$. Thus, we apply  Theorem~\ref{thm:introRHGs} to conclude that the quasi-redirecting boundary of $\pi_1(M)$ exists. 
\end{proof}

\subsection{QR boundary of certain right-angled Coxeter groups}
\label{sec:QR-RACG}

 Given a graph $\Gamma$, define $\Gamma^4$ as the graph whose vertices are induced 4--cycles of $\Gamma$. Two vertices in $\Gamma^4$ are adjacent if and only if the corresponding induced 4-cycles in $\Gamma$ have two nonadjacent vertices in common. 

\begin{defn}[Constructed from squares]\label{defn:CFS}
A graph $\Gamma$ is \emph{$\mathcal{CFS}$} if $\Gamma$ is the join $\Omega*K$ where $K$ is a (possibly empty) clique and $\Omega$ is a non-empty subgraph such that $\Omega^4$ has a connected component $T$ such that every vertex of $\Omega$ is contained in a $4$--cycle that is a vertex of $T$. If $\Gamma$ is $\mathcal{CFS}$, then we will say that the right-angled Coxeter group $W_\Gamma$ is $\mathcal{CFS}$. 
\end{defn}

\subsection*{Standing Assumptions}
The planar flag complex $\Delta\subset \field{S}^2$:
\begin{enumerate}
    \item is connected with no separating vertices and no separating edges ($W_\Delta$ is one-ended);
    \item contains at least one induced $4$-cycle ($W_\Delta$ is not hyperbolic);
    \item is not a $4$-cycle and not a cone of a $4$-cycle ($G_\Delta$ is not virtually $\Z^2$).
\end{enumerate}
 
\begin{prop}\label{prop:RACG} 
    Let $\Delta\subset \field{S}^2$ be a flag complex satisfying Standing Assumptions. 
    Assume that either $\Delta=\field{S}^2$ or the boundary of each region in $\field{S}^2-\Delta$ is a $4$--cycle. 
    Then the quasi-redirecting boundary of the right-angled Coxeter groups $W_\Gamma$ exists. 
\end{prop}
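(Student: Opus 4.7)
The plan is to invoke the three main results of the paper---Theorems~\ref{thm:3-manifold}, \ref{thm:introQRofadmissibleCAT0}, and \ref{thm:introRHGs}---after identifying the group-theoretic structure forced by the two cases in the hypothesis. Since the existence of the QR boundary is a quasi-isometry invariant (Theorem~\ref{thm:collectfacts}), it is enough to produce a commensurable or quasi-isometric group to which one of these theorems applies.

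First I would dispatch the case $\Delta=\field{S}^{2}$. A flag triangulation of $\field{S}^{2}$ forces the Davis complex of $W_\Gamma$ to be a simply connected topological $3$-manifold, and by Davis's virtual torsion-freeness $W_\Gamma$ has a finite-index subgroup that is the fundamental group of a closed aspherical $3$-manifold $M$. The Standing Assumptions then pin down the geometry of $M$: one-endedness (no separating vertex or edge) makes $M$ irreducible; the presence of an induced $4$-cycle rules out hyperbolicity via Moussong's criterion; and the exclusion in assumption~(3) prevents $M$ from being virtually Euclidean or Seifert fibered. Hence $M$ is irreducible and non-geometric, and Theorem~\ref{thm:3-manifold} provides the QR boundary of $\pi_{1}(M)$, and therefore of the commensurable group $W_\Gamma$.

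Next I would handle the case where every region of $\field{S}^{2}-\Delta$ is a $4$-cycle. Here the combinatorics of $\Delta$ yields an admissible graph-of-groups decomposition of $W_\Gamma$: each maximal subcomplex of $\Delta$ squeezed between $4$-cycle regions corresponds to a vertex group virtually of the form $F_{n}\times\Z$ (a free-by-cyclic Seifert-type piece), while each boundary $4$-cycle yields an edge group virtually $\Z^{2}$. The center of each vertex group is the visible $\Z$ factor, and the quotient is free, hence omnipotent by Wise \cite{Wis00}. The compatibility condition on edge groups across a vertex---non-commensurability of distinct incident $\Z^{2}$'s---follows from the planar embedding and the fact that $\Delta$ is not a cone over a $4$-cycle. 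Thus $W_\Gamma$ is a CAT(0) Croke-Kleiner admissible group with omnipotent vertex quotients, and Theorem~\ref{thm:introQRofadmissibleCAT0} applies. (Equivalently, one may observe that $W_\Gamma$ is quasi-isometric to the fundamental group of a non-geometric graph manifold and apply Theorem~\ref{thm:3-manifold} directly.)

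The principal technical obstacle is making the admissible decomposition in the second case explicit: one must verify that cutting $\Delta$ along the edges shared by two $4$-cycle regions produces vertex subgraphs whose associated special subgroups are indeed of the form $(\text{free})\times\Z$, and that the incident $\Z^{2}$'s satisfy the non-commensurability condition of Definition~\ref{defn:admissible}. Both points are standard consequences of the planar combinatorics once one rules out the degenerate configurations excluded by the Standing Assumptions, so the main work is bookkeeping rather than new geometric input; after that, both cases reduce to invariants already established earlier in the paper.
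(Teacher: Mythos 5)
Your proposal diverges from the paper's proof in a way that creates real gaps. The paper does not attempt to read off the graph-of-groups structure directly from the planar combinatorics; instead it cites the classification of~\cite{NT19} and~\cite{HNT20}, which identifies precisely which (finite covers of) $3$-manifolds arise: Seifert fibered, graph, mixed, or cusped hyperbolic, according to whether $\Delta^{(1)}$ is a suspension/broken line, is $\mathcal{CFS}$, has a separating induced $4$-cycle, or has none. Each of those four cases is then dispatched by a different result earlier in the paper (Proposition~\ref{prop:product}, Theorem~\ref{thm:3-manifold}, or Theorem~\ref{thm:introRHGs}). Your argument does not invoke this classification and tries to rebuild the structure from scratch, which is where the problems appear.

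Concretely, two of your steps fail. First, in the case $\Delta=\field{S}^2$ you assert that the Standing Assumptions force $M$ to be irreducible and \emph{non-geometric}, claiming that assumption~(3) rules out the Seifert fibered case. That is not so: assumption~(3) only excludes $W_\Delta$ being virtually $\Z^2$. A suspension of an $n$-cycle with $n\geq 5$ is a flag triangulation of $\field{S}^2$, contains induced $4$-cycles, is not a cone over a $4$-cycle, has no separating vertex or edge, and gives a $W_\Delta$ that is commensurable with a closed Seifert $3$-manifold group. This case must be handled separately (the paper does so via Proposition~\ref{prop:product}, observing the QR boundary is a point), and Theorem~\ref{thm:3-manifold} does not apply to it. Second, in the case where every region is bounded by a $4$-cycle, you claim $W_\Gamma$ is always a Croke--Kleiner admissible group with vertex groups virtually $F_n\times\Z$. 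This overlooks the case in which $\Delta^{(1)}$ has no separating induced $4$-cycle and is not $\mathcal{CFS}$: there the relevant manifold is a cusped hyperbolic $3$-manifold, whose fundamental group is not a Croke--Kleiner admissible group at all but rather is hyperbolic relative to $\Z^2$ subgroups, requiring Theorem~\ref{thm:introRHGs} instead of Theorem~\ref{thm:introQRofadmissibleCAT0}. Finally, the assertions that the vertex subgroups come out to be $(\text{free})\times\Z$ and that the incident $\Z^2$-subgroups satisfy the non-commensurability condition of Definition~\ref{defn:admissible} are precisely the content of the cited classification theorems; treating them as ``standard bookkeeping'' elides the main input.
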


\begin{proof}
    It is shown in~\cite[Theorem~1.1]{NT19} and~\cite{HNT20} that there are mutually exclusive cases as bellow:

    (1): If $\Delta$ is a suspension of some $n$-cycle ($n\geq 4$) or some broken line (i.e.\ a finite disjoint union of vertices and finite trees with vertex degrees $1$ or $2$), then $G$ contains a finite index subgroup $G'$ which is isomorphic to $\pi_1(M)$ with $M$ is a Seifert manifold. In this case, there is a finite cover $M' \to M$  such that $M' = F \times S^1$ where $F$ is a hyperbolic surface with a nonempty boundary, and thus $\partial (\pi_1(M'))$ consists only one point by Proposition~\ref{prop:product}. Since $G$ is quasi-isometric to $\pi_1(M')$, it follows from Theorem~\ref{thm:collectfacts} that $\partial G$ consists only one point.

    (2): If the $1$-skeleton of $\Delta$ is $\mathcal{CFS}$ and does not satisfy (1) then $G$ contains a finite index subgroup $G'$ which is isomorphic to $\pi_1(M)$ with $M$ is a graph manifold. If the 1-skeleton of $\Delta$ contains a separating induced $4$-cycle and is not $\mathcal{CFS}$, then $M$ is a mixed manifold. In these two cases, it follows from Theorem~\ref{thm:3-manifold} that the quasi-redirecting boundary of $\pi_1 (M)$ exists, and so does $G$.

    (3): If the $1$-skeleton of $\Delta$ has no separating induced $4$-cycle and is not $\mathcal{CFS}$, then $G$ contains a finite index subgroup $G'$ which is isomorphic to $\pi_1(M)$ with $M$ is a hyperbolic 3-manifold with tori boundary. In this case, $\pi_1(M)$ is hyperbolic relative to a finite collection of $\Z^2$ which have trivial QR-boundaries, and Theorem~\ref{thm:introRHGs} implies the existence of the quasi-redirecting boundary of $\pi_1(M)$, and so does $G$.
\end{proof}

\begin{thm}\label{maincox}
  Let $\Gamma$ be a graph whose flag complex $\Delta$ is planar.  Then the quasi-redirecting boundary of the right-angled Coxeter group $W_\Gamma$ exists.
\end{thm}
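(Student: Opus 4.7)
The plan is to reduce Theorem~\ref{maincox} to Proposition~\ref{prop:RACG}, which already covers the case when the Standing Assumptions hold and every region of $\field{S}^2 \setminus \Delta$ has a $4$-cycle boundary. Two situations remain: (i) the Standing Assumptions fail, and (ii) they hold but some region of $\field{S}^2 \setminus \Delta$ has boundary of length at least $5$.

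For (i), I would dispatch the subcases individually. If $\Delta$ contains no induced $4$-cycle, then by Moussong's criterion $W_\Gamma$ is Gromov hyperbolic, so $\partial W_\Gamma$ exists and coincides with the Gromov boundary. If $\Delta$ is a $4$-cycle or the cone on a $4$-cycle, then $W_\Gamma$ is virtually $\Z^2$, and Proposition~\ref{prop:product} together with the quasi-isometry invariance in Theorem~\ref{thm:collectfacts} yield that $P(W_\Gamma)$ is a single point. If $\Delta$ is disconnected or has a separating vertex or separating edge, then $W_\Gamma$ splits nontrivially as a graph of groups over finite reflection subgroups. Inducting on the number of vertices of $\Gamma$ and invoking the standard characterization of relative hyperbolicity for groups with such a visual splitting, $W_\Gamma$ is hyperbolic relative to its maximal one-ended planar special subgroups; the QR boundaries of these peripheral subgroups exist by the inductive hypothesis, so Theorem~\ref{thm:introRHGs} completes this case.

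For (ii), I would invoke a structural theorem for two-dimensional planar right-angled Coxeter groups (in the spirit of Dani--Thomas, Nguyen--Tran, or Levcovitz) which expresses $W_\Gamma$ as hyperbolic relative to a canonical finite collection of special subgroups $W_{\Delta_1}, \ldots, W_{\Delta_k}$, where each $\Delta_i$ is a planar flag subcomplex in which every complementary region of $\field{S}^2\setminus \Delta_i$ is bounded by a $4$-cycle. Proposition~\ref{prop:RACG} then supplies the existence of each $\partial W_{\Delta_i}$, and a final application of Theorem~\ref{thm:introRHGs} yields the existence of $\partial W_\Gamma$. The principal obstacle is making this last step precise: identifying the peripheral subgroups intrinsically from the planar embedding and verifying that each $\Delta_i$ is itself planar, satisfies the Standing Assumptions, and has only $4$-cycle complementary region boundaries. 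If no off-the-shelf structure theorem applies verbatim, a fallback plan is to argue by hand, excising regions with large boundary cycles using the planar embedding of $\Delta$ in $\field{S}^2$ and mimicking the graph-manifold-versus-mixed-manifold dichotomy used in the proof of Theorem~\ref{thm:3-manifold}.
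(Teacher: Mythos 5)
Your case (ii) is essentially the paper's entire proof: the structural theorem you gesture at (and worry about finding ``verbatim'') is precisely~\cite[Theorem~1.2]{HNT20}, which decomposes $W_\Gamma$ as hyperbolic relative to a finite collection of $\mathcal{CFS}$ special subgroups; the paper then applies Proposition~\ref{prop:RACG} to each peripheral and concludes via Theorem~\ref{thm:introRHGs}. Your case (i), dispatching the degenerate subcases where the Standing Assumptions fail (hyperbolic, virtually $\Z^2$, non-one-ended), is preliminary tidying that the paper folds implicitly into the citation of~\cite{HNT20}; it is harmless but not a genuinely different route.
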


\begin{proof}
    According to~\cite[Theorem~1.2]{HNT20}, there is a collection $\mathbb J$ of $\mathcal{CFS}$ subgraphs of $\Gamma$ such that the right-angled Coxeter group $G_\Gamma$ is relatively hyperbolic with respect to the collection $\mathbb P=\set{G_J}{J\in \mathbb J}$. 
    By Proposition~\ref{prop:RACG}, the quasi-redirecting of each peripheral subgroup $G_J \in \mathbb P$. We now apply Theorem~\ref{thm:introRHGs} to obtain the conclusion.
\end{proof}

%%%%%%%%%%%%%%%%%%%%%%%%%%%%%%%%%%%%%%%%%%%%%%%%%%%%%%%%%%%%%%%%%%%%%%%%%%%%%%%%%%%%%%%%%%%%%%%%%%%%%%%%%%%%%%%%%%%%%%%%%%%%%%%%%%%%%%%%%%%%
\bibliographystyle{alpha}

%\bibliographystyle{alpha}
%\bibliography{bibtex}

\begin{thebibliography}{AKB12} 
\bibitem[ANR23]{CarolynNguenRamussen2024} {C.~Abbott, H.~Nguyen, A.~Rasmussen}. \textit{Largest hyperbolic actions
of 3–manifold groups}. Bull. London Math. Soc., 56(10):3090–3113, 2024.

\bibitem[Baj07]{Baj07} {J.~Bajpai}. \textit{Omnipotence of surface groups}. Master’s thesis, McGill University, 2007.

\bibitem[BH99]{BH99} {M.~Bridson, A.~Haefliger}. \textit{Metric spaces of non-positive curvature}, volume 319 of
Grundlehren der mathematischen Wissenschaften. Springer-Verlag, Berlin, 1999.

\bibitem[Bow12]{Bow12} {B.~Bowditch}. \textit{Relatively hyperbolic groups}. Internat. J. Algebra Comput., 22(3):1250016, 66, 2012.

\bibitem[BW13]{BW13} {H.~Bigdely, D.~Wise}. \textit{Quasiconvexity and relatively hyperbolic groups that split}. Michigan Math. J., 62(2):387–406, 2013.

\bibitem[CDG22]{CDG22} {M.~Cordes, M.~Dussaule, I.~Gekhtman}. \textit{An embedding of the Morse boundary in the Martin boundary}. Algebr. Geom. Topol., 22(3):1217–1253, 2022.

\bibitem[CK00]{CK00} {C.~Croke, B.~Kleiner}. \textit{Spaces with nonpositive curvature and their ideal boundaries}. Topology., 39(3):549–556, 2000.

\bibitem[CK02]{CK02} {C.~Croke, B.~Kleiner}. \textit{The geodesic flow of a nonpositively curved graph manifold}. Geom. Funct. Anal., 12(3):479–545, 2002.


\bibitem[Cor17]{Cor17} {M.~Cordes}. \textit{Morse boundaries of proper geodesic metric spaces}. Groups Geom. Dyn., 11(4):1281–1306, 2017.

\bibitem[CS15]{CS15} {R.~Charney, H.~Sultan}. \textit{Contracting boundaries of CAT(0) spaces}. J. Topol., 8(1):93–117, 2015.

\bibitem[Dah03]{Dah03} {F.~Dahmani}. \textit{Combination of convergence groups}. Geom. Topol., 7:933–963, 2003.

\bibitem[DS05]{DS05} {C.~Drutu, M.~Sapir}. \textit{Tree-graded spaces and asymptotic cones of groups}. Topology., 44(5):959–1058, 2005. With an appendix by Denis Osin and Mark Sapir.


\bibitem[Far98]{Far98} {B.~Farb}. \textit{Relatively hyperbolic groups}. Geom. Funct. Anal., 8(5):810–840, 1998.

\bibitem[GQV24]{GQV24} {J.~Garcia, Y.~Qing, E.~Vest}. \textit{Topological and Dynamic Properties of the Sublinearly
Morse boundary and the Quasi-Redirecting Boundary}. \url{https://arxiv.org/abs/2408.10105}.

\bibitem[Ger94]{Ger94} {S.~Gersten}. \textit{Quadratic divergence of geodesics in CAT(0) spaces}. Geometric \& Functional Analysis GAFA 4(1) (1994): 37-51.

\bibitem[Gro87]{Gro87} {M.~Gromov}. \textit{Hyperbolic groups}. In Essays in group theory, volume 8 of Math. Sci. Res. Inst.
Publ., pages 75–263. Springer, New York, 1987.

\bibitem[HNT20]{HNT20} {M~Haulmark, H.~Nguyen, H.~Tran}. \textit{The relative hyperbolicity
and manifold structure of certain right-angled Coxeter groups}. Internat. J. Algebra Comput.,
30(3):501–537, 2020.

\bibitem[HNY23]{HNY23} {S.~Han, H.~Nguyen, W.~Yang}. \textit{Property (QT) for 3-manifold groups}. To appear in Algebr. Geom. Topol. \url{https://arxiv.org/abs/2108.03361v4}.

\bibitem[HRSS24]{HRSS24} {M~Hagen, J.~Russell, A.~Sisto, D.~Spriano}. \textit{Equivariant hierarchically hyperbolic structures for 3-manifold groups via quasimorphisms}. To appear in Ann. Inst. Fourier. \url{https://arxiv.org/abs/2206.12244}.

\bibitem[Hru10]{Hru10} {G.~Hruska}. \textit{Relative hyperbolicity and relative quasiconvexity for countable groups}. Algebr. Geom. Topol., 10(3):1807–1856, 2010.

\bibitem[KL98]{KL98} {M.~Kapovich, B.~Leeb}. \textit{3-manifold groups and nonpositive curvature}. Geom. Funct. Anal., 8(5):841–852, 1998.


\bibitem[MN24]{MN24} {A.~Margolis, H.~Nguyen}. \textit{Quasi-isometric rigidity of extended admissible groups}. \url{https://arxiv.org/abs/2401.03635}.


\bibitem[Ngu25]{Ngu25} {H.~Nguyen}. \textit{An extension of Kapovich-Leeb's theorem}. \url{https://drive.google.com/file/d/1KmYZL0j6ergoCYrH49blbaAN366kUf6i/view}.

\bibitem[NQ24]{NQ24} {H.~Nguyen, Y.~Qing}. \textit{Sublinearly Morse boundary of CAT(0) admissible groups}. J. Group Theory., 27(4):857–897, 2024.

\bibitem[NT19]{NT19} {H.~Nguyen, H.~Tran}. \textit{On the coarse geometry of certain right-angled Coxeter groups}. Algebr. Geom. Topol., 19(6):3075–3118, 2019.

\bibitem[NY23]{NY23} {H.~Nguyen, W.~Yang}. \textit{Croke-Kleiner admissible groups: property (QT) and quasiconvexity}. Michigan Math. J., 73(5):971–1019, 2023.


\bibitem[QR24]{QR24} {Y.~Qing, K.~Rafi}. \textit{The quasi-redirecting Boundary}. \url{https://arxiv.org/abs/2406.16794}.

\bibitem[QRT22]{QRT22} {Y.~Qing, K.~Rafi, G.~Tiozzo}. \textit{Sublinearly Morse boundary I: CAT (0) spaces}. Adv.Math., 404(part B):Paper No. 108442, 51, 2022.

\bibitem[QRT24]{QRT24} {Y.~Qing, K.~Rafi, G.~Tiozzo}. \textit{Sublinearly Morse boundary, II: Proper geodesic spaces}. Geom. Topol., 28(4):1829–1889, 2024.

\bibitem[QY24]{QY24} {Y.~Qing, W.~Yang}. \textit{Genericity of sublinearly Morse directions in general metric spaces}. \url{https://arxiv.org/abs/2404.18762}.

\bibitem[Sis12]{Sis12} {A.~Sisto}. \textit{On metric relative hyperbolicity}. \url{https://arxiv.org/abs/1210.8081}.

\bibitem[Sis13]{Sis13} {A.~Sisto}. \textit{Projections and relative hyperbolicity}. Enseign. Math. (2), 59(1-2):165–181, 2013.

\bibitem[SW79]{SW79} {P.~Scott, T.~Wall}. \textit{Topological methods in group theory}. n Homological group theory (Proc. Sympos., Durham, 1977), volume 36 of London Math. Soc. Lecture Note Ser., pages 137–203. Cambridge Univ. Press, Cambridge-New York, 1979.


\bibitem[SZ24]{SZ24} {A.~Sisto, S.~Zbinden}. \textit{ Nearly-linear solution to the word problem for 3-manifold groups}. \url{https://arxiv.org/abs/2407.18029}.

\bibitem[Tao25]{Tao25} {B.~Tao}. \textit{Property (QT) of relatively hierarchically hyperbolic groups}. \url{https://arxiv.org/abs/2412.20065}.

\bibitem[Wil10]{Wil10} {H.~Wilton}. \textit{irtual retractions, conjugacy separability and omnipotence}. . Algebra, 323(2):323– 335, 2010.

\bibitem[Wis00]{Wis00} {D.~Wise}. \textit{Subgroup separability of graphs of free groups with cyclic edge groups}. Q. J. Math., 51(1):107–129, 2000.

\end{thebibliography}
\end{document}